\def\blfootnote{\gdef\@thefnmark{}\@footnotetext}
\newtheorem{theorem}{Theorem}
\newtheorem{corollary}{Corollary}[theorem]
\newtheorem{lemma}[theorem]{Lemma}
\newtheorem{remark}{Remark}
\newcommand*{\rmu}{\ensuremath{\mathrm{u}}}
\newcommand*{\bu}{\ensuremath{\boldsymbol{u}}}
\newcommand*{\bp}{\ensuremath{\boldsymbol{p}}}
\providecommand{\keywords}[1]
{
  \small	
  \textbf{\textit{Keywords---}} #1
}
\numberwithin{equation}{section}
\title[Numerical analysis of the PDHG method for reaction-diffusion equations]{Numerical analysis of a first-order computational algorithm for reaction-diffusion equations via the primal-dual hybrid gradient method}
\author[Liu]{Shu Liu}
\email{shuliu@math.ucla.edu}
\address{Department of Mathematics, University of California, Los Angeles}
\author[Zuo]{Xinzhe Zuo}
\email{zxz@math.ucla.edu}
\address{Department of Mathematics, University of California, Los Angeles}
\author[Osher]{Stanley Osher}
\email{sjo@math.ucla.edu}
\address{Department of Mathematics, University of California, Los Angeles}
\author[Li]{Wuchen Li}
\email{wuchen@mailbox.sc.edu}
\address{Department of Mathematics, University of South Carolina}
\date{}
\keywords{Reaction diffusion equations; Time-implicit schemes; Primal-dual hybrid gradient algorithm; Lyapunov analysis.}
\begin{document}

\maketitle

\begin{abstract}
In \cite{liu2024first}, a first-order optimization algorithm has been introduced to solve time-implicit schemes of reaction-diffusion equations. In this research, we conduct theoretical studies on this first-order algorithm equipped with a quadratic regularization term. We provide sufficient conditions under which the proposed algorithm and its time-continuous limit converge exponentially fast to a desired time-implicit numerical solution. We show both theoretically and numerically that the convergence rate is independent of the grid size, which makes our method suitable for large scale problems. The efficiency of our algorithm has been verified via a series of numerical examples conducted on various types of reaction-diffusion equations. The choice of optimal hyperparameters as well as comparisons with some classical root-finding algorithms are also discussed in the numerical section.
\end{abstract}

\blfootnote{S. Liu and X. Zuo are partially funded by AFOSR YIP award No. FA9550-23-1-0087. S. Liu, X. Zuo, and S. Osher are partially funded by AFOSR MURI FA9550-18-502 and ONR N00014-20-1-2787. W. Li’s work is supported by AFOSR YIP award No. FA9550-23-1-0087, NSF DMS-2245097, and NSF RTG: 2038080.}

\section{Introduction}\label{intro}
Reaction-diffusion equations (RD) are well-known time-dependent partial differential equations (PDEs). They are originally used to model the density evolution of chemical systems with local reaction processes in which substances get transformed, and diffusion processes in which the substances get spread over. Since the same type of equations describe many systems, the RD equation finds its applications in broad scientific areas. This includes the study of phase-field models in which the Allen-Cahn and the Cahn-Hilliard equations \cite{allen1979microscopic, cahn1961spinodal} are used to depict the development of microstructures of multiple materials; the research on the evolution of species distribution in ecology system \cite{murray2001mathematical}; the study of the reaction processes of multiple chemicals \cite{schnakenberg1979simple, pearson1993complex}; and the modeling \& prediction of crimes \cite{crime_RD}. 

Time-implicit schemes are often used when solving RD equations numerically. This is because in simulations, explicit or semi-explicit schemes are often encountered with Courant–Friedrichs–Lewy (CFL) conditions, under which the time step size is restricted to be very small. Conversely, employing time-implicit schemes allows for the use of larger time step sizes, leading to a more efficient computation of the equilibrium state in RD equations. Moreover, computing RD equations with a weak diffusion and a strong reaction term is of great interest to the computational math community. The performance of explicit and semi-implicit schemes could be unstable under these circumstances. However, it has been shown that implicit schemes still work very well on these models \cite{xu2019stability, liu2024first}. In addition, time-implicit schemes are also known to be energy-stable \cite{xu2019stability}. 

In a recent work \cite{liu2024first}, the primal-dual hybrid gradient (PDHG) algorithm which is an easy-to-implement optimization algorithm, has been used for computing the time-implicit solution of RD equations. The PDHG algorithm \eqref{intr: abstract precond PDHG1 } is a first-order optimization algorithm with tunable hyperparameters. Notably, it does not require computing the inverse of the Jacobian matrix in the time-implicit scheme. It converges robustly regardless of the choice of the initial value, which is a key distinction from many classical methods, such as Newton’s methods. This property makes the PDHG algorithm easy to implement and computationally efficient for solving nonlinear equations. Another motivating feature of the PDHG method is that it allows for the design of customized preconditioning matrices based on the structure of the specific RD equation, resulting in a notable grid-size-independent convergence rate throughout the algorithm. 

Nevertheless, the prototype PDHG algorithm presented in \cite{liu2024first} faces theoretical and practical challenges. The time-implicit scheme results in a nonlinear equation, and the PDHG algorithm introduces nonlinear coupling in both the primal and dual variables. In addition, there is a lack of convergence analysis for the proposed PDHG algorithm. Furthermore, the nonlinearity inherent in RD equations poses a challenge in resolving the optimal choice of hyperparameters. In this paper, we provide the convergence study of the PDHG algorithm for computing the time-implicit scheme of RD equations. We also present a series of numerical experiments on the choices of hyperparameters.

Let us consider the general form of the RD equation on a region $\Omega \subset \mathbb{R}^d$ with prescribed boundary (e.g., periodic, Neumann, or Dirichlet) and initial conditions.
\begin{equation} 
  \frac{\partial u(x, t) }{\partial t} = - \mathcal{G} (a\mathcal{L}u(x,t) + bf(u(x,t))),  \quad x\in\Omega, \quad u(\cdot, 0) = u_0(\cdot),
  \label{general RD type PDE}
\end{equation}
Here we assume $\mathcal L, \mathcal G$ are self-adjoint, non-negative definite linear operators. $f(\cdot)$ is the reaction term (usually nonlinear). $a\geq 0$ is the diffusion coefficient. And $b\geq 0$ is the reaction coefficient. To compute the numerical solution of \eqref{general RD type PDE}, we adopt the following time-implicit scheme with a time step size $h_t>0$ and solve for the numerical solution on $N_t$ intervals:
\begin{equation}
  \frac{u^{t+1}-u^t}{h_t} = - \mathcal G(a\mathcal{L}u^{t+1} + b f(u^{t+1})), \quad 0\leq t\leq N_t-1.  \label{intr: semi discrete RD equ1}
\end{equation}

Assume that at each time step, the numerical solution $u^t$ belongs to a certain Hilbert space $\mathcal X$ with an inner product $(\cdot, \cdot)$. Let us denote $\boldsymbol{u}=[u^1, \dots, u^t, \dots, u^{N_t}]^\top\in\mathcal{X}^{N_t}$. Define the function $\mathcal{F}(\cdot):\mathcal{X}^{N_t}\rightarrow \mathcal{X}^{N_t}$ as
\begin{equation}
  \mathcal{F}(\boldsymbol{u}) = [\dots, u^{t+1} - u^t + h_t\mathcal G(a\mathcal L u^{t+1} + b f(u^{t+1})), \dots ]_{0\leq t\leq N_t-1}^\top.  \label{implicit scheme1}
\end{equation}
Then, solving the time-implicit scheme \eqref{intr: semi discrete RD equ1} is equivalent to obtaining the root of the problem $\mathcal F(\boldsymbol{u})=0$.

We now reformulate the time-implicit scheme \eqref{intr: semi discrete RD equ1} as an inf-sup problem with a tunable parameter $\epsilon$>0 following the treatment in \cite{zuo2023primal}
\begin{align}
  \inf_{\bu\in\mathcal X^{N_t}} \sup_{\bp\in\mathcal X^{N_t}}  &  (\bp, \mathcal F(\bu)) - \frac{\epsilon}{2}\|\bp\|_{ \mathcal X^{N_t} }^2.  \label{intr: min-max problem with quad1 }
\end{align}
Here we write $\bp = [p_1, \dots, p_t, \dots, p_{N_t}] \in \mathcal X^{N_t}.$ Compared with the saddle point scheme considered in \cite{liu2024first}, a quadratic regularization term is introduced in \eqref{intr: min-max problem with quad1 } to enhance the performance of the proposed algorithm both theoretically and numerically. It is not hard to verify that \eqref{intr: min-max problem with quad1 } is equivalent to the residue-minimizing problem $\underset{\boldsymbol{u}}{\inf} \ \frac{1}{2\epsilon}\|\mathcal F(\boldsymbol{u})\|_{\mathcal X^{N_t}}^2$, and we further point out that the saddle point of this inf-sup problem \eqref{intr: min-max problem with quad1 } exists and solves $\mathcal{F}(\bu)=0$ whenever the root-finding problem admits a unique solution. 

As demonstrated in \cite{liu2024first}, we deal with the inf-sup saddle problem by applying the primal-dual hybrid gradients (PDHG) algorithm \cite{chambolle2011first, zhu2008efficient}. We further substitute the proximal step of variable $\bu$ with an explicit update to obtain 
\begin{align}
\label{intr: abstract original PDHG1}
\begin{split}
   \bp_{n+1} = & \frac{1}{1+\epsilon \tau_P} \left(\bp_n + \tau_P \mathcal{F}(\bu_n)\right),\\
   \widetilde{\bp}_{n+1} = & \bp_{n+1} + \omega (\bp_{n+1} - \bp_n),  \\
   \bu_{n+1} = & \bu_n - \tau_U D\mathcal{F}(\bu_n)^*\widetilde{\bp}_{n+1}.
\end{split}
\end{align}
Here $\omega >0$ is the extrapolation coefficient, and $\tau_P, \tau_U>0$ are PDHG step sizes. $D\mathcal F(\bu)$ is a linear operator on $\mathcal{X}^{N_t}$, which denotes the Fr\'{e}chet derivative of $\mathcal{F}(\cdot)$ at $\bu$. $D\mathcal{F}(\bu)^*$ is the adjoint operator of $D\mathcal{F}(\bu)$ on $\mathcal{X}^{N_t}$. It is not hard to verify that the equilibrium state of PDHG scheme \eqref{intr: abstract original PDHG1} is the desired $(\bu_*, 0)$ with $\mathcal F(\bu_*)=0$ whenever $D\mathcal F(\bu)^*$ is invertible for arbitrary $\bu\in\mathcal{X}^{N_t}$. 

The PDHG algorithm \eqref{intr: abstract original PDHG1} converges slowly when $\mathcal F(\cdot)$ possesses a large condition number. To improve the convergence speed, it is necessary to consider preconditioning $\mathcal F(\cdot)$. We consider an invertible linear operator $\mathfrak M: \mathcal X^{N_t}\rightarrow \mathcal X^{N_t}$, where $\mathfrak M$ is extracted from the linear part of $\mathcal{F}(\cdot )$. Then we introduce the preconditioned functional $\widehat{\mathcal{F}}(\bu) = \mathfrak M^{-1} \mathcal F(u).$ We apply the PDHG algorithm \eqref{intr: abstract original PDHG1} to $\mathcal{\widehat F}(u)=0$ to obtain 
\begin{align}
\label{intr: abstract precond PDHG1 }
\begin{split}
  \bp_{n+1} = & \frac{1}{1+\epsilon\tau_P}(\bp_n + \tau_P {\mathfrak{M}}^{-1}\mathcal{F}(\bu_n)), \\ 
   \widetilde{\bp}_{n+1} = & \bp_{n+1} + \omega (\bp_{n+1} - \bp_n), \\ 
   \bu_{n+1} = & \bu_n - \tau_U D\mathcal{F}(\bu_n)^*(\mathfrak M^{-1})^*\widetilde{\bp}_{n+1} . 
\end{split}
\end{align}
The above treatment \eqref{intr: abstract precond PDHG1 } will significantly improve the algorithm's convergence speed while leaving the equilibrium state invariant. 

It is worth noting that the original approach proposed in \cite{liu2024first} solves the implicit scheme \eqref{implicit scheme1} while preserving the time causality: the algorithm sequentially computes \( u^t \) at each time step, using the previous solution as the initial condition. In contrast, our approach generalizes by accumulating multiple time steps into a single root-finding problem and computing the multi-step solution in a forward manner. More precisely, we solve \( \mathcal{F}(\boldsymbol{u}_j) = 0 \) for sequential blocks of solutions, where each block is defined as:
\[
\boldsymbol{u}_j = \begin{bmatrix} u^{j\cdot N_t + 1}, \dots, u^{j\cdot N_t + t}, \dots, u^{j\cdot N_t + N_t} \end{bmatrix}^\top \in \mathcal{X}^{N_t}, \quad j = 0,1,2,\dots
\]
When updating from \( \boldsymbol{u}_j \) to \( \boldsymbol{u}_{j+1} \), we set \( u^0 = u^{j\cdot N_t + N_t} \) as the initial condition in \eqref{intr: semi discrete RD equ1}. Unlike step-by-step update, the new approach preserves time causality among solution blocks \( \boldsymbol{u}_j, \boldsymbol{u}_{j+1} \).

In this paper, we analyze the aforementioned preconditioned PDHG algorithm \eqref{intr: abstract precond PDHG1 } to establish sufficient conditions under which the method is guaranteed to converge. We remark that there are two types of convergence analysis, which may cause confusion in this manuscript. One refers to the convergence of the numerical solution to the real solution as the 
 number of grid points increases; the other one refers to the convergence of $(\bu_n, \bp_n)$ to the equilibrium state of the PDHG algorithm \eqref{intr: abstract precond PDHG1 } as $n$ increases. In this research, we mainly focus on analyzing the second type of convergence. We now briefly summarize the main contributions: 
\begin{itemize}
    \item (Theoretical aspect) Suppose that the reaction term $f(\cdot)$ is Lipschitz. Assume that the discretization of the differential operators $\mathcal L_h, \mathcal G_h$ are positive-definite, self-adjoint, and commute. We establish the following theoretical results for our algorithm.
    \begin{enumerate}
        \item  We study the PDHG flow \eqref{PDHG cont time}, which is the time-continuous limit of \eqref{Preconded PDHG} as $\tau_U, \tau_P\rightarrow 0, (1+\omega)\tau_P \rightarrow \gamma > 0$. We give conditions on $h_t, N_t$ under which we can pick $\gamma, \epsilon$ such that the residual term exponentially decays to $0$. The convergence results for general RD equations are discussed in Theorem \ref{thm: exponential convergence regardless of b T } and Theorem \ref{thm: O(1) exponential convergence result for general RD equ }; We establish convergence rates that are independent of the grid-size $N_x$ for both Allen-Cahn type and Cahn-Hilliard type equations in Corollary \ref{practical corollary }.
        \item We analyze the convergence speed of the PDHG method \eqref{Preconded PDHG} in Theorem \ref{thm: time discrete PDHG converge with Lip }. We show that under certain conditions of  $h_t, N_t$, we are able to select suitable hyperparameters $\tau_U, \tau_P, \omega, \epsilon$ that guarantee the exponential convergence of the $L_2$ error term. We establish convergence rates that are independent of the grid-size $N_x$ for both Allen-Cahn type and Cahn-Hilliard type equations in Corollary \ref{coro: simplify PDHG alg converge}. 
    \end{enumerate}
    \item (Numerical aspect) In section \ref{subsec: convergence result for time continuous case } and \ref{sec: lyapunov time discrt case} we justify our theoretical results stated above. In section \ref{sec: tested eq }, we demonstrate the effectiveness of our algorithm on different RD equations, including the standard Allen-Cahn and Cahn-Hilliard equations, as well as equations with variable mobility terms or higher-order diffusion terms whose linear operator $\mathscr{M}$ (c.f. \eqref{def: scr M }) cannot be directly inverted. In section \ref{sec: PDHG convergence rate independency wrt grid resolution }, we validate that the convergence rate of our method is independent of the grid size $N_x$. In section \ref{sec: hyperparam select}, we investigate the optimal, or at least near-optimal hyperparameters of our algorithm for achieving efficient performance. We demonstrate the efficiency of our method by comparing it with some of the classical methods 
    in section \ref{sec : long-range} and section \ref{sec: compare speed with 3 methods }. 
\end{itemize}

There exist plenty of references regarding the numerical schemes for RD equations, which include studies on finite difference methods \cite{ ceniceros2013new, christlieb2014high, eyre1998unconditionally, hou2023linear, hundsdorfer2003numerical, li2010unconditionally,  merriman1994motion, shen2019new, shen2010numerical, xu2019stability, yang2016linear}, 
and finite element methods \cite{fu2023high, fu2023generalized, hundsdorfer2003numerical, liu2021structure, liu2022second, liu2022convergence, LiuLagrangian, zhu2009application}. A series of benchmark problems \cite{church2019high, jokisaari2017benchmark} has also been introduced to verify the effectiveness of the proposed methods. Recently, machine learning or deep learning algorithms such as deep-learning-based backward stochastic differential equations (BSDE) \cite{han2018solving, han2017deep}, physics-informed neural networks (PINNs) \cite{raissi2019physics, wight2020solving, xu2022numerical}, and Gaussian processes \cite{chen2021solving} have also been applied to deal with various types of nonlinear equations including the RD equations.

The primal-dual hybrid gradients (PDHG) method was first introduced in \cite{chambolle2011first, zhu2008efficient} to deal with constrained optimization problems arising in image processing. This method later finds its applications in various branches such as nonsmooth PDE-constrained optimization \cite{clason2017primal}, Magnetic resonance imaging (MRI) \cite{valkonen2014primal}, large-scale optimization problems including image denoising and optimal transport \cite{jacobs2019solving}, computing gradient flows in Wasserstein-like transport metric spaces \cite{CarrilloCraigWangWei2022_primala, CarrilloWangWei2023_structure, fu2023high}, as well as design fast optimization algorithms \cite{zuo2023primal}, etc. 

In \cite{fahroo1996optimum}, the authors introduce damping terms to the wave equation to achieve faster stabilization, which resembles the time-continuous limit (the PDHG flow) \eqref{PDHG cont time} of our proposed algorithm. However, \cite{fahroo1996optimum} focuses on the linear case while our research deals with nonlinear RD equations. In recent work \cite{chen2023transformed}, the authors conduct certain transformations to enhance the convergence of a saddle point algorithm. Although the transformed algorithm shares similarities with our method, the target functionals considered in both researches are distinct. In \cite{chen2023accelerated}, the authors apply the splitting method to propose an accelerating algorithm for the root-finding problem $\mathcal A(x)=0$, where $\mathcal A$ can be decomposed as the sum of the gradient function and the skew-symmetric operator. In contrast, our proposed method deals with a time-dependent root-finding problem, which generally can not be cast into the settings of \cite{chen2023accelerated}. We refer our readers to \cite{liu2024first} for more detailed discussions on related references.

Our research is inspired by \cite{liu2022primal} in which the authors apply the PDHG algorithm to compute time-implicit conservation laws. Our former research \cite{liu2024first} mainly focuses on the conceptual and experimental aspects of the PDHG method applied to RD equations. In addition, the primal-dual method also finds its application in the computation of Hamilton-Jacobi equations \cite{meng2023primal}. The aforementioned works \cite{liu2024first, liu2022primal, meng2023primal} do not address the convergence speed of the PDHG algorithm. In this work, we establish the convergence guarantee for the nonlinearly coupled primal-dual system. Moreover, we prove a convergence property of our method, where the convergence rate is independent of the space grid size.

This paper is organized as follows. In section \ref{sec: derivation }, we provide a detailed derivation of our algorithm applied to RD equations. In section \ref{sec: unique existence}, we establish the existence and uniqueness result regarding the time-implicit scheme of the RD equation. In section \ref{sec: Lyapunov analysis PDHG flow time continue }, we focus on the PDHG flow, which is the time-continuous limit of the proposed algorithm. We first establish convergence results for the general root-finding problem and then apply our theory to the time-implicit schemes of RD equations. In section \ref{sec: lyapunov time discrt case}, we prove exponential convergence of our algorithm. We also investigate necessary conditions that guarantee such convergence. In section \ref{sec: numerical example }, we demonstrate the effectiveness of our method on different types of RD equations and make comprehensive comparisons with the IMEX scheme as well as some classical root-finding algorithms.

\section{Derivation of the method}\label{sec: derivation }
In this section, we give a detailed derivation of the PDHG method when applied to the reaction-diffusion (RD) equation \eqref{general RD type PDE}. From now on, we assume that the domain $\Omega=[0, L]^2$ is a square region.

Suppose we solve \eqref{general RD type PDE} on the time interval $[0, T]$. We divide the time interval into $N_t$ subintervals, and divide the domain $\Omega$ into $N_x \times N_x$ grids. Applying time-implicit finite difference scheme yields
\begin{equation}
  \frac{\rmu^{t+1}-\rmu^t}{h_t} = - \mathcal G_{h} ( a\mathcal L_{h} \rmu^{t+1} + b f(\rmu^{t+1}) ), \quad \textrm{for } t=0, 1, \dots, N_t, \textrm{ with } \rmu^0  \textrm{ given.}  \label{time-implicit scheme}
\end{equation}
Denote $h_t = \frac{T}{N_t}$, and $h_x = \frac{L}{N_x}$. Write $U^t\in\mathbb{R}^{N_x\times N_x}$ as the numerical solution at the $t-$th time node. We denote $\mathcal{G}_{h}, \mathcal{L}_{h}$ as $N_x^2\times N_x^2$ matrices, which represents the discretization of the operator $\mathcal L, \mathcal G$ w.r.t. the spatial step size $h_x$ and the boundary condition. 
\begin{remark}[Allen-Cahn and Cahn-Hilliard equations]\label{rk: AC and CH eq }
  For Allen-Cahn equation \cite{allen1979microscopic}, we have $\mathcal G = \mathrm{Id}$, $\mathcal L = -\Delta$; for Cahn-Hilliard equation \cite{cahn1961spinodal}, we have $\mathcal{G} = -\Delta$, $\mathcal L = -\Delta.$ And $f(\cdot) = W'(\cdot)$ where $W(\xi) = \frac 14 (\xi^2 - 1)^2$ is the double-well potential for both equations. We can impose periodic or homogeneous Neumann boundary conditions for both equations. Furthermore, suppose we apply the central difference scheme to discretize the Laplace operator $\Delta$. 
  We obtain $\Delta_{h_x}^P = I_{N_x} \otimes \mathrm{Lap}_{h_x}^P + \mathrm{Lap}_{h_x}^P\otimes I_{N_x}$ for periodic boundary condition, and $\Delta_{h_x}^N = I_{N_x} \otimes \mathrm{Lap}_{h_x}^N + \mathrm{Lap}_{h_x}^N \otimes I_{N_x}$ for Neumann boundary condition, where $\otimes$ is the Kronecker product and we define
  \begin{equation}
      \mathrm{Lap}_{h_x}^P = \frac{1}{h_x^2}\left[ \begin{array}{ccccc}
          -2  & 1 &   &  & 1 \\
          1& -2  & 1 &  &  \\
             & \ddots & \ddots & \ddots &\\
             &        &   1   &   - 2   &  1\\
          1  &        &        &   1  & -2 
      \end{array} \right], \quad
      \mathrm{Lap}_{h_x}^N = \frac{1}{h_x^2}\left[ \begin{array}{ccccc}
          -1  & 1 &   &  &  \\
          1& -2  & 1 &  &  \\
             & \ddots & \ddots & \ddots &\\
             &        &   1   &   - 2   &  1\\
             &        &        &   1  & -1 
      \end{array} \right].\label{def: periodic discrete Laplace operator }
  \end{equation}  
\end{remark}

\subsection{PDHG method for preconditioned root-finding problem}
In this section, we provide a more detailed derivation for our algorithm. 

Let us treat $\mathcal X  = \mathbb{R}^{N_x^2}$. We denote $U=[{\rmu^{1}}^\top, \dots, {\rmu^{N_t}}^\top]^\top\in\mathbb{R}^{N_tN_x^2}$ as the numerical solution. $\mathcal L_h, \mathcal G_h$ indicate the discrete approximations of $\mathcal L, \mathcal G$. We formulate the time-implicit scheme \eqref{time-implicit scheme} as a root-finding problem   
\begin{equation} 
  F(U) = 0,  \label{root-finding}
\end{equation}
with $F:\mathbb{R}^{ N_t N_x^2  }\rightarrow \mathbb{R}^{N_tN_x^2}$ defined as
\begin{equation}
  F(U) = \mathscr D U + h_t\mathscr{G}_h( a \mathscr{L}_{h} U + bf(U) )-V.  \label{def: F }
\end{equation}
Here we denote the time difference matrix $\mathscr{D} = D_{N_t} \otimes I_x$, where $I_x$ is the identity matrix on $\mathbb{R}^{N_x^2}$, and
\begin{equation} 
  D_N = \left[\begin{array}{ccccc}
    1 & & & &   \\
    -1& 1 & & &  \\
      & -1& 1 & & \\
      &   & \ddots & \ddots & \\
      &   &        &     -1 & 1
\end{array}\right] \quad \textrm{is an}~~N\times N ~ \textrm{matrix}.  \label{def: mat D N }
\end{equation}
On the other hand, we define
\begin{equation}
 \mathscr{G}_h = I_t \otimes \mathcal G_h, \quad \mathscr{L}_h = I_t \otimes \mathcal{L}_h,\label{def; caligraph Gh, Lh }
 \end{equation}
with $I_t$ representing the identity matrix on $\mathbb{R}^{N_t}.$ The reaction function $f(\cdot)$ acts element-wisely on vector $U$. The constant vector $V\in\mathbb{R}^{N_tN_x^2}$ depends on both the initial condition and the boundary condition of the equation.

We aim to solve $F(U)=0.$ In \cite{liu2024first}, an indicator function $\iota(u)=\begin{cases}
    0      & \textrm{if } u=0;\\
    +\infty & \textrm{if } u\neq 0; 
\end{cases}$ is introduced to reformulate the root-finding problem as an 
optimization problem 
\begin{equation}
  \underset{U \in \mathbb{R}^{N_tN_x^2}}{\inf} ~ \iota(F(U)),  \label{min res }
\end{equation}
which can be further reduced to an inf-sup saddle problem 
\begin{equation}
   \inf_{ U \in \mathbb{R}^{N_tN_x^2}} \sup_{ P \in \mathbb{R}^{N_tN_x^2}} ~ P^\top{F}(U).  \label{intr: original min max with iota }
\end{equation}

Inspired by \cite{zuo2023primal}, we replace $\iota$ in \eqref{min res } by a milder quadratic function $\frac{1}{2\epsilon}\|\cdot\|^2 $ to obtain  
\begin{equation}
    \inf_{U\in\mathbb{R}^{N_tN_x^2}} ~ \frac{1}{2\epsilon}\|F(U)\|^2.  \label{intr: min problem with q epsilon }
\end{equation}
By introducing the dual variable $P\in\mathbb{R}^{N_tN_x^2}$, one can reformulate \eqref{intr: min problem with q epsilon } as an inf-sup problem with a tunable parameter $\epsilon$,
\begin{align}
  \inf_{U\in\mathbb{R}^{N_tN_x^2}} \sup_{P\in\mathbb{R}^{N_tN_x^2}} ~ L(U, P) & \triangleq P^\top F(U) - \frac{\epsilon}{2}\|P\|^2.  \label{intr: min-max problem with quad }
\end{align}

We tackle this saddle point problem by leveraging the primal-dual hybrid gradient (PDHG) algorithm and obtain 
\begin{align}
\label{derivation : original PDHG }
\begin{split}
   P_{n+1} = & \frac{1}{1+\epsilon \tau_P} \left(P_n + \tau_P F(U_n)\right),  
   \\
   \widetilde{P}_{n+1} = & P_{n+1} + \omega (P_{n+1} - P_n),
   \\
   U_{n+1} = & U_n - \tau_U DF(U_n)^\top\widetilde{P}_{n+1}.
\end{split}
\end{align}
When $DF(U)$ is nonsingular for arbitrary $U\in\mathbb{R}^{N_tN_x^2}$, the equilibrium state of the above discrete dynamic is $(U_*, 0)$ with $F(U_*)=0$. As discussed in the introduction, a large condition number of $F(\cdot)$ may significantly slow down the convergence speed of \eqref{derivation : original PDHG }. To mitigate this, we consider suitable preconditioning of $F(\cdot)$. Let us decompose $F(U)$ into its linear part and nonlinear part,
\begin{align}
 F(U) = & \mathscr D U + h_t \mathscr{G}_h ( a \mathscr{L}_{h} U + bf(U) ) - V \nonumber  \\ 
 = & (\mathscr D + a h_t \mathscr G_h \mathscr{L}_h)U + b h_t \mathscr G_h (f(\overline{U}) + J_f(U - \overline{U}) + R(U)) - V.    \label{decomp f }
\end{align}
Here we assume $\overline{U}$ is a certain point in $\mathbb{R}^{N_x^2  }$ at which we expand $f(U) = f(\overline{U}) + J_f(U-\overline{U}) + R(U)$. 
We choose matrix $J_f$ as an approximation of the Jacobian matrix $Df(\overline{U})=\mathrm{diag}(\dots,f'(\overline{U}_{ij}), \dots)$. We denote $R(U)\triangleq f(U)-f(\overline U)-J_f(U-\overline U)$ as the remainder term.
\begin{remark}
In practice, we usually choose $J_f = Df(u_{\mathrm{e}}\boldsymbol{1})$ where $\boldsymbol{1}$ is the $1-$vector, and $u_{\mathrm{e}}$ is one of the stable equilibrium states, i.e. $f(u_{\mathrm{e}}) = 0$. For example, in Allen-Cahn equation, $f(u) = u^3-u$, then $u_{\mathrm{e}}=\pm 1$, we always have $f'(u_{\mathrm{e}}) = 2$. Thus, we set $J_f = 2I$.
\end{remark}
By writing
\begin{align}
  & \mathscr{M} = \mathscr D + a h_t \mathscr G_h \mathscr L_h + bh_t \mathscr G_h J_f = \left[\begin{array}{cccc}
      X & & & \\
      -I & X & & \\
         & \ddots & \ddots & \\
         &        &   -I   & X
  \end{array}\right] ~ \textrm{with } X = I + a h_t \mathcal{G}_h\mathcal{L}_h + b h_t 
     \mathcal G_h J_f,  \label{def: scr M }   \\
  &  \widetilde{w} = bh_t\mathscr{G}_h(f(\overline U) - J_f \overline U)-V,  \nonumber
\end{align}
we decompose $F(U)$ as $\mathscr M U + b h_t  \mathscr{G}_h R(U) - \widetilde{w}.$ It is beneficial to consider the preconditioned function
\begin{equation}
  \widehat F(U) = \mathscr M^{-1} F(U) = U + \mathscr M^{-1} (bh_t\mathscr{G}_h R(U) ) - \widetilde{w} \overset{\textrm{denote as }}{=} U + \eta(U). \label{def: preconded F}
\end{equation}
We discuss the sufficient condition under which $\mathscr{M}$ is invertible in the following remark.
\begin{remark}[Invertibility of $\mathscr{M}$]
  Suppose $a,b\geq 0$, $\mathcal G_h$, $\mathcal L_h$ are self-adjoint, non-negative definite, and commute. Assume $J_f=cI$ with $c\geq 0$. Then $\mathscr{M}$ is invertible for any $h_t>0$. To prove this, it suffices to show that each $X$ is invertible. By similar arguments of the proof in Lemma \ref{lemm: posdef I+GL }, it is not hard to verify that $X$ is equivalent to $I+ah_t\Lambda_{\mathcal{G}_h}\Lambda_{\mathcal{L}_h}+bch_t\Lambda_{\mathcal{G}_h}$, which is invertible for $h_t>0$. Here $\Lambda_{\mathcal{G}_h}, \Lambda_{\mathcal{L}_h} $ are diagonal matrices equivalent to $\mathcal G_h, \mathcal L_h.$
\end{remark}
The corresponding root-finding problem $\widehat F(U) = 0$ is equivalent to the original problem \eqref{root-finding} whenever $\mathscr{M}$ is invertible. 

We now apply \eqref{derivation : original PDHG } to the inf-sup saddle problem with respect to $\widehat{F}(\cdot)$
\begin{equation}
    \inf_{U\in\mathbb{R}^{N_tN_x^2}}\sup_{Q\in\mathbb{R}^{N_tN_x^2}} \widehat{L}(U, Q) \triangleq Q^\top\widehat{F}(U) - \frac{\epsilon}{2}\|Q\|_2^2. \label{def: hat L(U, Q)}
\end{equation}
And our PDHG method with \textit{implicit} update in $Q$ and \textit{explicit} update in $U$ yields
\begin{equation}
\label{Preconded PDHG}
\begin{split}
  & Q_{k+1} = \frac{1}{1+\epsilon \tau_P}(Q_k + \tau_P(\widehat{F}(U_k))); \\
  & \widetilde{Q}_{k+1} = Q_{k+1} + \omega(Q_{k+1} - Q_k);\\
  & U_{k+1} = U_k - \tau_U ( D\widehat F(U_k)^\top \widetilde{Q}_{k+1} ).
\end{split}
\end{equation}
We then iterate \eqref{Preconded PDHG} so that $\{U_k\}$ approaches the desired root $U_*$. We terminate the iteration whenever the $\ell^{\infty}$ norm of the residual term
\begin{equation}
  \mathrm{Res}(U_k) = F(U_k)/h_t = \left[\dots, \left(\frac{\rmu_k^{t+1} - \rmu^t_k}{h_t} + \mathcal G_h(a\mathcal L_h \rmu^{t+1}_k  +  b f(\rmu^{t+1}_k))\right)^\top, \dots \right]^\top_{0\leq t\leq N_t-1}  .  \label{def: residual }
\end{equation}
is less than a certain tolerance $tol$, i.e., $\|\mathrm{Res}(U_k)\|_\infty < tol$.

\subsection{Complexity of the algorithm}
We apply the Fast Fourier Transform (FFT) \cite{cooley1965algorithm, strang1986proposal} to evaluate the multiplication of $\mathcal L_h, \mathcal G_h$ for periodic boundary conditions. Furthermore, the Discrete Cosine Transform (DCT) \cite{strang1999discrete} can be utilized to handle the no-flux or more general Neumann boundary conditions. We refer interested readers to \cite{liu2024first} for more details. Thus, computing $F(U)$ requires $\mathcal O(N_tN_x^2\log N_x)$ steps of operations. Furthermore, since $\mathscr{M}$ is block lower triangular, applying back substitution together with FFT/DCT to solve the linear system involving $\mathscr{M}$ requires $\mathcal O (N_tN_x^2\log N_x)$ steps of operations. Thus, the complexity at each iteration of our algorithm equals $\mathcal O(N_tN_x^2\log(N_x))$.

\subsection{Computing with time causality}\label{subsection  compute with time causality}
As mentioned in Section \ref{intro}, time causality can be incorporated into the numerical scheme by solving sequential blocks of numerical solutions:

\[  {U}^j = \begin{bmatrix} {\rmu^{j\cdot N_t + 1}}^\top, \dots, {\rmu^{j\cdot N_t + t}}^\top, \dots, {\rmu^{j\cdot N_t + N_t}}^\top \end{bmatrix}^\top  \in \mathbb{R}^{N_x^2N_t}, \quad j = 0, 1, 2, \dots  \]

More precisely, to compute \( U^j \) over the \( j \)-th time interval \([(j-1) \cdot N_t \cdot h_t, j \cdot N_t \cdot h_t]\), the proposed PDHG algorithm is applied to the root-finding problem \( F(U^j) = 0 \), with \( \rmu_0 = \rmu^{(j-1)\cdot N_t + N_t} \). That is, the initial value is set as the final state from the previous block \( U^{j-1} \). 

From a practical perspective, increasing \( N_t \) leads to higher memory consumption. From a theoretical point of view, as justified in Corollary \ref{practical corollary } and \ref{coro: simplify PDHG alg converge}, fixing the time step size \( h_t \) while selecting a large \( N_t \) may result in an ill-conditioned root-finding problem, posing challenges to the convergence of the method. In practice, choosing a moderate \( N_t \) (generally not exceeding $5$) mitigates these issues and ensures the efficient performance of the algorithm. Further discussions regarding hyperparameter selections are provided in Section \ref{sec: hyperparam select}.

A standard choice of the initial values \( (U_0, Q_0) \) for the PDHG algorithm \eqref{Preconded PDHG} upon solving \( F(U^j) = 0 \) is $U_0 = \widetilde{U}^{j}, Q_0 = 0,$ where \( \widetilde{U}^j \) denotes the numerical solution precomputed using the IMEX scheme with initial condition \( \rmu^{(j-1)\cdot N_t + N_t} \). A simpler alternative is to set $U_0 = U^{j-1}, Q_0 = 0.$ Both choices are efficient in practice as long as $N_t$ is not too large.

\subsection{Relation with G-prox PDHG method}
The G-prox primal-dual hybrid gradients algorithm \cite{jacobs2019solving} was recently invented to improve the convergence of optimization and root-finding problems. The algorithm can be formulated as
\begin{equation}
\begin{split}
  & P_{k+1} = \underset{P\in \mathbb{R}^{N_tN_x^2} }{\textrm{argmin}} ~ \left\{ \frac{1}{2\tau_P} \|P-P_k\|^2_G - \widehat{L}(U_k, P) \right\} = \frac{1}{1+\epsilon \tau_P}(P_k + \tau_P G^{-1} {F}(U_k)); \\
  & \widetilde{P}_{k+1} = P_{k+1} + \omega(P_{k+1} - P_k);\\
  & U_{k+1} = \underset{U\in\mathbb{R}^{N_tN_x^2}}{\textrm{argmin}} \left\{\frac{1}{\tau_P} \|U-U_k\|_2^2 + \widehat{L}(U,\widetilde{P}_{k+1}) \right\}.
\end{split}
\label{Gprox PDHG}
\end{equation}
Here we define the $G$-weighted norm as $\|v\|_G^2 = v^\top G v$, and pick $G = \mathscr{M}\mathscr{M}^\top$. In practice, we substitute the following explicit update of $U_k$ for the implicit update,
\begin{equation} 
  U_{k+1} = U_k - \tau_U  DF(U_k)^\top \widetilde{P}_{k+1} . \label{G-prox PDHG explicit update }
\end{equation}
Now, we multiply $\mathscr{M}^\top$ on both sides of \eqref{Gprox PDHG} (but with the third line replaced by \eqref{G-prox PDHG explicit update }) to obtain
\begin{equation}
\begin{split}
  & \mathscr{M}^\top P_{k+1} = \frac{1}{1+\epsilon \tau_P}(\mathscr{M}^\top P_k + \tau_P \mathscr{M}^{-1} {F}(U_k)); \\
  & \mathscr{M}^\top\widetilde{P}_{k+1} = \mathscr{M}^\top P_{k+1} + \omega(\mathscr{M}^\top P_{k+1} - \mathscr{M}^\top P_k);\\
  & U_{k+1} = U_k - \tau_U D\widehat{F}(U_k)^\top (\mathscr{M}^\top \widetilde{P}_{k+1}).
\end{split}\label{equivalent PDHG dynamic }
\end{equation}
By denoting $Q_k = \mathscr{M}^\top P_k$ and noticing that $\widehat{F}(U) = \mathscr{M}^{-1}F(U)$, \eqref{equivalent PDHG dynamic } reduces exactly to \eqref{Preconded PDHG}. This verifies the equivalence between the $G$-prox PDHG algorithm and our proposed method.

\section{Numerical analysis of the proposed method}
In this section, we study the numerical convergence properties of the proposed PDHG algorithm. In subsection \ref{sec: unique existence}, we prove the unique solvability of the time-implicit scheme \eqref{time-implicit scheme} of RD equations. In subsection \ref{sec: Lyapunov analysis PDHG flow time continue }, we study the convergence of the time-continuous limit of the PDHG algorithm. In subsection \ref{sec: lyapunov time discrt case}, we prove the convergence of the PDHG algorithm.

\subsection{Unique solvability of the time-implicit scheme}\label{sec: unique existence}
In this research, we mainly focus on reaction functions $f$ that belong to the functional space $\mathcal F$, where 
\begin{equation}
  \mathcal{F} = \left\{ f \in C^1(\mathbb{R}) \; \middle| \;
  \begin{tabular}{@{}l@{}}
     $f$ can be decomposed as $f = V' + \phi$,\\
     where $V\in C^1(\mathbb{R})$ is convex, and $\phi\in C(\mathbb{R})$ {is Lipschitz.}
  \end{tabular}
  \right\}.  \label{space F }
\end{equation}
The space $\mathcal F$ covers a majority of reaction functions that arise in classical RD equations such as the Allen-Cahn and the Cahn-Hilliard equations.

Before we present the result, we assume the spectral decomposition of $\mathcal G_h$: 
\begin{equation}
\mathcal G_h = \left[\begin{array}{c|c}
    Q_1 & Q_2 
\end{array}\right] \left[\begin{array}{cc}
    \Lambda &  \\
     &  O
\end{array}\right]\left[ \begin{array}{c}
     Q_1^\top  \\
     Q_2^\top
\end{array} \right],\label{spectral decomp   Gh}
\end{equation}
where $\Lambda = \mathrm{diag}({\lambda}_1,\dots,\lambda_{r})$ is a diagonal matrix with positive entries $\lambda_{ 1 }\geq\dots\geq\lambda_{r}>0$, $r = \mathrm{rank}(\mathcal G_h)$. 
\begin{theorem}[Existence and uniqueness of \eqref{root-finding}]\label{thm: unique existence root-finding}
  Suppose that $\mathcal G_h$, $\mathcal L_h$ used in the finite difference scheme \eqref{time-implicit scheme} are self-adjoint and positive semidefinite. Assume $\mathcal G_h$ has the spectral decomposition as in \eqref{spectral decomp   Gh}. We also assume that $f\in\mathcal{F}$, such that the convex function $V$ satisfies
   \[ (V'(x)-V'(y), x-y) \geq K|x-y|^2, \]
   for some $K\geq 0.$ If the time step size $h_t$ in \eqref{time-implicit scheme} satisfies 
   \begin{equation}
     \lambda_{\min}\left(\frac{\Lambda^{-1}}{h_t} + a \; Q_1^\top \mathcal L_h Q_1 \right) + b K > b \; \mathrm{Lip}(\phi),  \label{condition on existence and uniqueness}
   \end{equation}
   then the root-finding problem \eqref{root-finding} admits a unique solution.
\end{theorem}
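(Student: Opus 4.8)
The plan is to exploit the block lower-triangular structure of $\mathscr{D} = D_{N_t}\otimes I_x$: the scheme \eqref{time-implicit scheme} decouples across time levels, so that solving \eqref{root-finding} is equivalent to solving, for each $t = 0,\dots,N_t-1$ in turn, the single-step equation $U^{t+1} + h_t\mathcal{G}_h\bigl(a\mathcal{L}_h U^{t+1} + bf(U^{t+1})\bigr) = U^t$ for $U^{t+1}\in\mathbb{R}^{N_x^2}$, given the previous iterate $U^t$ (with the initial datum and boundary terms absorbed into the constant $V$ of \eqref{def: F }). By induction over $t$, it suffices to show this single-step equation has a unique solution. Since $\mathcal{G}_h$ is only positive semidefinite, I first isolate its kernel using the spectral decomposition \eqref{spectral decomp   Gh}: applying $Q_2^\top$ to the single-step equation and using $Q_2^\top\mathcal{G}_h = 0$ forces $Q_2^\top U^{t+1} = Q_2^\top U^t =: c$, so I look for $U^{t+1}$ in the form $Q_1 w + Q_2 c$ with unknown $w\in\mathbb{R}^r$.

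Substituting this ansatz, applying $Q_1^\top$ (using $Q_1^\top Q_1 = I_r$, $Q_1^\top Q_2 = 0$, $Q_1^\top\mathcal{G}_h = \Lambda Q_1^\top$), and multiplying through by $\Lambda^{-1}/h_t$ reduces the problem to finding $w\in\mathbb{R}^r$ with $\Phi(w) = 0$, where $\Phi(w) = \bigl(\tfrac{\Lambda^{-1}}{h_t} + aQ_1^\top\mathcal{L}_h Q_1\bigr)w + bQ_1^\top f(Q_1 w + Q_2 c) - \bigl(\tfrac{\Lambda^{-1}}{h_t}Q_1^\top U^t - aQ_1^\top\mathcal{L}_h Q_2 c\bigr)$. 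The role of the weighting by $\Lambda^{-1}/h_t$ is precisely to restore monotonicity, which the factor $\mathcal{G}_h$ sitting to the left of the nonlinearity would otherwise destroy in the standard inner product. I then verify that $\Phi$ is continuous (immediate, as $f\in C^1$) and strongly monotone: for any $w_1,w_2$, put $\xi_i = Q_1 w_i + Q_2 c$ and note $|\xi_1 - \xi_2| = |w_1 - w_2|$ because $Q_1$ has orthonormal columns. The linear part contributes at least $\lambda_{\min}\bigl(\tfrac{\Lambda^{-1}}{h_t} + aQ_1^\top\mathcal{L}_h Q_1\bigr)|w_1-w_2|^2$ (the matrix is symmetric positive definite, using $a\geq 0$, $\mathcal{L}_h$ self-adjoint and positive semidefinite, and $\Lambda^{-1}/h_t$ positive); and the decomposition $f = V' + \phi$ from \eqref{space F } gives $\langle f(\xi_1)-f(\xi_2),\xi_1-\xi_2\rangle = \langle V'(\xi_1)-V'(\xi_2),\xi_1-\xi_2\rangle + \langle\phi(\xi_1)-\phi(\xi_2),\xi_1-\xi_2\rangle \geq \bigl(K - \mathrm{Lip}(\phi)\bigr)|\xi_1-\xi_2|^2$ by the assumed $K$-strong monotonicity of $V'$ and the Lipschitz bound on $\phi$. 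Adding, $\langle\Phi(w_1)-\Phi(w_2),w_1-w_2\rangle \geq m\,|w_1-w_2|^2$ with $m = \lambda_{\min}\bigl(\tfrac{\Lambda^{-1}}{h_t} + aQ_1^\top\mathcal{L}_h Q_1\bigr) + bK - b\,\mathrm{Lip}(\phi)$, which is strictly positive exactly by hypothesis \eqref{condition on existence and uniqueness}.

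To conclude, I invoke the classical fact that a continuous, strongly monotone map from $\mathbb{R}^r$ to itself is a bijection: strong monotonicity gives injectivity at once, and it also yields coercivity, $\langle\Phi(w),w\rangle \geq m|w|^2 - |\Phi(0)|\,|w|\to\infty$, whence surjectivity by a standard Brouwer-degree (or homotopy) argument. Thus $\Phi(w)=0$ has a unique solution $w$, so $U^{t+1} = Q_1 w + Q_2 c$ is the unique solution of the single-step equation; induction over $t = 0,\dots,N_t-1$ then gives existence and uniqueness for the full system \eqref{root-finding}.

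The one genuinely delicate part is the reduction carried out in the first two paragraphs — splitting off $\ker\mathcal{G}_h$ and introducing the $\Lambda^{-1}/h_t$-weighting so that the residual finite-dimensional equation $\Phi(w)=0$ is honestly strongly monotone with the modulus appearing in \eqref{condition on existence and uniqueness}; once this is set up correctly, the remainder is a textbook monotone-operator argument. I note that an alternative route, available when $\phi$ happens to be itself a gradient, would be to recognize $\Phi$ as the gradient of a strongly convex functional and minimize it; but since \eqref{space F } only requires $\phi$ to be Lipschitz continuous, the monotone-operator formulation is the natural and more general one.
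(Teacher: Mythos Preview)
Your proof is correct and, at its core, follows the same route as the paper: both reduce the single-step implicit equation to a problem in $\mathbb{R}^r$ via the spectral decomposition of $\mathcal{G}_h$, and both establish existence and uniqueness through the same strong-monotonicity estimate with modulus $\lambda_{\min}\bigl(\tfrac{\Lambda^{-1}}{h_t} + aQ_1^\top\mathcal{L}_h Q_1\bigr) + bK - b\,\mathrm{Lip}(\phi)$.

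The framing differs slightly. The paper takes a variational viewpoint: it introduces the pseudoinverse $\mathcal{G}_h^\dagger$, shows the equation is the critical-point condition of a functional $\mathcal{J}(\xi)$ restricted to $\mathrm{Ran}(\mathcal{G}_h)$, changes variables $\xi = Q_1 x$, and then verifies strong convexity of the reduced functional $\widetilde{\mathcal{J}}$ by computing $(x-y,\nabla\widetilde{\mathcal{J}}(x)-\nabla\widetilde{\mathcal{J}}(y))$. You instead project directly with $Q_1^\top$, $Q_2^\top$, weight by $\Lambda^{-1}/h_t$, and invoke the bijectivity of continuous strongly monotone maps on $\mathbb{R}^r$. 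Your route is marginally more direct (no pseudoinverse, no intermediate functional) and, as you note, does not rely on recognizing $\Phi$ as a gradient. One small correction to your closing remark: since $\phi:\mathbb{R}\to\mathbb{R}$ is continuous and acts componentwise, the vector field $U\mapsto\phi(U)$ is always the gradient of $\sum_i\Psi(U_i)$ with $\Psi'=\phi$, so the paper's variational formulation is in fact available here; your monotone-operator argument is nonetheless a clean and self-contained alternative.
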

The proof of the theorem is deferred to Appendix \ref{app: proof th}.

\begin{remark}\label{remark on condition ht exist & unique }
The condition \eqref{condition on existence and uniqueness} can be simplified for some specific equations.
\begin{itemize}
    \item (Allen-Cahn equation with periodic boundary condition) $\mathcal G = \mathrm{Id}, \mathcal L = - \Delta$, $f(x) = x^3 - x$. We set $\mathcal G_h = I_{N_x^2}$, and $\mathcal L_h = - \Delta_{h_x}^P=I_{N_x} \otimes (-\mathrm{Lap}_{h_x}^P) + (-\mathrm{Lap}_{h_x}^P) \otimes I_{N_x}$, where $\mathrm{Lap}_{h_x}^P$ is defined in \eqref{def: periodic discrete Laplace operator }. In this case, the condition \eqref{condition on existence and uniqueness} yields $h_t < \frac{1}{2b}.$ 
    \item (Cahn-Hilliard equation with periodic boundary condition) $\mathcal G = -\Delta$, $\mathcal L= -\Delta$, $f(x)=x^3 - x.$ We set $\mathcal G_h = \mathcal L_h = -\Delta_{h_x}^P.$ A sufficient condition for \eqref{condition on existence and uniqueness} is $h_t < \frac{a^2}{b^2}.$
\end{itemize}
Similar results regarding both Allen-Cahn and Cahn-Hilliard equations have also been done in \cite{xu2019stability}. Theorem \ref{thm: unique existence root-finding} applies to general RD equation \eqref{general RD type PDE}. We refer interested readers to Appendix \ref{append: discuss on condition on ht} for more detailed discussions.
\end{remark}

\subsection{Lyapunov analysis for the PDHG flow}\label{sec: Lyapunov analysis PDHG flow time continue }
We are ready to present the main result of this paper. In subsection \ref{subsec: lyapunov analysis for general root-finding }, we first prove the convergence of the time-continuous limit of the PDHG algorithm \eqref{Preconded PDHG} for the general root-finding problem. In subsection \ref{subsec: convergence result for time continuous case }, we apply the previous theory to the time-implicit scheme of RD equations. In subsection \ref{subsec: numerical verification }, we provide numerical justifications for the theoretical study. To alleviate the notation, we denote $\|\cdot\|$ as the $2-$norm for both vectors and matrices in the following discussion.

\subsubsection{Convergence analysis for the general root-finding problem }\label{subsec: lyapunov analysis for general root-finding }
Firstly, we establish the convergence result for a general root-finding problem $\widehat{F}(U)=0$ regardless of the exact form of $\widehat F(U)$. Our main results are summarized in Theorem \ref{theorem : exponential decay} and Corollary \ref{main coro}.

Recall \eqref{Preconded PDHG}, we substitute $\widetilde{Q}_{k+1}$ with 
\begin{align*} 
  \widetilde Q_{k+1} = Q_k + (1+\omega)(Q_{k+1} - Q_k) =&  Q_k + (1+\omega)\tau_P\left(-\frac{\epsilon}{1+\epsilon \tau_P}Q_k + \frac{1}{1+\epsilon \tau_P}\widehat{F}(U_k)\right) \\
  = & \left(1-\frac{(1+\omega)\tau_P\epsilon}{1+\epsilon\tau_P}\right)Q_k + \frac{(1+\omega)\tau_P}{1+\epsilon\tau_P}\widehat{F}(U_k) .
\end{align*}
Then, the PDHG iteration \eqref{Preconded PDHG} can be formulated as
\begin{equation}
\label{ reformulate Preconded PDHG for prove limit }
\begin{split}
  & \frac{Q_{k+1}-Q_k}{\tau_P} = -\frac{\epsilon}{1+\epsilon \tau_P}Q_k+\frac{1}{1+\epsilon \tau_P}\widehat{F}(U_k); \\
  & \frac{U_{k+1}-U_k}{\tau_U} = -D\widehat F(U_k)^\top \left(\left(1-\frac{(1+\omega)\tau_P\epsilon}{1+\epsilon\tau_P}\right)Q_k + \frac{(1+\omega)\tau_P}{1+\epsilon\tau_P}\widehat{F}(U_k)\right).
\end{split}
\end{equation}
Suppose we send the step sizes $\tau_U, \tau_P \rightarrow 0$, and keep $\omega$ increasing such that $(1+\omega)\tau_P\rightarrow \gamma>0$. Then the above time-discrete dynamic will converge to the following time-continuous dynamic of $(U_t, Q_t)$ which we denote as the ``PDHG flow''.
\begin{equation}
\label{PDHG cont time}
\left\{\begin{aligned}
    \dot Q  = &  -\epsilon Q + \widehat F(U), \\
    \dot U  = &  - D\widehat{F}(U)^\top((1-\gamma\epsilon)Q  +  \gamma \widehat F(U)).
\end{aligned}\right.
\end{equation}
We introduce two notations that will be commonly used in the following discussion,
\begin{align} 
    \underline \sigma = & \inf_{U\in \mathbb{R}^{ N_t N_x^2}} \{ \sigma_{\min}(D\widehat F(U)) \} = \inf_{U\in \mathbb{R}^{ N_t N_x^2}} \{\sigma_{\min}(I + bh_t\mathscr{M}^{-1}\mathscr{G}_hDR(U))\},  \label{def: low bdd sigma}\\
    \overline \sigma = & \sup_{U\in \mathbb{R}^{N_t N_x^2}} \{ \sigma_{\max}(D\widehat F(U)) \} = \sup_{U\in \mathbb{R}^{N_t N_x^2}} \{\sigma_{\max}(I + bh_t\mathscr{M}^{-1}\mathscr{G}_hDR(U))\},  \label{def: up bdd sigma}
\end{align}
where $\sigma_{\min}(A) $($\sigma_{\max}(A)$) denotes the minimum (maximum) singular value of matrix $A$. The condition number is defined by 
\begin{equation}
  \kappa = {\overline\sigma}/{\underline\sigma}. \label{def: global cond num of DF }
\end{equation}

We consider the following Lyapunov function of $(U, Q)$ associated with a parameter $\mu>0$, 
\begin{equation}
    \mathcal I_{\mu}(U, Q) = \frac{1}{2} \|\widehat{F}(U)\|^2 + \frac{\mu}{2}\|Q\|^2. \label{def: Lyapunov function }
\end{equation}
The parameter $\mu$ enables us to establish the exponential decay of $\mathcal I_\mu(U_t, Q_t)$ along the PDHG flow whenever $0<\underline\sigma\leq \overline{\sigma}<\infty$. We have the following Lemma.

\begin{lemma}[Exponential decay of $\mathcal I_\mu(U_t, Q_t)$]\label{theorem : exponential decay}
  Suppose that $0<\underline{\sigma} \leq \overline{\sigma} < \infty$. We pick the parameter $\mu > 0$ satisfying 
  \begin{equation}
     \frac{1}{\underline \sigma} - \frac{1}{\overline \sigma} < \frac{2}{\sqrt{\mu}}.     \label{condition on low sigma up sigma}
  \end{equation}
  Furthermore, we choose $\gamma, \epsilon>0$ satisfying 
  \begin{equation}
     \max\left\{ \left(1-\frac{\sqrt{\mu}}{\overline\sigma}\right)^2, \left(1-\frac{\sqrt{\mu}}{\underline\sigma}\right)^2 \right\} < 
    \gamma\epsilon < \left( 1 + \frac{\sqrt{\mu}}{\overline \sigma } \right)^2 .  \label{condition on gamma epsilon}
  \end{equation}
  Under the above choices of $\mu$, $\gamma$ and $\epsilon$, let $(U_t, Q_t)$ be the solution to the PDHG flow \eqref{PDHG cont time} with arbitrary initial condition $(U_0, Q_0)$. Then we have, 
  \begin{equation*}
    \mathcal I_\mu (U_t, Q_t) \leq \exp \left( {- \frac{2 \beta \; t }{ \max\{1, \mu \} }  } \right) \; \mathcal I_\mu (U_0, Q_0).
  \end{equation*}
  Here we denote
  \begin{equation*}
    \beta =  \min_{z\in [\underline\sigma^2, \overline\sigma^2]} \; \{\varphi_{ \mu,  \gamma,  \epsilon }(z)\} > 0,
  \end{equation*}
  with $\varphi_{\mu, \gamma, \epsilon}(z) = \frac{1}{2}(\gamma z + \mu \epsilon - \sqrt{(\gamma z - \mu \epsilon)^2 + (\mu - (1-\gamma\epsilon)z)^2} ).$
\end{lemma}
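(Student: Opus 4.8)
The plan is to differentiate the Lyapunov function $\mathcal I_\mu(U_t,Q_t)$ along the PDHG flow \eqref{PDHG cont time} and show that $\frac{d}{dt}\mathcal I_\mu(U_t,Q_t) \le -\frac{2\beta}{\max\{1,\mu\}}\,\mathcal I_\mu(U_t,Q_t)$, after which Grönwall's inequality gives the claimed exponential decay. Writing $y = \widehat F(U)$ and using $\dot y = D\widehat F(U)\,\dot U$, the first term contributes $(y, D\widehat F(U)\dot U) = -\big(y,\, D\widehat F(U) D\widehat F(U)^\top((1-\gamma\epsilon)Q - \gamma y)\big)$, and the second contributes $\mu(Q,\dot Q) = \mu(Q, -\epsilon Q + y)$. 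Collecting terms, $\frac{d}{dt}\mathcal I_\mu$ becomes a quadratic form in the pair $(y, Q)$ whose coefficients involve the symmetric positive definite matrix $A := D\widehat F(U)D\widehat F(U)^\top$. Explicitly it should read $-\gamma\,(y,Ay)\;+\;(1-\gamma\epsilon)(y,AQ)\;-\;\mu(y,Q)\;-\;\mu\epsilon\|Q\|^2$ up to sign bookkeeping on the cross term; the first and last terms are the ``good'' negative terms and the middle two are the indefinite cross terms to be dominated.

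The key step is to diagonalize: let $A = \sum_j z_j\, e_j e_j^\top$ be the spectral decomposition of $A$, with eigenvalues $z_j \in [\underline\sigma^2, \overline\sigma^2]$ by definitions \eqref{def: low bdd sigma}--\eqref{def: up bdd sigma}. Expanding $y = \sum_j y_j e_j$ and $Q = \sum_j q_j e_j$, the quadratic form decouples across the eigendirections into a sum of $2\times 2$ forms in $(y_j, q_j)$ with matrix
\[
M_j \;=\; \begin{pmatrix} \gamma z_j & -\tfrac12\big((1-\gamma\epsilon)z_j - \mu\big) \\[2pt] -\tfrac12\big((1-\gamma\epsilon)z_j - \mu\big) & \mu\epsilon \end{pmatrix}.
\]
So $\frac{d}{dt}\mathcal I_\mu = -\sum_j (y_j,q_j) M_j (y_j,q_j)^\top \le -\big(\min_j \lambda_{\min}(M_j)\big)\big(\|y\|^2 + \|Q\|^2\big)$. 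Since $\mathcal I_\mu = \tfrac12\|y\|^2 + \tfrac{\mu}{2}\|Q\|^2 \le \tfrac{\max\{1,\mu\}}{2}(\|y\|^2+\|Q\|^2)$, this yields the decay rate $\frac{2}{\max\{1,\mu\}}\inf_{z\in[\underline\sigma^2,\overline\sigma^2]}\lambda_{\min}(M(z))$, and the remaining task is to identify $\lambda_{\min}(M(z))$ with $\varphi_{\mu,\gamma,\epsilon}(z)$: this is just the smaller root of the characteristic polynomial of a symmetric $2\times2$ matrix, $\lambda_{\min} = \tfrac12\big(\mathrm{tr} - \sqrt{\mathrm{tr}^2 - 4\det}\big)$, and a direct computation of $\mathrm{tr}(M(z)) = \gamma z + \mu\epsilon$ and $4\det(M(z)) = 4\gamma\epsilon\mu z - ((1-\gamma\epsilon)z-\mu)^2$ gives $\mathrm{tr}^2 - 4\det = (\gamma z - \mu\epsilon)^2 + (\mu - (1-\gamma\epsilon)z)^2$, matching the stated $\varphi_{\mu,\gamma,\epsilon}(z)$.

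The main obstacle is showing $\beta = \min_{z\in[\underline\sigma^2,\overline\sigma^2]}\varphi_{\mu,\gamma,\epsilon}(z) > 0$ under hypotheses \eqref{condition on low sigma up sigma} and \eqref{condition on gamma epsilon}; equivalently, $\det(M(z)) > 0$ for all $z$ in that interval (positivity of $\mathrm{tr}$ is automatic). Writing $g(z) := 4\det(M(z)) = -(1-\gamma\epsilon)^2 z^2 + \big(2\mu(1-\gamma\epsilon) + 4\gamma\epsilon\mu\big)z - \mu^2 = -(1-\gamma\epsilon)^2 z^2 + 2\mu(1+\gamma\epsilon)z - \mu^2$, this is a downward parabola in $z$ (when $\gamma\epsilon\neq1$), so it suffices to check $g(\underline\sigma^2)>0$ and $g(\overline\sigma^2)>0$. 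Completing the square / using the quadratic formula, $g(z)>0$ holds precisely when $z$ lies strictly between the two roots $z_\pm = \frac{\mu}{(1-\gamma\epsilon)^2}\big(1+\gamma\epsilon \pm 2\sqrt{\gamma\epsilon}\big) = \frac{\mu(\sqrt{\gamma\epsilon}\pm1)^2}{(1-\gamma\epsilon)^2} = \frac{\mu}{(\sqrt{\gamma\epsilon}\mp1)^2}$; so the requirement is $\frac{\mu}{(\sqrt{\gamma\epsilon}+1)^2} < \underline\sigma^2 \le \overline\sigma^2 < \frac{\mu}{(\sqrt{\gamma\epsilon}-1)^2}$ (with the right inequality reading as ``no constraint'' when $\gamma\epsilon\ge$ the relevant threshold makes the root vacuous), and one checks by elementary algebra that these two inequalities are exactly the content of \eqref{condition on gamma epsilon} — the lower bound $\gamma\epsilon > \max\{(1-\sqrt\mu/\overline\sigma)^2,(1-\sqrt\mu/\underline\sigma)^2\}$ rearranges to $\overline\sigma^2, \underline\sigma^2 < \mu/(\sqrt{\gamma\epsilon}-1)^2$ (taking care of signs of $\sqrt{\gamma\epsilon}-1$), and $\gamma\epsilon < (1+\sqrt\mu/\overline\sigma)^2$ rearranges to $\underline\sigma^2 \le \overline\sigma^2 \Rightarrow \mu/(\sqrt{\gamma\epsilon}+1)^2 < \overline\sigma^2$, while \eqref{condition on low sigma up sigma} is what guarantees the admissible interval for $\gamma\epsilon$ is nonempty. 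I would handle the degenerate case $\gamma\epsilon = 1$ separately (there $g$ is affine and the analysis simplifies). Once $\beta>0$ is established the rest is routine.
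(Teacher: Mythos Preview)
Your proposal is correct and follows essentially the same route as the paper: differentiate $\mathcal I_\mu$ along the flow, write the derivative as a quadratic form in $(\widehat F(U),Q)$ governed by $\Sigma=D\widehat F(U)D\widehat F(U)^\top$, diagonalize $\Sigma$ to decouple into $2\times2$ blocks whose smaller eigenvalue is exactly $\varphi_{\mu,\gamma,\epsilon}(z)$, bound below by $\beta$, and apply Gr\"onwall. The only cosmetic difference is in the positivity check for $\det M(z)$---the paper (via two auxiliary lemmas) treats the determinant as a quadratic in $\gamma\epsilon$ for each fixed $z=\lambda$ and intersects the positivity intervals over $\lambda\in[\underline\sigma^2,\overline\sigma^2]$, whereas you treat it as a downward quadratic in $z$ for fixed $\gamma\epsilon$; just tidy your final bookkeeping, since you actually need $z_-<\underline\sigma^2$ (not merely $<\overline\sigma^2$), and it is the lower bound $\gamma\epsilon>(1-\sqrt{\mu}/\underline\sigma)^2$ that delivers this, while the pair $\gamma\epsilon>(1-\sqrt{\mu}/\overline\sigma)^2$ together with $\gamma\epsilon<(1+\sqrt{\mu}/\overline\sigma)^2$ gives $\overline\sigma^2<z_+$.
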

We defer the proof of this Lemma to Appendix \ref{append: lyapunov analysis of general root finding }. Lemma \ref{theorem : exponential decay} provides a sharp convergence rate for $\mathcal I_\mu(U_t, Q_t)$. However, $\beta$ does not take an explicit form. In the following theorem, we relax the bound in Lemma \ref{theorem : exponential decay} to obtain an explicit convergence rate for $\|\widehat{F}(U_t)\|$.

\begin{theorem}[Exponential decay of the residual $\|\widehat{F}(U_t)\|$] \label{main coro} 
Assume that $(U_t, Q_t)$ solves \eqref{PDHG cont time} with an arbitrary initial position $(U_0, Q_0)$. Then, as long as $\underline \sigma $ is bounded away from $0$ and $\overline{\sigma}$ is finite, one can always pick suitable parameters $\gamma$, $\epsilon$ such that the residual $\|\widehat F(U_t)\|$ decays exponentially fast to $0$. In particular, if we set $\epsilon = (1-\delta)\kappa$ and $\gamma = \frac{1-\delta}{\kappa}$ with $|\delta| < \frac{1}{\kappa} $, then we have 
  \begin{equation*}
    \|\widehat{F}(U_t)\|_2 \leq \exp\left(-(1-\kappa|\delta|) (3-\delta)\frac{\min\{\underline\sigma^2, 1\}}{8\kappa} \; t \right) \; \sqrt{ \|\widehat F(U_0)\|^2 + \underline\sigma^2 \|Q_0\|^2 }.
  \end{equation*}
\end{theorem}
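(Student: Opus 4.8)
The plan is to derive Theorem \ref{main coro} as a corollary of Lemma \ref{theorem : exponential decay} by making the specific substitution $\epsilon = (1-\delta)\kappa$, $\gamma = (1-\delta)/\kappa$ and then (i) verifying that these choices satisfy the hypotheses \eqref{condition on low sigma up sigma}--\eqref{condition on gamma epsilon} for a suitable $\mu$, and (ii) lower-bounding the (implicit) rate $\beta$ by an explicit expression. First I would choose $\mu = \underline\sigma\,\overline\sigma = \underline\sigma^2\kappa$, which is the natural ``geometric mean'' pick that makes $\sqrt{\mu}/\underline\sigma = \sqrt\kappa$ and $\sqrt\mu/\overline\sigma = 1/\sqrt\kappa$ symmetric; with this choice $\tfrac{1}{\underline\sigma}-\tfrac1{\overline\sigma} = \tfrac{\sqrt\kappa - 1/\sqrt\kappa}{\overline\sigma} < \tfrac{2}{\sqrt\mu} = \tfrac{2}{\underline\sigma\sqrt\kappa}$ reduces to $\kappa - 1 < 2$, i.e. $\kappa < 3$... so in fact I should revisit the choice of $\mu$: the cleaner route is to take $\mu$ small, e.g. $\mu = \underline\sigma^2$ or even smaller, so that condition \eqref{condition on low sigma up sigma} is automatic and the interval \eqref{condition on gamma epsilon} for $\gamma\epsilon$ is wide. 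With $\gamma\epsilon = (1-\delta)^2$ and $|\delta|<1/\kappa\le 1$, one checks $(1-\sqrt\mu/\overline\sigma)^2 \le (1-\delta)^2 \le (1+\sqrt\mu/\overline\sigma)^2$ holds once $\mu$ is small enough relative to $\overline\sigma^2\delta^2$; I would make $\mu$ explicit (a constant multiple of $\min\{\underline\sigma^2,1\}$ times something) so the final constant comes out as claimed.

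The second and main step is to bound $\beta = \min_{z\in[\underline\sigma^2,\overline\sigma^2]} \varphi_{\mu,\gamma,\epsilon}(z)$ from below. Write $\varphi_{\mu,\gamma,\epsilon}(z) = \tfrac12\big(\gamma z + \mu\epsilon - \sqrt{(\gamma z - \mu\epsilon)^2 + (\mu - (1-\gamma\epsilon)z)^2}\big)$. Using the elementary inequality $s - \sqrt{s^2 - 4p} \ge \frac{2p}{s}$ (valid when $s>0$, $s^2\ge 4p\ge 0$) applied with $s = \gamma z + \mu\epsilon$ and $s^2 - [(\gamma z-\mu\epsilon)^2 + (\mu-(1-\gamma\epsilon)z)^2] = 4p$, one gets $\varphi \ge p/s = \frac{\gamma\mu\epsilon z - \tfrac14(\mu - (1-\gamma\epsilon)z)^2}{\gamma z + \mu\epsilon}$; actually it's cleaner to note $4\gamma\mu\epsilon z - (\mu - (1-\gamma\epsilon)z)^2 \ge 0$ is exactly what forces $\varphi\ge 0$, and the hypotheses on $\gamma\epsilon$ are designed to guarantee this on the whole interval $[\underline\sigma^2,\overline\sigma^2]$. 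Plugging in $\gamma\epsilon = (1-\delta)^2$, so $1-\gamma\epsilon = \delta(2-\delta)$, and $\mu\epsilon = \mu(1-\delta)\kappa$, $\gamma z = (1-\delta)z/\kappa$, I would simplify the numerator $4\gamma\mu\epsilon z - (\mu-(1-\gamma\epsilon)z)^2$ and the denominator $\gamma z + \mu\epsilon$, minimize the resulting ratio over $z\in[\underline\sigma^2,\overline\sigma^2]$ (the numerator is concave in $z$, so the min of the ratio is attained at an endpoint, likely $z=\underline\sigma^2$ or $z=\overline\sigma^2$), and track constants to land on $\beta \ge (1-\kappa|\delta|)(3-\delta)\min\{\underline\sigma^2,1\}/8$ after also accounting for the $1/\max\{1,\mu\}$ factor in the exponent of Lemma \ref{theorem : exponential decay}.

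Finally, to pass from the decay of $\mathcal I_\mu(U_t,Q_t)$ to the decay of $\|\widehat F(U_t)\|$ alone, I would use $\tfrac12\|\widehat F(U_t)\|^2 \le \mathcal I_\mu(U_t,Q_t) \le e^{-2\beta t/\max\{1,\mu\}}\,\mathcal I_\mu(U_0,Q_0) = e^{-2\beta t/\max\{1,\mu\}}\big(\tfrac12\|\widehat F(U_0)\|^2 + \tfrac\mu2\|Q_0\|^2\big)$, then take square roots, and bound $\mu$ above by $\underline\sigma^2$ (from the choice $\mu\le\underline\sigma^2$) to write the initial data as $\sqrt{\|\widehat F(U_0)\|^2 + \underline\sigma^2\|Q_0\|^2}$, and the rate in the exponent as a single clean expression. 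The main obstacle I anticipate is the bookkeeping in step two: getting the lower bound on $\beta$ to collapse to exactly the stated constant $(1-\kappa|\delta|)(3-\delta)\min\{\underline\sigma^2,1\}/(8\kappa)$ will require choosing $\mu$ just right (not merely ``small enough'') and carefully handling the endpoint minimization and the $\min\{\underline\sigma^2,1\}$ that appears — that factor strongly suggests the analysis splits into the cases $\underline\sigma \ge 1$ and $\underline\sigma < 1$, or equivalently that one replaces $\underline\sigma^2$ by $\min\{\underline\sigma^2,1\}$ at the point where $\max\{1,\mu\}$ enters. Everything else is a routine, if slightly tedious, chain of elementary inequalities.
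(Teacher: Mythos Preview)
Your overall strategy---choose $\mu$, verify \eqref{condition on low sigma up sigma}--\eqref{condition on gamma epsilon}, rationalize $\varphi_{\mu,\gamma,\epsilon}$ via $s-\sqrt{s^2-4p}\ge 2p/s$, then take square roots---is exactly the paper's route. But you have the dependence on $\mu$ backwards in one place, and that confusion prevents you from committing to the right choice.

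You write that \eqref{condition on gamma epsilon} ``holds once $\mu$ is small enough relative to $\overline\sigma^2\delta^2$''. In fact the admissible interval for $\gamma\epsilon$ \emph{shrinks} to the single point $\{1\}$ as $\mu\downarrow 0$: both endpoints $(1\pm\sqrt\mu/\overline\sigma)^2$ collapse to $1$, and since $\gamma\epsilon=(1-\delta)^2\ne 1$ for $\delta\ne 0$, small $\mu$ kills the argument. The paper's choice is simply $\mu=\underline\sigma^2$, no tuning needed. With this, $\sqrt\mu/\underline\sigma=1$ and $\sqrt\mu/\overline\sigma=1/\kappa$, so $(1-\sqrt\mu/\underline\sigma)^2=0$ and \eqref{condition on gamma epsilon} reads $(1-1/\kappa)^2<(1-\delta)^2<(1+1/\kappa)^2$, i.e.\ exactly $|\delta|<1/\kappa$; condition \eqref{condition on low sigma up sigma} becomes $1-1/\kappa<2$, trivially true. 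So your parenthetical ``e.g.\ $\mu=\underline\sigma^2$'' is the whole story---drop the hedging about ``a constant multiple of $\min\{\underline\sigma^2,1\}$''.

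With $\mu=\underline\sigma^2$ fixed, the rest of your plan goes through cleanly. The paper factors the numerator $4\gamma\epsilon\mu z-(\mu-(1-\gamma\epsilon)z)^2$ as $(\sqrt\mu-|1-\sqrt{\gamma\epsilon}|\sqrt z)(\sqrt\mu+|1-\sqrt{\gamma\epsilon}|\sqrt z)\big((1+\sqrt{\gamma\epsilon})^2z-\mu\big)$, substitutes, and obtains $\beta\ge (1-\kappa|\delta|)(3-\delta)\underline\sigma^2/(8\kappa)$ directly (the $\kappa$ in the denominator comes from bounding $\gamma z+\mu\epsilon\le \gamma\overline\sigma^2+\mu\epsilon=2(1-\delta)\underline\sigma\,\overline\sigma$). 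The $\min\{\underline\sigma^2,1\}$ then enters exactly where you guessed: $\beta/\max\{1,\mu\}=\beta/\max\{1,\underline\sigma^2\}$ gives $\underline\sigma^2/\max\{1,\underline\sigma^2\}=\min\{\underline\sigma^2,1\}$.
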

The proof is provided in Appendix \ref{append: lyapunov analysis of general root finding }. We can further improve the convergence rate by fixing $\gamma\epsilon=1$ in Theorem \ref{thm : exponential decay version 2} of Appendix \ref{append: lyapunov analysis of general root finding }.

\subsubsection{Convergence analysis for our specific root-finding problem \eqref{def: preconded F}}\label{subsec: convergence result for time continuous case }

In this section, we discuss the exponential decay of the PDHG flow \eqref{PDHG cont time} when it is applied to the time-implicit scheme \eqref{time-implicit scheme} of the RD equation \eqref{general RD type PDE} when the reaction term $f(\cdot)$ is Lipschitz. The main results of this section are Theorem \ref{thm: O(1) exponential convergence result for general RD equ } and Corollary \ref{practical corollary }.

Before demonstrating our result, we list several conditions regarding equation \eqref{general RD type PDE} and its numerical scheme \eqref{time-implicit scheme}. These conditions will be used later.
\begin{enumerate}
    \item Suppose the coefficients $a,b$ are non-negative, i.e.,
    \begin{equation}
        a\geq 0, \quad b\geq 0. \label{condition: a b >= 0 } \tag{A}
    \end{equation}
    \item Assume that
    \begin{equation}
      f(\cdot) \textrm{ is Lipschitz with constant } \mathrm{Lip}(f).  \label{condition: f Lip } \tag{B}
    \end{equation}
    \item In the numerical scheme \eqref{time-implicit scheme} of \eqref{general RD type PDE}, suppose 
    \begin{equation}
      \mathcal L_h , \mathcal G_h  ~ \textrm{are self-adjoint, non-negative definite, and commute, i.e., } \mathcal G_h \mathcal L_h = \mathcal L_h \mathcal G_h. \label{condition: G,L>=0, symm, commut } \tag{C}
    \end{equation} 
    \item Recall $J_f$ mentioned in \eqref{decomp f }. We assume 
    \begin{equation}
      J_f \textrm{ is a constant diagonal matrix } cI \textrm{ with } c\geq 0. \label{condition Jf=cI c>=0 }\tag{D}
    \end{equation}
\end{enumerate}

\begin{remark}
  We point out that many reaction-diffusion equations do not possess Lipschitz reaction terms $f(\cdot)$: Double-well polynomial potential in the phase field model, as well as logarithmic Flory-Huggins potential, does not yield Lipschitz reaction functions \cite{langer1986models, flory1942thermodynamics}. However, the Lipschitz assumption can still be applied if one can prove an \textit{a priori} estimation on $\ell^\infty$ norm of the numerical solution $U_k$ for all PDHG iteration $k$. This may serve as our future research topic.
\end{remark}

As stated in Theorem \ref{main coro}, we need $ \underline\sigma > 0 $ and $\overline{\sigma}<\infty$ in order to establish the exponential decay of $\|\widehat F(U)\|$. Lemma \ref{lemm: est singular values DF } provides a sufficient condition for this to hold.

\begin{lemma}\label{lemm: est singular values DF }
  Suppose \eqref{condition: a b >= 0 }, \eqref{condition: f Lip }, \eqref{condition: G,L>=0, symm, commut } hold. When $h_t < \frac{1}{|b|\lambda_{\max}(\mathcal G_h)\mathrm{Lip}(f)}$, we always have $\underline\sigma > 0$ and $\overline{\sigma}<\infty$.
\end{lemma}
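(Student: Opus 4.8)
The key observation is that $D\widehat F(U) = I + bh_t\mathscr M^{-1}\mathscr G_h DR(U)$, where $DR(U) = Df(U) - J_f$ is the diagonal matrix $\mathrm{diag}(\dots, f'(U^t_{ij}) - c, \dots)$ (recalling condition \eqref{condition Jf=cI c>=0 }). Since $f$ is Lipschitz with constant $\mathrm{Lip}(f)$ and $J_f = cI$ with $c \ge 0$ arising from an equilibrium expansion, the entries $f'(\cdot) - c$ are bounded, so $\|DR(U)\| \le C$ uniformly in $U$ for some constant $C$ depending only on $\mathrm{Lip}(f)$ and $c$; in fact one expects $\|DR(U)\| \le 2\,\mathrm{Lip}(f)$ or similar. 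Then $\overline\sigma \le 1 + bh_t\|\mathscr M^{-1}\|\,\|\mathscr G_h\|\,\|DR(U)\| < \infty$ follows immediately once we know $\|\mathscr M^{-1}\| < \infty$, which holds because $\mathscr M$ is invertible under \eqref{condition: a b >= 0 }, \eqref{condition: G,L>=0, symm, commut }, \eqref{condition Jf=cI c>=0 } (as noted in the Invertibility remark). So the upper bound $\overline\sigma < \infty$ is essentially free.

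The real content is the lower bound $\underline\sigma > 0$, i.e., showing $D\widehat F(U) = I + bh_t\mathscr M^{-1}\mathscr G_h DR(U)$ is uniformly invertible with its smallest singular value bounded below. The plan is to bound the operator norm $\|bh_t\mathscr M^{-1}\mathscr G_h DR(U)\|$ strictly below $1$; then $\sigma_{\min}(D\widehat F(U)) \ge 1 - \|bh_t\mathscr M^{-1}\mathscr G_h DR(U)\| > 0$. First I would exploit the block-triangular structure of $\mathscr M$ and the fact that $\mathcal G_h, \mathcal L_h$ commute and are simultaneously diagonalizable (condition \eqref{condition: G,L>=0, symm, commut }): in the common eigenbasis, $\mathscr M$ acts blockwise, with each diagonal block $X = I + ah_t\mathcal G_h\mathcal L_h + bh_t c\,\mathcal G_h$ becoming a diagonal matrix with entries $1 + ah_t\lambda_{\mathcal G}\lambda_{\mathcal L} + bh_t c\,\lambda_{\mathcal G} \ge 1$. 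The subtle point is that $\mathscr M^{-1}\mathscr G_h$ is \emph{not} simply diagonalized because $\mathscr M$ has the off-diagonal $-I$ blocks coupling time steps; I would handle this by writing $\mathscr M = (I_t \otimes I_x) \cdot(\text{block structure})$ more carefully, or better, estimate $\|\mathscr M^{-1}\mathscr G_h\|$ directly. The cleanest route: since $D_{N_t}$ (the time-difference matrix) and the spatial operators act on tensor-product factors, reduce to estimating, for each spatial eigenvalue pair $(\lambda_{\mathcal G}, \lambda_{\mathcal L})$, the norm of $(D_{N_t} + h_t(a\lambda_{\mathcal G}\lambda_{\mathcal L} + bc\lambda_{\mathcal G})I_{N_t})^{-1}\lambda_{\mathcal G}$ as an $N_t \times N_t$ matrix. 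One shows $\|(D_{N_t} + sI)^{-1}\| \le 1$ for $s \ge 0$ (since $D_{N_t} + sI$ is lower-triangular with diagonal entries $1+s \ge 1$ and the back-substitution contracts — or by a direct Neumann-series / energy estimate), hence $\|\mathscr M^{-1}\mathscr G_h\| \le \sup \frac{\lambda_{\mathcal G}}{1 + h_t(a\lambda_{\mathcal G}\lambda_{\mathcal L} + bc\lambda_{\mathcal G})} \le \lambda_{\max}(\mathcal G_h)$.

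Combining, $\|bh_t\mathscr M^{-1}\mathscr G_h DR(U)\| \le bh_t\,\lambda_{\max}(\mathcal G_h)\,\|DR(U)\| \le bh_t\,\lambda_{\max}(\mathcal G_h)\,\mathrm{Lip}(f)$ provided one checks $\|DR(U)\| = \|Df(U) - cI\| \le \mathrm{Lip}(f)$ — this needs the expansion point $\overline U$ to be an equilibrium so that $c = f'(u_{\mathrm e})$ and both $f'$ and $c$ lie in $[-\mathrm{Lip}(f), \mathrm{Lip}(f)]$, giving $|f'(\cdot) - c| \le \mathrm{Lip}(f)$; if only $|f'| \le \mathrm{Lip}(f)$ is available the bound degrades to $2\,\mathrm{Lip}(f)$ and the stated threshold would have a factor of $2$, so I would be careful to use the structural assumption \eqref{condition Jf=cI c>=0 } about $c$ precisely. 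Under the hypothesis $h_t < \frac{1}{|b|\lambda_{\max}(\mathcal G_h)\mathrm{Lip}(f)}$, the right-hand side is strictly less than $1$, so $\underline\sigma \ge 1 - bh_t\lambda_{\max}(\mathcal G_h)\mathrm{Lip}(f) > 0$, and simultaneously $\overline\sigma \le 1 + bh_t\lambda_{\max}(\mathcal G_h)\mathrm{Lip}(f) < \infty$. The main obstacle I anticipate is the estimate $\|(D_{N_t} + sI)^{-1}\| \le 1$ and, more generally, controlling $\|\mathscr M^{-1}\mathscr G_h\|$ cleanly despite the non-normality introduced by the time-coupling blocks — a naive bound via $\|\mathscr M^{-1}\|\cdot\|\mathscr G_h\|$ is too lossy because $\|D_{N_t}^{-1}\|$ grows with $N_t$, so the argument must keep $\mathscr G_h$ paired with $\mathscr M^{-1}$ and diagonalize in the spatial variable first.
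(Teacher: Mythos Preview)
Your approach---bounding $\|D\eta(U)\| = \|bh_t\mathscr M^{-1}\mathscr G_h DR(U)\|$ strictly below $1$---is essentially the route the paper takes for the \emph{later} Lemma~\ref{lemm: more sophisticate est on singular values of DF }, not for this one. For the present lemma the paper works instead with the unpreconditioned Jacobian $DF(U)$: it bounds $\sigma_{\min}$ of each diagonal block $X_i = I + ah_t\mathcal G_h\mathcal L_h + bh_t\mathcal G_h\,\mathrm{diag}(f'(U^i))$ from below by $\underline\alpha := 1 - h_t|b|\lambda_{\max}(\mathcal G_h)\mathrm{Lip}(f) > 0$ (this is exactly where the hypothesis on $h_t$ enters, with no reference to $N_t$), then invokes a block-bidiagonal lemma (Lemma~\ref{lemm: est singularvalues }) to get $\sigma_{\min}(DF(U)) \geq (\sum_{k=1}^{N_t}\underline\alpha^{-k})^{-1}$, and finally passes to $D\widehat F(U) = \mathscr M^{-1}DF(U)$ via the crude inequality $\sigma_{\min}(D\widehat F(U)) \geq \sigma_{\min}(DF(U))/\|\mathscr M\|$. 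The resulting lower bound on $\underline\sigma$ does depend on $N_t$, but it is positive, which is all the lemma asserts.

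Your plan has a genuine gap at the step ``$\|(D_{N_t}+sI)^{-1}\| \leq 1$ for $s \geq 0$''. This is false: already for $s=0$, $D_{N_t}^{-1}$ is the lower-triangular matrix of all $1$'s, with $2$-norm of order $N_t$; the back-substitution recursion $y_k = (x_k + y_{k-1})/(1+s)$ does not contract in $\ell^2$ (take $x=(1,\dots,1)^\top$). The correct bound, proved in the paper as Lemma~\ref{lemm:  est on inv_M G_h }, is $\|\mathscr M^{-1}\mathscr G_h\| \leq N_t\,\zeta_{a,b,c}(h_t)$, and the factor $N_t$ cannot be removed. With it, your condition for $\|D\eta(U)\| < 1$ becomes roughly $bT\lambda_{\max}(\mathcal G_h)\mathrm{Lip}(R) < 1$ (with $T = N_th_t$), which is \emph{not} implied by the lemma's hypothesis $h_t < 1/(|b|\lambda_{\max}(\mathcal G_h)\mathrm{Lip}(f))$ once $N_t > 1$. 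The paper's per-block analysis of $DF(U)$ is precisely what decouples the threshold on $h_t$ from $N_t$; the $N_t$-dependence is then harmlessly absorbed into the (merely positive) final bound on $\underline\sigma$. A secondary issue: your diagonalization of $\mathscr M^{-1}\mathscr G_h$ relies on condition \eqref{condition Jf=cI c>=0 }, which is not among the hypotheses \eqref{condition: a b >= 0 }--\eqref{condition: G,L>=0, symm, commut } of this lemma, whereas the paper's argument here uses only that $\mathscr M$ is invertible.
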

We prove this Lemma in Appendix \ref{append: lyapunov analysis of specific root-finding }. Combining Theorem \ref{main coro} and Lemma \ref{lemm: est singular values DF } leads to the following Theorem \ref{thm: exponential convergence regardless of b T }. 
\begin{theorem}[First convergence result of $\|\widehat{F}(U_t)\|$]\label{thm: exponential convergence regardless of b T }
  Consider the RD equation \eqref{general RD type PDE} on $[0,T]$. Suppose \eqref{condition: a b >= 0 }, \eqref{condition: f Lip } and \eqref{condition: G,L>=0, symm, commut } hold. We apply the PDHG flow \eqref{PDHG cont time} to solve the time-implicit scheme \eqref{time-implicit scheme} with time step size $h_t < \frac{1}{|b|\|\mathcal G_h\|\mathrm{Lip}(f)}$. Suppose $\gamma = \frac{1-\delta}{\kappa}$, and $\epsilon=(1-\delta)\kappa$ with $\kappa={\overline{\sigma}}/{\underline \sigma}$, and $|\delta|<\frac{1}{\kappa}$. Then $\|\widehat F(U_t)\|$ converges exponentially fast to $0$.
\end{theorem}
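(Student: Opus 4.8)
The plan is to obtain the statement as an immediate consequence of Lemma~\ref{lemm: est singular values DF} together with Theorem~\ref{main coro}. First I would invoke Lemma~\ref{lemm: est singular values DF}: under the standing hypotheses \eqref{condition: a b >= 0 }, \eqref{condition: f Lip }, \eqref{condition: G,L>=0, symm, commut } and the time-step restriction $h_t < \frac{1}{|b|\|\mathcal G_h\|\mathrm{Lip}(f)}$, this lemma guarantees $0 < \underline\sigma \le \overline\sigma < \infty$. In particular the condition number $\kappa = \overline\sigma/\underline\sigma$ in \eqref{def: global cond num of DF } is finite, and since $\overline\sigma \ge \sigma_{\max}(D\widehat F(U_0)) \ge \sigma_{\min}(D\widehat F(U_0)) \ge \underline\sigma$ we also have $\kappa \ge 1$, so the admissibility requirement $|\delta| < \frac{1}{\kappa}$ in particular forces $|\delta| < 1$.

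Next, since $\underline\sigma$ is bounded away from $0$ and $\overline\sigma$ is finite, the hypotheses of Theorem~\ref{main coro} are satisfied, and I would apply it with the prescribed parameters $\epsilon = (1-\delta)\kappa$, $\gamma = \frac{1-\delta}{\kappa}$, $|\delta| < \frac{1}{\kappa}$. This yields directly
\begin{equation*}
  \|\widehat F(U_t)\|_2 \le \exp\!\left( -(1-\kappa|\delta|)(3-\delta)\,\frac{\min\{\underline\sigma^2,1\}}{8\kappa}\, t \right)\sqrt{\|\widehat F(U_0)\|^2 + \underline\sigma^2\|Q_0\|^2}.
\end{equation*}
To conclude exponential convergence to $0$, I would verify that the coefficient multiplying $t$ in the exponent is strictly negative: the bound $|\delta| < \frac{1}{\kappa}$ gives $1 - \kappa|\delta| > 0$; the consequence $|\delta| < 1$ gives $3 - \delta > 2 > 0$; and $\min\{\underline\sigma^2,1\} > 0$ together with $\kappa \in [1,\infty)$ make the remaining factor positive. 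Hence $\|\widehat F(U_t)\| \to 0$ at the explicit exponential rate above.

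There is essentially no analytic obstacle in this theorem itself — the content is a clean combination of two previously established facts. The only place requiring genuine care, and the step I would regard as the substantive one, is making sure that the parameter choice $\epsilon=(1-\delta)\kappa$, $\gamma=\frac{1-\delta}{\kappa}$ actually lies in the admissible region behind Theorem~\ref{main coro}, i.e.\ that it is compatible with the inequalities \eqref{condition on low sigma up sigma} and \eqref{condition on gamma epsilon} for some valid $\mu$; this is precisely what the restriction $|\delta| < \frac{1}{\kappa}$ encodes, so it suffices to recall that verification from the proof of Theorem~\ref{main coro} rather than redo it. All the heavy lifting — the quantitative singular-value estimates (via simultaneous diagonalization of $\mathcal G_h,\mathcal L_h$ and the Lipschitz control of $DR(U)$) and the Lyapunov decay argument producing the explicit rate — has already been carried out in Lemma~\ref{lemm: est singular values DF} and Theorem~\ref{main coro}, which we are entitled to use.
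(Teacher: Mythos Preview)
Your proposal is correct and matches the paper's own argument: the theorem is stated in the paper precisely as the immediate combination of Lemma~\ref{lemm: est singular values DF } (giving $0<\underline\sigma\le\overline\sigma<\infty$ under the $h_t$ restriction) and Theorem~\ref{main coro} (yielding the explicit exponential decay for the prescribed $\gamma,\epsilon$). Your additional check that the exponent is strictly negative is a welcome clarification but not needed beyond what Theorem~\ref{main coro} already guarantees.
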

\begin{remark}
It is worth mentioning that we do not assume condition \eqref{condition on existence and uniqueness} of Theorem \ref{thm: unique existence root-finding}. Then $\widehat{F}(U)=0$ might not admit a unique solution, but the exponential decay of $\|\widehat{F}(U_t)\|$ is still guaranteed.    
\end{remark}

Although Theorem \ref{thm: exponential convergence regardless of b T } guarantees the exponential convergence of $\|\widehat F(U_t)\|$ for arbitrarily large $b$ and $T$ as long as $h_t, \gamma, \epsilon$ are suitably chosen, both the time step size $h_t$ and the convergence rate may depend on the spatial discretization $N_x$. To get rid of this dependency, we provide sufficient conditions under which $\underline\sigma$ and $\overline{\sigma}$ are bounded away from the constants that are independent of $N_x$. Thus, we achieve a convergence rate that is independent of $N_x$. Recall the remainder term $R(U) = f(U)-f(\overline{U})-Df(\overline{U})(U - \overline{U})$ mentioned in \eqref{decomp f }. We have the following Lemma.

\begin{lemma}\label{lemm: more sophisticate est on singular values of DF }
Consider the reaction-diffusion type equation \eqref{general RD type PDE} on $[0, T ].$ Suppose the conditions \eqref{condition: a b >= 0 }, \eqref{condition: f Lip }, \eqref{condition: G,L>=0, symm, commut } and \eqref{condition Jf=cI c>=0 } hold. Since \eqref{condition: f Lip } requires $f$ to be Lipschitz, so does $R$. And we denote its Lipschitz constant as $\mathrm{Lip}(R).$ Define
 \begin{equation*}
    \zeta_{a,b,c}(h_t) = \max_{1\leq k\leq N_x^2} \left\{ \frac{\lambda_k(\mathcal G_h)}{1 + h_t(a\lambda_k(\mathcal{G}_h)\lambda_k(\mathcal{L}_h) + bc\lambda_k(\mathcal{G}_h))}  \right\},
 \end{equation*}
 where $\lambda_k(\mathcal G_h), \lambda_k(\mathcal L_h)$ are the eigenvalues of $\mathcal G_h$, $\mathcal L_h$ which are simultaneously diagonalizable by an orthogonal matrix $Q$. Recall that in \eqref{def: preconded F}, we have $\eta (U) = b h_t \mathscr{M}^{-1} \mathscr{G}_h R(U)-\widetilde{\mathbf{w}}$, then
 \begin{equation*}
   \|D\eta(U)\|\leq bT \zeta_{a,b,c}(h_t) \mathrm{Lip}(R)\,.
 \end{equation*}
 And we also have
 \begin{equation*} 
   \underline\sigma \geq 1 - bT \zeta_{a,b,c}(h_t) \mathrm{Lip}(R), \quad \overline{\sigma} \leq1 + bT \zeta_{a,b,c}(h_t) \mathrm{Lip}(R).
 \end{equation*}
\end{lemma}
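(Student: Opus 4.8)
The plan is to bound $\|D\eta(U)\|$ by tracking the operator norm through the chain of factors in $\eta(U) = b h_t\,\mathscr{M}^{-1}\mathscr{G}_h R(U) - \widetilde{\mathbf{w}}$. Differentiating, $D\eta(U) = b h_t\,\mathscr{M}^{-1}\mathscr{G}_h DR(U)$, so $\|D\eta(U)\| \le b h_t\,\|\mathscr{M}^{-1}\mathscr{G}_h\|\cdot\|DR(U)\| \le b h_t\,\|\mathscr{M}^{-1}\mathscr{G}_h\|\,\mathrm{Lip}(R)$, using that $R$ is Lipschitz (inherited from \eqref{condition: f Lip }, since $f$ and its linearization are both Lipschitz). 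The crux is therefore to show
\[
  h_t\,\|\mathscr{M}^{-1}\mathscr{G}_h\| \;\le\; T\,\zeta_{a,b,c}(h_t),
\]
after which the singular-value bounds $\underline\sigma \ge 1 - bT\zeta_{a,b,c}(h_t)\mathrm{Lip}(R)$ and $\overline\sigma \le 1 + bT\zeta_{a,b,c}(h_t)\mathrm{Lip}(R)$ follow immediately from \eqref{def: low bdd sigma}--\eqref{def: up bdd sigma}, namely $D\widehat F(U) = I + D\eta(U)$ together with $\sigma_{\min}(I+A)\ge 1-\|A\|$ and $\sigma_{\max}(I+A)\le 1+\|A\|$.

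First I would use the block structure of $\mathscr{M}$ in \eqref{def: scr M } and of $\mathscr{G}_h = I_t\otimes\mathcal G_h$: under condition \eqref{condition Jf=cI c>=0 } ($J_f = cI$), $X = I + a h_t\mathcal G_h\mathcal L_h + bc h_t\mathcal G_h$, and under \eqref{condition: G,L>=0, symm, commut } $\mathcal G_h,\mathcal L_h$ are simultaneously orthogonally diagonalizable by some $Q$. Conjugating $\mathscr{M}$ block-by-block by $I_t\otimes Q$ reduces the problem to the scalar (per eigenmode) case: for each eigenvalue pair $(\lambda_k(\mathcal G_h),\lambda_k(\mathcal L_h))$ one gets a scalar bidiagonal Toeplitz matrix $M_k$ with diagonal $x_k := 1 + h_t(a\lambda_k(\mathcal G_h)\lambda_k(\mathcal L_h) + bc\lambda_k(\mathcal G_h)) \ge 1$ and subdiagonal $-1$, and $\mathscr{M}^{-1}\mathscr{G}_h$ becomes block-diagonalized into the scalar operators $\lambda_k(\mathcal G_h)\,M_k^{-1}$ acting on $\mathbb{R}^{N_t}$. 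Hence $\|\mathscr{M}^{-1}\mathscr{G}_h\| = \max_k \lambda_k(\mathcal G_h)\,\|M_k^{-1}\|$.

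The main obstacle — and the only genuinely nontrivial step — is bounding $\|M_k^{-1}\|$, where $M_k$ is the $N_t\times N_t$ lower-bidiagonal Toeplitz matrix with diagonal $x_k\ge 1$ and subdiagonal $-1$. Its inverse is the lower-triangular Toeplitz matrix with $(i,j)$ entry $x_k^{-(i-j+1)}$ for $i\ge j$. I would bound its operator norm crudely by $\|M_k^{-1}\| \le \|M_k^{-1}\|_1 = \|M_k^{-1}\|_\infty = \sum_{\ell=0}^{N_t-1} x_k^{-(\ell+1)} \le \sum_{\ell=1}^{N_t} x_k^{-\ell}$. Since $x_k = 1 + h_t c_k$ with $c_k := a\lambda_k(\mathcal G_h)\lambda_k(\mathcal L_h) + bc\lambda_k(\mathcal G_h) \ge 0$, the geometric sum satisfies $\sum_{\ell=1}^{N_t} x_k^{-\ell} \le N_t / x_k$ (each of the $N_t$ terms is at most $x_k^{-1}$), so $\|M_k^{-1}\| \le N_t/x_k$. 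Therefore
\[
  h_t\,\|\mathscr{M}^{-1}\mathscr{G}_h\| \;\le\; h_t\,\max_k \frac{N_t\,\lambda_k(\mathcal G_h)}{x_k} \;=\; N_t h_t\,\max_k \frac{\lambda_k(\mathcal G_h)}{1 + h_t c_k} \;=\; T\,\zeta_{a,b,c}(h_t),
\]
using $N_t h_t = T$ and the definition of $\zeta_{a,b,c}$. Substituting back gives $\|D\eta(U)\| \le bT\zeta_{a,b,c}(h_t)\mathrm{Lip}(R)$, and the singular-value estimates follow as noted. One should double-check the edge case where some $\lambda_k(\mathcal G_h)=0$ (then that mode contributes $0$, consistent with $\zeta$'s definition) and that the bound $\sum_{\ell=1}^{N_t}x_k^{-\ell}\le N_t/x_k$ is indeed what is wanted rather than the sharper $\frac{1}{x_k-1}(1-x_k^{-N_t})$ — the cruder $N_t/x_k$ bound is what produces the stated $T\,\zeta$ form and is uniform in $N_x$, which is the whole point.
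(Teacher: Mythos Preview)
Your proposal is correct and follows essentially the same route as the paper: both arguments reduce to bounding $\|\mathscr{M}^{-1}\mathscr{G}_h\|\le N_t\,\zeta_{a,b,c}(h_t)$ via the explicit lower-triangular Toeplitz form of $\mathscr{M}^{-1}$ and the bound $\sum_{\ell=1}^{N_t}x_k^{-\ell}\le N_t/x_k$ (using $x_k\ge 1$), and then conclude $\underline\sigma,\overline\sigma$ from $D\widehat F(U)=I+D\eta(U)$. The only cosmetic difference is ordering: the paper first factors $\mathscr{M}^{-1}\mathscr{G}_h=\mathscr{N}\,(I_t\otimes X^{-1}\mathcal G_h)$ and bounds $\|\mathscr{N}\|\le N_t$, $\|X^{-1}\mathcal G_h\|=\zeta_{a,b,c}(h_t)$ in block form, whereas you diagonalize per eigenmode first and bound the scalar $N_t\times N_t$ Toeplitz inverse directly.
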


We prove this lemma in Appendix \ref{append: lyapunov analysis of specific root-finding }. A direct corollary of Lemma \ref{lemm: more sophisticate est on singular values of DF } and Theorem \ref{main coro} is Theorem \ref{thm: O(1) exponential convergence result for general RD equ }, which not only guarantees the unique solvability of $\widehat{F}(U)=0$, but also establishes exponential convergence for $\|\widehat{F}(U_t)\|$.

\begin{theorem}[Unique existence of the root \& the second convergence result of $\|\widehat{F}(U_t)\|$]\label{thm: O(1) exponential convergence result for general RD equ }
Suppose conditions \eqref{condition: a b >= 0 }, \eqref{condition: f Lip }, \eqref{condition: G,L>=0, symm, commut } and \eqref{condition Jf=cI c>=0 } hold. We pick $h_t$ and $T=N_t h_t$ ($N_t\in\mathbb{N}_+$) satisfying
\begin{equation}
  bT\mathrm{Lip}(R)\zeta_{a,b,c}(h_t) < 1. \label{condition on ht zeta less than theta }
\end{equation}
Then there exists a unique root of $\widehat{F}$. Furthermore, we denote $\theta = bT\mathrm{Lip}(R)\zeta_{a,b,c}(h_t)<1$. Suppose we set $\epsilon = \kappa-\frac12$ and $\gamma = \frac{1}{\kappa} - \frac{1}{2\kappa^2}$. Then we have
  \begin{equation}
    \|\widehat{F}(U_t)\| \leq \exp\left(-\frac{5}{32} \cdot \frac{(1-\theta)^3}{1+\theta} \; t \right) \sqrt{\|\widehat F(U_0)\|^2 + (1+\theta)\|Q_0\|^2}. 
    \label{O(1) convergence rate }
 \end{equation}
\end{theorem}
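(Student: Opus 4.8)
The plan is to derive Theorem~\ref{thm: O(1) exponential convergence result for general RD equ } as an essentially mechanical specialization of the general-purpose results already established: Theorem~\ref{main coro} (exponential decay of the residual for a generic root-finding problem) together with the quantitative singular-value bounds of Lemma~\ref{lemm: more sophisticate est on singular values of DF }. The starting point is the hypothesis \eqref{condition on ht zeta less than theta }, i.e.\ $\theta := bT\,\mathrm{Lip}(R)\,\zeta_{a,b,c}(h_t) < 1$. Lemma~\ref{lemm: more sophisticate est on singular values of DF } immediately gives $1-\theta \le \underline\sigma \le \overline\sigma \le 1+\theta$, so in particular $\underline\sigma \ge 1-\theta > 0$ and $\overline\sigma \le 1+\theta < \infty$. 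Since $\widehat F(U) = U + \eta(U)$ with $\mathrm{Lip}(\eta) \le \theta < 1$, the map $\widehat F$ is a Lipschitz perturbation of the identity by a strict contraction, hence a bijection of $\mathbb{R}^{N_tN_x^2}$ onto itself; this yields the unique root of $\widehat F$ (equivalently, by invertibility of $\mathscr M$, the unique root of $F$). That disposes of the first assertion.

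For the convergence estimate, I would first record the bound $\kappa = \overline\sigma/\underline\sigma \le (1+\theta)/(1-\theta)$ and, conversely, $\kappa \ge 1$ always. The theorem's prescription $\epsilon = \kappa - \tfrac12$ and $\gamma = \tfrac1\kappa - \tfrac1{2\kappa^2}$ corresponds to the choice $\delta$ with $1-\delta = 1 - \tfrac1{2\kappa}$, i.e.\ $\delta = \tfrac1{2\kappa}$, in the parametrization of Theorem~\ref{main coro} (there one sets $\epsilon = (1-\delta)\kappa$, $\gamma = (1-\delta)/\kappa$); one checks $|\delta| = \tfrac1{2\kappa} < \tfrac1\kappa$, so the hypothesis of Theorem~\ref{main coro} is met. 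Plugging $\delta = \tfrac1{2\kappa}$ into the exponent of Theorem~\ref{main coro},
\[
(1-\kappa|\delta|)(3-\delta)\frac{\min\{\underline\sigma^2,1\}}{8\kappa}
= \Bigl(1-\tfrac12\Bigr)\Bigl(3-\tfrac1{2\kappa}\Bigr)\frac{\min\{\underline\sigma^2,1\}}{8\kappa}
= \Bigl(3-\tfrac1{2\kappa}\Bigr)\frac{\min\{\underline\sigma^2,1\}}{16\kappa},
\]
and the prefactor $\sqrt{\|\widehat F(U_0)\|^2 + \underline\sigma^2\|Q_0\|^2}$ appears. It remains to bound this exponent below by $\tfrac{5}{32}\cdot\tfrac{(1-\theta)^3}{1+\theta}$ and to bound the prefactor above by $\sqrt{\|\widehat F(U_0)\|^2 + (1+\theta)\|Q_0\|^2}$.

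The prefactor bound is immediate from $\underline\sigma^2 \le (1+\theta)^2$... wait, that is the wrong direction — rather $\underline\sigma^2 \le \overline\sigma^2 \le (1+\theta)^2$ is not what is needed either; what is needed is $\underline\sigma^2 \le 1+\theta$, which follows since $\underline\sigma \le \overline\sigma \le 1+\theta$ and $1+\theta \le (1+\theta)$ is trivial once $1+\theta \ge 1$, and indeed $\underline\sigma^2 \le \underline\sigma \cdot \overline\sigma \le 1+\theta$ using $\underline\sigma \le 1$ (valid because $\underline\sigma \le 1 + \mathrm{Lip}(\eta)$... here one should instead note $\underline\sigma = \sigma_{\min}(I + D\eta) \le \|I + D\eta\|$ evaluated appropriately, or more simply $\underline\sigma \le 1$ since $D\widehat F = I + D\eta$ has a singular value $\le 1 + 0$ only in the degenerate case — cleaner: $\underline\sigma \le \overline\sigma$ and $\underline\sigma\,\overline\sigma \le \overline\sigma^2$, so use $\underline\sigma^2 \le \overline\sigma \le 1+\theta$ provided $\underline\sigma \le \overline\sigma \le 1$... this needs care). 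I expect this to be the one genuinely fiddly point: pinning down that $\underline\sigma \le 1$ (hence $\min\{\underline\sigma^2,1\} = \underline\sigma^2 \ge (1-\theta)^2$ and $\underline\sigma^2 \le 1+\theta$), which should follow because $\eta(U)$ contains no identity component so that $\sigma_{\min}(I+D\eta(U)) \le 1$ for at least one $U$, or by observing $\underline\sigma \le 1 + \mathrm{Lip}(\eta)$ is the wrong inequality and instead the correct elementary fact $\sigma_{\min}(I+A) \le 1$ whenever... Actually the safe route is: $\underline\sigma \le \overline\sigma \le 1+\theta$ gives $\underline\sigma^2 \le (1+\theta)^2$; but to match the stated prefactor $(1+\theta)$ one uses $\underline\sigma \le 1$, which I would justify via $\underline\sigma = \inf_U \sigma_{\min}(D\widehat F(U))$ and the fact that $\sigma_{\min}(I + D\eta(U)) \le 1 + \|D\eta(U)\|$ is too weak — the honest justification is that $R(U)$ vanishes to second order so $D\eta$ is not uniformly bounded away from $0$, making $\inf_U \sigma_{\min} \le 1$; I would state $\underline\sigma \le 1$ as following from $1-\theta \le \underline\sigma \le 1+\theta$ combined with evaluating at a point where $DR$ vanishes, giving $\sigma_{\min}(I) = 1$, hence $\underline\sigma \le 1$. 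For the exponent bound, I would use $\kappa \le \tfrac{1+\theta}{1-\theta}$ so $\tfrac1\kappa \ge \tfrac{1-\theta}{1+\theta}$, together with $3 - \tfrac1{2\kappa} \ge 3 - \tfrac12 = \tfrac52$, and $\min\{\underline\sigma^2,1\} = \underline\sigma^2 \ge (1-\theta)^2$, yielding
\[
\Bigl(3-\tfrac1{2\kappa}\Bigr)\frac{\underline\sigma^2}{16\kappa}
\ge \frac{5}{2}\cdot\frac{(1-\theta)^2}{16}\cdot\frac{1-\theta}{1+\theta}
= \frac{5}{32}\cdot\frac{(1-\theta)^3}{1+\theta},
\]
which is exactly the claimed rate. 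Assembling these bounds into the conclusion of Theorem~\ref{main coro} gives \eqref{O(1) convergence rate }. The main obstacle, as indicated, is not any deep estimate but the careful bookkeeping needed to convert the $\kappa$- and $\underline\sigma$-dependent bound of Theorem~\ref{main coro} into a clean expression purely in $\theta$, in particular confirming $\underline\sigma \le 1$ so that $\min\{\underline\sigma^2,1\}=\underline\sigma^2$ and the prefactor collapses to $1+\theta$.
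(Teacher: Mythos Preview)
Your proposal is correct and matches the paper's proof for the convergence estimate: apply Theorem~\ref{main coro} with $\delta=\tfrac{1}{2\kappa}$, then use the bounds $\underline\sigma\ge 1-\theta$, $\overline\sigma\le 1+\theta$, $\kappa\le\tfrac{1+\theta}{1-\theta}$ from Lemma~\ref{lemm: more sophisticate est on singular values of DF } together with $\kappa\ge 1$ to simplify the exponent to $\tfrac{5}{32}\cdot\tfrac{(1-\theta)^3}{1+\theta}$. For uniqueness you take a slightly different and more direct route: the paper defers to Lemma~\ref{lemm: sufficient cond unique existence of root }, which ultimately reduces to the variational/strong-convexity argument of Theorem~\ref{thm: unique existence root-finding}, whereas your contraction argument ($\widehat F(U)=U+\eta(U)$ with $\mathrm{Lip}(\eta)\le\theta<1$, hence $\widehat F$ is a bijection by Banach) is cleaner and self-contained. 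Your concern about the prefactor bound $\underline\sigma^2\le 1+\theta$ is legitimate---the paper simply writes this inequality without comment---so you are not missing a hidden step; your proposed fix ($\underline\sigma\le 1$ by evaluating $D\widehat F=I+D\eta$ at a $U$ with $DR(U)=0$) is the natural one, and in any case this affects only the constant in front, not the exponential rate.
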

\begin{proof}
  The unique existence of the root for $\widehat{F}(\cdot)$ is due to Lemma \ref{lemm: sufficient cond unique existence of root }. 
   
  We now prove the exponential convergence \eqref{O(1) convergence rate }. According to Lemma \ref{lemm: more sophisticate est on singular values of DF }, by letting $\theta = bT\mathrm{Lip}(R)\zeta_{a,b,c}(h_t)$, we obtain 
  \begin{equation}
    \underline\sigma \geq 1-\theta, \quad \overline{\sigma}\leq 1+\theta, \quad \textrm{and thus} \quad \kappa \leq \frac{1+\theta}{1-\theta}.  \label{bounded sigma and k } 
  \end{equation}
  Now recall Theorem \ref{main coro}. To alleviate our discussion, we choose $\delta = \frac{1}{2\kappa}$. After setting the parameters $\epsilon = \kappa-\frac12$ and $\gamma = \frac{1}{\kappa} - \frac{1}{2\kappa^2}$, we have
\begin{align}
  \|\widehat{F}(U_t)\| & \leq \exp\left( - \frac{1}{2} (3-\frac{1}{2\kappa})\frac{\min\{\underline\sigma^2, 1\}}{8\kappa} \; t \right) \; \sqrt{ \|\widehat{F}(U_0)\|^2 + \underline\sigma^2 \|Q_0\|^2 } \nonumber\\
  & \leq \exp\left( -\frac12\cdot\frac{5}{2} \cdot \frac{(1-\theta)^3}{8(1+\theta)} \; t \right)\sqrt{\|\widehat F(U_0)\|^2 + (1+\theta)\|Q_0\|^2} 
 \nonumber \\
  & = \exp\left(-\frac{5}{32}\cdot\frac{(1-\theta)^3}{1+\theta} \; t \right) \sqrt{\|\widehat F(U_0)\|^2 + (1+\theta)\|Q_0\|^2},  \nonumber
\end{align}
 where the second inequality is due to \eqref{bounded sigma and k } and the fact that $\kappa \geq 1 $. 
\end{proof}

We can simplify condition \eqref{condition on ht zeta less than theta } for specific types of RD equations. This is summarized in the following Corollary.

\begin{corollary}[$N_x$-independent convergence rate for specific RD equations]\label{practical corollary }
  Suppose the conditions \eqref{condition: a b >= 0 }, \eqref{condition: f Lip }, \eqref{condition: G,L>=0, symm, commut } and \eqref{condition Jf=cI c>=0 } hold. 
  We pick $T=N_t h_t$ ($N_t\in\mathbb{N}_+$) such that
  \begin{itemize}
     \item (Allen-Cahn type, $\mathcal G_h = I$, $\mathcal L_h$ is self-adjoint, non-negative definite)
     $T < \frac{1}{b\mathrm{Lip}(R)}$, or equivalently, pick $h_t < \frac{1}{b\mathrm{Lip}(R)} \textrm{ and } N_t \leq \Bigl\lfloor\frac{1}{b\mathrm{Lip}(R)h_t}\Bigr\rfloor$. We denote $\widetilde\theta = b \mathrm{Lip}(R) T < 1 $.
     \item (Cahn-Hilliard type, $\mathcal G_h = \mathcal L_h$ are self-adjoint, and non-negative definite)
     $T < \frac{2\sqrt{ah_t} + bc h_t}{b\mathrm{Lip}(R)}$, or equivalently, pick $h_t <  \frac{4a}{b^2(\mathrm{Lip}(R)-c)_{+}^2} \textrm{ and } N_t  \leq  \Bigl\lfloor \frac{2\sqrt{a/h_t} + bc}{b\mathrm{Lip}(R)} \Bigr\rfloor$. We denote $\widetilde\theta = \frac{b\mathrm{Lip}(R) T }{2\sqrt{ah_t}+bch_t} < 1$.
  \end{itemize}
  Suppose further that $\epsilon = \kappa-\frac12$ and $\gamma = \frac{1}{\kappa} - \frac{1}{2\kappa^2}$, then $\|\widehat{F}(U_t)\|$ convergences to $0$ exponentially fast,
  \begin{equation}
    \|\widehat{F}(U_t)\| \leq \exp\left(-\frac{5}{32} \cdot \frac{(1-\widetilde\theta)^3}{1 + \widetilde\theta} \; t \right) \sqrt{\|\widehat F(U_0)\|^2 + (1+\widetilde\theta)\|Q_0\|^2}.   \label{O(1) convergence for AC CH equations }
 \end{equation}
\end{corollary}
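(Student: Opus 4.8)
The plan is to reduce everything to Theorem~\ref{thm: O(1) exponential convergence result for general RD equ}: I will bound the quantity $\zeta_{a,b,c}(h_t)$ from Lemma~\ref{lemm: more sophisticate est on singular values of DF} in each of the two special regimes, check that the stated restrictions on $T$ (equivalently on $h_t$ and $N_t$) force $\theta:=bT\mathrm{Lip}(R)\zeta_{a,b,c}(h_t)<1$ so that condition \eqref{condition on ht zeta less than theta} holds and the bound \eqref{O(1) convergence rate} becomes available, and then upgrade \eqref{O(1) convergence rate} to \eqref{O(1) convergence for AC CH equations} by replacing $\theta$ with the larger quantity $\widetilde\theta$ using monotonicity.

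First I bound $\zeta_{a,b,c}(h_t)$. For the Allen--Cahn type ($\mathcal G_h=I$) every $\lambda_k(\mathcal G_h)=1$, so $\zeta_{a,b,c}(h_t)=\max_k\,(1+h_t(a\lambda_k(\mathcal L_h)+bc))^{-1}$; since $a\ge 0$, $bc\ge 0$ and $\mathcal L_h$ is non-negative definite, each denominator is $\ge 1$, whence $\zeta_{a,b,c}(h_t)\le 1$ and $\theta\le bT\mathrm{Lip}(R)=\widetilde\theta$. For the Cahn--Hilliard type ($\mathcal G_h=\mathcal L_h$), put $\mu_k=\lambda_k(\mathcal G_h)=\lambda_k(\mathcal L_h)\ge 0$, so $\zeta_{a,b,c}(h_t)=\max_k g(\mu_k)$ with $g(\mu)=\mu/(1+ah_t\mu^2+bch_t\mu)$. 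A one-variable computation gives $g'(\mu)=(1-ah_t\mu^2)/(1+ah_t\mu^2+bch_t\mu)^2$, so over $\mu\ge 0$ the maximum is attained at $\mu^\star=(ah_t)^{-1/2}$ with $g(\mu^\star)=(2\sqrt{ah_t}+bch_t)^{-1}$ (the degenerate case $a=0$ is handled directly via $g(\mu)\le (bch_t)^{-1}$). Hence $\zeta_{a,b,c}(h_t)\le (2\sqrt{ah_t}+bch_t)^{-1}$ and $\theta\le bT\mathrm{Lip}(R)/(2\sqrt{ah_t}+bch_t)=\widetilde\theta$.

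Next I verify the stated conditions make $\widetilde\theta<1$ and match the ``equivalently'' reformulations. In the Allen--Cahn case $\widetilde\theta<1\iff T<1/(b\mathrm{Lip}(R))$; since $T=N_th_t$ with $N_t\in\mathbb{N}_+$, this amounts to requiring $h_t<1/(b\mathrm{Lip}(R))$ (so that $N_t=1$ is admissible) together with $N_t\le\lfloor 1/(b\mathrm{Lip}(R)h_t)\rfloor$. In the Cahn--Hilliard case $\widetilde\theta<1\iff T<(2\sqrt{ah_t}+bch_t)/(b\mathrm{Lip}(R))$; dividing by $h_t$ rewrites this as $N_t<(2\sqrt{a/h_t}+bc)/(b\mathrm{Lip}(R))$, and admissibility of $N_t\ge 1$ is exactly $2\sqrt{a/h_t}+bc\ge b\mathrm{Lip}(R)$, i.e.\ $h_t<4a/(b^2(\mathrm{Lip}(R)-c)_{+}^2)$, which is vacuous when $\mathrm{Lip}(R)\le c$. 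In either regime $\theta\le\widetilde\theta<1$, so \eqref{condition on ht zeta less than theta} holds; choosing $\epsilon=\kappa-\tfrac12$ and $\gamma=\tfrac1\kappa-\tfrac1{2\kappa^2}$, Theorem~\ref{thm: O(1) exponential convergence result for general RD equ} yields \eqref{O(1) convergence rate}.

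Finally, to pass from \eqref{O(1) convergence rate} to \eqref{O(1) convergence for AC CH equations} I use that $x\mapsto(1-x)^3/(1+x)$ is strictly decreasing on $[0,1)$ (its derivative equals $-(1-x)^2(4+2x)/(1+x)^2<0$) while $x\mapsto 1+x$ is increasing; since $0\le\theta\le\widetilde\theta<1$ this gives $\exp(-\tfrac{5}{32}\tfrac{(1-\theta)^3}{1+\theta}t)\le\exp(-\tfrac{5}{32}\tfrac{(1-\widetilde\theta)^3}{1+\widetilde\theta}t)$ and $\sqrt{\|\widehat F(U_0)\|^2+(1+\theta)\|Q_0\|^2}\le\sqrt{\|\widehat F(U_0)\|^2+(1+\widetilde\theta)\|Q_0\|^2}$, which is precisely \eqref{O(1) convergence for AC CH equations}. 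The substantive step is the maximization of $g(\mu)$ in the Cahn--Hilliard case; the only mildly delicate point is the bookkeeping in the second bullet, namely translating $T<(2\sqrt{ah_t}+bch_t)/(b\mathrm{Lip}(R))$ into the pair of constraints on $h_t$ and $N_t$ and handling $\mathrm{Lip}(R)\le c$ through the positive-part notation, but this is elementary.
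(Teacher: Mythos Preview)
Your proof is correct and follows essentially the same route as the paper's: bound $\zeta_{a,b,c}(h_t)$ in each regime to obtain $\theta\le\widetilde\theta<1$, invoke Theorem~\ref{thm: O(1) exponential convergence result for general RD equ }, and then weaken the resulting estimate via monotonicity of $x\mapsto(1-x)^3/(1+x)$ and $x\mapsto 1+x$. The only cosmetic differences are that the paper bounds the Cahn--Hilliard $\zeta$ by rewriting the fraction as $\bigl(1/\lambda+ah_t\lambda+bch_t\bigr)^{-1}$ and applying AM--GM rather than your calculus maximization of $g(\mu)$, and that you spell out the ``equivalently'' reformulations and the degenerate $a=0$ case, which the paper's proof omits.
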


\begin{proof}
Recall that we have $\theta = bT\mathrm{Lip}(R)\zeta_{a,b,c}(h_t)$. We prove $\widetilde\theta \geq \theta$ under both cases. 
\begin{itemize}
    \item (Allen-Cahn type) Note that $\zeta_{a,b,c}(h_t) = \max_k\left\{\frac{1}{1+h_t(a\lambda_k(\mathcal L_h) + bc)}\right\} \leq 1$. Thus,
    $$\theta = bT\mathrm{Lip}(R)\zeta_{a,b,c}(h_t) \leq bT\mathrm{Lip}(R) = \widetilde \theta.$$

    \item (Cahn-Hilliard type)  We have
    \begin{align*} 
      \zeta_{a,b,c}(h_t) &= \max_{k} \left\{ \left( {\frac{1}{\lambda_k(\mathcal G_h)} + h_t a \lambda_k(\mathcal L_h) + h_tbc } \right)^{-1} \right\} \\
     &= \max_{k} \left\{ \left( {\frac{1}{\lambda_k(\mathcal L_h)} + h_t a \lambda_k(\mathcal L_h) + h_tbc } \right)^{-1} \right\}\\
     & \leq \frac{1}{2\sqrt{ah_t} + bch_t}. 
    \end{align*}
    Then, 
    $$\theta = bT\mathrm{Lip}(R)\zeta_{a,b,c}(h_t) \leq \frac{bT\mathrm{Lip}(R)}{2\sqrt{ah_t}+bch_t} = \widetilde\theta. $$
\end{itemize}
Since $\widetilde\theta < 1$ in both cases, we have $\theta \leq \widetilde{\theta}<1$. Applying Theorem \ref{thm: O(1) exponential convergence result for general RD equ } yields \eqref{O(1) convergence rate }. Note that $\frac{(1-\theta)^3}{1+\theta}\geq \frac{(1-\widetilde\theta)^3}{1+\widetilde \theta}$ for $ 0\leq\theta \leq \widetilde\theta < 1$. This implies our result \eqref{O(1) convergence for AC CH equations }.
\end{proof}

\subsubsection{Numerical verification}\label{subsec: numerical verification }
We apply our algorithm to solve the Allen-Cahn equation \eqref{AC_equ} with $\epsilon_0=0.01$ on a $64\times 64$ grid. We use $\tau_U = \tau_P = 0.5$, $\omega = 1$, $\epsilon = 0.1$. At each iteration $k$, denote $U_k$ as the numerical solution. We define $r_k = -\log_{10}(\|\widehat{F}(U_{k+1})\|/\|\widehat{F}(U_k)\|)$ to be the convergence rate of the residual term $\|\widehat{F}(U_k)\|$ at $k$th iteration. The residual is expected to converge linearly to $0$. We denote by $\bar{r}$ the average convergence rate of the first $500$ iterations. 
By \eqref{O(1) convergence for AC CH equations }, when $\widetilde\theta$ is small, the convergence rate is $\frac{5}{32}(1-4\widetilde{\theta}+\mathcal{O}(\widetilde{\theta}^2))$, which is linear w.r.t. $N_th_t$ (recall that $\widetilde{\theta}\propto T = N_t h_t$). Such linear relation is verified in the first two figures of Figure \ref{fig: res converge rate vs ht Nt AC }. In the third figure, we observe fast decay of the average convergence rate $\bar{r}$ as $\widetilde\theta \propto N_th_t$ keeps increasing. Furthermore, we verify the dependence of the convergence rate on $N_th_t$ via the left plot of Figure \ref{fig: plt rate vs log Nt vs log ht}.
\begin{figure}[htb!]
    \centering
    \begin{subfigure}{0.325\textwidth}
    \captionsetup{justification=centering}
          \includegraphics[trim={3cm 8.5cm 4cm 8.5cm}, clip, width=\linewidth]{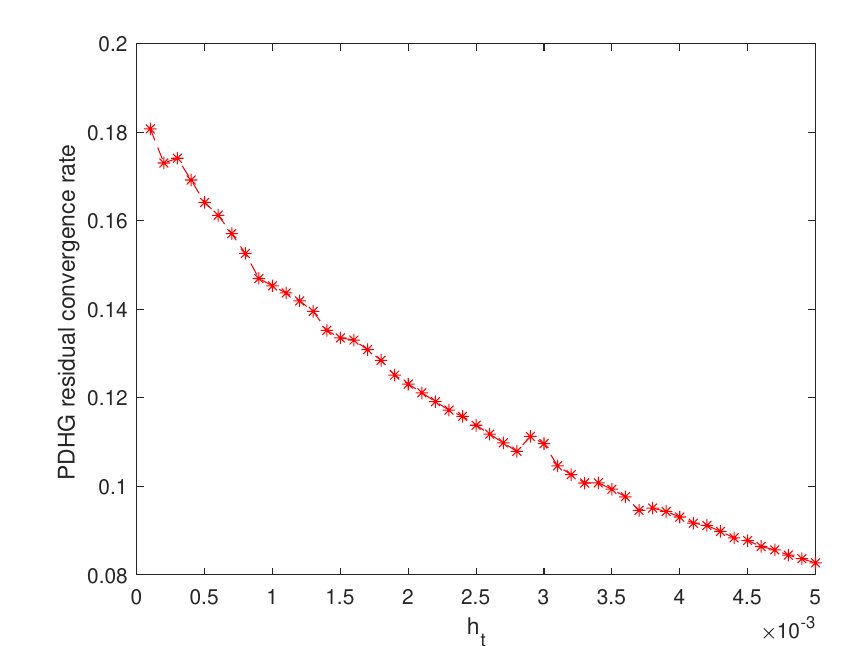}
          \subcaption{Plot of 
          $\bar{r}$ vs $h_t$. Fix $N_t=1$, $h_t=10^{-4}k$, $1 \leq k \leq 50$.}
    \end{subfigure}
    \begin{subfigure}{0.325\textwidth}
    \captionsetup{justification=centering}
          \includegraphics[trim={3cm 8.5cm 4cm 8.5cm}, clip, width=\linewidth]{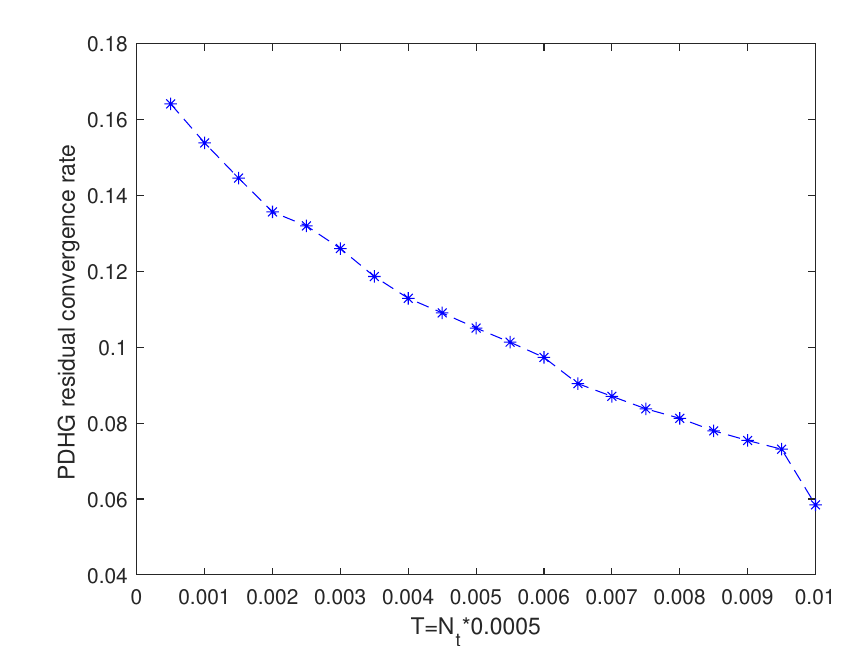} 
          \subcaption{ Plot of $\bar{r}$ vs $N_t$. \\
          Fix $h_t=5\times10^{-4}$, $1\leq N_t\leq 20$.}
    \end{subfigure}
    \begin{subfigure}{0.325\textwidth}
    \captionsetup{justification=centering}
          \includegraphics[trim={3cm 8.5cm 4cm 8.5cm}, clip, width=\linewidth]{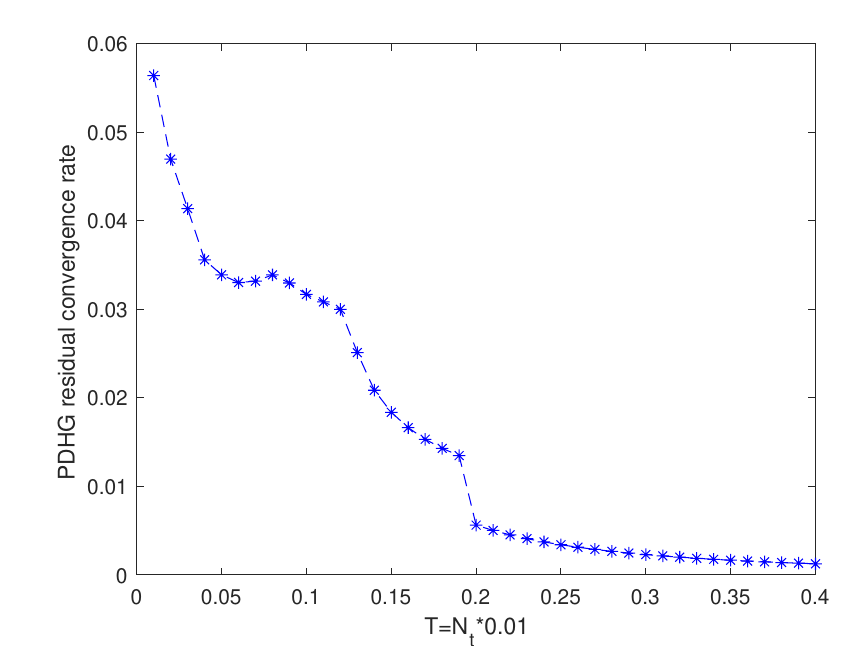} 
          \subcaption{ Plot of $\bar{r}$ vs $N_t$. \\
          Fix $h_t=10^{-2}$, $1\leq N_t\leq 40$.}
    \end{subfigure}
    \caption{Convergence rate of the residual term $\|\widehat{F}(U_k)\|$ w.r.t. $h_t, N_t$ for Allen-Cahn equation.  
    }
    \label{fig: res converge rate vs ht Nt AC }
\end{figure}
We also apply our algorithm to the Cahn-Hilliard equation \eqref{CH equ} with $\epsilon_0 = 0.1$ on a $64\times 64$ grid. We keep the hyperparameters the same as in the case of Allen-Cahn. The average convergence rate $\bar{r}$ is computed by the first $500$ iterations of the algorithm. By \eqref{O(1) convergence for AC CH equations }, the convergence rate is linear w.r.t. $N_t(\sqrt{h_t}+o(\sqrt{h_t}))$ when $\widetilde{\theta}\propto N_t \sqrt{h_t}$ is small. This is reflected in Figure \ref{fig: res converge rate vs ht Nt CH}. Unlike the case of Allen-Cahn, in which the PDHG algorithm converges as $\widetilde\theta$ increases, the iterations for Cahn-Hilliard diverges as $\widetilde\theta\propto N_t \sqrt{h_t} $ increases. This is reflected on the right plot of Figure \ref{fig: res converge rate vs ht Nt CH}. 
 
For a fixed time step size $h_t$, denote by $N_{\max}$ the maximum number of time steps that guarantees the convergence of the PDHG algorithm. We plot the relation between $N_{\max}$ and $h_t$ on a logarithmic scale in Figure \ref{fig: N max h_t log log plot }. We observe the relation $N_{\max}=\mathcal{O}(\frac{1}{\sqrt{h_t}})$ when the step size $h_t$ is not too small. The dependence of the convergence rate w.r.t. $N_t\sqrt{h_t}$ is shown in the right plot of Figure \ref{fig: plt rate vs log Nt vs log ht}. 
 
\begin{remark}
  It is worth mentioning that some of the tested values of $h_t$ in Figure \ref{fig: res converge rate vs ht Nt CH}, \ref{fig: N max h_t log log plot } may have exceeded the theoretical bounds for uniqueness (cf. Remark \ref{remark on condition ht exist & unique }) and convergence (cf. \ref{coro: simplify PDHG alg converge}). We point out that these bounds, derived from ensuring convexity and positive definiteness in the numerical analysis, are sufficient but not necessary. The figures primarily aim to illustrate the dependence of convergence rate on $N_t \cdot h_t$ and $N_t \cdot \sqrt{h_t}$, rather than strictly adhering to the bounds.
\end{remark}

\begin{figure}[htb!]
    \centering
    \begin{subfigure}{0.33\textwidth}
    \captionsetup{justification=centering}
        \includegraphics[trim={3cm 8.5cm 4cm 8.5cm}, clip, width=\linewidth]{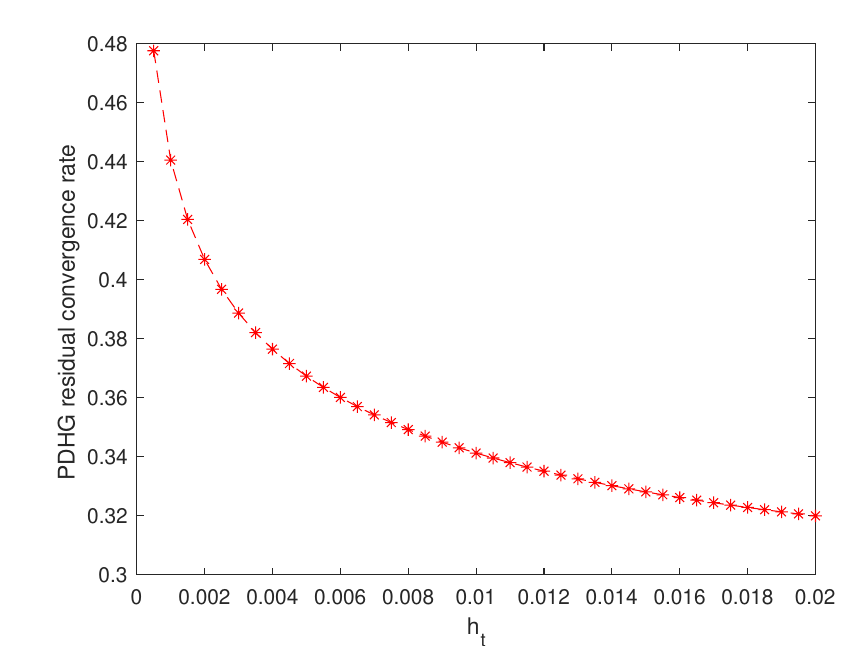}    
        \subcaption{Plot of $\bar{r}$ vs $h_t$. Fix $N_t = 1$, \\
        $h_t = 5\times 10^{-4}  k$, $1\leq k \leq 40$.
        }
    \end{subfigure}
    \begin{subfigure}{0.33\textwidth}
    \captionsetup{justification=centering}
        \includegraphics[trim={3cm 8.5cm 4cm 8.5cm}, clip, width=\linewidth]{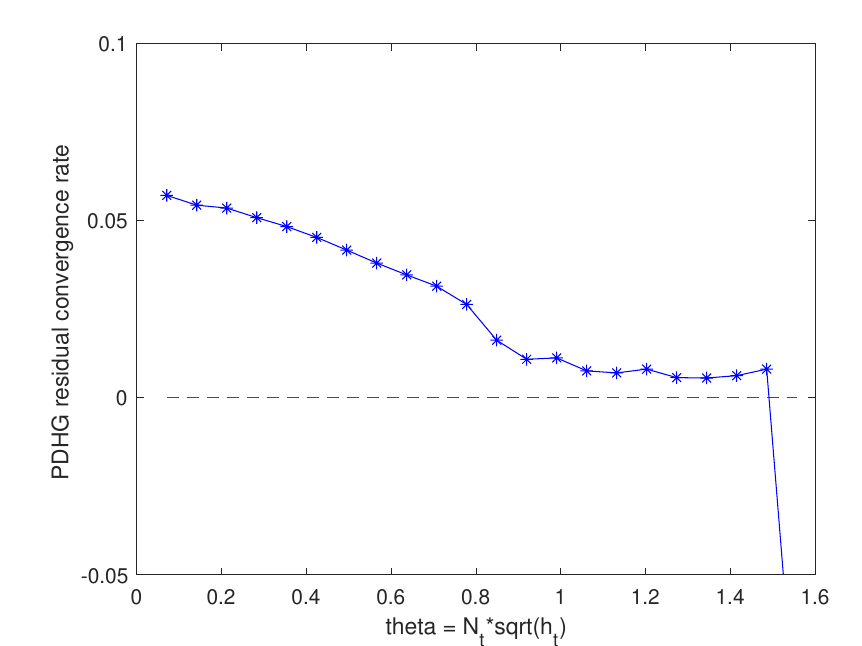}   
        \subcaption{Plot of $\bar{r}$ vs $N_t. $\\
        Fix $h_t = 0.005$, $1\leq N_t \leq 22$.
        }
    \end{subfigure}
    \caption{Convergence rate of the residual term $\|\widehat{F}(U_k)\|$ w.r.t. $h_t, N_t$ for Cahn-Hilliard equation.}
    \label{fig: res converge rate vs ht Nt CH}
\end{figure}

\begin{figure}[htb!]
    \centering
    \includegraphics[width=0.54\textwidth]{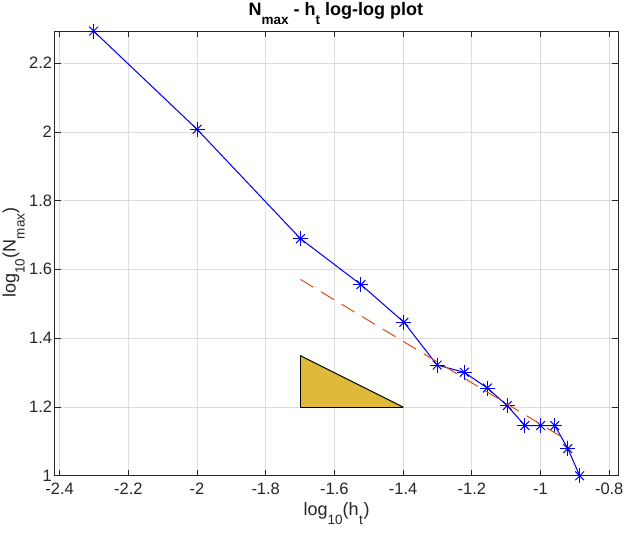}
    \caption{$N_{\mathrm{max}} - h_t$ log-log plot for Cahn-Hilliard equation  \eqref{CH equ}. We solve the equation on $64\time 64$ grid with $h_t = 0.01\cdot k$, $k=0.5, 1, 2, \dots, 13$. The yellow triangle has slope equals to $\frac{1}{2}$. The orange dashed line is the linear regression of data points with rather large $h_t = 0.01\cdot k$ with $5\leq k\leq 11$.}
    \label{fig: N max h_t log log plot }
\end{figure}

\begin{figure}[htb!]
    \centering
    \begin{subfigure}{0.4\textwidth}
    \captionsetup{justification=centering}
        \includegraphics[trim={1cm 5cm 1cm 5cm}, clip, width=\linewidth]{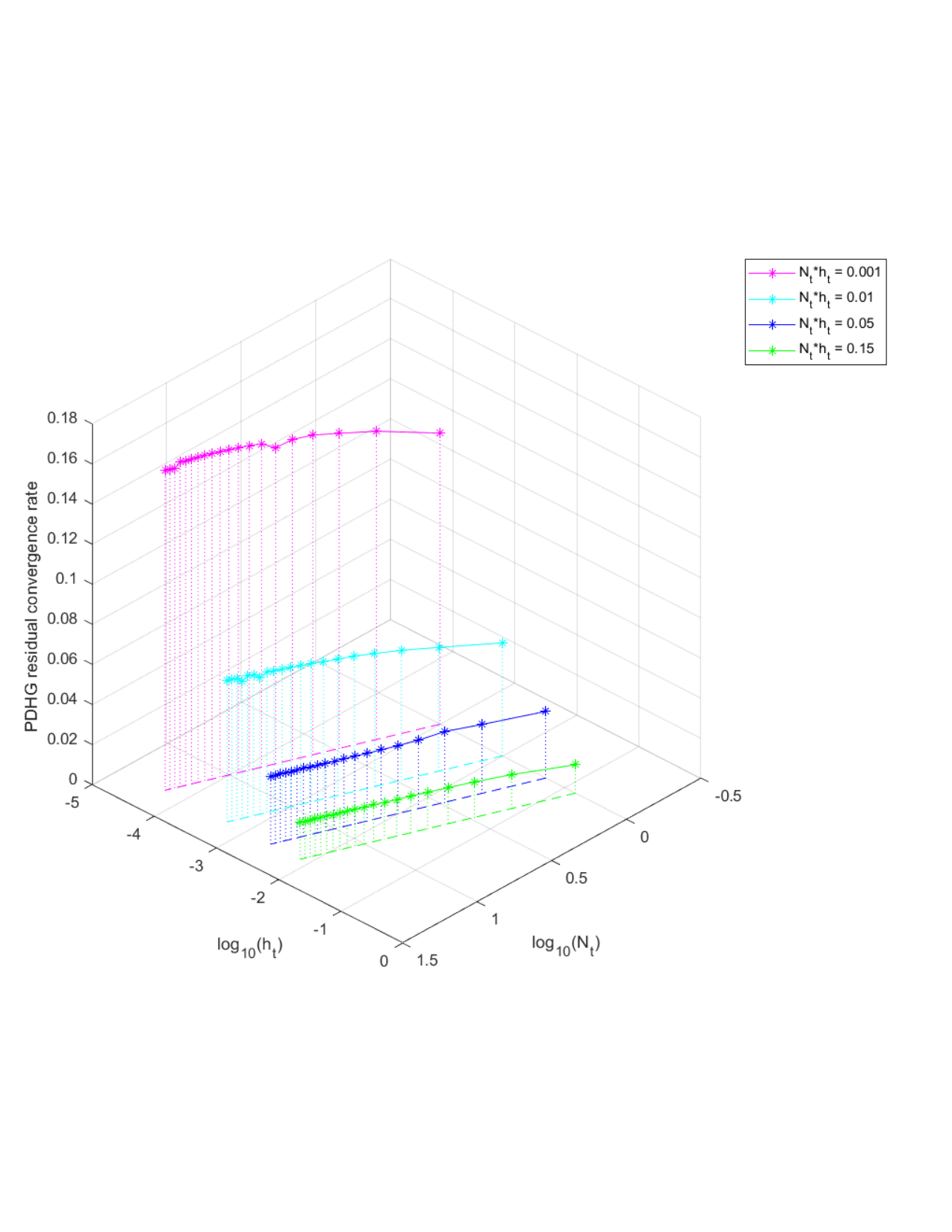}    
        \subcaption{We solve Allen-Cahn equation \eqref{AC_equ}\\
        Plot of $\bar{r}$ vs $( \log_{10} N_t, \log_{10} h_t)$, \\
        with $N_th_t = 0.15, 0.05, 0.01, 0.001.$
        }
    \end{subfigure}
    \begin{subfigure}{0.4\textwidth}
    \captionsetup{justification=centering}
        \includegraphics[trim={1cm 5cm 1cm 5cm}, clip, width=\linewidth]{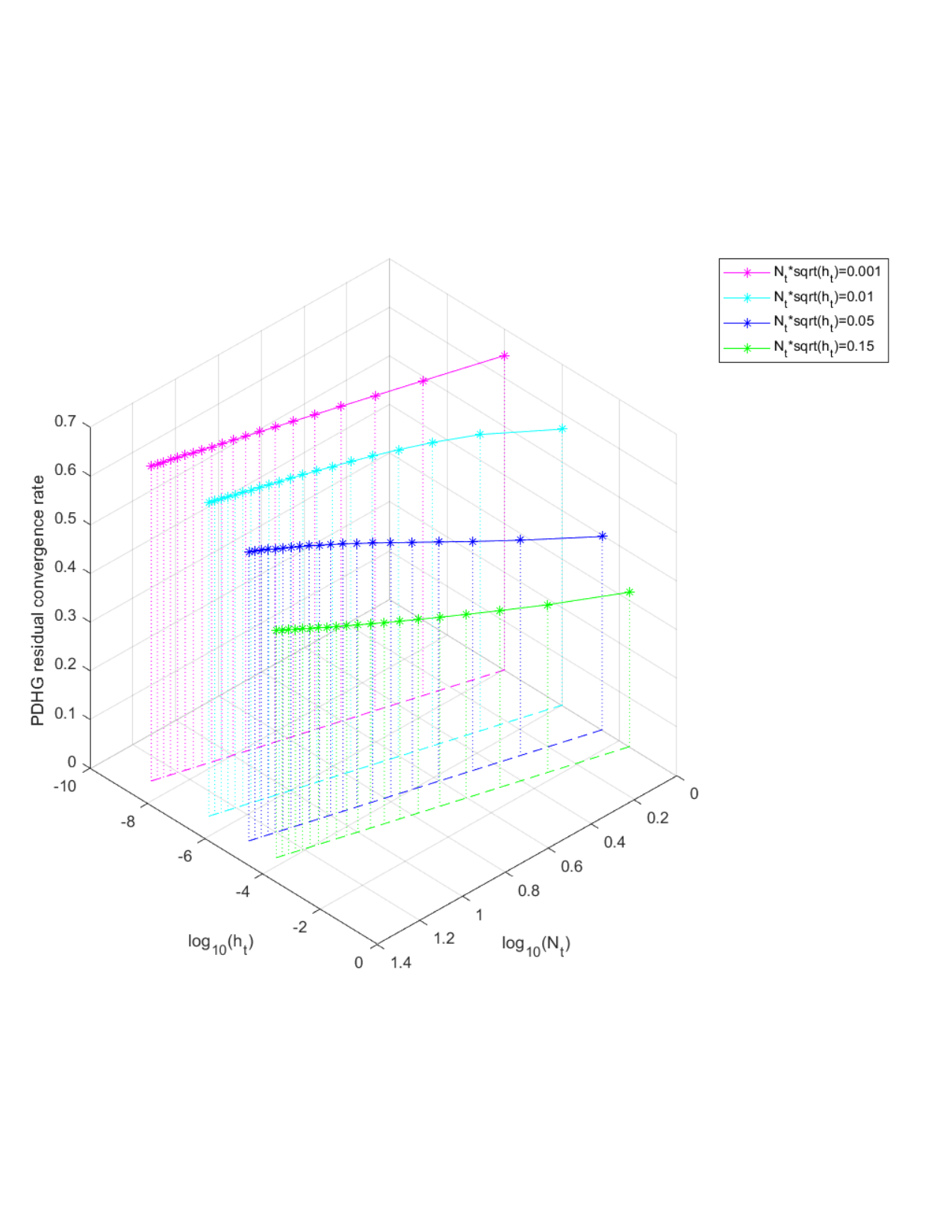}
        \subcaption{We solve Cahn-Hilliard equation \eqref{CH equ}\\
        Plot of $\bar{r}$ vs $( \log_{ 10 }N_t, \log_{10} h_t)$, \\
        with $N_t\sqrt{h_t} = 0.15, 0.05, 0.01, 0.001.$
        }
    \end{subfigure}
    \caption{Plots of $\bar{r}$ vs $(N_t, h_t)$.}\label{fig: plt rate vs log Nt vs log ht}
\end{figure}

\subsection{Lyapunov analysis for the time-discrete case}\label{sec: lyapunov time discrt case}
In this section, we discuss the convergence of the time-discrete PDHG algorithm \eqref{Preconded PDHG}. Recall that the equilibrium state of the PDHG dynamic \eqref{Preconded PDHG} is $(U_*, 0)$ with $\widehat{F}(U_*)=0$, we consider the following Lyapunov function
\begin{equation}
  \mathcal J(U, Q) = \frac{1}{2}(\|U-U_*\|^2 + \|Q - 0\|^2) = \frac{1}{2}(\|U-U_*\|^2 + \|Q \|^2).\nonumber
\end{equation}

The next theorem provides a sufficient condition on the convergence of $\mathcal J$ when $f(\cdot)$ is Lipschitz.

\begin{theorem}[Exponential convergence of the PDHG algorithm \eqref{Preconded PDHG}]\label{thm: time discrete PDHG converge with Lip }
  Consider the following assumptions,
  \begin{itemize}
    \item (On PDE \eqref{general RD type PDE}) Assume \eqref{condition: a b >= 0 }, \eqref{condition: f Lip } hold.
  
    \item (On numerical scheme \eqref{time-implicit scheme} of PDE)  Assume \eqref{condition: G,L>=0, symm, commut } holds. 
    Suppose the time step size $h_t$ and $T= N_t h_t$ satisfy $b T \mathrm{Lip}(R)\zeta_{a,b,c}(h_t)<\sqrt{2}-1$. Suppose we pick $\theta \geq bT\mathrm{Lip}(R)\zeta_{a,b,c}(h_t)$ with $\theta < \sqrt{2}-1$.
  
    \item (On PDHG algorithm \eqref{Preconded PDHG}) Suppose \eqref{condition Jf=cI c>=0 } holds. There exist $\widetilde{\gamma} = \omega \tau_P, \varrho = \frac{\tau_P}{\tau_U}, \epsilon > 0$ satisfying
    \begin{equation} 
      \varrho\widetilde\gamma\epsilon\Psi(\theta) - \frac{1}{4} \Omega(\widetilde\gamma\epsilon, \varrho, \theta)^2 > 0  \label{condition on gamma, ktau, epsilon }.
    \end{equation}
    Here we denote $\Psi(\theta)=1-2\theta-\theta^2$, and $\Omega(u, \varrho, \theta) = |1-u-\varrho| + (|1-u| + \varrho)\theta$. We choose PDHG step size for the dual variable as 
    \begin{equation}
      \tau_P = \frac{\varrho\widetilde\gamma\epsilon \Psi(\theta)  - \frac14 \Omega(\widetilde\gamma\epsilon, \varrho, \theta)^2}{4(\widetilde\gamma + \varrho\epsilon)(1+\theta)^2\max\{ \widetilde\gamma^2 (1+\theta)^2 ,  (1 -\widetilde\gamma \epsilon )^2 \}},\label{set tau }
    \end{equation}
    and set the extrapolation coefficient $\omega = \frac{\widetilde{\gamma}}{\tau_P}$, the PDHG step size for $U$ as $\tau_U = \frac{\tau_P}{\varrho}$.
  \end{itemize}
  Under the above conditions, there exists a unique $U_*$ s.t. $\widehat F(U_*)=0$. Furthermore, assume that $\{U_k, Q_k\}$ solves the PDHG algorithm \eqref{Preconded PDHG} with arbitrary initial condition $(U_0, Q_0)$. Write $\mathcal J_k = \mathcal J(U_k, Q_k)$. We have
  \begin{equation}
    \mathcal J_k \leq \left(\frac{2}{\Phi + \sqrt{\Phi^2 + 4}}\right)^{k+1} \left(\mathcal J_1 + \frac{\Phi + \sqrt{\Phi^2 + 4}}{2}\mathcal J_0\right), \label{exponential convergence of PDHG algorithm }
  \end{equation}
  where 
  $$\Phi=\frac{(\varrho\widetilde\gamma\epsilon \Psi(\theta)  - \frac14 \Omega(\widetilde\gamma\epsilon, \varrho, \theta)^2 )^2}{2(1+\theta)^2\max\{\widetilde\gamma^2(1+\theta)^2, (1-\widetilde\gamma \epsilon)^2\}(\widetilde{\gamma} + \varrho \epsilon)^2 }.$$
\end{theorem}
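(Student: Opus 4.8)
The plan is to run, in discrete time, the Lyapunov argument underlying Theorem~\ref{main coro}, but now with $\mathcal J$ measuring the true errors $\|U_k-U_*\|$ and $\|Q_k\|$, and with the extra task of absorbing the $O(\tau_P^2)$ terms created by the explicit $U$-update. I would first settle existence/uniqueness: by Lemma~\ref{lemm: more sophisticate est on singular values of DF} and $\theta\ge bT\,\mathrm{Lip}(R)\,\zeta_{a,b,c}(h_t)$ with $\theta<\sqrt2-1<1$, the map $\eta$ of \eqref{def: preconded F} obeys $\|D\eta(U)\|\le\theta$ for all $U$, so $U\mapsto-\eta(U)$ is a contraction and $\widehat F=\mathrm{Id}+\eta$ has a unique zero $U_*$ (this is Lemma~\ref{lemm: sufficient cond unique existence of root}). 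The same bound shows every Jacobian $D\widehat F(U)=I+D\eta(U)$, and the secant operator $A_k:=\int_0^1 D\widehat F(U_*+t(U_k-U_*))\,dt$ (so that $\widehat F(U_k)=\widehat F(U_k)-\widehat F(U_*)=A_k(U_k-U_*)$), equals $I$ plus an operator of norm $\le\theta$; hence all their singular values lie in $[1-\theta,1+\theta]$, and in particular $(1-\theta)\|U_k-U_*\|\le\|\widehat F(U_k)\|\le(1+\theta)\|U_k-U_*\|$, the ``strong monotonicity'' that turns residual decay into error decay.

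Next I would set $e_k=U_k-U_*$, $g_k=\widehat F(U_k)$, $B_k=D\widehat F(U_k)$, eliminate $\widetilde Q_{k+1}=(1+\omega)Q_{k+1}-\omega Q_k$, and use $\widetilde\gamma=\omega\tau_P$, $\varrho=\tau_P/\tau_U$ to rewrite \eqref{Preconded PDHG} as a one-step map: $Q_{k+1}=\frac{1}{1+\epsilon\tau_P}(Q_k+\tau_P g_k)$ and $e_{k+1}=e_k-\frac{\tau_P}{\varrho(1+\epsilon\tau_P)}B_k^\top\!\big((1-\widetilde\gamma\epsilon)Q_k+(\tau_P+\widetilde\gamma)g_k\big)$. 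Expanding $2\mathcal J_{k+1}=\|e_{k+1}\|^2+\|Q_{k+1}\|^2$ and comparing with $2\mathcal J_k$ produces a quadratic expression in $(e_k,Q_k,g_k)$ whose part linear in $\tau_P$ carries the primal--dual cross terms plus the dual damping $-\frac{\tau_P\epsilon(2+\epsilon\tau_P)}{(1+\epsilon\tau_P)^2}\|Q_k\|^2$, while its part quadratic in $\tau_P$ is the discretization overhead.

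I would then write $B_k=I+D\eta(U_k)$ and $A_k=I+E_k$ with $\|E_k\|\le\theta$: the ``$I$'' pieces give the clean linear primal--dual cancellation familiar from the continuous case, while the $D\eta$ and $E_k$ pieces, combined with $(1-\theta)\|e_k\|\le\|g_k\|\le(1+\theta)\|e_k\|$ and Cauchy--Schwarz on the cross terms, are exactly what feed into $\Psi(\theta)=1-2\theta-\theta^2=2-(1+\theta)^2$ (from $\langle B_ke_k,g_k\rangle\ge\Psi(\theta)\|e_k\|^2$) and into $\Omega(\widetilde\gamma\epsilon,\varrho,\theta)$ (the coefficient of $\langle e_k,Q_k\rangle$ after the partial cancellation between the $e$- and $Q$-expansions, with the $\theta$-corrections appended). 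Plugging in the prescribed $\tau_P$ of \eqref{set tau} --- hence $\omega=\widetilde\gamma/\tau_P$, $\tau_U=\tau_P/\varrho$ --- the hypothesis \eqref{condition on gamma, ktau, epsilon}, i.e.\ $\varrho\widetilde\gamma\epsilon\Psi(\theta)-\frac14\Omega^2>0$, is precisely the discriminant condition making the reduced scalar quadratic form in $(\|e_k\|,\|Q_k\|)$ negative-definite with a quantitative margin, while the $O(\tau_P^2)$ overhead is swallowed by the smallness of $\tau_P$. This yields a recursive inequality bounding $\mathcal J_{k+1}$ by $\mathcal J_k$ with the constant $\Phi$ as displayed, from which \eqref{exponential convergence of PDHG algorithm} follows by the standard unwinding, using that $r=\frac{2}{\Phi+\sqrt{\Phi^2+4}}$ is the positive root of $r^2+\Phi r=1$.

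The hard part will be the middle two steps: organizing the expansion so that every genuinely nonlinear contribution visibly carries a factor $\theta$, and then calibrating $\tau_P$ so that the available dissipation simultaneously dominates the primal--dual cross terms and the $O(\tau_P^2)$ overhead with an explicit leftover --- this is where the precise shapes of $\Psi$, $\Omega$ and the factor $\max\{\widetilde\gamma^2(1+\theta)^2,(1-\widetilde\gamma\epsilon)^2\}$ get pinned down, and it is the discrete analogue of the step-size optimization already carried out in Lemma~\ref{theorem : exponential decay}. Note that $\theta<\sqrt2-1$ is sharp here, since it is exactly the condition $\Psi(\theta)>0$. The reduction of existence/uniqueness to Lemma~\ref{lemm: sufficient cond unique existence of root}, and the passage from the one-step estimate to \eqref{exponential convergence of PDHG algorithm}, are comparatively routine.
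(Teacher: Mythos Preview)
Your overall strategy---a discrete Lyapunov argument on $\mathcal J$, controlling the nonlinearity via $\|D\eta\|\le\theta$, and organizing the cross terms so that $\Psi(\theta)$ and $\Omega(\widetilde\gamma\epsilon,\varrho,\theta)$ emerge---matches the paper's approach closely. Your secant operator $A_k=\int_0^1 D\widehat F(U_*+t(U_k-U_*))\,dt$ is a perfectly good substitute for the paper's pointwise mean-value device (Lemma~\ref{vector function mean value thm }), and the existence/uniqueness reduction is correct.

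There is, however, a genuine gap in your final step. You say the expansion ``yields a recursive inequality bounding $\mathcal J_{k+1}$ by $\mathcal J_k$ with the constant $\Phi$,'' and then invoke the positive root of $r^2+\Phi r=1$. But a one-step inequality $\mathcal J_{k+1}\le\rho\,\mathcal J_k$ would unwind to $\mathcal J_k\le\rho^k\mathcal J_0$, not the stated bound involving \emph{both} $\mathcal J_0$ and $\mathcal J_1$ and the characteristic root of a second-order recurrence. In the paper the one-step estimate comes out as
\[
\mathcal J_{k+1}-\mathcal J_k\;\le\;-\tfrac{\Phi}{2}\bigl(\|U_k-U_*\|^2+\|Q_{k+1}\|^2\bigr),
\]
with \emph{mixed} indices $k$ and $k+1$: because the $Q$-update is implicit, $Q_{k+1}$---not $Q_k$---is the natural dual variable in the difference, and the right side cannot be bounded below by a multiple of either $\mathcal J_k$ or $\mathcal J_{k+1}$ alone. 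The paper's trick is to add this inequality at consecutive indices $k$ and $k+1$; the right side then contains $\|U_{k+1}-U_*\|^2+\|Q_{k+1}\|^2=2\mathcal J_{k+1}$, giving the \emph{second-order} recurrence $\mathcal J_{k+2}-\mathcal J_k\le-\Phi\,\mathcal J_{k+1}$, from which \eqref{exponential convergence of PDHG algorithm} follows via Lemma~\ref{Lemm: exponential decay of sequence with 2nd order recurrence ineq }. If you instead push everything into $(e_k,Q_k)$ as you propose, you may well obtain a one-step contraction, but the resulting constant will not be the displayed $\Phi$ and the final form will not match the theorem; you need to account for this index mismatch and the two-step summation.
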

The proof of the theorem is provided in Appendix \ref{append: lyapunov analysis of PDHG alg }.

\vspace{0.5cm}

We can simplify the results in Theorem \ref{thm: time discrete PDHG converge with Lip } for Allen-Cahn and Cahn-Hilliard type of equations, using similar argument in the proof of Corollary \ref{practical corollary }, for Allen-Cahn (resp., Cahn-Hilliard) type equations. Suppose $b\mathrm{Lip}(R)T<\sqrt{2}-1$ (resp., $\frac{b\mathrm{Lip}(R)T}{2\sqrt{ah_t}+bch_t}<\sqrt{2}-1$). If we set $\theta = b\mathrm{Lip}(R)T$ (resp., $\theta = \frac{b\mathrm{Lip}(R)T}{2\sqrt{ah_t}+bch_t}$), then we have $bT\zeta_{a,b,c}(h_t)\mathrm{Lip}(R) \leq \theta < \sqrt{2}-1$.

Furthermore, we can pick specific values of the hyperparameters $\tau_U, \tau_P, \omega, \epsilon$ to obtain a more concise convergence rate $\Phi$. To do so, we denote $u=\widetilde\gamma\epsilon$ and assume that $u < 1$. We set $\varrho = 1-\widetilde{\gamma}\epsilon = 1-u$. Then the condition \eqref{condition on gamma, ktau, epsilon } leads to $(1-u)u\Psi(\theta)-(1-u)^2\theta^2 > 0$, which yields $ \frac{\theta^2}{1-2\theta} < u < 1 $. Furthermore, the rate $\Phi$ equals
\begin{equation*} 
  \Phi = \frac{ (1-u)^2(u(1-2\theta-\theta^2) -  (1-u)\theta^2 )^2 }{2(1+\theta)^2 \max\{ \widetilde\gamma^2(1+\theta)^2, (1-u)^2 \} (\widetilde\gamma + (1-u ) \epsilon)^2 }.
\end{equation*}
We further pick $\widetilde\gamma = (1-u)\epsilon$. Together with $\widetilde\gamma\epsilon = u$, we have $\widetilde \gamma = \sqrt{u(1-u)}$, $\epsilon = \sqrt{\frac{u}{1-u}}$. Thus,
\begin{align*} 
  \Phi = \frac{(1-2\theta)^2}{8(1+\theta)^2} \cdot \frac{\left(1 - \frac{\theta^2}{1-2\theta}\cdot \frac{1}{u} \right)^2}{ \max\{(1+\theta)^2, {(1-u)}/{u}\}}.
\end{align*}
Now the value of $\tau_P$ is determined by \eqref{set tau }, $\tau_U = \frac{\tau_P}{\varrho}$, $\omega = \frac{\widetilde{\gamma}}{\tau_P}$ can also be determined. In summary, we have the following Corollary.
\begin{corollary}[$N_x$-independent convergence rate for specific RD equations] \label{coro: simplify PDHG alg converge}
Suppose \eqref{condition: a b >= 0 }, \eqref{condition: f Lip }, \eqref{condition: G,L>=0, symm, commut }, and \eqref{condition Jf=cI c>=0 } hold. Assume $h_t$, $N_t$ and $T= N_t h_t$ satisfy 
\begin{itemize}
      \item (Allen-Cahn type, $\mathcal G_h = I$, $\mathcal L_h$ is self-adjoint, non-negative definite)
      Pick $T < \frac{\sqrt{2}-1}{b\mathrm{Lip}(R)}$, or equivalently, $h_t < \frac{\sqrt{2}-1}{b\mathrm{Lip}(R)}, ~ N_t \leq \Bigl\lfloor\frac{\sqrt{2}-1}{b\mathrm{Lip}(R)h_t}\Bigr\rfloor.$ We denote $\theta=b\mathrm{Lip}(R)T<\sqrt{2}-1$;
     \item (Cahn-Hilliard type, $\mathcal G_h = \mathcal L_h$ is self-adjoint, and non-negative definite) \\
     Pick $T <\frac{(\sqrt{2} - 1)(2\sqrt{ah_t} + bc h_t)}{b\mathrm{Lip}(R)}$, or equivalently, $h_t  <  \frac{4(\sqrt{2}-1)^2a}{b^2(\mathrm{Lip}(R)-(\sqrt{2}-1)c)_{+}^2}, ~ N_t \leq \Bigl\lfloor{ (\sqrt{2} - 1) \frac{2\sqrt{a/h_t} + bc }{b\mathrm{Lip}(R)}} \Bigr\rfloor$.
     We denote $\theta = \frac{b\mathrm{Lip}(R)T}{2\sqrt{ah_t}+bch_t} = \frac{b\mathrm{Lip}(R)N_t\sqrt{h_t}}{2\sqrt{a}+bc\sqrt{h_t}} < \sqrt{2}-1$.
\end{itemize}
Then, there is unique $U_*$ with $\widehat{F}(U_*)=0$. Furthermore, if we choose $u \in (\frac{\theta^2}{1-2\theta}, 1)$ and set 
  {\small 
\begin{align}
  \tau_P = \frac{u(1-2\theta)-\theta^2}{8\sqrt{u(1-u)}(1+\theta)^2\max\{u(1+\theta)^2, 1-u\}}, \quad \tau_U = \frac{\tau_P}{1-u}, \quad\omega = \frac{\sqrt{u(1-u)}}{\tau_U}, \quad \epsilon = \sqrt{\frac{u}{1-u}},
  \label{select hyperparameters }
\end{align}  }
then $U_k$ converges exponentially fast to $U_*$, i.e.,
\begin{equation*}
  \|U_k - U_*\|^2 \leq C_0 \left(\frac{2}{\Phi + \sqrt{\Phi^2 + 4}}\right)^{k+1} . 
\end{equation*}
Here
\begin{equation*}
  C_0 = \left(\mathcal J_1 + \frac{\Phi + \sqrt{\Phi^2 + 4}}{2}\mathcal J_0\right), \quad 
  \Phi = \frac{(1-2\theta)^2}{8(1+\theta)^2} \cdot \frac{\left(1 - \frac{\theta^2}{1-2\theta}\cdot \frac{1}{u} \right)^2}{ \max\{(1+\theta)^2, {(1-u)}/{u}\}}.
\end{equation*}
\end{corollary}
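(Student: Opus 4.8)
The plan is to derive Corollary~\ref{coro: simplify PDHG alg converge} as a direct specialization of Theorem~\ref{thm: time discrete PDHG converge with Lip }, following the same pattern used to prove Corollary~\ref{practical corollary } from Theorem~\ref{thm: O(1) exponential convergence result for general RD equ }. First I would verify that under the stated choices of $h_t$, $N_t$, $T$ in each of the two cases, the key smallness hypothesis $bT\mathrm{Lip}(R)\zeta_{a,b,c}(h_t) < \sqrt{2}-1$ holds, and that the proposed $\theta$ indeed satisfies $bT\mathrm{Lip}(R)\zeta_{a,b,c}(h_t) \le \theta < \sqrt 2 - 1$. This step reuses the two estimates on $\zeta_{a,b,c}(h_t)$ established inside the proof of Corollary~\ref{practical corollary }: for Allen-Cahn type, $\zeta_{a,b,c}(h_t) = \max_k \{(1+h_t(a\lambda_k(\mathcal L_h)+bc))^{-1}\}\le 1$, so $\theta := b\mathrm{Lip}(R)T$ dominates; for Cahn-Hilliard type, $\zeta_{a,b,c}(h_t)\le (2\sqrt{ah_t}+bch_t)^{-1}$ by AM-GM on $\tfrac{1}{\lambda_k(\mathcal L_h)} + h_t a\lambda_k(\mathcal L_h)$, so $\theta := \frac{b\mathrm{Lip}(R)T}{2\sqrt{ah_t}+bch_t}$ dominates. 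The equivalent reformulations of the bounds on $h_t$ and $N_t$ (e.g.\ $h_t < \frac{4(\sqrt 2-1)^2 a}{b^2(\mathrm{Lip}(R)-(\sqrt 2-1)c)_+^2}$) follow by elementary algebraic rearrangement of $T < \frac{(\sqrt 2-1)(2\sqrt{ah_t}+bch_t)}{b\mathrm{Lip}(R)}$ together with $T = N_t h_t$.

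Next I would invoke Theorem~\ref{thm: time discrete PDHG converge with Lip } with this $\theta$, which immediately yields the unique existence of $U_*$ with $\widehat F(U_*)=0$ and the estimate \eqref{exponential convergence of PDHG algorithm }. The remaining work is purely to simplify the abstract rate $\Phi$ and the hyperparameter formulas under the particular algebraic choices $u = \widetilde\gamma\epsilon < 1$, $\varrho = 1-u$, and $\widetilde\gamma = (1-u)\epsilon$. Setting $\varrho = 1-u$ makes $\Omega(\widetilde\gamma\epsilon,\varrho,\theta) = |1-u-(1-u)| + (|1-u|+(1-u))\theta = 2(1-u)\theta$, so that $\varrho\widetilde\gamma\epsilon\Psi(\theta) - \tfrac14\Omega^2 = (1-u)u\Psi(\theta) - (1-u)^2\theta^2 = (1-u)\bigl(u(1-2\theta-\theta^2) - (1-u)\theta^2\bigr)$; requiring this to be positive gives the constraint $u > \frac{\theta^2}{1-2\theta}$ (using $\theta < \sqrt 2 - 1$ so that $1-2\theta-\theta^2 > 0$ and $1-2\theta>0$). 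Then $\widetilde\gamma = (1-u)\epsilon$ combined with $\widetilde\gamma\epsilon = u$ forces $\epsilon = \sqrt{u/(1-u)}$ and $\widetilde\gamma = \sqrt{u(1-u)}$; substituting everything into the definition of $\Phi$ and cancelling the common factor $(1-u)^2$ against the $(\widetilde\gamma + \varrho\epsilon)^2 = (1-u)^2\epsilon^2 = (1-u)u\cdot\frac{1-u}{u}\cdots$ — more precisely one checks $(\widetilde\gamma+\varrho\epsilon)^2 = (\sqrt{u(1-u)} + (1-u)\sqrt{u/(1-u)})^2 = 4u(1-u)$ and $\widetilde\gamma^2(1+\theta)^2 = u(1-u)(1+\theta)^2$ — yields after simplification $\Phi = \frac{(1-2\theta)^2}{8(1+\theta)^2}\cdot\frac{(1-\frac{\theta^2}{(1-2\theta)u})^2}{\max\{(1+\theta)^2,\,(1-u)/u\}}$, matching the claimed formula. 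Finally, $\tau_P$ is read off from \eqref{set tau } with these substitutions, and $\tau_U = \tau_P/\varrho = \tau_P/(1-u)$, $\omega = \widetilde\gamma/\tau_P$; noting $\|U_k - U_*\|^2 \le 2\mathcal J_k$ and feeding \eqref{exponential convergence of PDHG algorithm } in gives the final bound with $C_0 = 2(\mathcal J_1 + \tfrac{\Phi+\sqrt{\Phi^2+4}}{2}\mathcal J_0)$ — or the stated $C_0$ if one absorbs the factor of $2$, which I would double-check against the theorem's normalization.

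I expect the only real obstacle to be bookkeeping: carefully tracking the absolute values in $\Omega$ (the sign of $1-u$ is positive since $u<1$, and the sign of $1-u-\varrho$ is zero by the choice $\varrho=1-u$, which is what produces the clean factor $2(1-u)\theta$), and verifying that all the $\max$ expressions in $\Phi$ and in \eqref{set tau } collapse consistently under the substitutions. There is no new analytic idea required beyond what is already in Theorem~\ref{thm: time discrete PDHG converge with Lip } and the $\zeta$-estimates of Corollary~\ref{practical corollary }; the proof is a specialization plus algebraic simplification, so I would present it compactly, citing those two results and displaying only the three or four intermediate identities ($\Omega = 2(1-u)\theta$, $\widetilde\gamma = \sqrt{u(1-u)}$, $\epsilon = \sqrt{u/(1-u)}$, $(\widetilde\gamma+\varrho\epsilon)^2 = 4u(1-u)$) that drive the simplification of $\Phi$.
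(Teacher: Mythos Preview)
Your proposal is correct and follows essentially the same approach as the paper: the paper's derivation (given in the paragraphs immediately preceding the corollary) likewise invokes the $\zeta_{a,b,c}(h_t)$ bounds from the proof of Corollary~\ref{practical corollary } to verify $bT\mathrm{Lip}(R)\zeta_{a,b,c}(h_t)\le\theta<\sqrt 2-1$, applies Theorem~\ref{thm: time discrete PDHG converge with Lip }, and then makes exactly the substitutions $u=\widetilde\gamma\epsilon$, $\varrho=1-u$, $\widetilde\gamma=(1-u)\epsilon$ to simplify $\Phi$. Your intermediate identities ($\Omega=2(1-u)\theta$, $\widetilde\gamma=\sqrt{u(1-u)}$, $\epsilon=\sqrt{u/(1-u)}$, $(\widetilde\gamma+\varrho\epsilon)^2=4u(1-u)$) are precisely the ones driving the paper's computation, and your caution about the factor of $2$ in $C_0$ is well-placed.
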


In the following example, we pick the hyperparameters $h_t, N_t, \tau_U, \tau_P, \omega, \epsilon$ according to Corollary \ref{coro: simplify PDHG alg converge}, and apply it to different types of equations. Our algorithm is guaranteed to converge linearly. The theoretical results presented in Theorem \ref{thm: time discrete PDHG converge with Lip } and Corollary \ref{coro: simplify PDHG alg converge} are not necessarily the sharpest convergence rate. In practice, the actual convergence rate of our PDHG method is generally faster than the theoretical guarantee in Corollary \ref{coro: simplify PDHG alg converge}. This is reflected in the following Table \ref{tab: theoretical rate vs actual rate}. When composing Table \ref{tab: theoretical rate vs actual rate}, recall that $f(u)=u^3-u$, we set $c=f'(\pm 1)=2$, and $R(u) = f(u)-cu = u^3-3u.$ In our numerical result, we observe that $|U_{ij}^t|\leq 1$ for any spatial index $(i,j)$ and temporal index $t$. Thus we use $\underset{u\in[-1, 1]}{\sup } ~|R'(u)| = 3$ as the value of $\mathrm{Lip}(R)$ in Corollary \ref{coro: simplify PDHG alg converge} during the calculation.

\begin{table}[htb!]
\scriptsize
    \centering
    \begin{tabular}{c|c|cccccccccc}
       \multicolumn{2}{c|}{    }  & $h_t$ & $N_t$ & $u$ & $\tau_P$ & $\tau_U$ & $\omega$  & $\epsilon$ & $\widetilde\theta$ &
       \begin{tabular}{@{}c@{}} Theoretical \\ rate \end{tabular} & \begin{tabular}{@{}c@{}} Actual \\ rate \end{tabular}
         \\
         \hline \hline
      & $\epsilon_0=1.0$  & \begin{tabular}{@{}c@{}} 0.005 \\ $ < 0.1381$ \end{tabular} & \begin{tabular}{@{}c@{}} 20 \\ $ \leq 27 $ \end{tabular} & \begin{tabular}{@{}c@{}} 0.5 \\ $u \in (0.2250,1)$ \end{tabular} & 0.0498 & 0.0996 & 5.0181 & 1.0 & 0.3000 & 0.0112 & 0.0723 \\
      AC\eqref{AC_equ} & $\epsilon_0 = 0.1$  & \begin{tabular}{@{}c@{}} 0.001 \\ $ < 0.0138$ \end{tabular} & \begin{tabular}{@{}c@{}} 7 \\ $ \leq 13 $ \end{tabular} & 
       \begin{tabular}{@{}c@{}} 0.5 \\ $u \in (0.0760,1)$ \end{tabular} & 0.0574 & 0.1147 & 4.3587 & 1.0 & 0.2100 & 0.0141 & 0.0821 \\
      & $\epsilon_0 = 0.01$  & \begin{tabular}{@{}c@{}} 0.0005 \\ ($< 0.0014$) \end{tabular} & \begin{tabular}{@{}c@{}}1\\($\leq2$)\end{tabular} & \begin{tabular}{@{}c@{}} 0.5 \\ ($u\in (0.0321, 1)$) \end{tabular} & 0.0936 & 0.1872 & 2.6702 & 1.0 & 0.1500 & 0.0307 & 0.1325 \\
         \hline
        &  $\epsilon_0 = 10$         &\begin{tabular}{@{}c@{}}0.005 \\ (<1.4553) \end{tabular}& \begin{tabular}{@{}c@{}} 10 \\ ($\leq 12 $) \end{tabular}&\begin{tabular}{@{}c@{}} 0.5 \\ ($u\in (0.04, 1)$) \end{tabular}  & 0.0842 & 0.1684 & 2.9695 & 1.0 &  0.1640  & 0.0260 & 0.0537 \\
      CH\eqref{CH equ}  &  $\epsilon_0 = 1.0$         &\begin{tabular}{@{}c@{}}0.001 \\ (<0.1455) \end{tabular}& \begin{tabular}{@{}c@{}} 5 \\ ($\leq 9$) \end{tabular}&\begin{tabular}{@{}c@{}} 0.5 \\ ($u\in (0.0978, 1)$) \end{tabular}  & 0.0475 &0.0949 & 5.2662 & 1.0 & 0.2874 & 0.0103 & 0.0301 \\
        & $\epsilon_0 = 0.1$         &\begin{tabular}{@{}c@{}}0.0005 \\ (<0.0015) \end{tabular}& \begin{tabular}{@{}c@{}} 1 \\ ($\leq 1$) \end{tabular}&\begin{tabular}{@{}c@{}} 0.5 \\ ($u\in (0.1663, 1)$) \end{tabular}  & 0.0286 & 0.0572 & 8.7392 & 1.0 & 0.2741 & 0.0043 & 0.0169 \\
      \hline
    \end{tabular}
    \caption{Theoretical convergence rate vs actual convergence rate of $\|U_k - U_*\|^2_2$. The constraints in the parentheses in the columns of $h_t, N_t$, and $u$ are derived from the conditions in Corollary \ref{coro: simplify PDHG alg converge}. The actual rate $r$ is solved from the linear regression model $r\cdot k + b$ given the numerical data $\{k, \log(\|U_{k+1}-U_*\|^2/\|U_k - U_*\|^2)\}$ for $1\leq k \leq 400$ (Allen-Cahn equation \eqref{AC_equ}); and $1\leq k \leq 500$ (Cahn-Hilliard equation \eqref{CH equ}). }
    \label{tab: theoretical rate vs actual rate}
\end{table}

\begin{figure}[htb!]
\begin{subfigure}{.31\textwidth}
    \centering
    \includegraphics[trim={4cm 8.5cm 4cm 8.5cm}, clip, width=\linewidth]{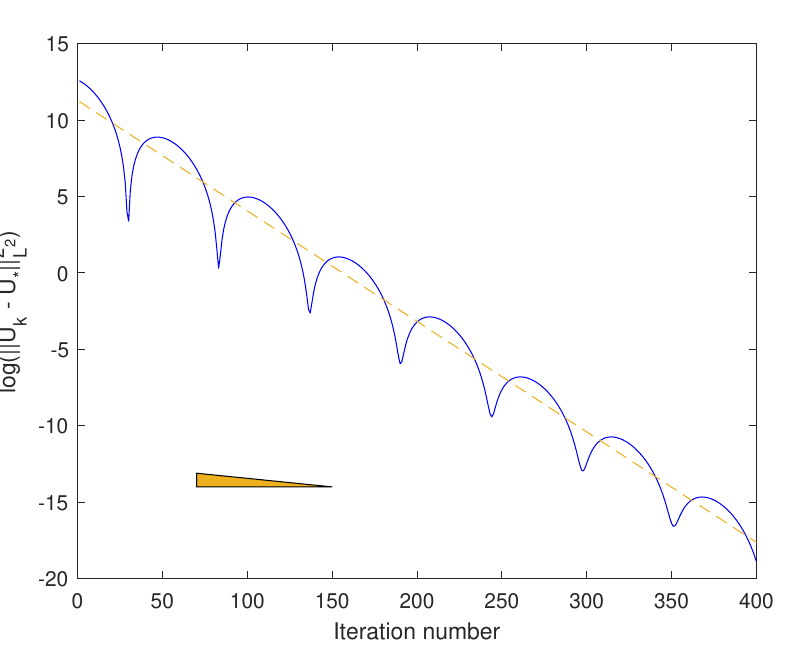}
    \subcaption{$\epsilon_0 = 1.0$.}
\end{subfigure}
\begin{subfigure}{.31\textwidth}
    \centering
    \includegraphics[trim={4cm 8.5cm 4cm 8.5cm}, clip, width=\linewidth]{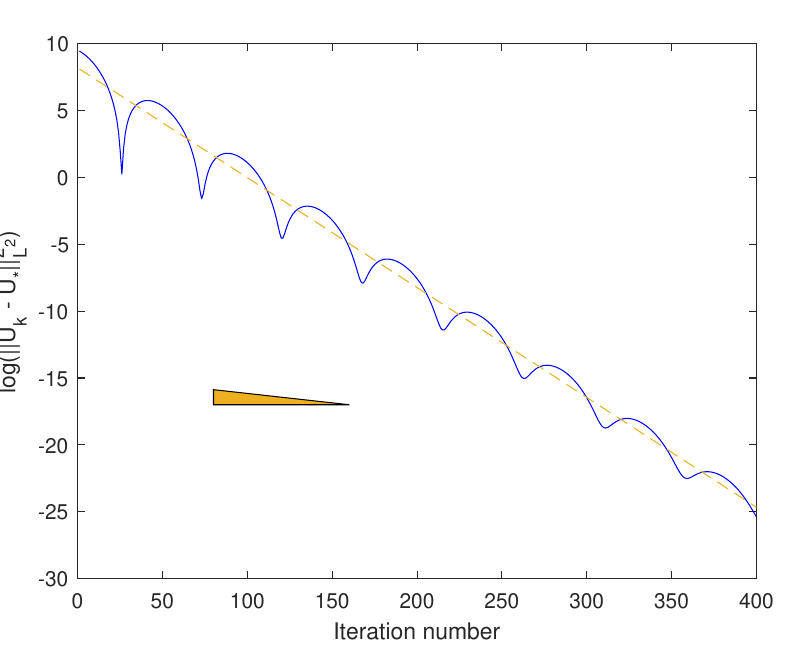}
    \subcaption{$\epsilon_0 = 0.1.$}
\end{subfigure}
\begin{subfigure}{.31\textwidth}
    \centering
    \includegraphics[trim={4cm 8.5cm 4cm 8.5cm}, clip, width=\linewidth]{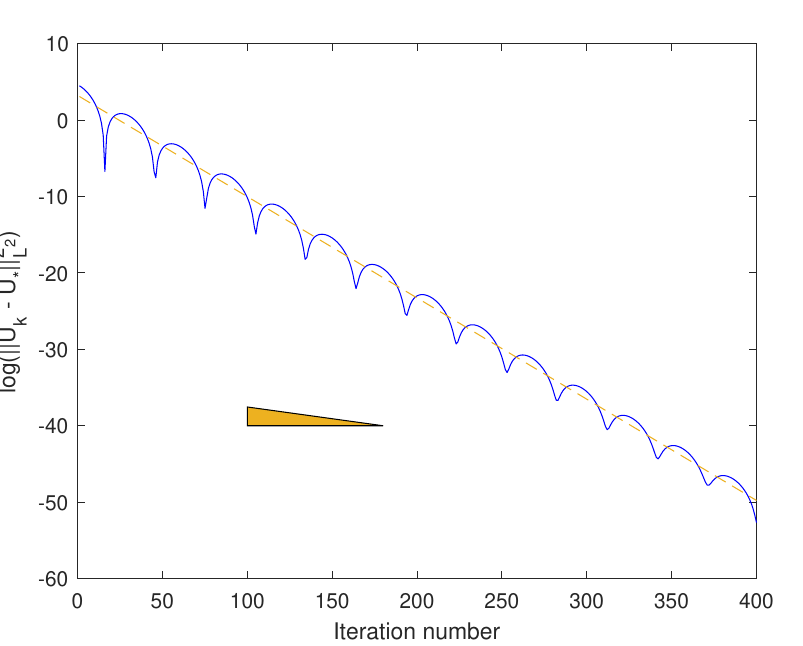}
    \subcaption{$\epsilon_0=0.01.$}
\end{subfigure}
\caption{Plot of $\log\|U_k-U_*\|^2$ vs $k$ ($1\leq k \leq 400$)  when using hyperparameters specified in Table \ref{tab: theoretical rate vs actual rate} to solve Allen-Cahn equation \eqref{AC_equ} with different $\epsilon_0$ on a $128\times 128$ grid. }
\label{fig: verify actual rate and theoretical rate A }
\end{figure}

\begin{figure}[htb!]
\begin{subfigure}{.31\textwidth}
    \centering
    \includegraphics[trim={4cm 8.5cm 4cm 8.5cm}, clip, width=\linewidth]{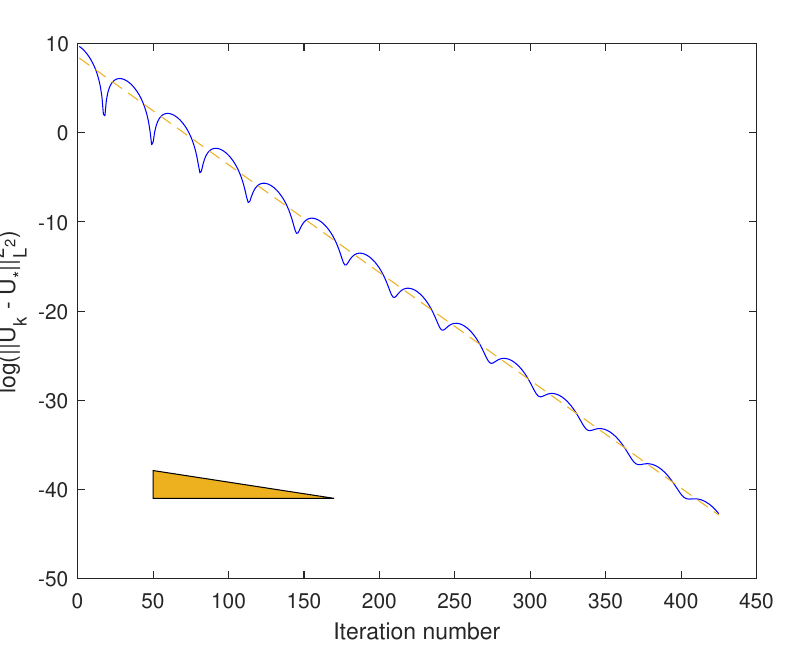}
    \subcaption{$\epsilon=10.$}
\end{subfigure}
\begin{subfigure}{.31\textwidth}
    \centering
    \includegraphics[trim={4cm 8.5cm 4cm 8.5cm}, clip, width=\linewidth]{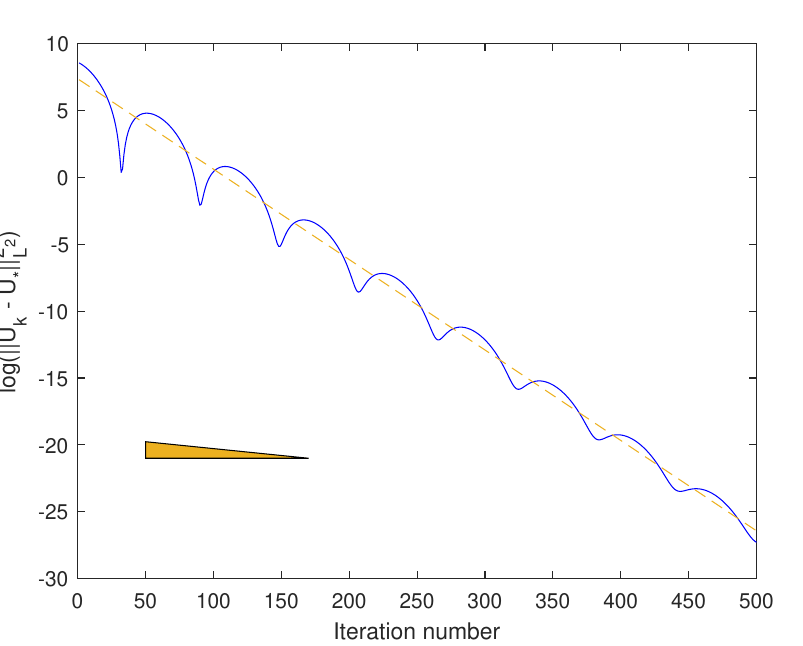}
    \subcaption{$\epsilon=1.0.$}
\end{subfigure}
\begin{subfigure}{.31\textwidth}
    \centering
    \includegraphics[trim={4cm 8.5cm 4cm 8.5cm}, clip, width=\linewidth]{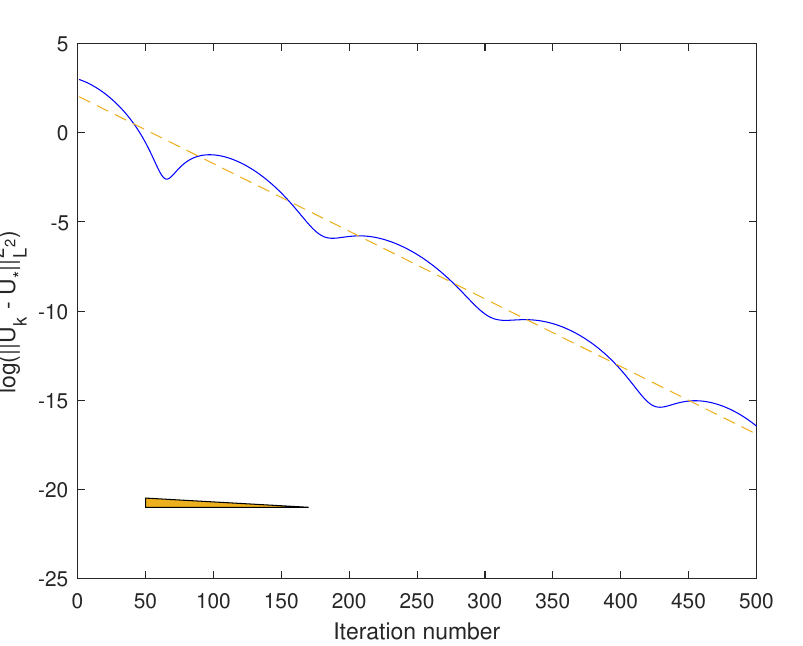}
    \subcaption{$\epsilon=0.1.$}
\end{subfigure}
\caption{Plot of $\log\|U_k-U_*\|^2$ vs $k$ ($1\leq k \leq 500$)  when using hyperparameters specified in Table \ref{tab: theoretical rate vs actual rate} to solve Cahn-Hilliard equation \eqref{CH equ} with different $\epsilon_0$ on a $128\times 128$ grid. }
\label{fig: verify actual rate and theoretical rate B }
\end{figure}

\begin{remark}
  \eqref{select hyperparameters } may also not be the optimal choice of hyperparameters. We provide suggestions on selecting the optimal hyperparameters in section \ref{sec: hyperparam select}.
\end{remark}

\section{Numerical examples}\label{sec: numerical example }
In this section, we test the proposed algorithm on four types of RD equations, namely the Allen-Cahn equation, the Cahn-Hilliard equation, an RD equation with variable coefficients (mobility term), and a 6th-order reaction-diffusion equation. We verify the independence between the convergence rate of our algorithm and the grid size $N_x$. We discuss how the hyperparameters of the proposed algorithm are chosen to achieve the optimal (or near-optimal) performance via numerical experiments. We also provide comparisons between the implicit scheme with adaptive step size $h_t$ and the IMEX scheme on long-time range computation. At the end of this section, we make comparisons with three commonly used algorithms for resolving the time-implicit schemes such as the nonlinear SOR \cite{merriman1994motion}, the preconditioned fixed point method \cite{bai2007preconditioned} and Newton's method \cite{christlieb2014high}. 

For all the numerical examples in this section, if not specified, we always set the hyperparameters $\omega=1$ and $\epsilon = 0.1$. We terminate the iteration whenever $\|\mathrm{Res}(U_k)\|_\infty < tol$ with $tol=10^{-6}$. Here the residual term $\mathrm{Res}(U_k)$ is defined in \eqref{def: residual }. All numerical examples are imposed with periodic boundary conditions. We adopt the central discretization scheme to discretize the Laplace operator $\Delta$, i.e., we set the discretized Laplace operator as $\mathrm{Lap}^P_{h_x}$ defined in \eqref{def: periodic discrete Laplace operator }.

Among four equations discussed in this section, equations \eqref{AC_equ}, \eqref{CH equ}, and \eqref{6thorder} have already been considered in \cite{liu2024first}, where more numerical results are demonstrated. In this research, we mainly use them as test equations for validating our theoretical findings and justifying the effectiveness of our method. 

All the numerical examples are computed using MATLAB on a laptop with 11th Gen Intel Core i5-1135G7 @ 2.40GHz CPU and 16.0 GB RAM. The corresponding codes are provided at \url{https://github.com/LSLSliushu/PDHG-method-for-solving-reaction-diffusion-equations/tree/main}.

\subsection{Tested equations}\label{sec: tested eq }
Throughout this section, we denote the double potential function $W(u) = \frac{1}{4}(u^2 - 1)^2$, and thus $W'(u)=u^3 - u.$
\subsubsection{Allen-Cahn equation (AC)}
We consider the Allen-Cahn equation
\begin{equation}
  \frac{\partial u}{\partial t} = a \Delta u - bW'(u), \quad \textrm{on } [0, 0.5]^2\times[0, T], \quad u(x,0)=u_0(x).   \label{AC_equ}
\end{equation}
We set $a=\epsilon_0, b=\frac{1}{\epsilon_0}$ with $\epsilon_0 = 0.01$.  We set the initial condition as $u_0 = 2\chi_{B(x_*, r)}-1$ where $x_*=(0.25, 0.25), r=0.2$. For the precondition matrix $\mathscr{M}$, $\mathcal G_h=I,$ and $\mathcal L_h = \Delta_{h_x}^P$, and $J_f = 2I$. We compare our method and the IMEX method in Figure \ref{fig: our implicit vs IMEX AC }. The zero-level set of the solution $u(\cdot, t)$ of this equation is known to be the curvature flow of a circle \cite{merriman1994motion}. A comparison among the plots of the front positions computed by our method, the Nonlinear SOR method. The real solution is presented on the right-hand side of Figure \ref{fig: our implicit vs IMEX AC }.
\begin{figure}[htb!]
    \centering
    \begin{subfigure}{0.6\textwidth}
        \includegraphics[trim={2cm 8cm 2cm 8cm}, clip, width=0.75\linewidth]{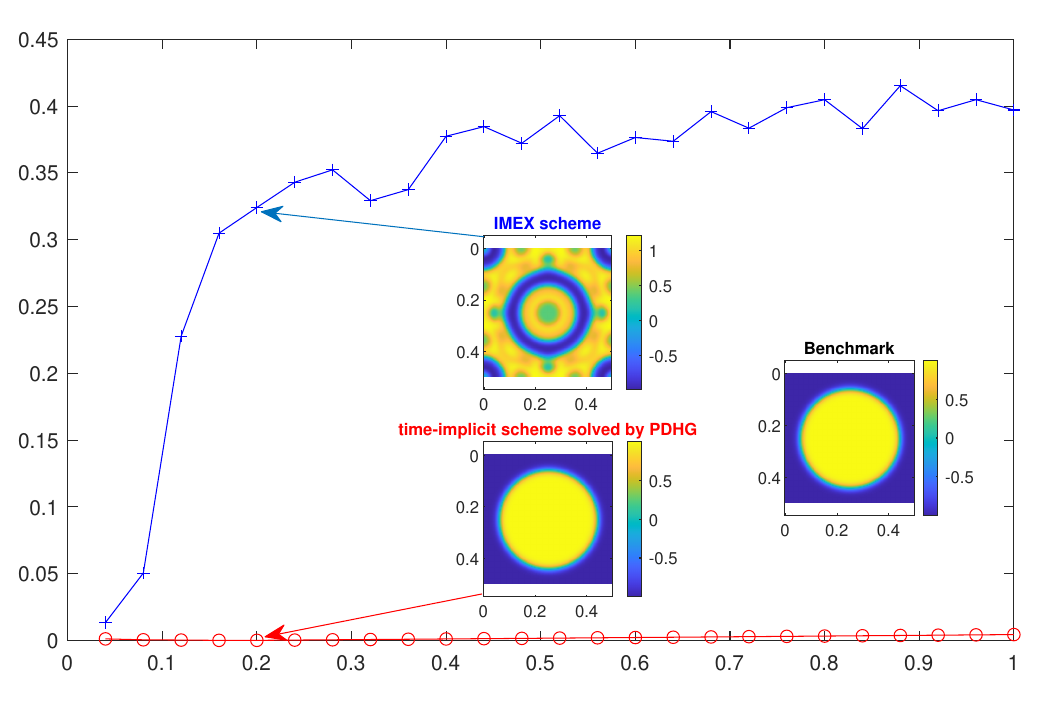}
    \end{subfigure}
    \begin{subfigure}{0.35\textwidth}
        \includegraphics[trim={4cm 8.5cm 4cm 9cm}, clip, width=\linewidth]{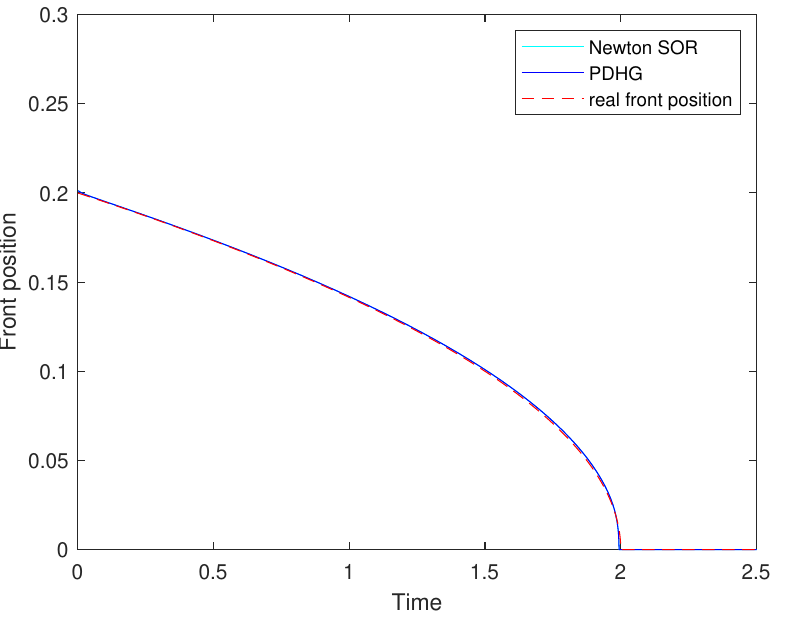}
    \end{subfigure}
    \caption{We solve equation \eqref{AC_equ} with $\epsilon_0 = 0.01$. We set $\tau_U=0.55, \tau_P=0.95$ for our PDHG method. (Left) Comparison between our method (time-implicit scheme solved by the proposed PDHG algorithm) and the IMEX scheme.  We discrete the space into $128\times 128$ lattices. We compute both schemes with large time step size $h_t = 0.02$ and compare with the benchmark solution solved from the same IMEX scheme with $h_t=0.001.$ Blue curve indicates the $L^1$ discrepancy between the IMEX solution on the coarser time grid $U_{\textrm{IMEX}}$ and the benchmark solution $U_{\star}$. Red curve indicates the $L^1$ discrepancy between the time-implicit solution $U_{\textrm{PDHG}}$ and $U_\star$. (Right) Comparison between the front position of the numerical solution solved via our PDHG method and the Nonlinear SOR method, as well as the real front position.
    }\label{fig: our implicit vs IMEX AC }
\end{figure}

\subsubsection{Cahn-Hilliard equation (CH)}
We consider the Cahn-Hilliard equation
\begin{equation}
  \frac{\partial u}{\partial t} = - a \Delta\Delta u + \Delta bW'(u), \quad \textrm{on } [0, 2\pi]^2\times[0, T], \quad u(x,0)=u_0(x).     \label{CH equ}
\end{equation}
We set $a=\epsilon_0^2$ and $b=1$. We set the initial condition $u_0$ as a modified indicator function whose value equals $+1$ if $(x,y)$ falls inside any of the seven circles and $-1$ otherwise, i.e.,
\[u_0(x, y) = -1 + \sum_{i=1}^7 \varphi(\sqrt{(x-x_i)^2 + (y-y_i)^2} - r_i),\]
where the mollifier function $\varphi$ is defined as
\[ \varphi(s) = \begin{cases}
    2e^{-\frac{\epsilon^2}{s^2}} \quad &s<0;\\
    0 \quad &s \geq 0
\end{cases}, \quad \textrm{with } \epsilon = 0.1. \]
The centers and radii of these seven circles are listed in Table~\ref{tab: seven circles}.
\begin{table}[htb!]
    \centering
    \begin{tabular}{|c||c|c|c|c|c|c|c|}
    \hline
          $i$ &  $1$     &  $2$        &  $3$        & $4$        & $5$      & $6$   & $7$  \\
          \hline
          \hline
       $x_i$  &  $\pi/2$ &  $\pi/4$    &  $\pi/2$   & $\pi$       & $3\pi/2$ & $\pi$ & $3\pi/2$\\
       $y_i$  &  $\pi/2$ &  $3\pi/4$   &  $5\pi/4$  & $\pi/4$     & $\pi/4$  & $\pi$ & $3\pi/2$\\
       $r_i$  &  $\pi/5$ &  $2\pi/15$  &  $\pi/15$  & $\pi/10$    & $\pi/10$ & $\pi/4$ & $\pi/4$\\
    \hline
    \end{tabular}
    \caption{Centers and radius of the 7 circles.}
    \label{tab: seven circles}
\end{table}
For the precondition matrix $\mathscr{M}$, $\mathcal G_h = \mathcal L_h = \Delta_{h_x}^P$, and $J_f = 2I$.

\subsubsection{A reaction-diffusion equation with variable coefficient (VarCoeff)}

We consider the following equation with variable coefficient (mobility term) $\sigma(\cdot)$,
\begin{equation}
  \frac{\partial u}{\partial t} = a \nabla\cdot(\sigma(x)\nabla u ) - bW'(u), \quad \textrm{on } [0, 2\pi]^2\times[0, T], \quad u(x,0)=u_0(x).   \label{VarCoeff}
\end{equation}
We choose $a=\epsilon_0, b=\frac{1}{\epsilon_0}$ with $\epsilon_0=0.01$. The media $\sigma(x, y) = 1+\frac{\mu}{2}(\sin^2 x + \sin^2 y)$ with $\mu=5.0.$ We set the initial condition $u_0 = \frac{1}{2}(\cos(4x)+\cos(4y))$. We adopt the following time-implicit scheme
{\footnotesize
\begin{equation} 
\frac{U_{ij}^{t+1}-U_{ij}^t}{h_t} = \frac{a}{h_x^2} (\sigma_{i+\frac{1}{2},j}(U_{i+1, j} - U_{i,j}) - \sigma_{i-\frac{1}{2},j}(U_{i,j} - U_{i-1, j}) + \sigma_{i,j+\frac12}(U_{i, j+1} - U_{i,j}) - \sigma_{i,j-\frac12}(U_{i,j} - U_{i, j-1})) - bW'(U^{t+1}_{ij}), \label{implicit schm for RD with mobility }
\end{equation}
}
where $0\leq t\leq N_t-1$, $1\leq i,j\leq N_x$, and $U_{N_x+1, j}=U_{1,j}, U_{0, j} = U_{N_x, j}; U_{i, N_x+1}=U_{i, 1}, U_{i, 0}=U_{i, N_x}$ for all $1\leq i,j\leq N_x.$ And we set $\sigma_{pq}=\sigma((p-1)h_x, (q-1)h_x)$ for any $p,q\in\mathbb{Q}$.

For the precondition matrix $\mathscr{M}$, $\mathcal G_h = I,$ we approximate $\mathcal L_h$ by $-\overline{\sigma}\Delta_{h_x}^P$, whose matrix-vector multiplication and inversion can be efficiently computed via the FFT algorithm. Here $\overline{\sigma}=\frac{1}{|\Omega|}\int_\Omega \sigma(x,y)~dxdy = 1+\frac{\mu}{2}$ denotes the average of $\sigma$ over $\Omega=[0, 2\pi]^2$. We set $J_f = 2I.$ We choose $\tau_U=0.5, \tau_P=0.95$ when applying our PDHG method to solve the time-implicit scheme \eqref{implicit schm for RD with mobility }.

The numerical solutions to \eqref{VarCoeff} are provided in Figure \ref{VarCoeff solution PDHG}. A series of residual decay plots throughout our method are demonstrated in Figure \ref{fig: res decay plots VarCoeff }. 

Furthermore, we denote 
\[ E(u) = \int_{\Omega} \frac{a}{2}\sigma(x)|\nabla u(x)|^2 + bW(u(x))~dx, \]
as the free energy functional associated with the reaction-diffusion equation \eqref{VarCoeff}. Denote
\begin{equation} 
    E_{h_x}(U) = \sum_{1\leq i,j\leq N_x} \left(\frac{a}{2}(\sigma_{i+\frac12, j}|U_{i+1, j} - U_{i, j}|^2 + \sigma_{i, j+\frac12}|U_{i, j+1} - U_{i, j}|^2) + b W(U_{i, j})\right)h_x^2    \label{def : discrete free energy }
\end{equation}
as the discrete analogy of $E(u)$. The free energy $E_{h_x}(U^{t_k})$ versus $t_k$ plot of energy decay is presented in Figure \ref{fig: free energy and relative error in free energy}. In addition, a comparison between the proposed scheme and the IMEX scheme can be found in Figure \ref{fig: our implicit vs IMEX varcoeff }.

\begin{figure}[htb!]
\begin{subfigure}{.16\textwidth}
  \centering
  \includegraphics[trim={4cm 9cm 4cm 9cm},clip, width=\linewidth]{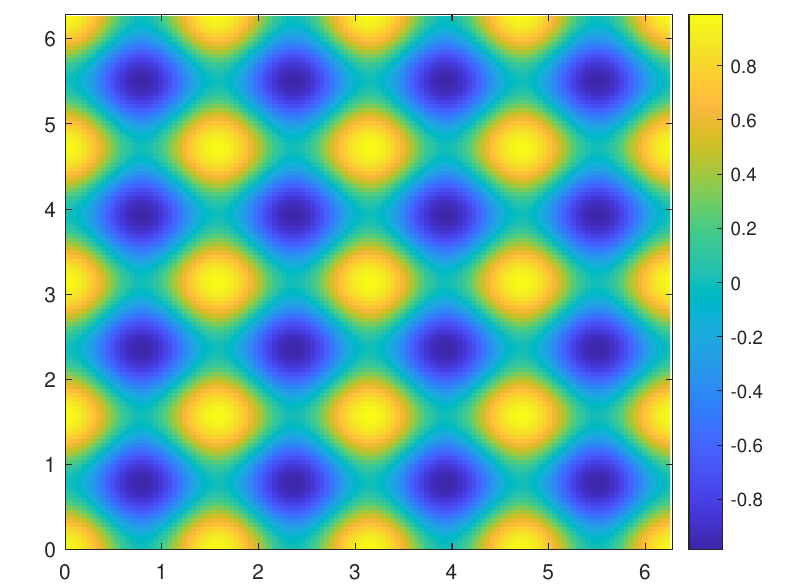}
  \caption*{$t=0.0$}
\end{subfigure}
\begin{subfigure}{.16\textwidth}
  \centering
  \includegraphics[trim={4cm 9cm 4cm 9cm},clip, width=\linewidth]{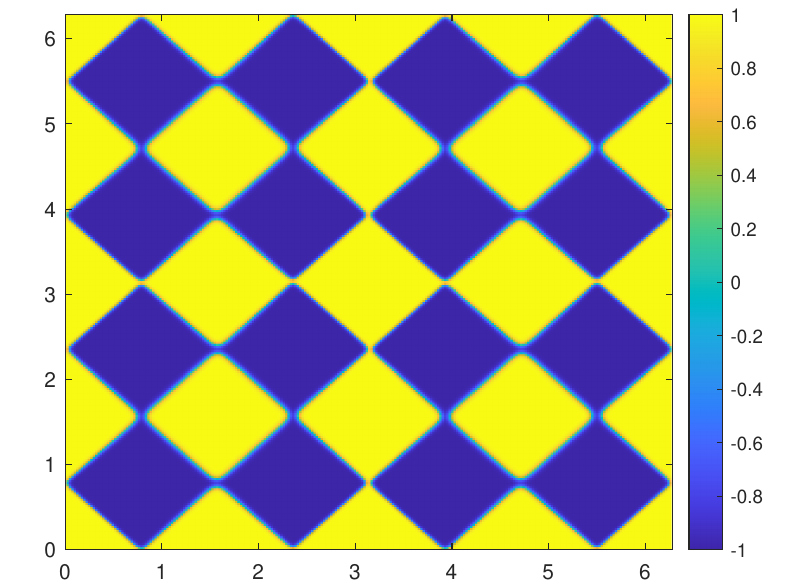}
  \caption*{$t=0.2$}
\end{subfigure}
\begin{subfigure}{.16\textwidth}
  \centering
  \includegraphics[trim={4cm 9cm 4cm 9cm},clip, width=\linewidth]{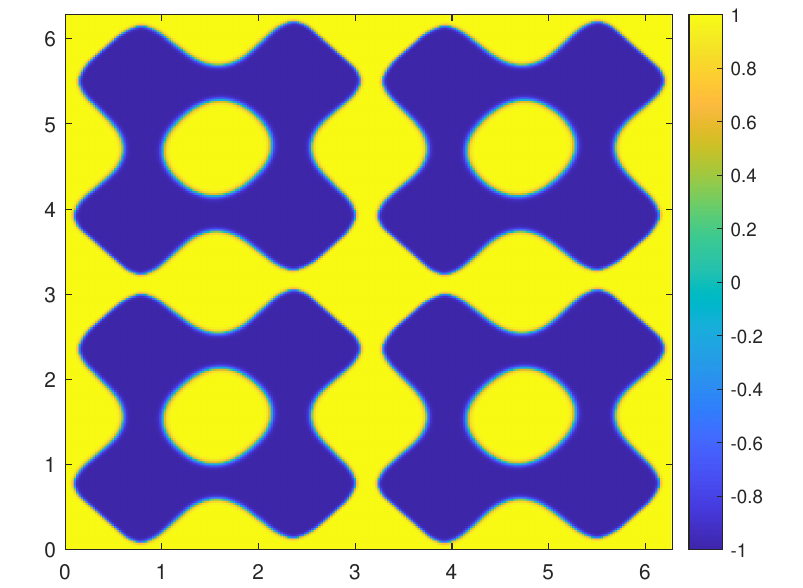}
  \caption*{$t=1.0$}
\end{subfigure}
\begin{subfigure}{.16\textwidth}
  \centering
  \includegraphics[trim={4cm 9cm 4cm 9cm},clip, width=\linewidth]{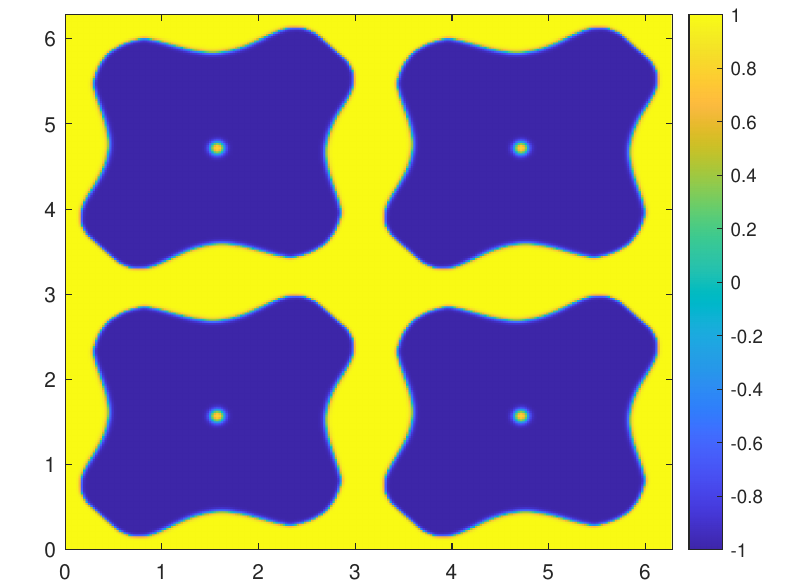}
  \caption*{$t=3.6$}
\end{subfigure}
\begin{subfigure}{.16\textwidth}
  \centering
  \includegraphics[trim={4cm 9cm 4cm 9cm},clip, width=\linewidth]{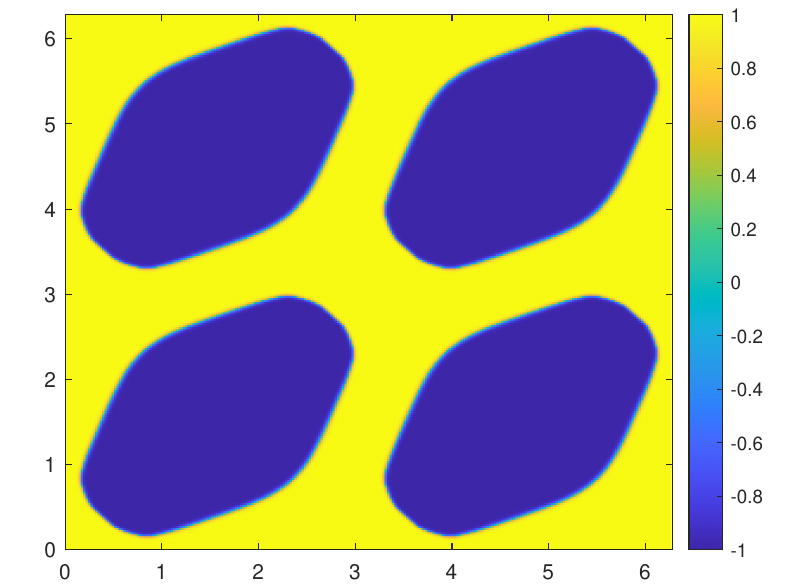}
  \caption*{$t=10.0$}
\end{subfigure}
\begin{subfigure}{.16\textwidth}
  \centering
  \includegraphics[trim={4cm 9cm 4cm 9cm},clip, width=\linewidth]{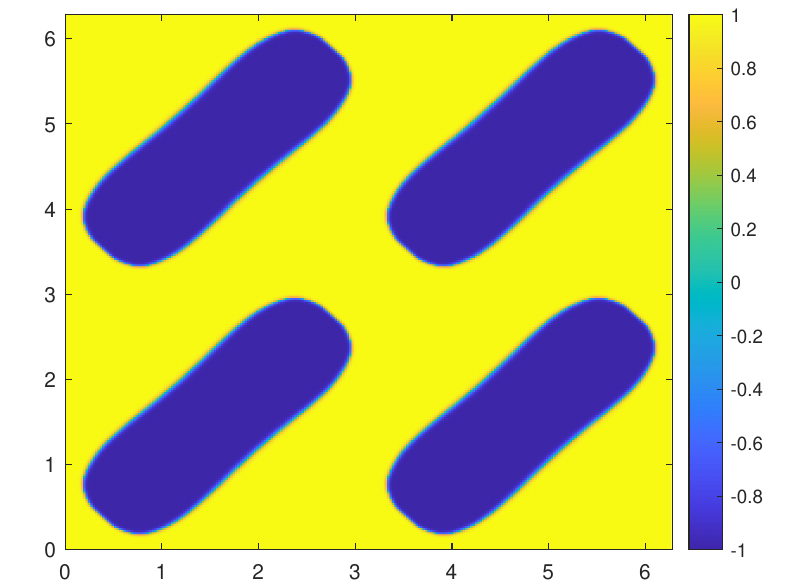}
  \caption*{$t=20.0$}
\end{subfigure}
\caption{ Numerical solution of the time-implicit scheme solved via our PDHG method on a $256\times 256$ grid at different time stages $t=0.0,0.2, 1.0, 3.6, 10.0, 20.0$.}\label{VarCoeff solution PDHG}
\end{figure}
\begin{figure}
    \centering
    \begin{subfigure}{0.31\textwidth}
    \includegraphics[trim={4cm 7.5cm 4cm 9cm},clip, width=\linewidth]{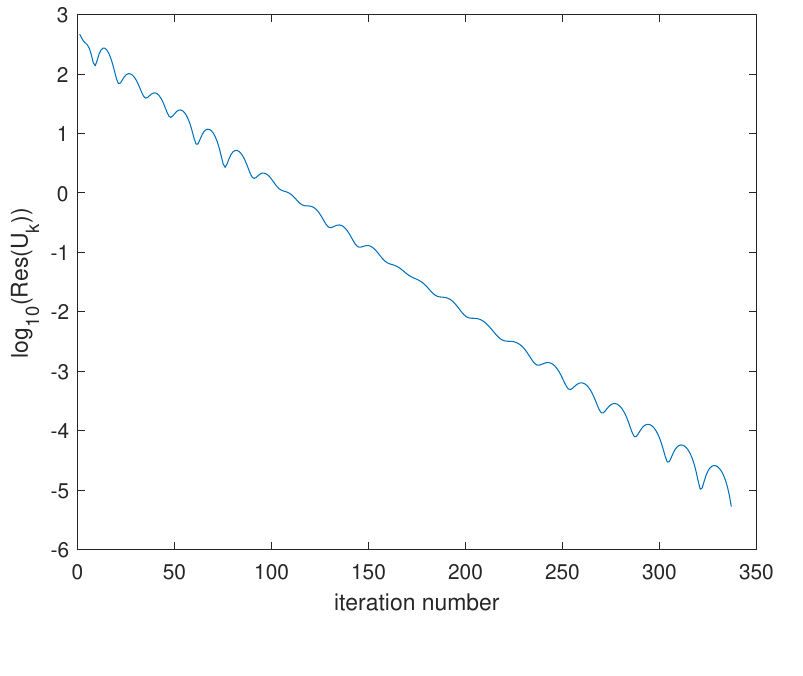}
    \end{subfigure}
    \begin{subfigure}{0.31\textwidth}
    \includegraphics[trim={4cm 7.5cm 4cm 9cm},clip, width=\linewidth]{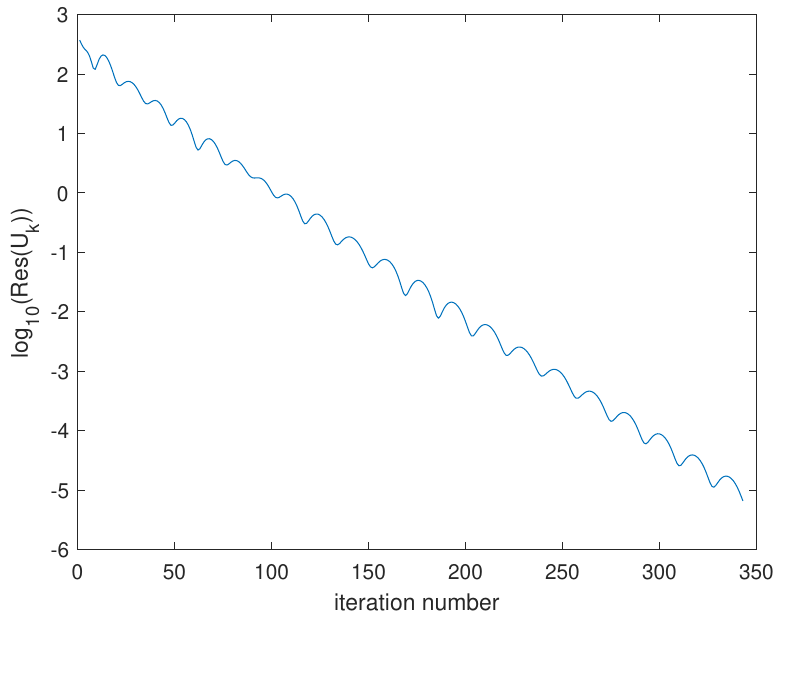}
    \end{subfigure}
    \begin{subfigure}{0.31\textwidth}
    \includegraphics[trim={4cm 7.5cm 4cm 9cm},clip, width=\linewidth]{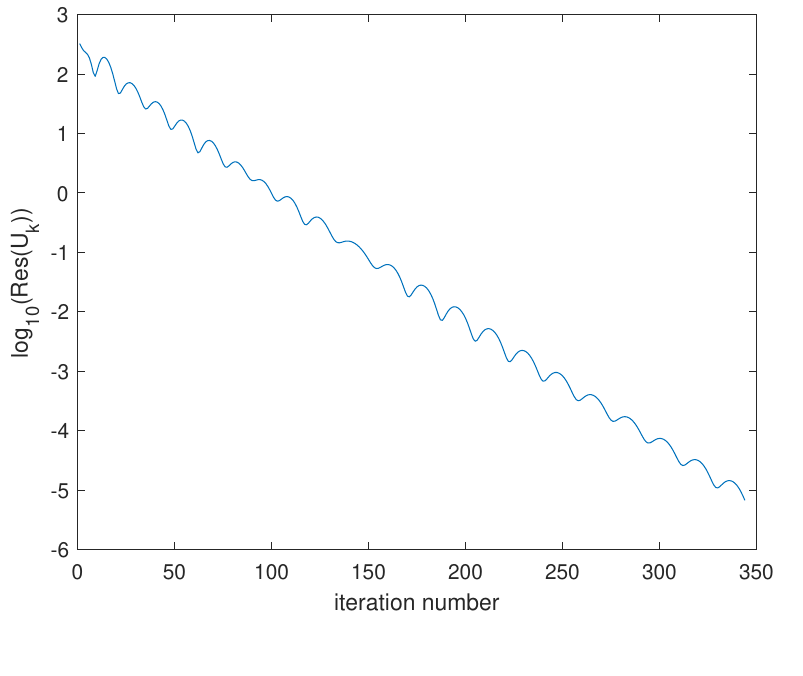}
    \end{subfigure}
    \caption{The loss plot of $\log_{10}(\mathrm{Res}(U_k))$ vs iteration number $k$. We solve \eqref{VarCoeff} with $h_t = 0.002$. The plots (from left to right) are the loss plots at $30$th, $60$th, and $90$th subinterval.}\label{fig: res decay plots VarCoeff }
\end{figure}

\begin{figure}[htb!]
\centering
\begin{subfigure}{.4\textwidth}
    \centering
    \includegraphics[trim={4cm 8.5cm 4cm 8.5cm}, clip, width=\linewidth]{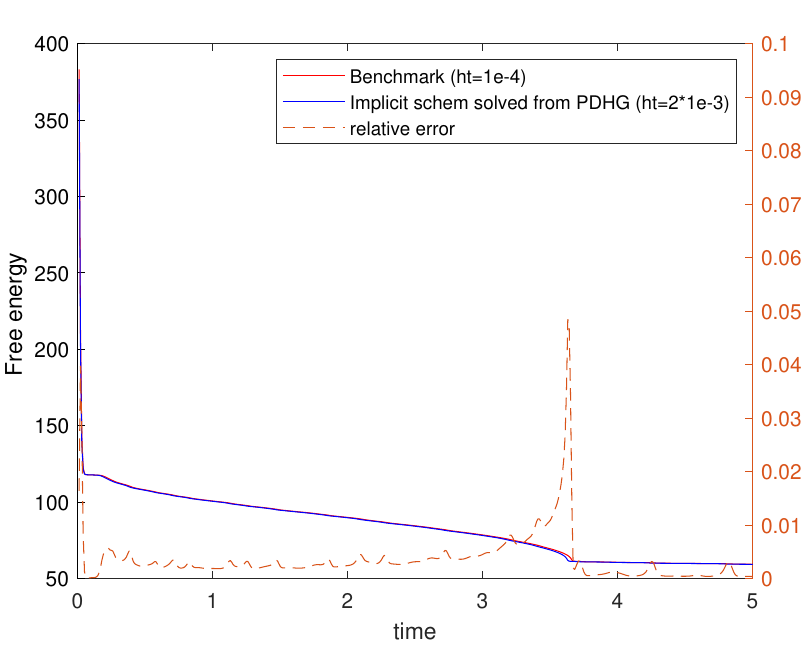}
\end{subfigure}
\hspace{0.5cm}
\begin{subfigure}{.4\textwidth}
    \centering
    \includegraphics[trim={4cm 8.5cm 4cm 8.5cm}, clip, width=\linewidth]{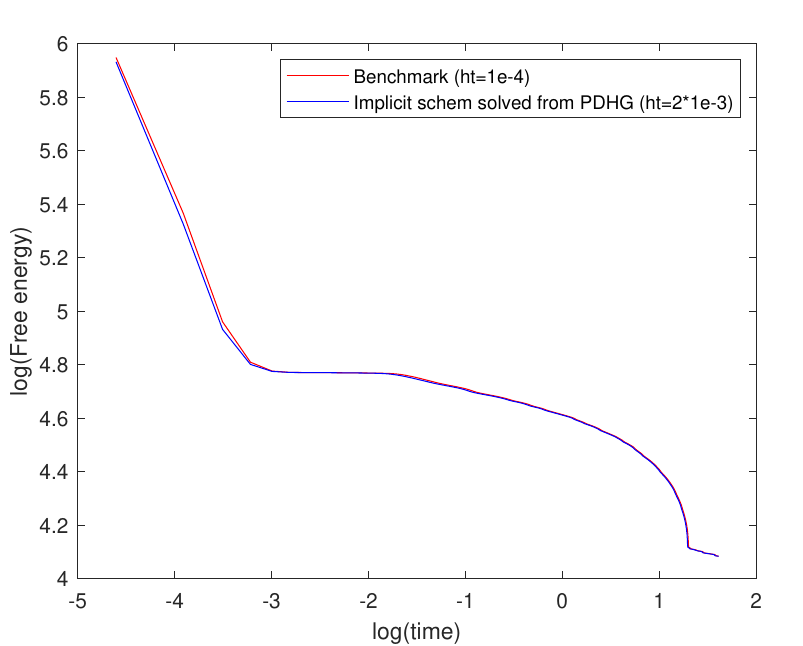}
\end{subfigure}
\caption{ We compute the free energy on $[0, 5].$ (Left) Free energy decay (blue) of the time-implicit scheme (solved by PDHG method) with $h_t=2\cdot 10^{-3}$, and the reference energy decay (red) solved from IMEX scheme with $h_t=10^{-4}.$ The relative error between them is plotted in orange. (Right) The $\log-\log$ plot of free energy.}
\label{fig: free energy and relative error in free energy}
\end{figure}

\begin{figure}[htb!]
    \centering
    \begin{subfigure}{.55\linewidth}
    \includegraphics[trim={1cm 7.5cm 1cm 7.2cm}, clip, width=\linewidth]{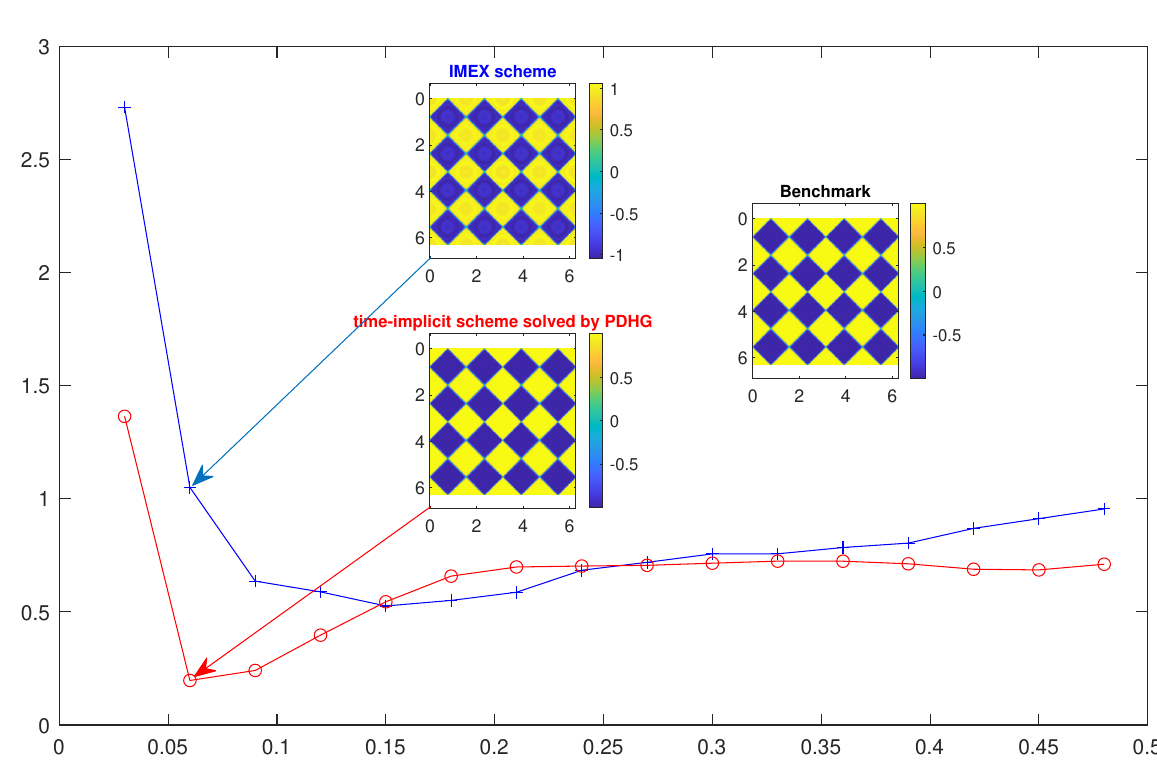}
    \end{subfigure}
    \begin{subfigure}{.27\linewidth}
        \includegraphics[trim={2cm 8cm 2cm 8cm}, clip, width=\linewidth]{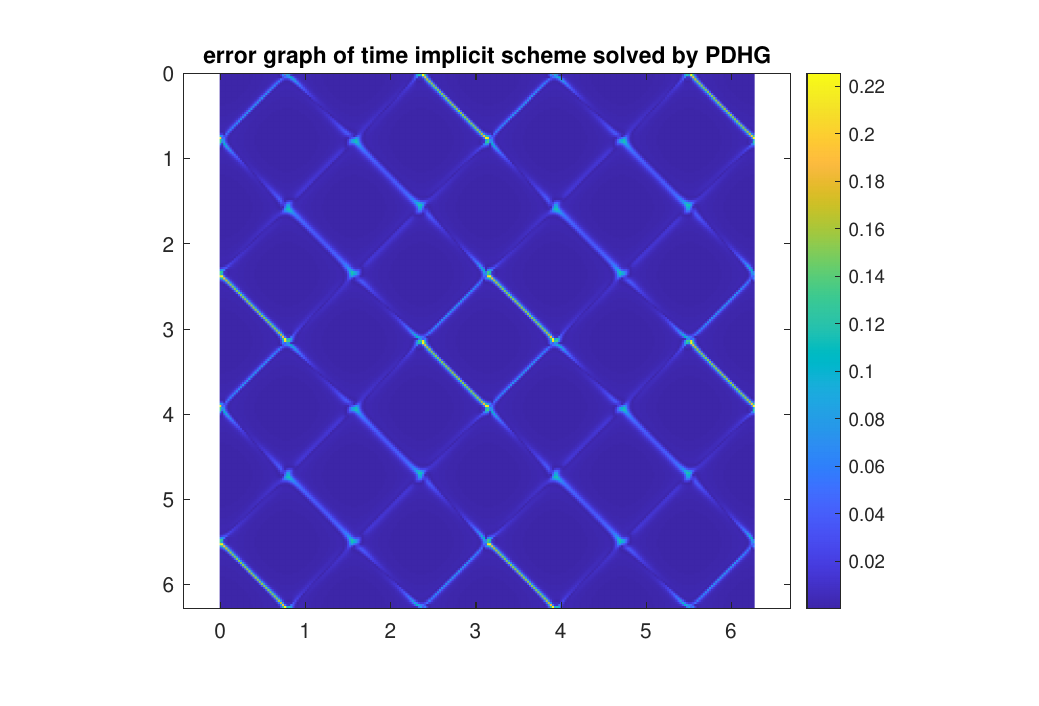}
        \includegraphics[trim={2cm 8cm 2cm 8cm}, clip, width=\linewidth]{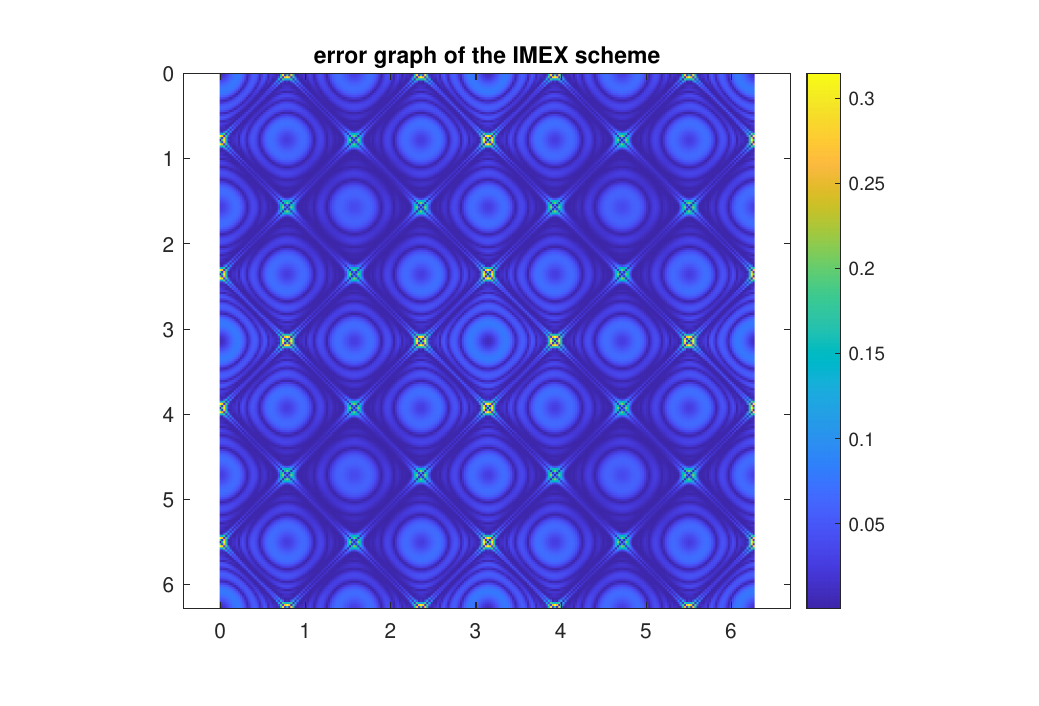}
    
    \end{subfigure}
    \caption{(Left) Comparison between our method (time-implicit scheme solved by the proposed PDHG algorithm) and the IMEX scheme. We discretize the space into a $256\times 256$ lattice. We compute both schemes with large time step size $h_t = 0.01$ and compare with the benchmark solution solved from the same IMEX scheme with $h_t=0.001.$ Blue curve indicates the $L^1$ discrepancy between the IMEX solution on the coarser time grid $U_{\textrm{IMEX}}$ and the benchmark solution $U_{\star}$. Red curve indicates the $L^1$ discrepancy between the time-implicit solution $U_{\textrm{PDHG}}$ and the benchmark $U_\star$. (Right) Plot of $|U_{\textrm{PDHG}}-U_\star|$ (up); and plot of $|U_{\textrm{IMEX}}-U_\star|$ (down). }
    \label{fig: our implicit vs IMEX varcoeff }
\end{figure}

\subsubsection{A 6th-order Reaction-Diffusion Equation (6th-order)}

We consider the following 6th-order Cahn-Hilliard-type equation:
\begin{equation}
  \frac{\partial u}{\partial t} = \Delta (\epsilon_0^2\Delta - (W''(u) - \epsilon_0^2) \mathrm{Id} ) (\epsilon_0^2\Delta u - W'(u)), \quad \textrm{on } [0, 2\pi]^2\times [0, T], \quad u(\cdot,0) = u_0.  \label{6thorder}
\end{equation}
In this example, we choose parameter $\epsilon_0 = 0.18$. We set the initial condition 
\[ u_0(x,y) = 2 e^{\sin x+\sin y-2} + 2.2 e^{-\sin x-\sin y-2}-1. \]
When we set up the precondition matrix $\mathscr{M}$, we approximate $\mathcal G_h$ by 
\[\Delta_h(\epsilon_0^2\Delta_h - W''(\pm 1) + \epsilon_0^2) = \Delta_h(\epsilon_0^2\Delta_h - 2 + \epsilon_0^2),\]
and set $\mathcal L_h = \epsilon_0^2\Delta_h$. We pick $J_f = 2I$. We choose $\tau_U = 0.5, \tau_P = 0.95$ for our PDHG method. A comparison between our proposed scheme and the IMEX scheme is provided in Figure \ref{fig: our implicit vs IMEX 6thOrder }.

\begin{figure}[htb!]
    \centering
    \begin{subfigure}{.55\linewidth}
    \includegraphics[trim={3cm  9cm 2.5cm 8cm}, clip, width=\linewidth]{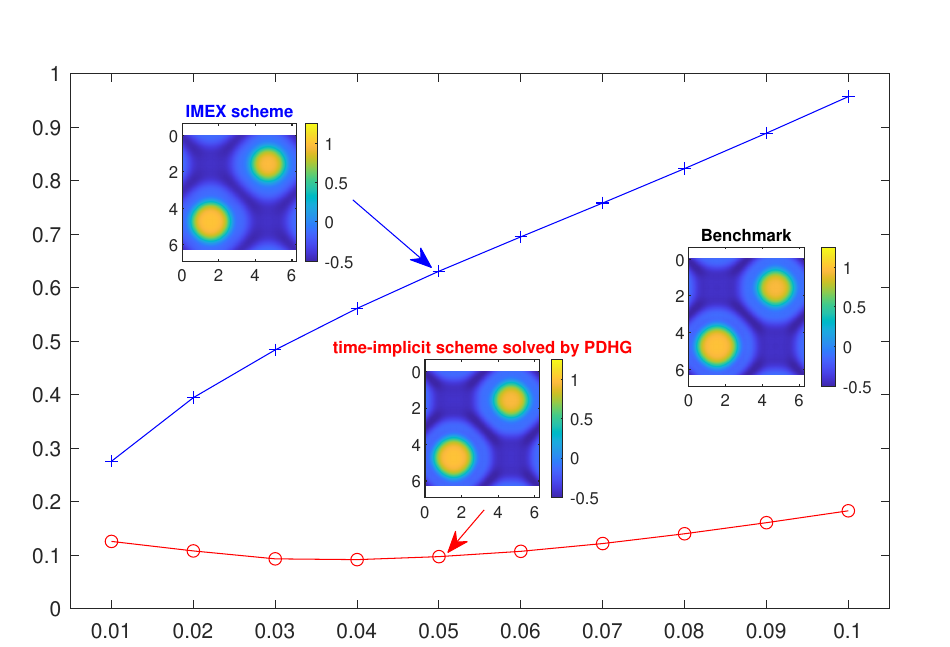}
    \end{subfigure}
    \begin{subfigure}{.27\linewidth}
        \includegraphics[trim={2cm 9cm 2cm 8cm}, clip, width=\linewidth]{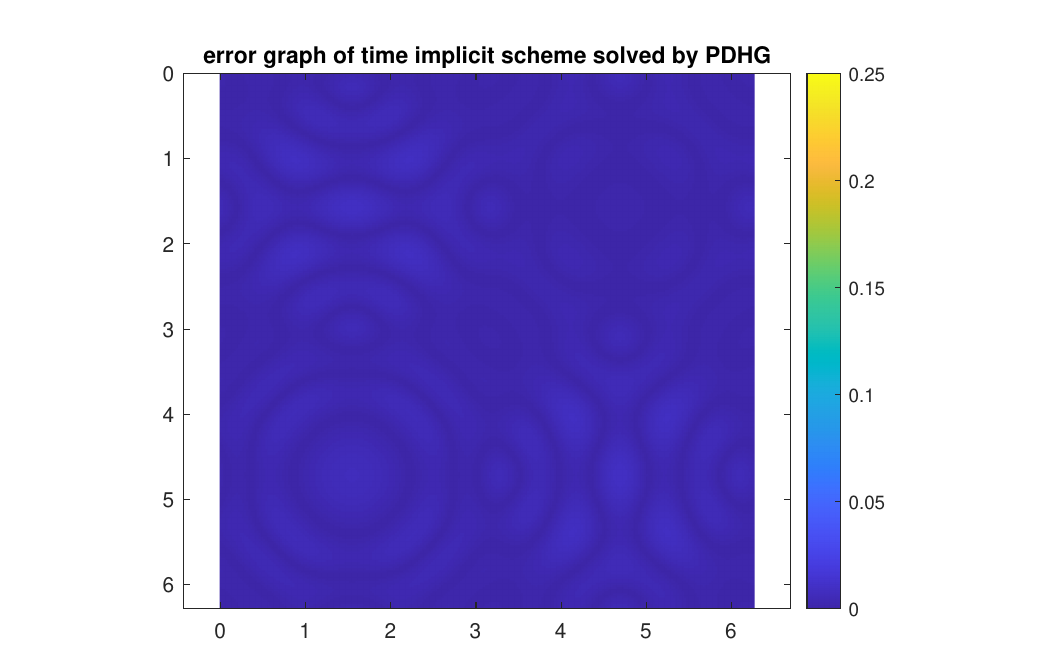}
        \includegraphics[trim={2cm 9cm 2cm 8cm}, clip, width=\linewidth]{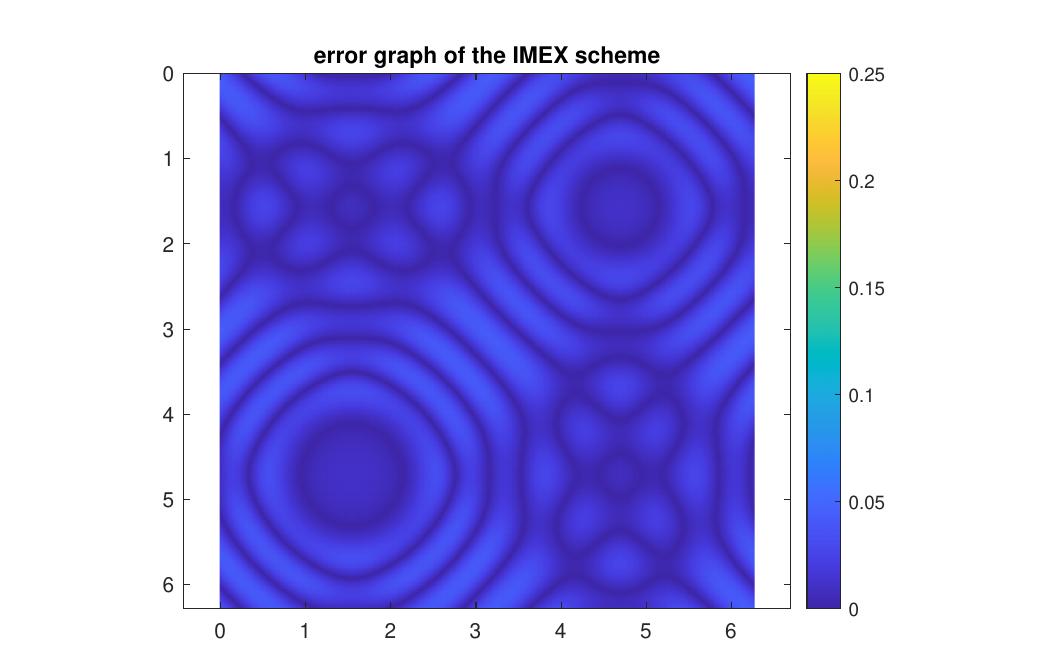}
    \end{subfigure}
    \caption{Similar to Figure \ref{fig: our implicit vs IMEX varcoeff }. (Left) Comparison between the $L^1$ discrepancy of our method and the IMEX scheme with $h_t=0.01$. (Right) Plot of $|U_{\textrm{PDHG}}-U_\star|$ (up); and $|U_{\textrm{IMEX}}-U_\star|$ (down). 
    }
    \label{fig: our implicit vs IMEX 6thOrder }
\end{figure}

\subsubsection{Grid-size-free algorithm} \label{sec: PDHG convergence rate independency wrt grid resolution }
As emphasized previously in the introduction, the convergence rate of our algorithm is independent of the grid size $N_x$. This has also been verified in Corollary \ref{practical corollary } and Corollary \ref{coro: simplify PDHG alg converge}. (Recall that the quantities $\widetilde\theta$ and $\theta$ in these corollaries are independent of $N_x$.) In this subsection, we verify such irrelevance by testing our algorithm on various types of equations with different grid sizes $N_x$. 
The numerical results are demonstrated in Figure \ref{fig: rel iter for convgn - Nx }, where the number of iterations required upon convergence directly reflects the convergence rate of our PDHG algorithm.
\begin{figure}[htb] 
    \centering
    \includegraphics[trim={2cm 8cm 2cm 8.4cm},clip, width=0.68\linewidth]{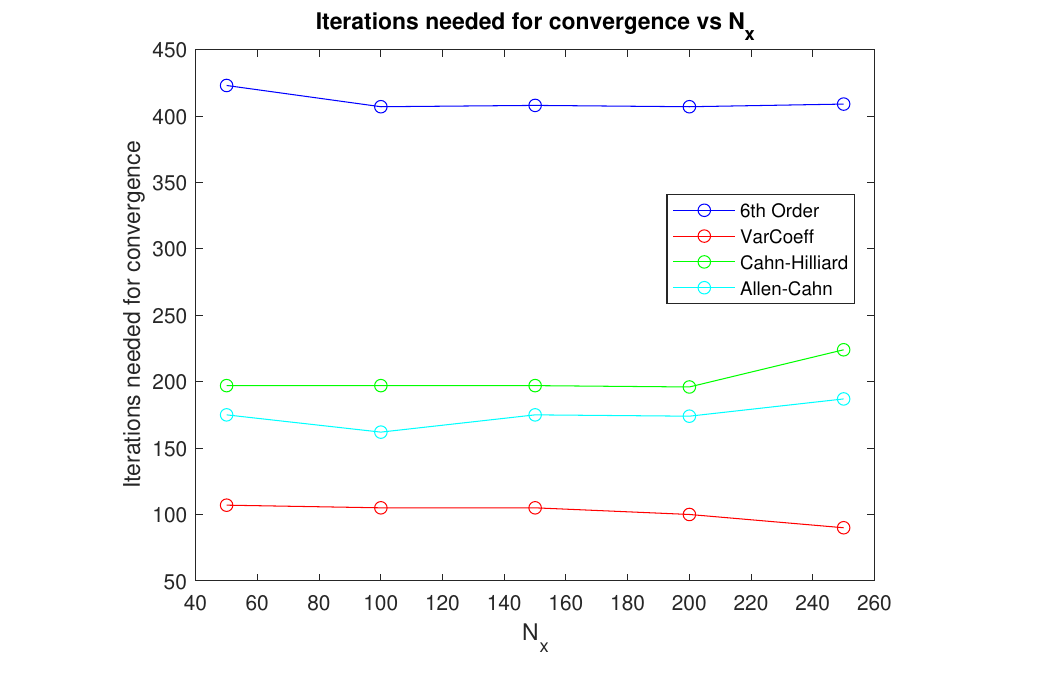}
    \caption{ Relation between the number of iterations needed for convergence and space discretization $N_x$. We verify on four different equations with $N_x = 50, 100, 150, 200, 250.$ We set $\epsilon_0 = 0.01$ for the Allen-Cahn equation and $\epsilon_0 = 0.1$ for the Cahn-Hilliard equation.}
    \label{fig: rel iter for convgn - Nx }
\end{figure}

\subsection{Comparisons with the convex splitting method}
The convex splitting method originally proposed in \cite{eyre1998unconditionally} seeks a specific decomposition of the function $F(\cdot):\mathbb{R}^n\rightarrow\mathbb{R}$, such that the semi-implicit time-discrete scheme applied to the gradient flow 
\begin{equation*}
  \frac{d}{dt} u(t) = - \nabla F(u(t)),
\end{equation*}
is energy stable. To be more specific, suppose $F$ is splitted as $F(u) = F_c(u) - F_e(u)$, where $F_c, F_e$ are convex functions on $\mathbb{R}^n$. Here ``$c$'' denotes contraction, and ``$e$'' denotes expansion, which indicates the effect of the gradient fields $\nabla F_c$ and $\nabla F_e$ in the gradient flow. Consider the scheme
\begin{equation}
  \frac{u^{t+1} - u^t}{2h_t} = - (\nabla F_c(u^{t+1}) - \nabla F_e(u^t)), \quad 0\leq t \leq N_t . \label{cvxsplit_schm}
\end{equation}
It can be shown that $F(u^{t+1})\leq F(u^t)$ for all $t=0, 1, 2, \dots$, i.e., the numerical scheme preserves the decaying of energy.

Many RD equations can be interpreted as gradient flows in certain functional spaces. The convex splitting method has been widely applied to compute these equations. We refer the readers to \cite{xu2019stability} and the references therein for more details. In comparison, we demonstrate that our proposed algorithm, which employs the PDHG method for solving the time-implicit scheme, offers notable advantages over the convex splitting methods. Specifically, it achieves higher accuracy in computing the phase-field models with weak diffusion and strong reaction.

Recall the Allen-Cahn equation \eqref{AC_equ}, which can be cast as the $L^2$ gradient flow of the free energy $\int_\Omega \frac{\epsilon_0}{2} \|\nabla u\|^2\ dx + \frac{1}{\epsilon_0}\int_\Omega W(u) \ dx$. For the numerical solution, we adopt the finite difference scheme and consider the discretized energy function\footnote{Here we denote discrete gradient $\nabla_{h_x} U_{ij} := (\frac{U_{i+1, j} - U_{i-1, j}}{h_x}, \frac{U_{i, j+1} - U_{i, j-1}}{h_x}). $} $F(U) = \frac{\epsilon_0}{2} \sum_{i,j} h_x^2 \|\nabla_{h_x} U_{ij}\|^2 + \frac{1}{\epsilon_0}\sum_{i, j} h_x^2 W(U_{ij})$. Following the discussion in \cite{gu2018convex}, we decompose the double-well potential $W(u)=\frac{1}{4}(u^2-1)^2=W_c(u) - W_e(u)$ in two ways\footnote{Although $W_e(\cdot)$ for scheme (A) is not convex on $\mathbb{R}$, it is convex on the finite interval $[-1.7, 1.7]$. This remains a reasonable splitting as long as $U_{ij}^t$ lies in this interval for arbitrary $1\leq i, j \leq N_x$, $ 0 \leq t \leq N_t$.}:
\begin{align*}
   \textrm{(A)} \quad &  W_c(u) = \frac{1}{2}u^2, \quad W_e(u) = -\frac{1}{4}u^4 + \frac32u^2 - \frac14; \\
   \textrm{(B)} \quad &  W_c(u) = \frac{1}{4}u^4+\frac14, \quad W_e(u) = \frac12u^2.  
\end{align*}
One then considers $F_c(U) = \frac{\epsilon_0}{2} \sum_{i,j} h_x^2 \|\nabla_{h_x} U_{ij}\|^2 + \frac{1}{\epsilon_0}\sum_{i, j} h_x^2 W_c(U_{ij})$ and $F_e(U) = \frac{1}{\epsilon_0}\sum_{i, j} h_x^2 W_e(U_{ij})$. The convex split scheme \eqref{cvxsplit_schm} yields
\begin{equation}
  (I - \epsilon_0 h_t \Delta_{h_x} ) U_{ij}^{t+1} + \frac{h_t}{\epsilon_0} W'_c(U_{ij}^{t+1})   = U^t_{ij} + \frac{h_t}{\epsilon_0}  W'_e(U_{ij}^t), \quad 0 \leq t \leq N_t.  \label{Allen Cahn cvxsplit scheme}
\end{equation}
It is worth mentioning that \eqref{Allen Cahn cvxsplit scheme} reduces to a linear equation if $W_c(\cdot)$ is quadratic. Otherwise, \eqref{Allen Cahn cvxsplit scheme} is a nonlinear root-finding problem and the proposed PDHG algorithm can be applicable here to resolve for $U^{t+1}$.

We apply \eqref{Allen Cahn cvxsplit scheme} using splitting schemes (A) and (B) to (4.1) with \(\epsilon_0 = 0.01\) and compare the results with the time-implicit scheme. The numerical results are presented in Figure \ref{fig: compare fis and css}. As shown in the results, a small \(\epsilon_0\) in this phase-field model poses a challenge for the convex splitting methods, as they are unable to accurately capture the movement of the zero-level set of \(u(\cdot, t)\). In contrast, the time-implicit scheme maintains computational accuracy. Further comparisons between the time-implicit scheme and the convex splitting method can be found in \cite{xu2019stability}.
\begin{figure}[htb!]
\centering
\begin{subfigure}{.4\textwidth}
    \centering
    \includegraphics[trim={4cm 8.5cm 4cm 8.5cm}, clip, width=\linewidth]{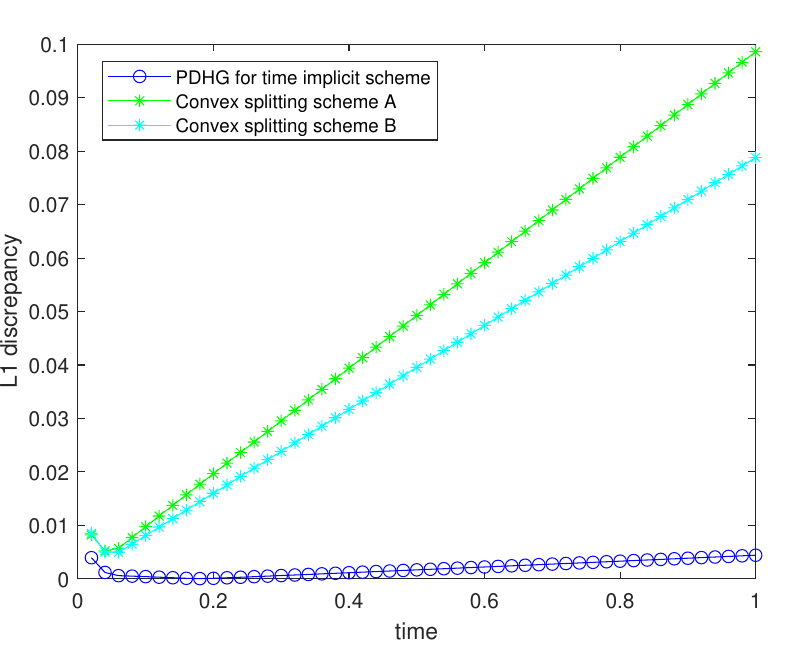}
\end{subfigure}
\hspace{0.5cm}
\begin{subfigure}{.4\textwidth}
    \centering
    \includegraphics[trim={4cm 8.5cm 4cm 8.5cm}, clip, width=\linewidth]{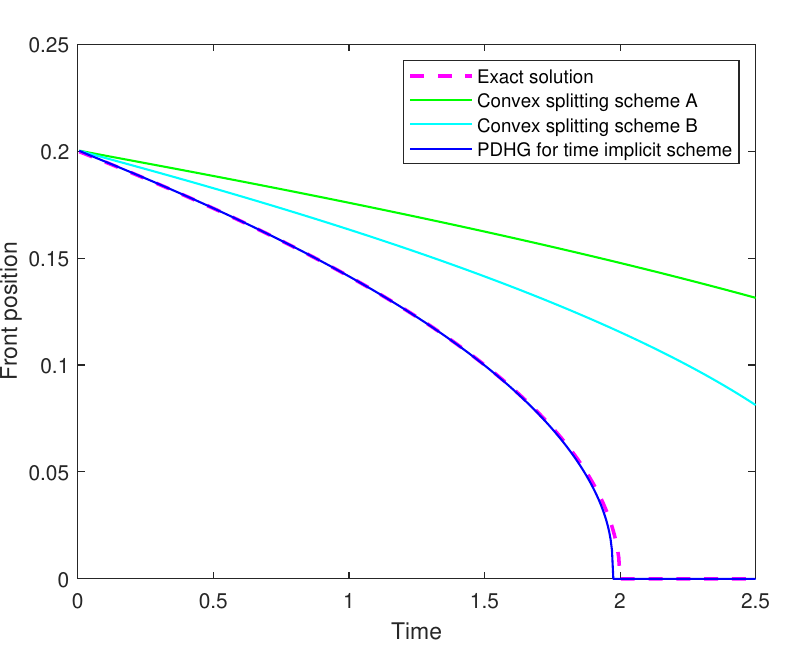}
\end{subfigure}
\caption{ Comparison between the time implicit scheme and the convex splitting scheme: (Left) We discrete the space into $128\times 128$ lattices. Similar to Figure \ref{fig: our implicit vs IMEX AC }, we compute both schemes with rather large time step size $h_t = 0.02$ and plot the $L^1$ discrepancy curve versus time (with the benchmark solution solved from the IMEX scheme with $h_t = 0.001$). (Right) We discretize the space into $256\times 256$ lattices, pick $h_t=0.005$, and plot the front position versus time for different numerical schemes.}
\label{fig: compare fis and css}
\end{figure}

\subsection{Hyperparameter selection}\label{sec: hyperparam select}
Given the spatial and the temporal step sizes $h_x, h_t$ of the implicit scheme, there are 5 hyperparameters to be determined for our algorithm: $N_t, \tau_U, \tau_P, \omega$, and $\epsilon$. In the following, we discuss the choice of these hyperparameters.
\vspace{0.1cm}
\begin{enumerate}[wide, labelindent=0pt]
    \item (Choosing $N_t$) {As mentioned previously in section \ref{subsection  compute with time causality}, one can distribute the computational task into multiple blocks and apply PDHG algorithm to evaluate each block of solutions sequentially.} Suppose we aim to solve an equation on $[0, T_{\textrm{total}}]$. We may divide the time interval into $M\cdot N_t$ subintervals, i.e.,
    \begin{align*}
        [0, T_{\textrm{total}}] = \bigcup_{k=1}^M I_k = \bigcup_{k=1}^M \left(\bigcup_{j=1}^{N_t} I_{k, j}\right), \quad  &  \textrm{where each } I_{k,j} = [(k-1)T+(j-1)h_t,(k-1)T+jh_t].\\
        & \textrm{with } T = T_{\textrm{total}}/M, h_t = T/N_t.
    \end{align*}
    We then apply our proposed method to each subinterval $I_k$ in order to obtain the entire numerical solution on $[0, T_{\textrm{total}}]$. We test our algorithm with different combinations of $M\cdot N_t$ on various types of equations. Unless specified otherwise, we choose $\omega=1, \epsilon=0.1$. We set the stopping criteria as $\|\mathrm{Res}(U_k)\|_{\infty}<10^{-6}$. The efficiency of our algorithm under different scenarios is reflected in CPU time demonstrated in Table \ref{tab: CPU time with diff Nt }. {Among the series of experiments, we observe that it is usually efficient to pick $N_t\leq 5$.}
    \begin{table}[htb!]
     \small
     \centering
     \begin{tabular}{ c|ccccccccc }
        \multirow{2}{*}{Equation Name [$\tau_U,\tau_P,{T_{\textrm{total}}}$]}& \multicolumn{9}{|c}{$M\times N_t$} \\
        \cline{2-10}
          & $1\times 100$ & $2\times 50$ & $4\times 25$ & $10\times 10$ & $20\times 5$ & $25 \times 4$ & $33 \times 3 + 1$ & $50\times 2 $ & $100\times 1$ \\
        \hline\hline
        AC($\epsilon_0 = 0.01$) [$0.5, 0.5, 1.0$] & -- & -- & $ 1198.41 $  & $219.52$ &$137.71$  & $138.65$ & \textbf{88.53}    &$106.41$  &$92.72$ \\
        \hline
        AC($\epsilon_0 = 0.1$) [$0.5, 0.5, 1.0$] & -- & -- & 90.28 & 57.73 & 34.37 & 50.43 & 41.37 & 26.62&\textbf{24.20}\\
        \hline
        AC($\epsilon_0 = 1$) [$0.5, 0.5, 1.0$] & 64.28 & 38.11 & 23.42 & 24.24 & 13.05 & 13.29 & 12.51 & 10.89 &\textbf{10.72}  \\
        \hline
        CH [$0.5, 0.5, 1.0$] & 775.15 &  208.93  & 170.77 & 252.99 &148.96&  183.34  & 101.41 & \textbf{77.35} & 86.37 \\
         \hline
        6th Order [$0.8, 0.8, 0.1$] & -- & -- &  374.82  & 389.90 & 285.12 & 384.52 & 199.11 & \textbf{188.58} & 208.30 \\
        \hline
        { Varcoeff [$0.95, 0.5,1.0$]}  & -- & -- & 305.73 &  206.72 & 204.34 & 153.88 & 144.67 & 142.22 & \textbf{61.46 } \\
        \hline
    \end{tabular}
    \caption{Comparison of CPU time (s) with different $N_t$s (All problems are solved on $256\times 256$ grids).}
    \label{tab: CPU time with diff Nt }
   \end{table}
    \vspace{0.2cm}
    \item (Choosing $\tau_U, \tau_P$) Theoretically, choosing $\tau_U, \tau_P$ as suggested in Corollary \ref{coro: simplify PDHG alg converge} will guarantee the convergence of our method. In practice, we can pick a larger $\tau_U, \tau_P$ to achieve faster convergence. Generally speaking, the optimal step size $\tau_P$ is around $0.9$, and the optimal ratio $\varrho = \frac{\tau_P}{\tau_U}$ should be  slightly less than $2$. The intuition of choosing $\varrho>1$ is that we want to treat the inner optimization of the functional $\widehat{L}(U, Q)$ defined in \eqref{def: hat L(U, Q)} w.r.t. the dual variable $Q$ more thoroughly. In fact, it is common in bi-level optimization to choose a larger, more aggressive step size for the inner-level optimization problem both practically \cite{finn2017model}  and theoretically \cite{lin2020gradient,zuo2022understanding}. A rather efficient choice of the step sizes $(\tau_U, \tau_P)$ is $(0.5, 0.9)$. This is verified in Table \ref{tab: compare diff ratio of tau_u tau_p }, in which we compare the choice $(0.5, 0.9)$ with other combinations of $(\tau_U, \tau_P)$. 
    \begin{table}[htb!]
    \centering
    \begin{tabular}{c|c|ccc }
       \multicolumn{2}{c|}{ $ \epsilon = 0.1$ for all problems }   & $\tau_U = 0.9, \tau_P=0.5$ & $\tau_U = 0.65, \tau_P=0.65$ & $\tau_U = 0.5, \tau_P=0.9$\\
                         \hline\hline
    \multirow{2}{*}{6th Order [$T=0.5$]} & $N_x=256, N_t=50$ & 62.28 & 47.92 & \textbf{30.53}\\
        \cline{2-5}
       & $N_x=128, N_t=50$ & 12.31 & 9.47 &  \textbf{8.54}  \\
        \hline
      \multirow{2}{*}{VarCoeff [$T=0.5$]} 
    & $N_x = 256, N_t=50$ & 103.23 & 109.38 & \textbf{82.38} \\
        \cline{2-5}
       & $N_x = 128, N_t = 50$ & 15.92 & 13.35 & \textbf{9.54} \\
        \hline
    \end{tabular}
    \caption{Comparison on speeds among different ratios $\varrho = \frac{\tau_P}{\tau_U}$ for different equations.}\label{tab: compare diff ratio of tau_u tau_p }
    \end{table}
    \vspace{0.2cm}
    \item (Choosing $\omega$) We pick $\omega=1$ in our experiments. If one increases or decreases $\omega$, one should modify $\tau_P$ correspondingly so that $\widetilde\gamma = \omega  \tau_P$ remains unchanged. Once $\widetilde\gamma\approx 0.9$ is fixed, we generally achieve the optimal (or near-optimal) performance of our algorithm.
    \vspace{0.2cm}
    \item (Choosing $\epsilon$) We set $\epsilon$ around $0.1$. Recall that ${\sup}_Q ~ \{ \widehat{L}(U, Q) \} = \frac{\|\widehat{F}(U)\|^2}{2\epsilon}$. Increasing $\epsilon$ will decrease the convexity of the functional $\frac{\|\widehat{F}(U)\|^2}{2\epsilon}$, which will slow down our algorithm. Decreasing $\epsilon$ brings our algorithm closer to our original version of PDHG method \cite{liu2024first}, in which we discover stronger oscillations towards convergence, which may also affect the efficiency.
\end{enumerate}

\subsection{Long-time computation via adaptive time step size}\label{sec : long-range}
It is an important topic how one can efficiently compute the RD equation for large time $T$ to study its behavior near the equilibrium state. Since 
we can pick large time step size $h_t$ under the implicit scheme, our proposed method offers an opportunity for faster computations to approximate the equilibrium state of RD equations. 

To be more precise, we adopt adaptive time step size $h_t$ during the update of time-implicit scheme \eqref{time-implicit scheme}. Suppose we set up an upper bound $\bar{h}_t>0$ for time step size $h_t$. As $h_t < \bar{h}_t$, we increase $h_t$ by $10\%$ if the proposed PDHG algorithm converges in less than $\bar{n}$ steps. Otherwise, we decrease $h_t$ by $50\%$. If $h_t$ exceeds $\bar h_t$, we reset $h_t=\bar h_t$. 

We implement this strategy of adaptive time step size on equation \eqref{VarCoeff} with $T=20$. As we pick $\epsilon_0=0.01$, \eqref{VarCoeff} possesses weak mobility-diffusion and strong reaction. We solve the equation with $N_x=256$, and set the initial time step size $h_t=0.01$, we set $\bar{h}_t=0.08$. As shown in Figure \ref{fig:  long-range }, our method works efficiently in this example, with an average $h_t\approx0.04$. We also compute the same equation by using the classical IMEX method \cite{hundsdorfer2003numerical} in which we treat the linear part as implicit and the nonlinear part as explicit. We apply the preconditioned conjugate gradient (PCG) algorithm with tolerance\footnote{Suppose we apply PCG algorithm to solve the linear equation $Ax=b$ with $A$ positive definite. Denote $x_k$ as the solution obtained at the $k$-th iteration of the PCG algorithm, then we terminate the PCG iteration if $\|Ax_k-b\|_\infty\leq \eta.$} $\eta = 10^{-10}$ to solve the linear system at each IMEX step. For \eqref{VarCoeff}, the IMEX method only works stably for a rather small time step size $h_t\leq 0.5\cdot 10^{-3}$. 
\begin{figure}[htb!]
    \centering
    \begin{subfigure}{.31\linewidth}
    \includegraphics[trim={2cm  8.5cm 1.5cm 8cm}, clip, width=\linewidth]{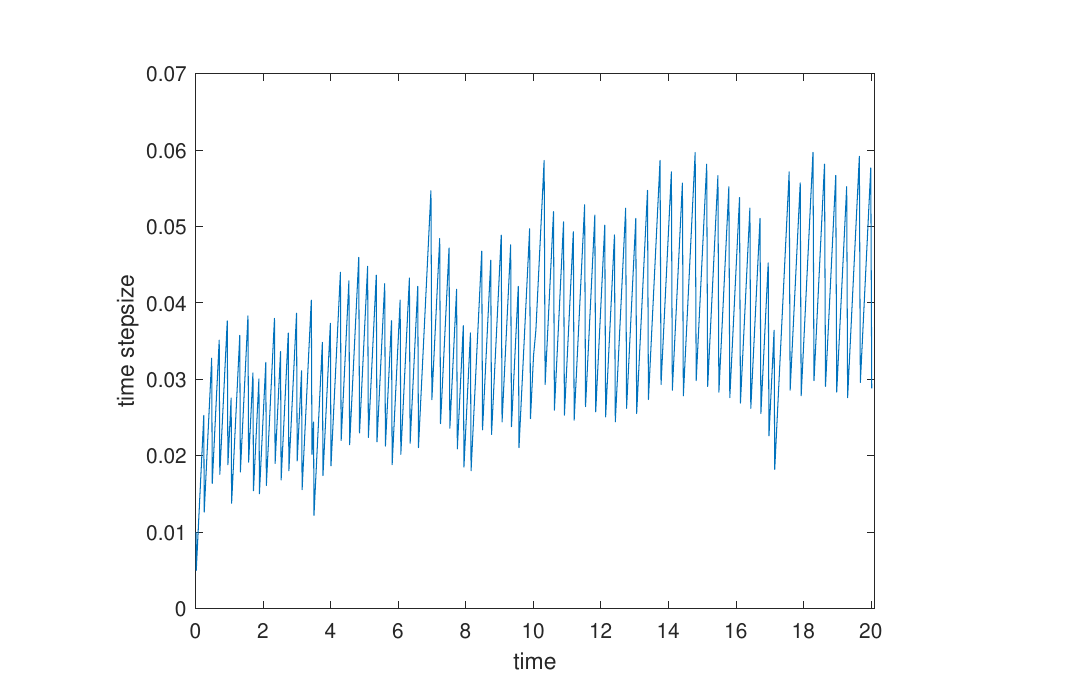}
    \end{subfigure}
    \begin{subfigure}{.31\linewidth}
        \includegraphics[trim={2cm  8.5cm 1.5cm 8cm}, clip, width=\linewidth]{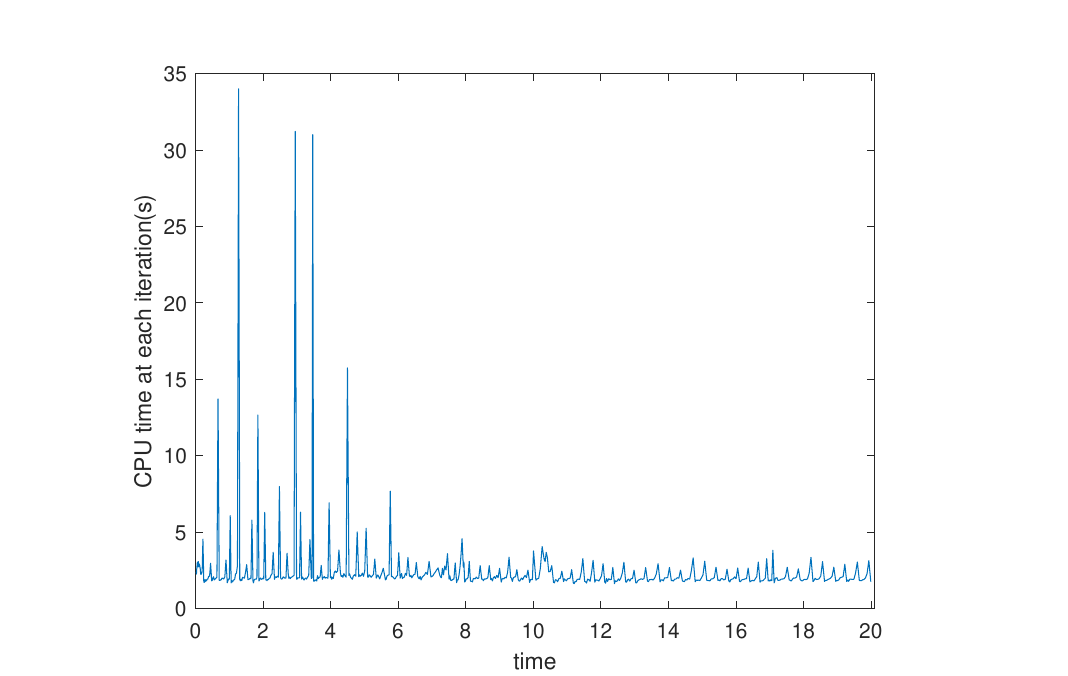}
    \end{subfigure}
   \begin{subfigure}{.31\linewidth}
        \includegraphics[trim={2cm  8.5cm 1.5cm 8cm}, clip, width=\linewidth]{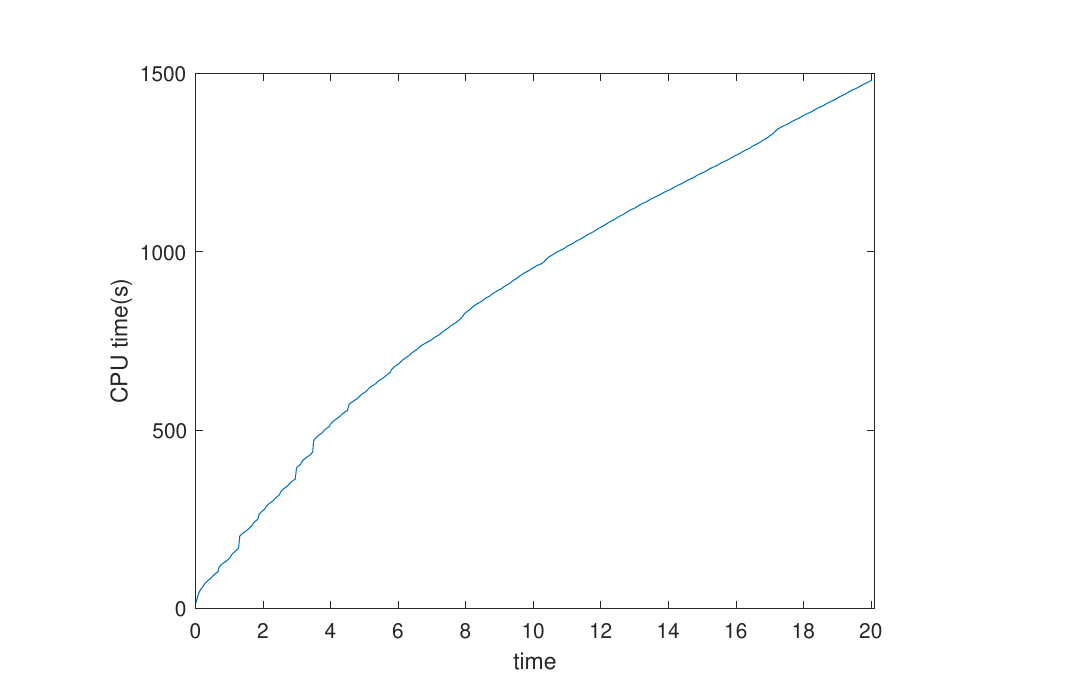}
    \end{subfigure}
    \caption{(Left) Plot of time step size $h_t$ versus physical time $t$; (Middle) Plot of the time cost of PDHG iterations versus physical time $t$; (Right) Plot of accumulated CPU time (s) versus physical time $t$.}
    \label{fig:  long-range }
\end{figure}
As reflected in Table \ref{tab:  compare time PDHG vs IMEX}, our method works better on long-time computation.
\begin{table}
    \small
    \centering
    \begin{tabular}{c|ccc}
        \multirow{2}{*}{ Our method } & \multicolumn{3}{|c}{ IMEX  }\\
        \cline{2-4}
         &  $h_t=0.5\cdot10^{-3}$  & $h_t = 0.2 \cdot 10^{-3}$ & $h_t = 10^{-4}$  \\
        \hline\hline
        1481.76 s & 1814.40 s &  4158.18 s  &  6216.81 s     \\
        \hline
    \end{tabular}
    \caption{Comparison of CPU time (s) between our treatment and the classical IMEX method on computing the equation \eqref{VarCoeff} on $[0, 20]$.
    }
    \label{tab:  compare time PDHG vs IMEX}
\end{table}

\subsection{Comparison on computational efficiency}\label{sec: compare speed with 3 methods }
In this section, we compare the computational efficiency (in CPU time) of the proposed method with some classical algorithms used for solving time-implicit schemes of the reaction-diffusion equations.
\begin{enumerate}
\item (\textbf{Nonlinear SOR}) The Nonlinear SOR (NL SOR) method is the nonlinear version of the successive over-relaxation (SOR) algorithm. It is used to solve the implicit scheme of the Allen-Cahn equation \eqref{AC_equ} in \cite{merriman1994motion}. We set the tolerance of the Newton's method used in NL SOR as $10^{-10}$. We set $\tau_U=0.55, \tau_P=0.95$ for our PDHG method. We compare NL SOR with our algorithm in Figure \ref{fig: compare CPU time NL SOR vs PDHG }.
\begin{figure}[htb!]
  \centering
  \includegraphics[trim={3.4cm 8.2cm 3.4cm 9cm}, clip, width=0.44\linewidth]{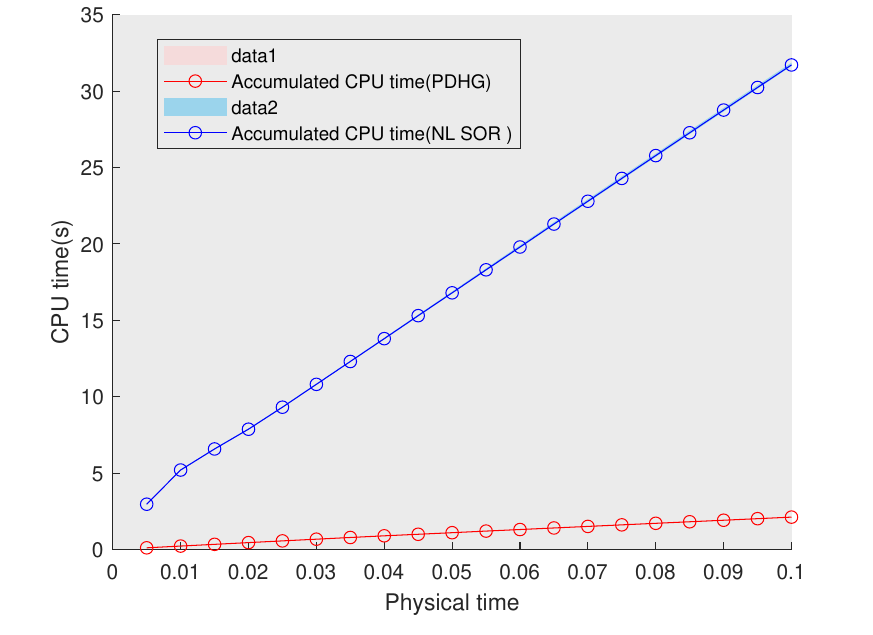}
  \caption{Accumulated CPU time comparison between our method (red) and Nonlinear SOR method (blue) applied to Allen-Cahn equation \eqref{AC_equ} with $\epsilon_0 = 0.1$ and $h_t=0.005.$ We solve the equation on a $128\times 128$ grid. The quantile plots are composed based on $40$ independent runs of both algorithms. }\label{fig: compare CPU time NL SOR vs PDHG }
\end{figure}

\item (\textbf{Fixed point method}) The fixed point method is also a frequently used algorithm to solve the time-implicit scheme of the RD equation. We reformulate the time-implicit scheme \eqref{time-implicit scheme} as
\begin{equation*}
  (I + a h_t \mathcal G_h \mathcal L_h) U^{t+1} = U^t - b h_t \mathcal G_h f(U^{t+1}). \label{fixed pt schemee }
\end{equation*}
For fixed $U^t$, we establish the following fixed point iteration for solving $U^{t+1}$,
\begin{equation*}
  U_{k+1} = (I +  h_t \mathcal G_h ( a \mathcal L_h + b c I) )^{-1} (U^t - b h_t \mathcal G_h (f(U_k) - cU_k)), \quad \textrm{with initial guess } U_0=U^t.
\end{equation*}
Here $c$ is a tunable constant that can be chosen as the value of $f'(\cdot)$ at equilibrium state. When $f(u)=W(u)=\frac{1}{4}(1-u^2)^2$, we set $c=f'(\pm 1) = 2.$ The linear system is solved by the PCG algorithm with tolerance $\eta = 10^{-10}$. We set $\tau_U=0.5, \tau_P=0.95$ for our PDHG method. We apply both algorithms to \eqref{VarCoeff} with $\epsilon_0=0.1$. We compare the fixed point method with our algorithm in Figure \ref{PCGFP computn time}. 
\begin{figure}[htb!]
  \centering
    \includegraphics[trim={3.4cm 8.2cm 3.4cm 9cm}, clip, width=0.48\linewidth]{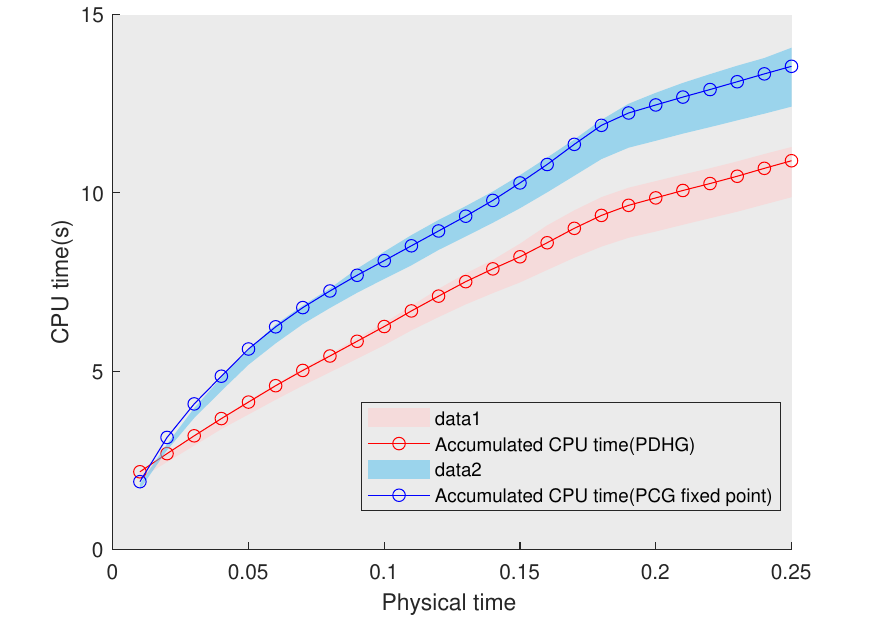}
\caption{Accumulated CPU time comparison between our method (red) and PCG-fixed point iteration (blue). We solve \eqref{VarCoeff} with $\epsilon_0=0.1$ and $h_t=0.01$ on a $256\times 256$ grid. These quantile plots are composed based on $40$ independent runs of both algorithms. }\label{PCGFP computn time}
\end{figure}

\item (\textbf{Newton's method}) Newton's method with the PCG algorithm as its linear solver serves as a popular tool for solving implicit schemes of RD equations with a higher order of spatial differentiation. Here we consider Newton's method introduced in section 3 of \cite{christlieb2014high}. In \cite{christlieb2014high}, Newton's method is applied to the spectral discretization of the solution while here we apply Newton's method to the finite difference scheme. We set $\tau_U=0.5, \tau_P = 0.95$ for our PDHG method. We apply both methods to \eqref{6thorder}. According to our experiments, we observe that when the time step size $h_t\leq 0.005$, Newton's method works more efficiently than the PDHG algorithm. When $h_t > 0.005$,
the PDHG method is faster. Such observation is reflected in Figure \ref{fig: Newton computn time}. Table \ref{tab: PDHG vs PCG Newton } demonstrates that the PDHG method is more efficient than Newton's method when the latter is applied to multi-interval computation with smaller time step sizes.
\begin{figure}[htb!]
\centering
\begin{subfigure}{0.46\textwidth}
    \centering
    \includegraphics[trim={3.4cm 8.2cm 3.4cm 9cm}, clip, width=\linewidth]{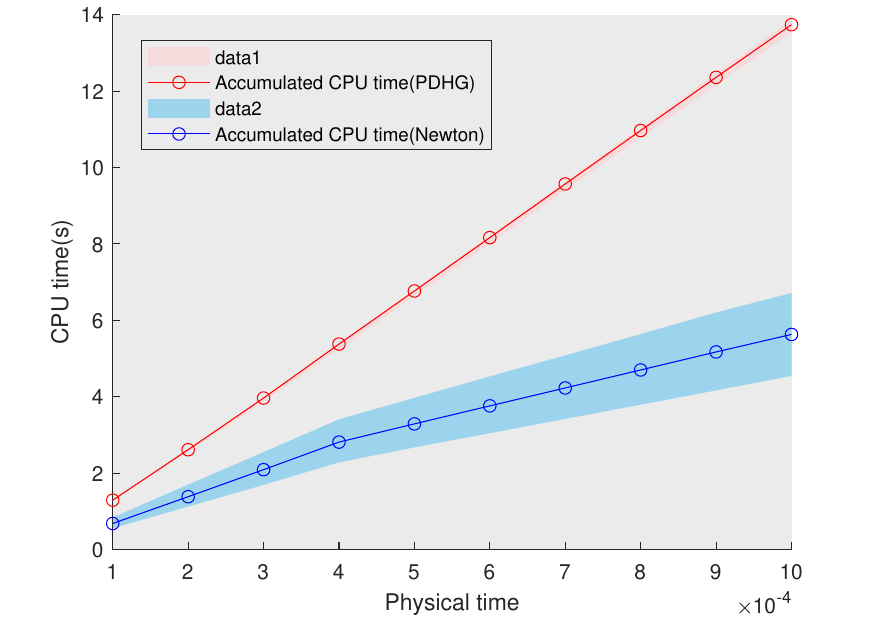}    
\end{subfigure}
\begin{subfigure}{0.46\textwidth}
    \centering
    \includegraphics[trim={3.4cm 8.2cm 3.4cm 9cm}, clip, width=\linewidth]{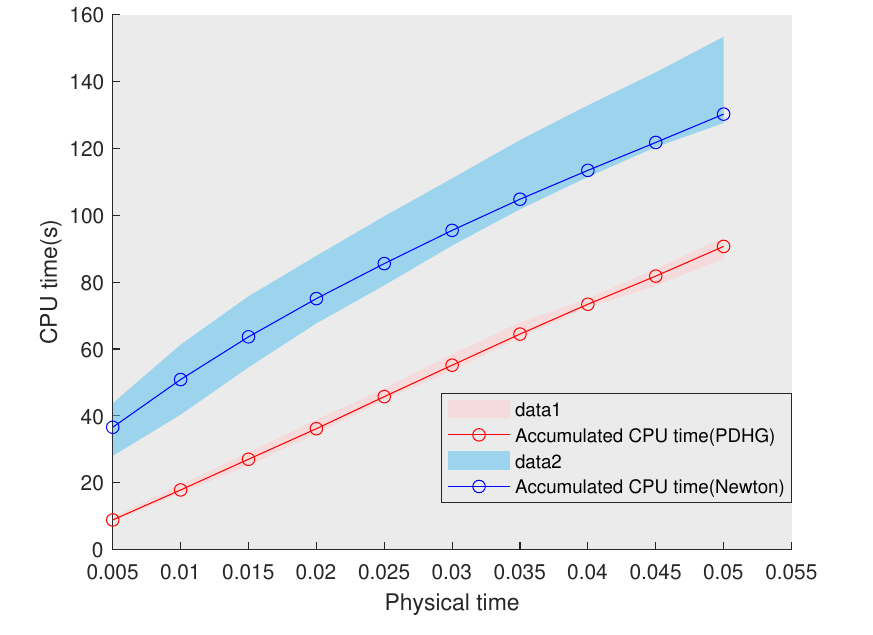}
\end{subfigure}
\caption{Accumulated CPU time comparison between our method (red) and Newton's method (blue). Solving equation \eqref{6thorder} with $h_t=0.001$ (Left) and $h_t=0.005$ (Right). We solve the equation on a $256\times 256$ grid. These quantile plots are composed based on $40$ independent runs of both algorithms. }\label{fig: Newton computn time}
\end{figure}

\begin{table}[htb!]
    \centering
    \begin{tabular}{c|c|cccc}
       Method  & PDHG & \multicolumn{4}{c}{ PCG Newton's method } \\
       \hline
       $h_t\times n$  & $0.01\times 50$ & $0.005\times 100$ & $0.001\times 500$ & $0.0005\times 1000$ & $0.00025\times 2000$\\
       \hline\hline 
       CPU time(s)  & 263.90 & 422.28 & 299.02 & 470.71 & 773.01   \\
    \end{tabular}
    \caption{Time costs of applying the PDHG method and  Newton's method to \eqref{6thorder} on $256\times 256$ grid.  }
    \label{tab: PDHG vs PCG Newton }
\end{table}
\end{enumerate}

\section{Conclusion}
In this research, we reformulate the PDHG algorithm proposed in \cite{liu2024first} by introducing a quadratic regularization term to solve implicit schemes of RD equations. Theoretically, we establish unique existence results for the time-implicit schemes of general RD equations. We further prove the exponential convergence for both the PDHG flow and the proposed discrete-time PDHG algorithm. In addition, we show that the convergence rates are independent of the grid size $N_x$. Our theoretical results  are also supported by numerous numerical experiments. We test the proposed PDHG method via four different types of reaction-diffusion equations. Based on these numerical examples, we verify the optimal (or near-optimal) way to set the hyperparameters of our algorithm. We also verify the efficiency of our method by comparing it with several classical root-finding algorithms, such as the nonlinear SOR method, the fixed point method, and Newton's method. 

We end the discussion by mentioning important future directions.
\begin{itemize}
    \item The convergence rate achieved in this research is not the sharpest rate. Can we establish a sharp convergence rate in terms of the algorithm's hyperparameters?
    \item Currently, all of the proposed preconditioners are time-independent. How can we design a more sophisticated time-dependent preconditioner to assist the convergence of the generalized PDHG algorithm?
    \item As we accumulate multiple time intervals together to formulate a saddle-point scheme for the root-finding problem, we cancel the causalities among different time nodes. Will this causality-free optimization strategy render the possibility of parallel computing for the proposed PDHG time-implicit solvers?
    \item {While the proposed algorithm performs efficiently on reaction-diffusion equations, we aim to extend our approach to simulate general equations in physical modeling, including Fokker-Planck equations and their generalizations in complex systems.}
    \item {Extend the proposed primal-dual approach to nonlinear, high-dimensional equations by integrating deep learning algorithms.}
\end{itemize}

\bibliographystyle{plain}
\bibliography{references}


\newpage
\appendix

\section{Proofs of section \ref{sec: unique existence}}

\subsection{Proof of Theorem \ref{thm: unique existence root-finding}}\label{app: proof th}
\begin{proof}[Proof of Theorem \ref{thm: unique existence root-finding}]
To prove this result, we only need to prove that the following single-step scheme
\begin{equation}
  \frac{U-U^0}{h_t} = - \mathcal G_h ( a\mathcal L_h U + b f(U) ),\label{one step implicit scheme }
\end{equation}
admits a unique solution $U$ for arbitrary $U^0.$ By writing $\xi = U - U^0$, we reformulate \eqref{one step implicit scheme } as
\begin{equation}
  \frac{\xi}{h_t} + \mathcal G_h(a\mathcal L_h (U^0 + \xi) + b f(U^0 + \xi)) = 0.\label{reformulate root-finding}
\end{equation}
We first show that $\xi$ solves \eqref{reformulate root-finding} iff $\xi$ is the critical point of the following variational problem
\begin{equation}
  \min_{\xi\in\mathrm{Ran}(\mathcal G_h)} \; \left\{  \frac{\xi^\top \mathcal{G}_h^\dagger \xi }{2h_t} + \frac{a}{2} (U^0 + \xi)^\top\mathcal L_h (U^0 + \xi) + bW(U^0 + \xi)^\top \boldsymbol{1}  \right\}.\label{proximal scheme reaction diffusion}
\end{equation}
Here we denote $W(\cdot)$ as the primitive function of $f(\cdot).$ Let us define $\mathcal V = \mathrm{Ran}(\mathcal G_h)$ and $\mathcal J(\xi)$ as the function in \eqref{proximal scheme reaction diffusion} for simplicity. Define $\Pi_{\mathcal V}$ as the orthogonal projection from $\mathbb{R}^{N_x\times N_x}$ onto the subspace $\mathcal V$. 

We know that $\xi$ is a critical point of $\mathcal J$ on space $\mathcal V$ iff 
\[ \Pi_{\mathcal V}\nabla\mathcal J(\xi) = 0. \]
By direct calculation, this is equivalent to
\[ \frac{\mathcal G_h^\dagger \xi}{h_t} + a\;\Pi_{\mathcal V} \mathcal L_h (U_0 + \xi) + b\;\Pi_{\mathcal V} f(U+\xi) = 0. \]
Writing the projection $\Pi_{\mathcal V} = \mathcal{G}_h^\dagger\mathcal{G}_h$, we obtain 
\[ \mathcal G^\dagger_h\left(\frac{\mathcal\xi}{h_t} + a\; \mathcal G_h \mathcal L_h (U_0 + \xi) + b \; \mathcal G_h f(U+\xi)\right) = 0. \]
Since the vector inside the above bracket belongs to $\mathcal V$, the above is equivalent to \eqref{reformulate root-finding}. 

We now prove the existence and uniqueness of the minimizer to the variational problem \eqref{proximal scheme reaction diffusion} under condition \eqref{condition on existence and uniqueness}, which implies the theorem. 

By a change of variable $\xi = Q_1 x$, where $Q_1$ is defined as in the spectral decomposition \eqref{spectral decomp   Gh} of $\mathcal G_h$, and $x\in\mathbb{R}^r$, \eqref{proximal scheme reaction diffusion} is equivalent to the following non-constrained optimization problem
\begin{equation}
  \min_{x\in\mathbb R^r} \left\{ \frac{x^\top \Lambda^{-1} x}{2h_t} + \frac{a}{2}x^\top Q_1^\top \mathcal L_h Q_1 x + a\;U^{0\top}\mathcal L_h Q_1 x + b\;W(U^0+Q_1x)^\top\boldsymbol{1} \right\}. 
\end{equation}
Denote $\widetilde{\mathcal J}(x)$ as the function in the above problem. 
Computing $\nabla\widetilde{\mathcal J}$ yields
\begin{align*}
  \nabla\widetilde{\mathcal J}(x) = & \frac{\Lambda^{-1}}{h_t} x + a \; Q_1^\top\mathcal L_h Q_1 x + a Q_1^\top \mathcal L_h U^{0}+ b \; Q_1^\top (V'(U^0+Q_1x) + \phi(U^0 + Q_1x)). 
\end{align*}
Then
\begin{align*}
  & (x-y, \; \nabla\widetilde{\mathcal{J}}(x) - \nabla\widetilde{\mathcal{J}}(y)) \\
= \; & \frac{1}{h_t} (x-y)^\top\Lambda(x-y) + a (x-y)^\top Q_1^\top \mathcal L_h Q_1(x-y) + b (Q_1(x-y))^\top (V'(U^0+Q_1x) - V'(U^0+Q_1y))\\
  \; & + b (Q_1(x-y))^\top (\phi(U^0+Q_1x) - \phi(U^0+Q_1y)) \\
\geq \; & (x-y)^\top \left(\frac{\Lambda}{h_t} + a Q_1^\top \mathcal L_h Q_1 \right)(x-y) + b K\|x-y\|^2 - b \mathrm{Lip}(\phi)\|x-y\|^2\\
\geq \; & \left(\lambda_{\min}\left(\frac{\Lambda^{-1}}{h_t} + a Q_1^\top \mathcal L_h Q_1 \right) + bK - b\mathrm{Lip}(\phi)\right) \|x-y\|^2.
\end{align*}
Then the condition \eqref{condition on existence and uniqueness} leads to 
$$\alpha = \lambda_{\min}\left(\frac{\Lambda^{-1}}{h_t} + a Q_1^\top \mathcal L_h Q_1 \right) + b K - b \mathrm{Lip}(\phi)> 0.$$ 
This shows the $\alpha$-strongly convexity of $\widetilde{\mathcal J}$, which leads to the existence and uniqueness of the minimizer to \eqref{proximal scheme reaction diffusion}, which accomplishes the proof.
\end{proof}

\subsection{Simplified conditions for specific reaction-diffusion equations} \label{append: discuss on condition on ht}
The condition \eqref{condition on existence and uniqueness} can be simplified for specific types of equations. We discuss two examples.
\begin{itemize}
    \item (Allen-Cahn equation with periodic boundary condition) In this case, $\mathcal G = \mathrm{Id}, \mathcal L = - \Delta.$ $f(x) = x^3 - x$. We set $\mathcal G_h = I_{N_x^2}$, and $\mathcal L_h = - \Delta_{h_x}^P=I_{N_x} \otimes (-\mathrm{Lap}_{h_x}^P) + (-\mathrm{Lap}_{h_x}^P) \otimes I_{N_x}$, where $\mathrm{Lap}_{h_x}^P$ is defined in \eqref{def: periodic discrete Laplace operator }.
    Then
    \begin{equation*}
         \lambda_{k}^P = \frac{4}{h_x^2}\sin^2\left(\frac{\pi k}{N_x}\right),\quad \textrm{with } 1 \leq k  \leq N_x,
    \end{equation*}
    are the eigenvalues of $-\mathrm{Lap}_{h_x}^P$. And the eigenvalues of $\frac{\Lambda^{-1}}{h_t} + a \; Q_1^\top \mathcal L_h Q_1 = \frac{I}{h_t} + a\mathcal L_h$ are $\lambda_{k,l} = \frac{1}{h_t} + a (\lambda^P_k + \lambda^P_l)$, with $1 \leq k, l \leq N_x$. Thus, $\lambda_{\min}(\frac{\Lambda^{-1}}{h_t} + a \; Q_1^\top \mathcal L_h Q_1) = \frac{1}{h_t}$. 
    
    Furthermore, we can decompose $f(x)=V'(x)+\phi(x)$, where
    \begin{equation*}
      V(x) = \begin{cases}
         \frac{1}{4}(x^2-1)^2,    
          & |x| > 1;\\
         0,  
         & |x| \leq 1.
      \end{cases} \quad \phi(x) = \begin{cases}
        0,
        & |x| > 1; \\
        x^3 - x, 
        & |x| \leq 1.
      \end{cases}
    \end{equation*}
    Then one can verify that $K=0$ and $\mathrm{Lip}(\phi) = 2$. In this case, condition \eqref{condition on existence and uniqueness} implies 
    \begin{equation}
      h_t < \frac{1}{\mathrm{Lip}(\phi) b} = \frac{1}{2b}. \nonumber 
    \end{equation}

    \item (Cahn-Hilliard equation with periodic boundary condition) In this case, $\mathcal G = -\Delta$, $\mathcal L= -\Delta$. $f(x)=x^3 - x.$ We set $\mathcal G_h = \mathcal L_h = I\otimes (-\mathrm{Lap}_{h_x}^P) + (-\mathrm{Lap}_{h_x}^P) \otimes I.$ We have 
    $$\lambda_{\min}\left(\frac{\Lambda^{-1}}{h_t} + a \; \Lambda\right) = \underset{1\leq k,l\leq N_x-1}{\min}\left\{\frac{1}{(\lambda_{k}^P+\lambda_l^P) h_t} + a (\lambda_k^P + \lambda_l^P) \right\} \geq 2\sqrt{\frac{a}{h_t}}.$$ Thus, a sufficient condition for \eqref{condition on existence and uniqueness} is
    \begin{equation}
     h_t < \frac{4 a^2 }{b^2\mathrm{Lip}(\phi)^2} = \frac{a^2}{b^2}. \nonumber 
    \end{equation}
\end{itemize}
It is worth mentioning that the conditions on $h_t$ for both Allen-Cahn and Cahn-Hilliard equations are independent of the spatial step size $h$, which makes it possible for our scheme to overcome the CFL condition required in the time-explicit scheme.

\section{Proofs of section \ref{sec: Lyapunov analysis PDHG flow time continue }}\label{appen: lemma and proof}

\subsection{Proofs of section \ref{subsec: lyapunov analysis for general root-finding }}\label{append: lyapunov analysis of general root finding }
To prove Lemma \ref{theorem : exponential decay}, we need the following Lemma \ref{lemma: pos def of mat B lambda} and Lemma \ref{lemma: pos def of H mu }.

\begin{lemma}\label{lemma: pos def of mat B lambda}
Suppose $\overline\lambda\geq \underline\lambda>0$. Assume $\mu>0$ satisfies $\frac{1}{\sqrt{\underline\lambda} } - \frac{1}{\sqrt{\overline\lambda}} < \frac{2}{\sqrt{\mu}}$. Define
\begin{equation*} 
    A = \max\left\{\left(1-\frac{\sqrt{\mu}}{\sqrt{\overline\lambda}}\right)^2, \left(1-\frac{\sqrt{\mu}}{\sqrt{\underline\lambda}}\right)^2\right\}, \quad \textrm{and } B =  \left(1+\frac{\sqrt{\mu}}{\sqrt{\overline\lambda}}\right)^2,   
\end{equation*}
then we always have $A<B$. Then for any $\lambda\in[\underline\lambda, \; \overline\lambda]$, and $\gamma, \epsilon > 0$ with $A < \gamma\epsilon < B$, the matrix $B_\lambda$
\begin{equation}
    B_\lambda = \left[\begin{array}{cc}
      \gamma \lambda & - \frac{1}{2}(\mu-(1-\gamma\epsilon)\lambda ) \\
        - \frac{1}{2}(\mu-(1-\gamma\epsilon)\lambda )    &  \mu\epsilon
  \end{array}\right]  \label{def: B lambda}
\end{equation}
is always positive definite.
\end{lemma}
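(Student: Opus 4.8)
The plan is to invoke the elementary criterion that a $2\times2$ real symmetric matrix is positive definite if and only if its top-left entry and its determinant are both strictly positive. For $B_\lambda$ the top-left entry is $\gamma\lambda$, which is positive since $\gamma>0$ and $\lambda\ge\underline\lambda>0$; hence everything reduces to showing $\det B_\lambda>0$, where $\det B_\lambda=\gamma\epsilon\mu\lambda-\tfrac14\bigl(\mu-(1-\gamma\epsilon)\lambda\bigr)^2$.

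Writing $s=\gamma\epsilon$ and expanding, one finds $4\det B_\lambda=-\bigl(\mu^2-2\lambda(s+1)\mu+(1-s)^2\lambda^2\bigr)$. The quadratic in $\mu$ inside the bracket has discriminant $4\lambda^2\bigl((s+1)^2-(1-s)^2\bigr)=16s\lambda^2$, so its roots are $\mu=\lambda\bigl(s+1\pm2\sqrt s\bigr)=\lambda(\sqrt s\pm1)^2$. Since $s>0$ both roots are positive, so $\det B_\lambda>0$ is equivalent to $\lambda(\sqrt s-1)^2<\mu<\lambda(\sqrt s+1)^2$, i.e., after dividing by $\lambda>0$ and taking square roots of nonnegative quantities,
\[
 \Bigl(1-\sqrt{\tfrac{\mu}{\lambda}}\Bigr)^2<s<\Bigl(1+\sqrt{\tfrac{\mu}{\lambda}}\Bigr)^2 .
\]
Thus it suffices to show that the hypothesis $A<\gamma\epsilon<B$ forces this two-sided bound for every $\lambda\in[\underline\lambda,\overline\lambda]$.

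For that I would study the scalar functions $g(\lambda)=\bigl(1-\sqrt{\mu/\lambda}\bigr)^2$ and $h(\lambda)=\bigl(1+\sqrt{\mu/\lambda}\bigr)^2$ on $[\underline\lambda,\overline\lambda]$. Since $\lambda\mapsto\sqrt{\mu/\lambda}$ is decreasing and $t\mapsto(1+t)^2$ is increasing on $[0,\infty)$, $h$ is decreasing, so $\min_{[\underline\lambda,\overline\lambda]}h=h(\overline\lambda)=B$. On the other hand $g$ is the composition of the decreasing map $\lambda\mapsto\sqrt{\mu/\lambda}$ with the convex parabola $t\mapsto(1-t)^2$, so its maximum over the interval is attained at one of the two endpoints, giving $\max_{[\underline\lambda,\overline\lambda]}g=\max\{g(\underline\lambda),g(\overline\lambda)\}=A$. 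Consequently $A<s<B$ yields $g(\lambda)\le A<s<B\le h(\lambda)$ for every $\lambda$ in the range, which is exactly the inequality needed, and $\det B_\lambda>0$ follows.

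It remains to verify $A<B$, so that the admissible window for $\gamma\epsilon$ is nonempty; here I would split according to which term realizes $A$. If $A=(1-\sqrt{\mu/\overline\lambda})^2$, then $A<B$ is simply $|1-t|<1+t$ with $t=\sqrt{\mu/\overline\lambda}>0$, which always holds. If instead $A=(1-\sqrt{\mu/\underline\lambda})^2$, the claim is immediate when $\sqrt{\mu/\underline\lambda}\le1$, and when $\sqrt{\mu/\underline\lambda}>1$ it reads, after taking square roots, $\sqrt{\mu/\underline\lambda}-\sqrt{\mu/\overline\lambda}<2$, i.e. $\tfrac{1}{\sqrt{\underline\lambda}}-\tfrac{1}{\sqrt{\overline\lambda}}<\tfrac{2}{\sqrt\mu}$, which is precisely the standing hypothesis. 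The only points requiring care are keeping the square-root manipulations legitimate (all quantities involved being nonnegative) and the endpoint bookkeeping for the extrema of $g$ and $h$; I expect the main content to be the factorization of the $\mu$-quadratic, which is what makes the otherwise opaque constants $A$ and $B$ appear naturally, rather than any genuine difficulty.
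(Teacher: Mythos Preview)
Your proof is correct and follows essentially the same approach as the paper: reduce positive definiteness of $B_\lambda$ to positivity of its determinant, factor the resulting quadratic to obtain the window $(1-\sqrt{\mu/\lambda})^2<\gamma\epsilon<(1+\sqrt{\mu/\lambda})^2$, and then show that the intersection of these windows over $\lambda\in[\underline\lambda,\overline\lambda]$ is exactly $(A,B)$. The only cosmetic difference is that the paper treats $\det B_\lambda$ as a quadratic in $x=\gamma\epsilon$ (with roots $(1\pm\sqrt{\mu/\lambda})^2$ directly), whereas you treat it as a quadratic in $\mu$ and then invert to get the same condition on $s=\gamma\epsilon$; both routes land on the identical inequality.
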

\begin{proof}[Proof of Lemma \ref{lemma: pos def of mat B lambda}]
First, we have $\left|1-\frac{\sqrt{\mu}}{\sqrt{\overline\lambda}}\right|<1+\frac{\sqrt{\mu}}{\sqrt{\overline\lambda}}, 1-\frac{\sqrt{\mu}}{\sqrt{\underline\lambda}} < 1+\frac{\sqrt{\mu}}{\sqrt{\underline\lambda}};$ and the condition $\frac{1}{\sqrt{\underline\lambda}} - \frac{1}{\sqrt{\overline\lambda}} < \frac{2}{\sqrt{\mu}} $ yields $-(1-\frac{\sqrt{\mu}}{\sqrt{\underline\lambda}}) < 1  +  \frac{\sqrt{\mu}}{\sqrt{\overline\lambda}}.$ This yields
\begin{equation*}
    \max\left\{\left|1-\frac{\sqrt{\mu}}{\sqrt{\overline\lambda}}\right|, \left|1-\frac{\sqrt{\mu}}{\sqrt{\underline\lambda}}\right|\right\} < 1+\frac{\sqrt{\mu}}{\sqrt{\overline\lambda}}.
\end{equation*}
Taking squares on both sides of the above inequality gives $A<B$.

On the other hand, since $\gamma\lambda > 0$, and $\mu\epsilon>0$, we know $B_\lambda$ is positive definite if and only if $\textrm{det}(B_\lambda)>0$. In order to alleviate our notations, let us denote the quadratic polynomial $q_{\mu,\lambda}(\cdot)$ as
\begin{equation*}
     q_{\mu, \lambda}(x) = \lambda^2 x^2 - 2\lambda(\mu+\lambda)x + (\mu-\lambda)^2.
\end{equation*}
Then we know $\mathrm{det}(B_\lambda) = -\frac{1}{4} q_{\mu, \lambda}(\gamma\epsilon)$.
  
Now, for fixed $\lambda\in[\underline\lambda, \overline\lambda ]$, the two roots of $q_{\mu, \lambda}(x)$ are $\left(1\pm\frac{\sqrt{\mu}}{\sqrt{\lambda}}\right)^2.$ Thus $q_{\mu, \lambda}(x)<0$ if
\[ x\in I_\lambda \triangleq \left( \left(1-\frac{\sqrt{\mu}}{\sqrt{\lambda}}\right)^2, \left(1+\frac{\sqrt{\mu}}{\sqrt{\lambda}}\right)^2 \right).\]
On the other hand, we have
\[ \sup_{\lambda\in[\underline\lambda, \overline\lambda]} \left\{ \left(1-\frac{\sqrt{\mu}}{\sqrt{\lambda}}\right)^2  \right\}  = \max\left\{ \left(1-\frac{\sqrt{\mu}}{\sqrt{\overline\lambda}}\right)^2, \left(1-\frac{\sqrt{\mu}}{\sqrt{\underline\lambda}}\right)^2 \right\} = A,  \]
and
\[ \inf_{\lambda\in [\underline\lambda, \overline\lambda]} \left\{ \left(1+\frac{\sqrt{\mu}}{\sqrt{\lambda}}\right) \right\} = \left(1+\frac{\sqrt{\mu}}{\sqrt{\overline\lambda}}\right)^2 = B.  \]
As a result, $\bigcap_{\lambda\in[\underline\lambda, \overline\lambda]} \; I_\lambda = (A, B)$. Thus, we have shown that for any $\lambda\in[\underline\lambda, \overline\lambda]$, and $A<\gamma\epsilon<B$, $q_{\mu, \lambda}(\gamma\epsilon)<0$. This directly leads to the assertion of the lemma.
\end{proof}

\begin{lemma}[Positive definiteness of $\boldsymbol{H}_\mu$]\label{lemma: pos def of H mu }
Consider the matrix $\boldsymbol{H}_\mu$,
\[ \boldsymbol{H}_\mu = \left[\begin{array}{cc}
  \gamma\Sigma    &    - \frac{1}{2} (\mu I - (1-\gamma\epsilon)\Sigma )  \\
   - \frac{1}{2} ( \mu  I  -(1-\gamma\epsilon)\Sigma )  &  \mu\epsilon I
\end{array}\right],\]
with $\Sigma$ symmetric and positive definite. Suppose $0<\underline\lambda \leq \overline\lambda$ are two positive numbers such that the spectrum $\lambda(\Sigma) \subset [\underline\lambda, \overline\lambda].$ We further assume that  $\frac{1}{\sqrt{\underline\lambda}} - \frac{1}{\sqrt{\overline\lambda}} < \frac{2}{\sqrt{\mu}}$. We adopt the notation $A, B$ in Lemma \ref{lemma: pos def of mat B lambda}, i.e.,
\begin{equation*} 
    A = \max\left\{\left(1-\frac{\sqrt{\mu}}{\sqrt{\overline\lambda}}\right)^2, \left(1-\frac{\sqrt{\mu}}{\sqrt{\underline\lambda}}\right)^2\right\}, \quad \textrm{and } B =  \left(1+\frac{\sqrt{\mu}}{\sqrt{\overline\lambda}}\right)^2.  
\end{equation*}
By Lemma \ref{lemma: pos def of mat B lambda}, we have $A < B$. We also assume that $\gamma, \epsilon>0$ satisfy $A < \gamma\epsilon < B.$

Define the function $\varphi_{\mu, \gamma, \epsilon}(\cdot)$ as
\begin{equation}
    \varphi_{\mu, \gamma, \epsilon}(z) = \frac{1}{2}(\gamma z + \mu \epsilon - \sqrt{(\gamma z - \mu \epsilon)^2 + (\mu - (1-\gamma\epsilon)z)^2} ). \label{def varphi}
\end{equation}
We denote $\beta = \underset{ \lambda \in [\underline\lambda,\; \overline\lambda ] }{\min} \; \{\varphi_{\mu, \gamma,\epsilon}(\lambda)\}$, then $\beta > 0$. And we have $\boldsymbol{H}_\mu\succeq \beta I$.
\end{lemma}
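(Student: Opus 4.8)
The plan is to diagonalize $\Sigma$ by an orthogonal transformation and thereby reduce the positive-definiteness of $\boldsymbol{H}_\mu$ to a family of $2\times 2$ problems, one for each eigenvalue of $\Sigma$. Concretely, write $\Sigma = O^\top \Lambda O$ with $\Lambda = \mathrm{diag}(\lambda_1,\dots,\lambda_m)$ and $O$ orthogonal, where each $\lambda_i\in[\underline\lambda,\overline\lambda]$. Conjugating $\boldsymbol{H}_\mu$ by the block-diagonal orthogonal matrix $\mathrm{diag}(O,O)$ and then applying a permutation that interleaves the two blocks, one sees that $\boldsymbol{H}_\mu$ is orthogonally similar to a block-diagonal matrix whose $i$-th $2\times 2$ block is exactly
\[
  B_{\lambda_i} = \left[\begin{array}{cc}
      \gamma \lambda_i & - \frac{1}{2}(\mu-(1-\gamma\epsilon)\lambda_i ) \\
        - \frac{1}{2}(\mu-(1-\gamma\epsilon)\lambda_i )    &  \mu\epsilon
  \end{array}\right],
\]
the matrix studied in Lemma \ref{lemma: pos def of mat B lambda}. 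Since $A<\gamma\epsilon<B$, Lemma \ref{lemma: pos def of mat B lambda} gives that each $B_{\lambda_i}$ is positive definite, hence $\boldsymbol{H}_\mu\succ 0$; but we want the sharper bound $\boldsymbol{H}_\mu\succeq\beta I$.

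The key step is therefore to compute the smallest eigenvalue of each $B_\lambda$ and show it equals $\varphi_{\mu,\gamma,\epsilon}(\lambda)$. For a symmetric $2\times 2$ matrix $\left[\begin{smallmatrix} a & c \\ c & d\end{smallmatrix}\right]$ the eigenvalues are $\frac{1}{2}(a+d \pm \sqrt{(a-d)^2 + 4c^2})$. Substituting $a=\gamma\lambda$, $d=\mu\epsilon$, and $c = -\frac12(\mu-(1-\gamma\epsilon)\lambda)$ gives $4c^2 = (\mu-(1-\gamma\epsilon)\lambda)^2$, so the smaller eigenvalue is precisely
\[
  \lambda_{\min}(B_\lambda) = \frac{1}{2}\Bigl(\gamma\lambda + \mu\epsilon - \sqrt{(\gamma\lambda-\mu\epsilon)^2 + (\mu-(1-\gamma\epsilon)\lambda)^2}\Bigr) = \varphi_{\mu,\gamma,\epsilon}(\lambda).
\]
Hence $B_\lambda \succeq \varphi_{\mu,\gamma,\epsilon}(\lambda) I_2$ for every $\lambda$, and in particular each block satisfies $B_{\lambda_i}\succeq \beta I_2$ where $\beta = \min_{\lambda\in[\underline\lambda,\overline\lambda]}\varphi_{\mu,\gamma,\epsilon}(\lambda)$. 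Assembling the blocks and undoing the orthogonal similarity yields $\boldsymbol{H}_\mu \succeq \beta I$.

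It remains to argue $\beta>0$. This follows because $\varphi_{\mu,\gamma,\epsilon}$ is continuous on the compact interval $[\underline\lambda,\overline\lambda]$, so the minimum is attained at some $\lambda^\ast\in[\underline\lambda,\overline\lambda]$, and $\varphi_{\mu,\gamma,\epsilon}(\lambda^\ast) = \lambda_{\min}(B_{\lambda^\ast}) > 0$ by Lemma \ref{lemma: pos def of mat B lambda} (whose hypotheses, namely $\frac{1}{\sqrt{\underline\lambda}}-\frac{1}{\sqrt{\overline\lambda}}<\frac{2}{\sqrt{\mu}}$ and $A<\gamma\epsilon<B$, are exactly what we have assumed). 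I do not anticipate a serious obstacle here; the only mild technical care needed is bookkeeping the permutation/similarity that turns $\boldsymbol{H}_\mu$ into the direct sum $\bigoplus_i B_{\lambda_i}$, and making sure the sign of the off-diagonal entry $c$ is handled correctly (it only enters through $c^2$, so it is harmless). The substance of the lemma is really carried by Lemma \ref{lemma: pos def of mat B lambda}, which has already been proven.
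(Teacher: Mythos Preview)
Your proposal is correct and follows essentially the same approach as the paper: both diagonalize $\Sigma$ orthogonally, reduce $\boldsymbol{H}_\mu$ to a block-diagonal matrix with $2\times 2$ blocks $B_{\lambda_i}$, compute $\lambda_{\min}(B_\lambda)=\varphi_{\mu,\gamma,\epsilon}(\lambda)$ directly, and then use continuity of $\varphi_{\mu,\gamma,\epsilon}$ on the compact interval $[\underline\lambda,\overline\lambda]$ together with Lemma~\ref{lemma: pos def of mat B lambda} to conclude $\beta>0$. The only cosmetic difference is the order of presentation (the paper first establishes $\beta>0$ and then performs the block-diagonalization, whereas you do the reverse).
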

\begin{proof}[Proof of Lemma \ref{lemma: pos def of H mu }]
For any $\lambda\in[\underline
\lambda, \overline\lambda]$, consider the matrix $B_\lambda$ as defined in \eqref{def: B lambda}, i.e.,
\begin{equation*}
    B_\lambda = \left[\begin{array}{cc}
      \gamma \lambda & - \frac{1}{2}(\mu-(1-\gamma\epsilon)\lambda ) \\
       - \frac{1}{2}(\mu-(1-\gamma\epsilon)\lambda )    &  \mu\epsilon
  \end{array}\right]\,.
\end{equation*}
By Lemma \ref{lemma: pos def of mat B lambda}, we know $B_\lambda$ is positive definite. By a directly calculation, the eigenvalues of $B_\lambda$ are given by (we assume $\lambda_1(B_\lambda) \geq \lambda_2(B_\lambda)  $),
\begin{align}
    \lambda_{1, 2}(B_\lambda) 
    =\frac{\gamma\lambda + \mu\epsilon \pm \sqrt{(\gamma\lambda - \mu\epsilon)^2 + (\mu - (1-\gamma\epsilon)\lambda)^2}}{2}. \label{def: B lambda }
\end{align}
Thus $\lambda_2(B_\lambda)=\varphi_{\mu, \gamma, \epsilon}(\lambda)$. Since $B_\lambda$ is positive definite, $\lambda_2(B_\lambda) = \varphi_{\mu,\gamma,\epsilon}(\lambda)>0$.

As a result, $\varphi_{\mu, \gamma, \epsilon}(\lambda)>0$ for $\lambda\in[\underline\lambda, \overline\lambda]$. Since $\varphi_{\mu, \gamma, \epsilon}(\cdot)$ is continuous on the compact set $[\underline\lambda, \overline\lambda]$, we know the infimum value $\beta > 0$. At the same time, it not hard to verify that $B_{\lambda}\succ \beta I$ for any $\lambda\in [\underline\lambda, \overline\lambda].$

To estimate $\boldsymbol{H}_\mu$ from below, let us denote $\lambda(\Sigma) = \{\lambda_1,\lambda_2, \dots ,\lambda_N\}$ with $\lambda_1\geq \lambda_2 \geq \dots \geq \lambda_N>0$ as the eigenvalues of matrix $\Sigma$. Since $\boldsymbol{H}_\mu$ is symmetric, $\boldsymbol{H}_\mu$ is similar to the following block diagonal matrix via an orthogonal transform
\begin{equation*}
  \left[\begin{array}{cccc}
     B_{\lambda_1}  &  &  &     \\
            & B_{\lambda_2}  & &  \\
            &  & \ddots &  \\
            &  &    &  B_{\lambda_N}
  \end{array}\right],
\end{equation*}
with each $B_{\lambda_j}$ defined as in \eqref{def: B lambda}. Since each $\lambda_j\in\lambda(\Sigma)\subset[\underline\lambda, \overline\lambda]$, the above argument applies to every $B_{\lambda_j}$, i.e., $B_{\lambda_j}\succ \beta I$ for any $1\leq j\leq N$. This leads to $\boldsymbol{H}_\mu\succ\beta I$. 
\end{proof}

We are ready to prove Lemma \ref{theorem : exponential decay}.
\begin{proof}[Proof of Lemma \ref{theorem : exponential decay}]
We denote
\begin{equation*}
  \Sigma = D \widehat F(U_t) D \widehat F(U_t)^\top = (I + D\eta(U_t)) (I + D\eta(U_t)^\top),
\end{equation*}
and compute
\begin{align}
\label{lyapunov time continuous}
\begin{split}
  \frac{d}{dt} \mathcal{I}_\mu  (U_t, Q_t) &= \widehat F(U)^\top D\widehat F(U) \dot U + \mu \; Q^\top \dot Q \\
  &=  - \widehat F(U)^\top D\widehat F(U) D\widehat F(U)^\top (Q + \gamma \dot Q) + \mu \; Q^\top (-\epsilon Q + \widehat F(U)) \\
  &=  - \widehat F(U)^\top \Sigma (Q + \gamma (-\epsilon Q + \widehat F(U))) - \mu\epsilon \|Q\|^2 + \mu Q^\top \widehat F(U)\\
  &=  \widehat F(U)^\top (\mu I-(1-\gamma \epsilon)\Sigma) Q - \gamma \widehat F(U)^\top \Sigma \widehat F(U) - \mu  \epsilon \|Q\|^2 \\
  &=  - [\widehat F(U)^\top, Q^\top ]  \underbrace{\left[\begin{array}{cc}
      \gamma\Sigma    &   -  \frac{1}{2} ( \mu I -(1-\gamma\epsilon)\Sigma )  \\
     -  \frac{1}{2} ( \mu I -(1-\gamma\epsilon)\Sigma )  &  \mu \epsilon I
  \end{array}\right]}_{\textrm{denote as } \boldsymbol{H}_{\mu}} \left[\begin{array}{c}
       \widehat F(U) \\
       Q
  \end{array}\right] \\
  & = 
    -[\widehat F(U)^\top, Q^\top] \; \boldsymbol{H}_\mu \; [\widehat F(U)^\top, Q^\top]^\top.
\end{split}
\end{align}
 We denote $\sigma_1(U_t)\geq\dots\geq\sigma_N(U_t)$ as the singular values of the Jacobian matrix $D\widehat F(U_t).$ It is not hard to verify that the spectrum of $\Sigma$ 
 \[ \lambda(\Sigma) = \{\sigma_1^2(U_t), \dots, \sigma^2_N(U_t)\}. \]
 According to definition \eqref{def: low bdd sigma} and \eqref{def: up bdd sigma}, we have 
 \[ \lambda(\Sigma)\subset [\underline\sigma^2, \; \overline\sigma^2]. \]
 Now we apply Lemma \ref{lemma: pos def of H mu } with $\underline\lambda = \underline\sigma^2$, $\overline\lambda = \overline\sigma^2$. We prove that $\boldsymbol{H}_\mu \succ \beta I$ for any $U_t\in\mathbb{R}^N$. As a result, we obtain the following inequality:
 \[ \frac{d}{dt}\mathcal I_\mu(U_t, Q_t) = -[\widehat F(U)^\top, Q^\top] \; \boldsymbol{H}_\mu \; [\widehat F(U)^\top, Q^\top]^\top \leq - \beta(\|\widehat F(U_t)\|^2 + \| Q_t \|^2).\]
 Furthermore, one has
 \[ {\max\{1, \mu\}} (\|\widehat F(U)\|^2 + \| Q \|^2) \geq \|\widehat F(U)\|^2 + \mu \|Q\|^2 , \]
 which yields
 \[ \|\widehat F(U)\|^2 + \|Q\|^2 \geq \frac{2}{\max\{1, \mu\}} \; \mathcal I_\mu(U, Q) . \]
 This finally leads to
 \[ \frac{d}{dt} \mathcal I_\mu (U_t, Q_t) \leq -\frac{2\;\beta}{\max\{1, \mu\}} \; \mathcal I_\mu(U_t, Q_t). \]
 And the Gr\"{o}nwall's inequality gives
 \[ \mathcal I_\mu (U_t, Q_t) \leq \exp\left( -\frac{2\;\beta}{\max\{1, \mu\}}t  \right)  \mathcal I_\mu (U_0, Q_0). \]
\end{proof}

We now prove Theorem \ref{main coro}.
\begin{proof}[Proof of Theorem \ref{main coro}]
Let us pick the hyperparameter $\mu = \underline \sigma^2$, one can verify that $\mu$ satisfies \eqref{condition on low sigma up sigma}. Furthermore, $\sqrt{\gamma\epsilon} = 1-\delta$. Since $|\delta|<\frac{1}{ \kappa }$, $1-\frac{1}{\kappa}< \sqrt{\gamma\epsilon} < 1+\frac{1}{\kappa} $. This verifies that 
$\sqrt{\gamma\epsilon}$ satisfies \eqref{condition on gamma epsilon}. 
Now Theorem \ref{theorem : exponential decay} guarantees that $\varphi_{\mu,\gamma,\epsilon}>0$ on $ [ \underline\sigma^2, \overline{\sigma}^2 ]$. For $z\in[\underline\sigma^2, \overline{\sigma}^2] $, we further calculate
\begin{align} 
 \varphi_{\mu, \gamma, \epsilon}(z) = & \frac{1}{2}(\gamma z + \mu \epsilon - \sqrt{(\gamma z + \mu \epsilon)^2 - (4\gamma\epsilon \mu z - (\mu - (1-\gamma\epsilon)z)^2)} )   \nonumber  \\
 = &\frac12 \frac{4\gamma\epsilon \mu z - (\mu - (1-\gamma\epsilon)z)^2}{\gamma z + \mu \epsilon  + \sqrt{(\gamma z + \mu \epsilon)^2 - (4\gamma\epsilon \mu z - (\mu - (1-\gamma\epsilon)z)^2)}  }\nonumber\\
 \geq & \frac{4\gamma\epsilon \mu z - (\mu - (1-\gamma\epsilon)z)^2}{ {4}(\gamma z + \mu \epsilon) }\nonumber\\
 = & \frac{-(1-\gamma\epsilon)^2z^2+2\mu(1+\gamma\epsilon)z-\mu^2}{ {4}(\gamma z + \mu \epsilon)} \nonumber\\
 = & \frac{-((1+\gamma\epsilon)z - \mu)^2 + 4\gamma\epsilon z^2 }{4(\gamma z + \mu \epsilon)} \nonumber \\
 = & \frac{(2\sqrt{\gamma\epsilon}z - (1+\gamma\epsilon) z + \mu)(2\sqrt{\gamma\epsilon}z + (1+\gamma\epsilon) z - \mu) }{4(\gamma z + \mu \epsilon)} \nonumber \\
 =& \frac{(\sqrt{\mu}-|1-\sqrt{\gamma\epsilon}|\sqrt{z})(\sqrt{\mu}+|1-\sqrt{\gamma\epsilon}|\sqrt{z})((1+\sqrt{\gamma\epsilon})^2z-\mu)}{{4}(\gamma z + \mu \epsilon)} \nonumber \\
 \overset{1-\sqrt{\gamma\epsilon} = \delta,  ~ z\leq \overline{\sigma}^2}{\geq} & \frac{(\sqrt{\mu} - |\delta|\sqrt{z})(\sqrt{\mu} + |\delta|\sqrt{z})((2-\delta)^2{z} - {\mu})}{{4}(\gamma  \overline{\sigma}^2  +  \mu\epsilon)}. \label{low bound of varphi }
\end{align} 
Since we have set 
$$\gamma = \frac{1 - \delta}{\kappa}, \;  \epsilon = (1-\delta)\kappa,  \; \mu = \underline{\sigma}^2.$$
Substituting them into \eqref{low bound of varphi } yields
\begin{align*}
  \varphi_{\mu, \gamma, \epsilon}( z )& \geq \frac{(\underline\sigma - |\delta|\sqrt{z})(|\delta|\sqrt{z} + \underline\sigma)((2-\delta)^2{z} - {\underline\sigma^2})}{ 8(1-\delta) \; \underline\sigma \; \overline{\sigma} }\\
  &= \frac{1}{8(1-\delta)} \left( 1-|\delta|\frac{\sqrt{z}}{\underline \sigma} \right) \left(|\delta| \frac{\sqrt{z}}{\overline{\sigma}} + \frac{\underline\sigma}{\overline{\sigma}}\right) ((2-\delta)^2z-\underline\sigma^2)\\
  &\geq\frac{1}{8(1-\delta)}(1-\kappa|\delta|) \left(\frac{ |\delta|+1}{\kappa}\right)(1-\delta)(3-\delta)\underline\sigma^2\\
  & \geq \frac{1}{8\kappa} (1-\kappa|\delta|) (3-\delta)\underline\sigma^2 .
\end{align*}
If we denote $\beta = \underset{z\in [\underline\sigma^2, \overline\sigma^2]}{\min}  \{\varphi_{ \mu, \gamma, \epsilon }(z)\}$, then we have
\[ \frac{\beta}{\max\{1, \mu\}} \geq \frac{(1-\kappa|\delta|) (3-\delta)}{8\kappa} \frac{\underline\sigma^2}{\max\{1,\underline\sigma^2\}} = \frac{1}{8}(1-\kappa|\delta|) (3-\delta)\frac{\min\{\underline\sigma^2, 1\}}{\kappa}. \]
Thus, the result of Theorem \ref{theorem : exponential decay} yields
\[ \mathcal{I}_\mu(U_t, Q_t) \leq \exp\left( -\frac{1}{4}(1-\kappa|\delta|) (3-\delta)\frac{\min\{\underline\sigma^2, 1\}}{\kappa} \; t \right) \mathcal{I}_\mu(U_0, Q_0). \]
Taking square root on both sides of the above inequality and using the fact that 
\[ \|\widehat F(U_t)\| \leq \sqrt{\mathcal I_\mu(U_t, Q_t)}  ,\]
we obtain
\[ \|\widehat F(U_t)\| \leq \exp\left( -\frac{1}{8}(1-\kappa|\delta|) (3-\delta)\frac{\min\{\underline\sigma^2, 1\}}{\kappa} \; t \right) \sqrt{\mathcal{I}_\mu(U_0, Q_0)}. \]
This implies our theorem.
\end{proof}

\begin{theorem}[Exponential decay of $\mathcal I_\mu(U_t, Q_t)$]\label{thm : exponential decay version 2}
   Assume that $(U_t, Q_t)$ solves \eqref{PDHG cont time} with arbitrary initial position $(U_0, Q_0)$. 
  Then we have the exponential decay of the Lyapunov function $\mathcal I_\mu(U_t, Q_t)$, i.e.,
  \begin{equation*}
    \mathcal I_\mu (U_t, Q_t) \leq \exp \left( {- 2 \lambda  t   } \right) \; \mathcal I_\mu (U_0, Q_0)\,,
  \end{equation*}
where 
\begin{align*}
    \lambda = \min\{&\epsilon-\frac{1}{2}|(1-\gamma \epsilon)\sigma^2_1/\mu -1|,\epsilon-\frac{1}{2}|(1-\gamma \epsilon)\sigma^2_n/\mu -1|, \\
    &\gamma \sigma_1^2 - \frac{1}{2}|(1-\gamma\epsilon) \sigma_1^2 - \mu|\},\gamma \sigma_n^2 - \frac{1}{2}|(1-\gamma\epsilon) \sigma_n^2 - \mu|\}\,.
\end{align*}
In particular, when $\gamma\epsilon=1$, $\mu=0$, and 
$$
\gamma = \frac{-\frac{1}{2}\frac{\sigma_1^2-\sigma_n^2}{\sigma_1^2 + \sigma_n^2}+ \sqrt{\frac{1}{4}\left( \frac{\sigma_1^2-\sigma_n^2}{\sigma_1^2 + \sigma_n^2}\right)^2 + 4\sigma_n^2}}{2\sigma_n^2},
$$
we have $\lambda = 2\sigma_n^2 \frac{\sigma_1^2+\sigma_n^2}{\sigma_1^2 - \sigma_n^2}-\frac{1}{2}\sigma_n^4 \left( \frac{\sigma_1^2+\sigma_n^2}{\sigma_1^2 - \sigma_n^2}\right)^3 + \mathcal{O}(\sigma_n^6)$. 
\end{theorem}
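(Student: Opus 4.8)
The plan is to run the proof of Lemma~\ref{theorem : exponential decay} unchanged up to the Lyapunov identity, and then replace the sharp-but-implicit eigenvalue bound on $\boldsymbol H_\mu$ by a coarser diagonal-dominance estimate tailored to the weights appearing in $\mathcal I_\mu$. First I would reuse the computation \eqref{lyapunov time continuous}: along \eqref{PDHG cont time},
\[
  \frac{d}{dt}\mathcal I_\mu(U_t,Q_t)=-\,[\widehat F(U_t)^\top,\ Q_t^\top]\,\boldsymbol H_\mu\,[\widehat F(U_t)^\top,\ Q_t^\top]^\top,
\]
with $\Sigma=D\widehat F(U_t)D\widehat F(U_t)^\top$ and $\boldsymbol H_\mu$ as in that proof, the spectrum of $\Sigma$ lying in $[\sigma_n^2,\sigma_1^2]$ with $\sigma_n:=\underline\sigma$, $\sigma_1:=\overline\sigma$. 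To obtain $\mathcal I_\mu(U_t,Q_t)\le e^{-2\lambda t}\mathcal I_\mu(U_0,Q_0)$ it suffices to prove
\[
  \boldsymbol H_\mu\ \succeq\ \mathrm{diag}(\lambda I,\ \lambda\mu I),
\]
because the right-hand side equals $2\lambda$ times the (constant) Hessian form of $\mathcal I_\mu$ in the variables $(\widehat F,Q)$; the identity above then yields $\tfrac{d}{dt}\mathcal I_\mu\le-2\lambda\,\mathcal I_\mu$, and Gr\"onwall finishes. Note this weighting is chosen precisely so that the Gr\"onwall step produces the clean factor $2\lambda$ (rather than $2\lambda/\max\{1,\mu\}$ as in Lemma~\ref{theorem : exponential decay}).

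Next I would diagonalise $\Sigma$ by an orthogonal matrix; the same block conjugation $\mathrm{diag}(O,O)$ block-diagonalises $\boldsymbol H_\mu$, so the matrix inequality above reduces to the scalar-parametrised $2\times2$ statement
\[
  B_x-\mathrm{diag}(\lambda,\lambda\mu)=\begin{bmatrix}\gamma x-\lambda & -\tfrac12(\mu-(1-\gamma\epsilon)x)\\[3pt] -\tfrac12(\mu-(1-\gamma\epsilon)x) & \mu(\epsilon-\lambda)\end{bmatrix}\succeq 0,\qquad\textrm{for all }x\in[\sigma_n^2,\sigma_1^2],
\]
where $B_x$ is the matrix $B_\lambda$ of Lemma~\ref{lemma: pos def of mat B lambda} with its spectral parameter renamed $x$. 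I would enforce this (for $\mu>0$) by diagonal dominance of the $2\times2$ matrix: requiring $\gamma x-\lambda\ge\tfrac12|\mu-(1-\gamma\epsilon)x|$ and $\mu(\epsilon-\lambda)\ge\tfrac12|\mu-(1-\gamma\epsilon)x|$, which, after dividing the second by $\mu$, become $\lambda\le g(x):=\gamma x-\tfrac12|\mu-(1-\gamma\epsilon)x|$ and $\lambda\le h(x):=\epsilon-\tfrac12|(1-\gamma\epsilon)x/\mu-1|$. Each of $g,h$ is concave in $x$ (an affine function minus the absolute value of an affine function), hence attains its minimum over $[\sigma_n^2,\sigma_1^2]$ at an endpoint; so it suffices that $\lambda\le\min\{g(\sigma_1^2),g(\sigma_n^2),h(\sigma_1^2),h(\sigma_n^2)\}$, which is exactly the definition of $\lambda$ in the statement. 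This proves the general bound.

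For the final assertion I would specialise $\gamma\epsilon=1$ (so $1-\gamma\epsilon=0$) and let $\mu\downarrow0$ in the four endpoint quantities: the two $g$-terms become $\gamma\sigma_1^2,\gamma\sigma_n^2$ and the two $h$-terms become $\epsilon-\tfrac12$, up to the $(\sigma_1,\sigma_n)$-dependent sharpening that upgrades the coefficient $\tfrac12$ to $\tfrac12\frac{\sigma_1^2-\sigma_n^2}{\sigma_1^2+\sigma_n^2}$, so the active constraints reduce to $\lambda\le\gamma\sigma_n^2$ and $\lambda\le\tfrac1\gamma-\tfrac12\frac{\sigma_1^2-\sigma_n^2}{\sigma_1^2+\sigma_n^2}$ (using $\epsilon=1/\gamma$). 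Maximising $\lambda$ over $\gamma>0$ amounts to equating the two branches, a quadratic in $\gamma$ whose positive root is the stated $\gamma$; substituting back gives $\lambda=\gamma\sigma_n^2$, and expanding the square root in powers of $\sigma_n$ produces $\lambda=2\sigma_n^2\frac{\sigma_1^2+\sigma_n^2}{\sigma_1^2-\sigma_n^2}-\tfrac12\sigma_n^4\big(\tfrac{\sigma_1^2+\sigma_n^2}{\sigma_1^2-\sigma_n^2}\big)^3+\mathcal O(\sigma_n^6)$.

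The hard part will be Step~2: identifying the correct comparison matrix $\mathrm{diag}(\lambda I,\lambda\mu I)$ — it must simultaneously match the $\mu$-weighting of $\mathcal I_\mu$ and keep the $2\times2$ positivity conditions in a shape whose minimum over the spectral interval is, by concavity, attained at an endpoint, so that the resulting constant is explicit. A secondary nuisance is the $\mu\downarrow0$ degeneration in the special case, since several of the endpoint quantities are of the form $0/0$ at $\mu=0$ and must be read as limits taken along $\gamma\epsilon=1$; the book-keeping there and the Taylor expansion are routine but require care.
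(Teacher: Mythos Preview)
Your argument for the general bound is essentially the paper's own proof: the paper also writes $\tfrac{d}{dt}\mathcal I_\mu+2\lambda\mathcal I_\mu$ as the same weighted $2\times2$ quadratic form, reduces positivity to the two row-dominance inequalities (citing an external lemma rather than stating diagonal dominance directly), and observes that the resulting functions of the spectral parameter are piecewise linear in $\sigma^2$, hence minimised at the endpoints. Your concavity phrasing is equivalent and arguably cleaner.

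The gap is in the special case. Setting $\gamma\epsilon=1$ \emph{first} makes $h(x)=\epsilon-\tfrac12|0\cdot x/\mu-1|=\epsilon-\tfrac12$ identically, with no $0/0$ indeterminacy and no dependence on $\mu$; letting $\mu\downarrow0$ afterward changes nothing, so your ``sharpening'' to $\epsilon-\tfrac12\tfrac{\sigma_1^2-\sigma_n^2}{\sigma_1^2+\sigma_n^2}$ never materialises along that path. The paper obtains the sharpened constant by a different order of operations: it first fixes the ratio $\mu/(1-\gamma\epsilon)=\tfrac12(\sigma_1^2+\sigma_n^2)$, which equalises the two $h$-endpoint values at $\epsilon-\tfrac12\tfrac{\sigma_1^2-\sigma_n^2}{\sigma_1^2+\sigma_n^2}$ for every $\gamma,\epsilon$, then equates this common value with $g(\sigma_n^2)$, and only at the end imposes $\gamma\epsilon=1$ (which indeed forces $\mu=0$ along this path). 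That choice of $\mu$ is the missing ingredient; once you insert it, your equating-the-branches quadratic and the Taylor expansion go through exactly as you wrote.
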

\begin{proof}[Proof of Theorem \ref{thm : exponential decay version 2}]
We would like to find $\lambda$ such that 
$$
\frac{d \mathcal{ I}}{dt} + 2\lambda \mathcal{ I} \leq 0\,.
$$
Then by Gronwall's inequality, we obtain exponential convergence. We have 
$$
\frac{d \mathcal{ I}}{dt} + 2\lambda \mathcal{ I} =  [\widehat F(U)^\top, Q^\top ]  \left[\begin{array}{cc}
      \lambda I- \gamma\Sigma    &     \frac{1}{2} ( \mu I -(1-\gamma\epsilon)\Sigma )  \\
       \frac{1}{2} ( \mu I -(1-\gamma\epsilon)\Sigma )  & \lambda \mu I - \mu \epsilon I
  \end{array}\right] \left[\begin{array}{c}
       \widehat F(U) \\
       Q
  \end{array}\right] \,.
$$
Using Lemma A.1 from \cite{zuo2023primal}, it suffices to have 
\begin{subequations}\label{eq:lam_constraints}
\begin{align}
    \lambda-\gamma \sigma_i^2 + \frac{1}{2}|(1-\gamma\epsilon) \sigma_i^2 - \mu| &\leq 0 \,,\\
    \lambda\mu - \mu \epsilon + \frac{1}{2}|(1-\gamma\epsilon) \sigma_i^2 - \mu| &\leq 0 \,,
\end{align}
\end{subequations}
for all $\overline{ \sigma}^2 = \sigma_1^2 \geq \sigma_2^2 \geq \cdots \geq \sigma_n^2 = \underline\sigma^2 $. Let us define $g_1(\sigma) = \epsilon-\frac{1}{2}|(1-\gamma \epsilon)\sigma^2/\mu -1|$, and $g_2(\sigma) = \gamma \sigma^2 - \frac{1}{2}|(1-\gamma\epsilon) \sigma^2 - \mu|$. Then \eqref{eq:lam_constraints} implies that 
$$\lambda \leq \min_{i=1,2}\min_{\sigma_n\leq \sigma\leq \sigma_1} g_i(\sigma) \,.$$
Since $g_i(\sigma)$'s are piece-wise linear and have only one kink, it is easy to check that 
$$\min_{\sigma_n\leq \sigma\leq \sigma_1} g_i(\sigma) = \min \{ g_i(\sigma_1),g_i(\sigma_n)\}\,.$$
This proves the first part of our lemma. When taking $\mu = \frac{1}{2}(1-\gamma\epsilon)(\sigma_1^2 + \sigma_n^2)$, one can show by a straightforward calculation that $g_1(\sigma_n) = g_1(\sigma_1)$. This also implies that $g_2(\sigma_1) \geq g_2(\sigma_n)$. Therefore, to make $\lambda$ large, we would like to equate $g_1(\sigma_n)$ and $g_2(\sigma_n)$. This yields 
\begin{align}
     \epsilon-\frac{1}{2}\frac{\sigma_1^2-\sigma_n^2}{\sigma_1^2 + \sigma_n^2} &= \gamma \sigma_n^2 - \frac{1}{4}(1-\gamma\epsilon)(\sigma_1^2 - \sigma_n^2) \nonumber \\
     \epsilon &= \frac{\gamma \sigma_n^2 + \frac{1}{2}\frac{\sigma_1^2-\sigma_n^2}{\sigma_1^2 + \sigma_n^2}-\frac{1}{4}(\sigma_1^2 - \sigma_n^2)}{1-\frac{1}{4}\gamma (\sigma_1^2 - \sigma_n^2)}\,.\label{eq:epsilon}
\end{align}
In the special case of $\gamma \epsilon = 1$, we obtain 
\begin{align}
   1= \gamma \epsilon = \frac{\gamma^2 \sigma_n^2 + \frac{1}{2}\gamma\frac{\sigma_1^2-\sigma_n^2}{\sigma_1^2 + \sigma_n^2}-\frac{1}{4}\gamma(\sigma_1^2 - \sigma_n^2)}{1-\frac{1}{4}\gamma (\sigma_1^2 - \sigma_n^2)}\,. 
\end{align}
We can solve for $\gamma$ and we get (keeping the positive root)
$$
\gamma = \frac{-\frac{1}{2}\frac{\sigma_1^2-\sigma_n^2}{\sigma_1^2 + \sigma_n^2}+ \sqrt{\frac{1}{4}\left( \frac{\sigma_1^2-\sigma_n^2}{\sigma_1^2 + \sigma_n^2}\right)^2 + 4\sigma_n^2}}{2\sigma_n^2}\,.
$$
Consequently, the convergence rate is 
\begin{align}
    \lambda = \gamma \sigma_n^2 &= -\frac{1}{4}\frac{\sigma_1^2-\sigma_n^2}{\sigma_1^2 + \sigma_n^2}+\frac{1}{2}\sqrt{\frac{1}{4}\left( \frac{\sigma_1^2-\sigma_n^2}{\sigma_1^2 + \sigma_n^2}\right)^2 + 4\sigma_n^2}\nonumber \\
    & = 2\sigma_n^2 \frac{\sigma_1^2+\sigma_n^2}{\sigma_1^2 - \sigma_n^2}-\frac{1}{2}\sigma_n^4 \left( \frac{\sigma_1^2+\sigma_n^2}{\sigma_1^2 - \sigma_n^2}\right)^3 + \mathcal{O}(\sigma_n^6)\,.
\end{align}
\end{proof}

\subsection{Proofs of section \ref{subsec: convergence result for time continuous case }}\label{append: lyapunov analysis of specific root-finding }

To prove Lemma \ref{lemm: est singular values DF }, we need the following Lemma \ref{lemm: est singularvalues } and Lemma \ref{lemm: posdef I+GL }.

\begin{lemma}\label{lemm: est singularvalues }
  Suppose $A$ is an $nm \times nm$ matrix defined as
  \[ A = \left[\begin{array}{ccccc}
                 A_1  &   &  &  & \\
                -I & A_2 &  & & \\
                   & -I & A_3 & & \\
                  &    & \ddots & \ddots &  \\
                 &  &   &   -I & A_n
               \end{array}
  \right], 
  \]
    where each $A_k$ is an$m\times m$ matrix with $\sigma_{\min}(A_k) \geq \underline{\alpha} > 0$ and $\sigma_{\max}(A_k ) \leq \overline{\alpha}$, i.e., $\|A_k v\|\geq \underline\alpha \|v\|$, $\|A_kv\|\leq \overline{\alpha}\|v\|$ for any $v\in\mathbb{R}^m$. Then $\|A^{-1}\|\leq \sum_{k=1}^n \underline{\alpha}^{- k} $, and $\|A\|\leq \overline{\alpha}+1$, i.e., $\sigma_{\min}(A) \geq \frac{1}{\sum_{k=1}^n \underline{\alpha}^{-k}}, $ and $\sigma_{\max}(A)\leq \overline{\alpha}+1.$ 
\end{lemma}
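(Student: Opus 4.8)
The plan is to bound $\|A^{-1}\|$ and $\|A\|$ separately by exploiting the block lower-bidiagonal structure of $A$. For the bound on $\|A\|$, I would simply split $A = D - L$, where $D = \mathrm{diag}(A_1,\dots,A_n)$ is the block-diagonal part and $L$ is the strictly lower block-bidiagonal part consisting of the $-I$ entries on the subdiagonal. Then $\|D\| = \max_k \sigma_{\max}(A_k) \le \overline\alpha$ and $\|L\| = 1$ (the subdiagonal identity blocks act on disjoint coordinates after a shift, so the operator norm is exactly $1$), hence by the triangle inequality $\|A\| \le \|D\| + \|L\| \le \overline\alpha + 1$, which immediately gives $\sigma_{\max}(A) \le \overline\alpha + 1$.

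For the bound on $\|A^{-1}\|$, the cleanest route is to solve $A\boldsymbol{x} = \boldsymbol{y}$ explicitly by forward block-substitution. Writing $\boldsymbol{x} = (x_1,\dots,x_n)$ and $\boldsymbol{y} = (y_1,\dots,y_n)$ in block form, the system reads $A_1 x_1 = y_1$ and $A_k x_k = y_k + x_{k-1}$ for $k \ge 2$. Since $\sigma_{\min}(A_k) \ge \underline\alpha$, each $A_k$ is invertible with $\|A_k^{-1}\| \le \underline\alpha^{-1}$, so $\|x_1\| \le \underline\alpha^{-1}\|y_1\|$ and inductively $\|x_k\| \le \underline\alpha^{-1}(\|y_k\| + \|x_{k-1}\|)$. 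Unrolling this recursion gives $\|x_k\| \le \sum_{j=1}^{k} \underline\alpha^{-(k-j+1)}\|y_j\| \le \big(\sum_{i=1}^{n}\underline\alpha^{-i}\big)\max_j\|y_j\| \le \big(\sum_{i=1}^{n}\underline\alpha^{-i}\big)\|\boldsymbol{y}\|$. Taking the maximum over $k$ (or, more carefully, using $\|\boldsymbol{x}\|^2 = \sum_k \|x_k\|^2$ together with the geometric-sum bound on the coefficients) yields $\|\boldsymbol{x}\| \le \big(\sum_{k=1}^{n}\underline\alpha^{-k}\big)\|\boldsymbol{y}\|$, i.e. $\|A^{-1}\| \le \sum_{k=1}^{n}\underline\alpha^{-k}$, and consequently $\sigma_{\min}(A) = \|A^{-1}\|^{-1} \ge \big(\sum_{k=1}^{n}\underline\alpha^{-k}\big)^{-1}$.

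The only mild subtlety — and what I would treat as the main technical point — is being careful in the passage from the per-block estimates $\|x_k\| \le \sum_{j\le k}\underline\alpha^{-(k-j+1)}\|y_j\|$ to a clean bound on the full Euclidean norm $\|\boldsymbol{x}\|$; a crude bound $\|x_k\|\le(\sum_i \underline\alpha^{-i})\|\boldsymbol y\|$ for each $k$ would lose a factor $\sqrt{n}$. This is avoided by noting that the matrix of coefficients relating $(\|y_j\|)_j$ to $(\|x_k\|)_k$ is itself lower triangular Toeplitz with entries $\underline\alpha^{-1},\underline\alpha^{-2},\dots$, whose operator norm (on $\ell^2$) is bounded by its $\ell^1$ row/column sum $\sum_{i=1}^n \underline\alpha^{-i}$; this is exactly the constant claimed. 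Everything else is routine, so the proof is short.
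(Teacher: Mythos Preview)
Your proof is correct and takes essentially the same approach as the paper: the $\|A\|$ bound via the split $A = D - L$ is identical, and for $\|A^{-1}\|$ the paper writes out the explicit block inverse and decomposes it along block subdiagonals as $A^{-1} = J_1 + \cdots + J_n$ with $\|J_k\| \le \underline\alpha^{-k}$, which is just your forward-substitution/Toeplitz-coefficient argument viewed from the other side. Both routes land on the same geometric sum $\sum_{k=1}^n \underline\alpha^{-k}$.
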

\begin{proof}[Proof of Lemma \ref{lemm: est singularvalues }]
  By a direct calculation, we have
  \[ A^{-1} = \left[ \begin{array}{ccccc}
      A_1^{-1} &  &  & & \\
      (A_1A_2)^{-1} & A_2^{-1} &  &  & \\
      (A_1A_2A_3)^{-1} & (A_2A_3)^{-1} & A_3^{-1} & & \\
      \vdots & \vdots &\vdots& \ddots &  \\
      (A_1A_2\dots A_n)^{-1}  &  (A_2\dots A_n)^{-1} & (A_3\dots A_n)^{-1} & \dots & A_n^{-1} 
  \end{array} \right]\,.
  \]
  Thus we can write $A^{-1}$ as
  \begin{align*} 
    A^{-1} =& \left[\begin{array}{cccc}
       {A}_{11}  &  & & O \\
       &  A_{22} & & \\
       &   &  \ddots & \\
       O & & & A_{nn}
    \end{array}\right] + \left[\begin{array}{cccc}
      O  &  &  &    \\
      A_{21} & \ddots & &   \\
       &  \ddots  &   \ddots & \\ 
      O &  &   A_{n, n-1} & O 
    \end{array}\right] + \dots + \left[ \begin{array}{cccc}
      O   &  &  &  \\
      \vdots &  \ddots  & & \\
      O  &     &  \ddots & \\ 
      A_{n1} & O &  \dots & O 
    \end{array} \right]\\
    \overset{\textrm{denote as}}{=}& J_1 + J_2 + \dots + J_n.  
  \end{align*}
  Here, each $J_k$ ($1\leq k\leq n$) is an $nm\times nm$ block-(sub)diagonal matrix whose $k$-th subdiagonal is $$\mathrm{diag}(A_{k, 1}, A_{k+1, 2}, \dots, A_{n, n-k+1}).$$ And each $A_{ij}$ is defined as
  \[ A_{ij} = (A_jA_{j+1}\dots A_i)^{-1}, \quad \textrm{if } i\geq j. \]
  Then one can bound $\|A^{-1}\|$ as
  \[ \|A^{-1}\| \leq \sum_{k=1}^{n} \|J_k\|. \]
  To bound each $\|J_k\|$ from above, consider any $v = [v_1^\top, v_2^\top, \dots, v_n^\top]^\top\in\mathbb{R}^{nm}$ with each $v_j\in\mathbb{R}^m$, we have
  \[ \|J_k v\|^2 = \sum_{j=k}^{n} \|A_{j, j-k+1} v_j\|^2 = \sum_{j=k}^n \|(A_{j-k+1}\dots A_j)^{-1} v_j\|^2 \leq \underline{\alpha}^{-2k} \sum_{j=k}^n\|v_j\|^2 \leq \underline{\alpha}^{-2k}  \|v\|^2 . \]
  This yields $\|J_k v\|\leq \underline{\alpha}^{-k} \|v\|$ which further gives $\|J_k\|\leq \underline\alpha^{-k}$. Thus, we have proved
  $\|A^{-1}\| \leq \sum_{k=1}^n\underline{\alpha}^{-k}$, which directly leads to the result $\sigma_{min} ( A )  \geq \frac{1}{\sum_{k = 1 }^n \underline{ \alpha }^{-k} } $.

  On the other hand, we write $A$ as
  \[ A = \textrm{diag}(A_1, \dots, A_n) - J \otimes I, \]
  where $J$ is an $n\times n$ matrix defined as 
  \begin{equation}
   J = \left[\begin{array}{cccc}
      0  &    &   &  \\
      1  & \ddots  &  &  \\
         &    \ddots  & \ddots & \\
         &            &   1  & 0
  \end{array}\right],  \label{def: subdiag matrix J }
  \end{equation}
  and $I$ is an $n\times n$ identity matrix. Then we have
  \[ \|A\|\leq\|\mathrm{diag}(A_1, \dots, A_n)\| + \|J\otimes I\| \leq \overline{\alpha} + 1.\]
\end{proof}

\begin{lemma}\label{lemm: posdef I+GL }
  Suppose $G,L$ are self-adjoint, nonnegative definite matrices. Assume $GL = LG$. Then $I+GL$ (or $I+LG$) is orthogonally equivalent to $I+\Lambda_G\Lambda_L$, where $\Lambda_G, \Lambda_L$ are the diagonal matrices equivalent to $G, L$. Furthermore, $\sigma_{\min}(I+GL) = \sigma_{\min}(I+LG) \geq 1+\lambda_{\min}(G)\lambda_{\min}(L)\geq 1.$
\end{lemma}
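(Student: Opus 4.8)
\textbf{Proof plan for Lemma \ref{lemm: posdef I+GL }.}

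The plan is to exploit the fact that commuting self-adjoint matrices are simultaneously diagonalizable by a single orthogonal matrix. First I would invoke the spectral theorem for a commuting family: since $G$ and $L$ are both symmetric (self-adjoint on $\mathbb{R}^{N}$) and $GL=LG$, there exists an orthogonal matrix $Q$ such that $Q^\top G Q = \Lambda_G$ and $Q^\top L Q = \Lambda_L$ are both diagonal, with the diagonal entries of $\Lambda_G$ (resp.\ $\Lambda_L$) being the eigenvalues of $G$ (resp.\ $L$), all nonnegative by the nonnegative-definiteness hypothesis. Then $Q^\top (I+GL) Q = I + \Lambda_G \Lambda_L$, which is the claimed orthogonal equivalence; the same $Q$ works for $I+LG$ since $\Lambda_G$ and $\Lambda_L$ commute as diagonal matrices, giving $Q^\top(I+LG)Q = I+\Lambda_L\Lambda_G = I+\Lambda_G\Lambda_L$. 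This already shows $I+GL$ and $I+LG$ are orthogonally equivalent to the \emph{same} diagonal matrix, hence have the same singular values.

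Next, because $I+\Lambda_G\Lambda_L$ is diagonal with entries $1 + \lambda_i(G)\lambda_i(L)$ (after suitably ordering the simultaneous eigenbasis), its singular values are exactly $|1+\lambda_i(G)\lambda_i(L)|$. Since each $\lambda_i(G)\geq 0$ and $\lambda_i(L)\geq 0$, every such entry satisfies $1+\lambda_i(G)\lambda_i(L)\geq 1 > 0$, so the diagonal matrix is positive definite and its smallest singular value equals $\min_i\{1+\lambda_i(G)\lambda_i(L)\}$. I would then bound this below: $\min_i\{1+\lambda_i(G)\lambda_i(L)\} \geq 1 + \min_i \lambda_i(G)\lambda_i(L) \geq 1 + \lambda_{\min}(G)\lambda_{\min}(L)$, where the last step uses that for each index the product $\lambda_i(G)\lambda_i(L)$ is at least $\lambda_{\min}(G)\lambda_{\min}(L)$ (both factors nonnegative). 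Finally $1+\lambda_{\min}(G)\lambda_{\min}(L)\geq 1$ since the product is nonnegative. Since orthogonal equivalence preserves singular values, $\sigma_{\min}(I+GL) = \sigma_{\min}(I+LG) = \sigma_{\min}(I+\Lambda_G\Lambda_L) \geq 1+\lambda_{\min}(G)\lambda_{\min}(L) \geq 1$, which is the assertion.

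The only subtlety — and the one place I would be careful — is the simultaneous diagonalization step: one must make sure the eigenvalues of $G$ and of $L$ are paired correctly along the common eigenbasis, i.e.\ that the $i$-th diagonal entry of $\Lambda_G$ and the $i$-th diagonal entry of $\Lambda_L$ correspond to the \emph{same} eigenvector $q_i$ of $Q$. This is exactly what the commuting spectral theorem provides (decompose $\mathbb{R}^N$ into joint eigenspaces of $G$ and $L$), so there is no real obstacle, just a need to state the pairing precisely so that the inequality $\lambda_i(G)\lambda_i(L)\geq \lambda_{\min}(G)\lambda_{\min}(L)$ is literally valid termwise. Everything else is a direct computation on diagonal matrices.
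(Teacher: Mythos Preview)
Your proposal is correct and follows essentially the same approach as the paper's proof: simultaneous orthogonal diagonalization of the commuting symmetric matrices $G$ and $L$, followed by reading off the singular values of the resulting positive diagonal matrix $I+\Lambda_G\Lambda_L$. Your write-up is in fact more careful than the paper's, which states the final inequality $\sigma_{\min}(I+\Lambda_G\Lambda_L)\geq 1+\lambda_{\min}(G)\lambda_{\min}(L)$ without spelling out the termwise bound or the eigenvalue-pairing subtlety you flag.
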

\begin{proof}[Proof of Lemma \ref{lemm: posdef I+GL }]
  Since $G, L$ commutes, they can be diagonalized simultaneously, i.e., there exists an orthogonal matrix $Q$, s.t. $G =  Q \Lambda_G Q^\top, $ and $L = Q \Lambda_L Q^\top$, where $\Lambda_G, \Lambda_L\succeq O$ are diagonal matrices. Then $I+GL = I + LG = Q(I+\Lambda_G\Lambda_L)Q^\top$. And thus $\sigma_{\min}(I+GL) = \sigma_{\min}(I+\Lambda_G\Lambda_L) \geq 1+\lambda_{\min}(G)\lambda_{\min}(L)\geq 1.$
\end{proof}

We now prove Lemma \ref{lemm: est singular values DF }.
\begin{proof}[Proof of Lemma \ref{lemm: est singular values DF }]
We first recall
\begin{align*}
 \underline \sigma = & \inf_{U\in \mathbb{R}^{N_x^2}} \{ \sigma_{\min}(D\widehat F(U)) \} = \inf_{U\in \mathbb{R}^{N_x^2}} \{\sigma_{\min}(\mathscr{M}^{-1}DF(U))\},\\
 \overline{\sigma} = & \sup_{U\in \mathbb{R}^{N_x^2}} \{ \sigma_{\max}(D\widehat F(U)) \} = \sup_{U\in \mathbb{R}^{N_x^2}} \{\sigma_{\max}(\mathscr{M}^{-1}DF(U))\},
\end{align*}
where we denote $F(U) = \mathscr{D} U + h_t \mathscr{G}_h(a\mathscr{L}_h U +bf(U)).$

We have
\begin{equation} 
\sigma_{\min}(\mathscr{M}^{-1}DF(U)) = \frac{1}{\sigma_{\max}(DF(U)^{-1}\mathscr{M})} \geq \frac{1}{\|DF(U)^{-1}\|     \|\mathscr{M}\|_2 } = \frac{ \sigma_{\min}(DF(U)) }{ \| \mathscr{M}  \| }  . \label{sigma min D hat F} 
\end{equation}
And
\begin{equation}
\sigma_{\max}(\mathscr{M}^{-1}DF(U)) \leq  \sigma_{\max}(DF(U))\|  \mathscr{M}^{-1}  \|. \label{sigma max D hat F}
\end{equation}

Now we estimate the singular values of $DF(U)$, since 
\[ DF(U) = \left[\begin{array}{ccccc}
    X_1  &   &  &  & \\
    -I & X_2 &  & & \\
       & -I & X_3 & & \\
       &    & \ddots & \ddots &  \\
       &  &   &   -I & X_{N_t}
\end{array}\right],
\]
where each $X_i = I + a h_t \mathcal G_h \mathcal L_h + b h_t \mathcal G_h \mathrm{diag}(f'(U^i))$. (Here we denote $U = (U^{1 \top},\dots,U^{N_t \top})^\top$.)  

Then for each $X_i$, we have 
\begin{align*}
  \sigma_{\min}(X_i)&\geq \sigma_{\min}(I + ah_t\mathcal G_h \mathcal L_h) - \sigma_{\max}(b h_t \mathcal G_h \mathrm{diag}(f'(U^i)))\\
  & \geq \sigma_{\min}(I + ah_t\mathcal G_h \mathcal L_h) - h_t|b|\|\mathcal G_h\|\|\mathrm{diag}(f'(U^i))\|.
\end{align*}
By Lemma \ref{lemm: posdef I+GL }, the first term above is no less than $1+ah_t\lambda_{\min}(\mathcal G_h)\lambda_{\min}(\mathcal L_h)\geq 1$. It is not hard to verify that $\|\mathcal G_h\|=\lambda_{\max}(\mathcal G_h)$,  $\|\mathrm{diag}(f'(U^i))\|\leq \mathrm{Lip}(f)$. This leads to
$$\sigma_{\min}(X_i) \geq 1-h_t|b|\lambda_{\max}(\mathcal G_h)\mathrm{Lip}(f). $$
We denote $\underline{\alpha} = 1-h_t|b|\lambda_{\max}(\mathcal G_h)\mathrm{Lip}(f)$. Then $\underline\alpha>0$, and is independent of $U$. 

On the other hand, one can also verify that
\[ \sigma_{\max}(X_i) = \|X_i\|\leq \|I+ah_t\mathcal G_h\mathcal L_h\| + h_t|b|\|\mathcal G_h\|\mathrm{Lip}(f), \]
by denoting $\overline{\alpha} = \|I+ah_t\mathcal G_h\mathcal L_h\| + h_t|b|\|\mathcal G_h\|\mathrm{Lip}(f)$, we know $\overline{ \alpha}$ is also independent of $U$.

We now apply Lemma \ref{lemm: est singularvalues } to $DF(U)$ with $\sigma_{\min}(X_i)\geq \underline{ \alpha }$ and $\sigma_{\max}(X_i)\leq  \overline{\alpha}$. Together with \eqref{sigma min D hat F} and \eqref{sigma max D hat F}, we have
\[ \sigma_{\min}(D\widehat F(U)) \geq \frac{1}{\left(\sum_{k=1}^{N_t} \underline \alpha^{-k}\right) \|\mathscr{M}\| }, \quad \sigma_{\max}(D\widehat F(U)) \leq (1+\overline{\alpha}) \|\mathscr{M}^{-1}\|. \]
Since $\underline \alpha$, $\overline{\alpha}$, $\|\mathscr{M}\|$ and $\|\mathscr{M}^{-1}\|$ are all independent of $U$, we are done.
\end{proof}

To prove Lemma \ref{lemm: more sophisticate est on singular values of DF }, we need the following Lemma \ref{lemm:  est on inv_M G_h }.
\begin{lemma}\label{lemm:  est on inv_M G_h }
  Suppose we keep all the assumptions from Lemma \ref{lemm: more sophisticate est on singular values of DF }. Let $\mathscr{G}_h$ be defined as in \eqref{def; caligraph Gh, Lh }, and $\mathscr{M}$ be defined as in \eqref{def: scr M }. Then 
  \begin{equation*}
    \|\mathscr{M}^{-1}\mathscr{G}_h\|\leq N_t\left(\max_{1\leq k\leq N_x^2} \left\{ \frac{\lambda_k(\mathcal G_h)}{1 + h_t(a\lambda_k(\mathcal{G}_h)\lambda_k(\mathcal{L}_h) + bc\lambda_k(\mathcal{G}_h)) }\right\}\right).
  \end{equation*}
\end{lemma}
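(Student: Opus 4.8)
The plan is to exploit the block lower-triangular structure of $\mathscr{M}$ together with the simultaneous diagonalizability of $\mathcal G_h$ and $\mathcal L_h$. First I would recall from \eqref{def: scr M } that $\mathscr{M}$ has the block form with $X = I + ah_t\mathcal G_h\mathcal L_h + bh_t\mathcal G_h J_f$ on the diagonal and $-I$ on the subdiagonal, and under \eqref{condition Jf=cI c>=0 } we have $J_f = cI$, so $X = I + ah_t\mathcal G_h\mathcal L_h + bch_t\mathcal G_h$. Since $\mathscr{G}_h = I_t \otimes \mathcal G_h$ commutes (block-wise) with the structure of $\mathscr{M}$, the product $\mathscr{M}^{-1}\mathscr{G}_h$ inherits the same block lower-triangular Toeplitz structure: its $(i,j)$ block for $i \geq j$ is $(X^{-1})^{i-j+1}\mathcal G_h$ — more precisely $\mathscr{M}^{-1}$ has $(i,j)$ block equal to $X^{-(i-j+1)}$ for $i\geq j$, so $\mathscr{M}^{-1}\mathscr{G}_h$ has $(i,j)$ block $X^{-(i-j+1)}\mathcal G_h$.

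Next I would diagonalize. By \eqref{condition: G,L>=0, symm, commut } there is an orthogonal $Q$ with $\mathcal G_h = Q\Lambda_{\mathcal G_h}Q^\top$, $\mathcal L_h = Q\Lambda_{\mathcal L_h}Q^\top$, hence $X = Q\Lambda_X Q^\top$ with $\Lambda_X = I + ah_t\Lambda_{\mathcal G_h}\Lambda_{\mathcal L_h} + bch_t\Lambda_{\mathcal G_h}$, a diagonal matrix with $k$-th entry $1 + h_t(a\lambda_k(\mathcal G_h)\lambda_k(\mathcal L_h) + bc\lambda_k(\mathcal G_h))$, which is $\geq 1 > 0$ under \eqref{condition: a b >= 0 } and \eqref{condition: G,L>=0, symm, commut }. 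Conjugating $\mathscr{M}^{-1}\mathscr{G}_h$ by $I_t\otimes Q$ block-wise makes it orthogonally equivalent to a block lower-triangular Toeplitz matrix whose $(i,j)$ block is the diagonal matrix $\Lambda_X^{-(i-j+1)}\Lambda_{\mathcal G_h}$. This means the whole operator decouples across the $N_x^2$ spatial modes: for each fixed mode $k$, we get an $N_t\times N_t$ scalar lower-triangular Toeplitz matrix $T_k$ with entries $(T_k)_{ij} = x_k^{-(i-j+1)}\lambda_k(\mathcal G_h)$ for $i\geq j$, where $x_k = 1 + h_t(a\lambda_k(\mathcal G_h)\lambda_k(\mathcal L_h) + bc\lambda_k(\mathcal G_h)) \geq 1$.

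The operator norm $\|\mathscr{M}^{-1}\mathscr{G}_h\|$ is then $\max_k \|T_k\|$. I would bound $\|T_k\|$ crudely: writing $T_k = \lambda_k(\mathcal G_h) x_k^{-1}(I - x_k^{-1}J)^{-1}$ where $J$ is the nilpotent subdiagonal shift from \eqref{def: subdiag matrix J }, one has $\|T_k\| \leq \lambda_k(\mathcal G_h)\sum_{m=1}^{N_t} x_k^{-m} \leq N_t \lambda_k(\mathcal G_h) x_k^{-1} \leq N_t \frac{\lambda_k(\mathcal G_h)}{x_k}$, using $x_k \geq 1$ so that $x_k^{-m}\leq x_k^{-1}$ for all $m\geq 1$. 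Alternatively one can invoke Lemma \ref{lemm: est singularvalues } directly with $A_k = X$ (so $\underline\alpha = \lambda_{\min}(X)$) to get $\|\mathscr{M}^{-1}\| \leq \sum_{m=1}^{N_t}\lambda_{\min}(X)^{-m}$; but the mode-by-mode argument is cleaner because it keeps the per-mode factor $\lambda_k(\mathcal G_h)/x_k$ together, which is exactly what appears in the claimed bound. Taking the max over $k$ gives $\|\mathscr{M}^{-1}\mathscr{G}_h\| \leq N_t \max_k \{\lambda_k(\mathcal G_h)/x_k\}$, which is the assertion. The main obstacle — really the only nontrivial point — is justifying the block decoupling carefully: that $\mathscr{M}^{-1}\mathscr{G}_h$, after simultaneous block-diagonalization by $I_t\otimes Q$, splits into independent scalar Toeplitz blocks indexed by spatial modes, so that the spectral norm is the maximum of the per-mode norms rather than some larger combination. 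Everything after that is an elementary geometric-series estimate.
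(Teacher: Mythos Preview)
Your argument is correct. It differs from the paper's in organization: the paper factors $\mathscr{M}^{-1}\mathscr{G}_h = \mathscr{N}\,\widetilde{\mathscr{G}}_h$, where $\mathscr{N}$ is the block lower-triangular Toeplitz matrix with blocks $I, X^{-1}, \dots, X^{-(N_t-1)}$ and $\widetilde{\mathscr{G}}_h = I_t\otimes(X^{-1}\mathcal G_h)$, then bounds $\|\mathscr{N}\|\leq\sum_{m=0}^{N_t-1}\|X^{-1}\|^m\leq N_t$ (using $\|X^{-1}\|\leq 1$) and $\|\widetilde{\mathscr{G}}_h\|=\max_k\{\lambda_k(\mathcal G_h)/x_k\}$ separately. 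You instead diagonalize in space first via $I_t\otimes Q$, then permute to decouple into $N_x^2$ scalar $N_t\times N_t$ Toeplitz matrices $T_k$, and bound each $\|T_k\|$ by the same geometric sum. Both routes rest on the same two facts ($X$ is simultaneously diagonalizable with $\mathcal G_h$ and has all eigenvalues $\geq 1$) and yield the identical bound; your mode-by-mode version has the mild advantage that it keeps the per-mode ratio $\lambda_k(\mathcal G_h)/x_k$ intact throughout and would allow a sharper estimate $\max_k\bigl\{\lambda_k(\mathcal G_h)\sum_{m=1}^{N_t}x_k^{-m}\bigr\}$ if one ever needed it, whereas the paper's submultiplicative split $\|\mathscr{N}\|\|\widetilde{\mathscr{G}}_h\|$ mixes the worst time factor with the worst space factor. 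The decoupling step you flag is indeed the only point requiring care, and it is justified by the orthogonal permutation interchanging the time and space indices after block-diagonalization.
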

\begin{proof}[Proof of Lemma \ref{lemm:  est on inv_M G_h }]
  Recall that we have
  \[ \mathscr M = \left[\begin{array}{ccccc}
    X & & & &   \\
   -I & X & & &  \\
      & -I& X & & \\
      &   & \ddots & \ddots & \\
      &   &        &     -I & X
  \end{array}\right], \quad  X = I + a h_t \mathcal{G}_h\mathcal{L}_h + b h_t 
     \mathcal G_h J_f. \]  
  By Lemma \ref{lemm: posdef I+GL }, we have $X = Q(I + ah_t\Lambda_{\mathcal G_h}\Lambda_{\mathcal L_h}+bch_t\Lambda_{\mathcal G_h})Q^\top$, where we have also used that $\mathcal G_h, \mathcal L_h$ commute, and $J_f  =  c  I$. Here we write $\Lambda_{\mathcal{G}_h}, \Lambda_{\mathcal{L}_h}$ as the diagonal matrices which are orthogonally similar to $\mathcal G_h, \mathcal L_h$ w.r.t. orthogonal matrix $Q$. It is not hard to verify that 
  \begin{equation}  
    \|X^{-1}\|\leq \frac{1}{1+h_t(\lambda_{\min}(a\mathcal G_h\mathcal L_h + bc \mathcal G_h))} \leq 1\label{est low bdd sigmamin X}.
  \end{equation}
  Now one can compute
  \begin{align*}
     \mathscr{M}^{-1}\mathscr{G}_h & = \left[ \begin{array}{ccccc}
      X^{-1} &  &  & & \\
      X^{-2} & X^{-1} &  &  & \\
      X^{-3} & X^{-2} & X^{-1} & & \\
      \vdots & \vdots &\vdots& \ddots &  \\
      X^{-N_t}  &  X^{-(N_t - 1)} & X^{-(N_t-2)} & \dots & X^{-1} 
  \end{array} \right]  \left[\begin{array}{ccccc}
     \mathcal G_h  & & & & \\
       & \mathcal G_h & & &\\
       & & \mathcal G_h&&\\
       & & & \ddots &\\
       &&&&\mathcal G_h
  \end{array}\right] \\
  & = \underbrace{\left[ \begin{array}{ccccc}
      I &  &  & & \\
      X^{-1} & I &  &  & \\
      X^{-2} & X^{-1} &  I  & & \\
      \vdots & \vdots &\vdots& \ddots &  \\
      X^{-(N_t-1)}  &  X^{-(N_t - 2)} & X^{-(N_t-3)} & \dots & I  
  \end{array} \right]}_{\mathscr{N}}  \underbrace{\left[\begin{array}{ccccc}
     X^{-1} \mathcal G_h  & & & & \\
       & X^{-1} \mathcal G_h & & &\\
       & & X^{-1} \mathcal G_h&&\\
       & & & \ddots &\\
       &&&& X^{-1}\mathcal G_h
  \end{array}\right]}_{\widetilde{\mathscr{G}}_h}\\
  & \overset{\textrm{denote as}}{=}  {\mathscr{N}} \widetilde{\mathscr{G}}_h.
  \end{align*}
  Similar to the treatment in Lemma \ref{lemm: est singular values DF }, we estimate $\|\mathscr N\|$ by decomposing $\mathscr{N}$ as
  \begin{align*} 
    \mathscr{N} = & I\otimes X^{0} + J \otimes X^{-1} + J^2 \otimes X^{-2} + \dots + J^{N_t-1} \otimes X^{-(N_t-1)}\,,
  \end{align*}
  where we recall that $J$ is defined as in \eqref{def: subdiag matrix J }; And $X^0$ is treated as the identity matrix.
  
  Then we estimate $\|\mathscr{N}\|$ as
  \begin{equation*}
    \|\mathscr N \|\leq \left(\sum_{k=0}^{N_t-1} \|J^k\otimes (X^{-1})^k \|\right).
  \end{equation*}
  Since $\|A\otimes B\| = \|A\|\cdot \|B\|$ for any dimensions of square matrices $A,B$, using \eqref{est low bdd sigmamin X} and $\|J\|\leq 1$ yields
  \begin{equation}
      \|\mathscr{N}\|\leq \sum_{k =0 }^{N_t - 1 }\|(X^{-1})^{k}\| \leq \sum_{k =0 }^{N_t - 1 }\|X^{-1}\|^k \leq N_t.\label{est norm matrix N }
  \end{equation}
  
  On the other hand, we have 
  \begin{equation*}
    \widetilde{\mathscr{G}}_h = X^{-1}\mathcal G_h = Q((I + ah_t\Lambda_{\mathcal G_h}\Lambda_{\mathcal L_h}+bch_t\Lambda_{\mathcal G_h})^{-1}\Lambda_{\mathcal G_h})Q^\top. 
  \end{equation*}
  If we denote $\{\lambda_k(\mathcal G_h)\}$, $\{\lambda_k(\mathcal L_h)\}$ ($1\leq k \leq N_x^2$) as the corresponding eigenvalues of $\mathcal G_h, \mathcal L_h$ w.r.t. $Q$, we know
  \begin{equation}
    \|\widetilde{\mathscr{G}_h}\| = \max_{1\leq k\leq N_x^2} \left\{ \frac{\lambda_k(\mathcal G_h)}{1 + h_t(a\lambda_k(\mathcal{G}_h)\lambda_k(\mathcal{L}_h) + bc\lambda_k(\mathcal{G}_h))}  \right\} . \label{est norm matrix tilde Gh }
  \end{equation}
  Now combining \eqref{est norm matrix N } and \eqref{est norm matrix tilde Gh } and using $\|\mathscr{M}^{-1}\mathscr{G}_h\|\leq \|\mathscr{N}\|\|\widetilde{\mathscr G}_h\|$, we finish the proof.
\end{proof}

We now prove Lemma \ref{lemm: more sophisticate est on singular values of DF }.
\begin{proof}[Proof of Lemma \ref{lemm: more sophisticate est on singular values of DF }]
By Lemma \ref{lemm:  est on inv_M G_h } and the fact that $\|DR(\cdot)\|\leq \mathrm{Lip}(R)$, we have
\begin{equation*}
  \|D\eta(U)\| = \|b h_t \mathscr{M}^{-1} \mathscr{G}_h DR(U)\| \leq bh_t\cdot\|\mathscr{M}^{-1}\mathscr G_h\|\cdot\mathrm{Lip}(R) \leq b T \zeta_{a,b,c}(h_t)\mathrm{Lip}(R).
\end{equation*}
 Recall that
 \[ D\widehat F(U) = I + D\eta(U). \]
  Now for any $v\in\mathbb{R}^{N_x^2}$, we have
  \begin{equation}
       \|D\widehat F(U)v\| = \|v + D\eta(U)v\| \geq \|v\| - \|D\eta(U)\|\|v\|.  \geq (1-bT\mathrm{Lip}(R)\zeta_{a,b,c}(h_t))\|v\|. \label{est low bd D hat F }
  \end{equation}
  Since the right-hand side of \eqref{est low bd D hat F } is independent of $U$, this will lead to a lower bound on $\underline\sigma$, i.e. 
  \[ \underline \sigma \geq 1-bT \zeta_{a,b,c}(h_t) \mathrm{Lip}(R) . \]
By a similar argument, we have
  \[ \|D\widehat F(U) v\| \leq \|v\| + \|D\eta(U)\|\|v\|\leq (1+bT \zeta_{a,b,c}(h_t) \mathrm{Lip}(R))\|v\|. \]
  This will finally lead to
  \[ \overline \sigma \leq 1 + bT \zeta_{a,b,c}(h_t) \mathrm{Lip}(R). \] 
\end{proof}

\begin{lemma}[Sufficient condition on the unique solvability of $\widehat{F}(U) = 0$]\label{lemm: sufficient cond unique existence of root }
Suppose conditions \eqref{condition: a b >= 0 }, \eqref{condition: f Lip }, \eqref{condition: G,L>=0, symm, commut } and \eqref{condition Jf=cI c>=0 } hold. We pick $h_t$ and $T=N_t h_t$ ($N_t\in\mathbb{N}_+$) satisfying $bT\mathrm{Lip}(R) \zeta_{a,b,c}(h_t) < 1$. Then there exists a unique root of $\widehat{F}$.
\end{lemma}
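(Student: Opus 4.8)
The plan is to recognize $\widehat F$ as a small Lipschitz perturbation of the identity and to invoke the Banach fixed point theorem. Recall from \eqref{def: preconded F} that $\widehat F(U) = U + \eta(U)$ with $\eta(U) = bh_t\,\mathscr{M}^{-1}\mathscr{G}_h R(U) - \widetilde V$, where $\widetilde V$ is a constant vector independent of $U$. First I would record that, since assumption \eqref{condition: f Lip } forces $R$ to be Lipschitz with constant $\mathrm{Lip}(R)$, the map $\eta$ is globally Lipschitz: for any $U,U' \in \mathbb{R}^{N_tN_x^2}$,
\[
  \|\eta(U) - \eta(U')\| = bh_t\,\|\mathscr{M}^{-1}\mathscr{G}_h(R(U)-R(U'))\| \le bh_t\,\|\mathscr{M}^{-1}\mathscr{G}_h\|\,\mathrm{Lip}(R)\,\|U-U'\|.
\]
By Lemma \ref{lemm:  est on inv_M G_h } we have $\|\mathscr{M}^{-1}\mathscr{G}_h\| \le N_t\,\zeta_{a,b,c}(h_t)$, and since $T = N_t h_t$ this gives $\|\eta(U)-\eta(U')\| \le bT\,\mathrm{Lip}(R)\,\zeta_{a,b,c}(h_t)\,\|U-U'\|$. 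Writing $\theta = bT\,\mathrm{Lip}(R)\,\zeta_{a,b,c}(h_t)$, the hypothesis of the lemma is exactly $\theta < 1$.

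Next I would observe that $U$ is a root of $\widehat F$ if and only if $U = G(U)$, where $G(U) := -\eta(U)$. The estimate above shows $G$ is a contraction on $\mathbb{R}^{N_tN_x^2}$ with Lipschitz constant $\theta < 1$, and $\mathbb{R}^{N_tN_x^2}$ is a complete metric space, so the Banach fixed point theorem furnishes a unique fixed point of $G$, hence a unique root of $\widehat F$. (Alternatively, under \eqref{condition Jf=cI c>=0 } together with $f\in C^1$ one has $D\widehat F(U) = I + D\eta(U)$ with $\|D\eta(U)\| \le \theta < 1$ by Lemma \ref{lemm: more sophisticate est on singular values of DF }, so $D\widehat F$ is everywhere nonsingular and $\widehat F$ is a global diffeomorphism by Hadamard's theorem; but the contraction argument is shorter and needs only Lipschitz regularity of $f$.)

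I expect essentially no serious obstacle: the quantitative heart of the statement is already contained in Lemma \ref{lemm:  est on inv_M G_h } (equivalently Lemma \ref{lemm: more sophisticate est on singular values of DF }), and what remains is the routine Banach fixed point step. The only point that warrants a moment's care is to phrase the contraction bound through the Lipschitz constant of $R$ directly rather than through $DR$, so that no differentiability of $f$ beyond \eqref{condition: f Lip } is used. I would also note in passing that invertibility of $\mathscr{M}$ — guaranteed under \eqref{condition: a b >= 0 }, \eqref{condition: G,L>=0, symm, commut }, and \eqref{condition Jf=cI c>=0 } by the remark on invertibility of $\mathscr{M}$ — is what makes $\widehat F(U)=0$ equivalent to the original root-finding problem $F(U)=0$, so the unique root we produce is the desired numerical solution of the time-implicit scheme.
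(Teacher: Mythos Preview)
Your argument is correct, but it differs from the paper's proof. The paper does not invoke Banach's fixed point theorem on the preconditioned system; instead it unwinds the hypothesis $bT\mathrm{Lip}(R)\zeta_{a,b,c}(h_t)<1$ into the single-step inequality
\[
  \min_{\lambda_k(\mathcal G_h)>0}\Bigl\{\tfrac{1}{\lambda_k(\mathcal G_h)h_t}+a\lambda_k(\mathcal L_h)+bc\Bigr\}>b\,\mathrm{Lip}(R),
\]
observes that the decomposition $f(u)=cu+R(u)$ matches the $V'+\phi$ framework of Theorem~\ref{thm: unique existence root-finding} with $K=c$ and $\phi=R$, and recognizes the displayed inequality as condition~\eqref{condition on existence and uniqueness}. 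Unique solvability then follows from the variational (strong convexity) argument already established there. Your contraction route is shorter and more self-contained: it bypasses Theorem~\ref{thm: unique existence root-finding} entirely, works directly on the full $N_t$-block preconditioned problem, and only needs the operator bound of Lemma~\ref{lemm:  est on inv_M G_h } together with the Lipschitz property of $R$. The paper's route, on the other hand, makes explicit that the multi-step hypothesis already forces well-posedness of each individual implicit step and does not rely on the preconditioner $\mathscr{M}$ at all, tying the result back to the single-step analysis. Either approach is valid; yours is arguably the cleaner way to prove the lemma as stated.
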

\begin{proof}[Proof of Lemma \ref{lemm: sufficient cond unique existence of root }]
\eqref{condition on ht zeta less than theta } leads to 
  \begin{equation*}
     \max_{1\leq k\leq N_x^2} \left\{ \frac{\lambda_k(\mathcal G_h)}{1 + h_t(a\lambda_k(\mathcal{G}_h)\lambda_k(\mathcal{L}_h) + bc\lambda_k(\mathcal{G}_h))}  \right\} < \frac{1}{bT\mathrm{Lip}(R)},
  \end{equation*}
  which is equivalent to
  \begin{equation*}
     \min_{1\leq k\leq N_x^2, \lambda_k(\mathcal G_h)>0} \left\{ \frac{1}{\lambda_k(\mathcal G_h)} +h_t(a\lambda_k(\mathcal{L}_h) + bc) \right\}  >  bT\mathrm{Lip}(R).
  \end{equation*}
  Since $T\geq h_t$, the right-hand side of the above inequality is larger than or equal to $bh_t\mathrm{Lip}(R)$. Thus the above inequality yields
  \begin{equation}
     \min_{1\leq k\leq N_x^2, \lambda_k(\mathcal G_h)>0} \left\{ \frac{1}{\lambda_k(\mathcal G_h)h_t} + a\lambda_k(\mathcal{L}_h) + bc\right\}  >  b\mathrm{Lip}(R). \label{equiv to unique existence root condition}
  \end{equation}
  Recall the decomposition of $f(u)=cu+(f(u)-cu)=cu+R(u)$. By \eqref{condition Jf=cI c>=0 }, $c\geq 0$. We can then set $K=c, \phi=R$ in Theorem \ref{thm: unique existence root-finding}. Furthermore, \eqref{condition: G,L>=0, symm, commut } implies $\lambda_k(Q_1^\top \mathcal{L}_hQ_1) = \lambda_k(\mathcal L_h)$. As a result, \eqref{equiv to unique existence root condition} is equivalent to \eqref{condition on existence and uniqueness} in Theorem \ref{thm: unique existence root-finding}, which leads to the unique existence of the root-finding problem $\widehat{F}(U)=0.$
\end{proof}

\subsection{Proofs of section \ref{sec: lyapunov time discrt case}} \label{append: lyapunov analysis of PDHG alg }
Before we prove Theorem \ref{thm: time discrete PDHG converge with Lip }, we need Lemma \ref{lemm: solution set for ineq is nonempty }, \ref{vector function mean value thm } and \ref{Lemm: exponential decay of sequence with 2nd order recurrence ineq }.

\begin{lemma}\label{lemm: solution set for ineq is nonempty }
  Suppose $\theta\in[0, \sqrt{2}-1)$, there exist $u, k>0$, s.t.
  \[ k u \Psi(\theta) - \frac14 \Omega(u, k, \theta)^2 > 0,\]
  where $\Psi(\theta) = 1-2\theta-\theta^2, ~ \Omega(u, k, \theta) =|1-u-k| + \theta(|1-u|+k).$ 
\end{lemma}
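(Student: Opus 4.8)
The plan is to exhibit an explicit choice of $u$ and $k$ (as functions of $\theta$) for which the inequality holds, rather than argue abstractly. The natural first move is to kill the first absolute value in $\Omega$ by setting $k = 1-u$, so that $1-u-k = 0$ and $\Omega(u,1-u,\theta) = \theta(|1-u| + 1-u)$; if we additionally require $u < 1$ then $|1-u| = 1-u$ and $\Omega = 2\theta(1-u)$. Under this substitution the target inequality becomes
\[
(1-u)\,u\,\Psi(\theta) - \tfrac14\cdot 4\theta^2(1-u)^2 = (1-u)\bigl(u\,\Psi(\theta) - \theta^2(1-u)\bigr) > 0,
\]
and since $0 < u < 1$ makes the factor $1-u$ positive, it remains to find $u \in (0,1)$ with $u\,\Psi(\theta) > \theta^2(1-u)$, i.e. $u(\Psi(\theta) + \theta^2) > \theta^2$, i.e. $u > \theta^2/(\Psi(\theta)+\theta^2) = \theta^2/(1-2\theta)$.

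The second step is to check that this last requirement is compatible with $u \in (0,1)$, which needs $\theta^2/(1-2\theta) < 1$. Here is where the hypothesis $\theta < \sqrt{2}-1$ is used: $\theta^2 < 1 - 2\theta$ rearranges to $\theta^2 + 2\theta - 1 < 0$, and the positive root of $\theta^2 + 2\theta - 1$ is exactly $\sqrt{2}-1$, so for $0 \le \theta < \sqrt{2}-1$ we indeed have $\theta^2 < 1-2\theta$ (note also $1-2\theta > 0$ on this range since $\sqrt 2 - 1 < 1/2$, so the quotient is a well-defined positive number less than $1$). Hence the open interval $\bigl(\theta^2/(1-2\theta),\, 1\bigr)$ is nonempty; picking any $u$ in it and then $k = 1-u > 0$ gives a valid pair. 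One can even record the convenient explicit midpoint-type choice, e.g. $u = \tfrac12\bigl(1 + \theta^2/(1-2\theta)\bigr)$, though any interior point works.

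I do not expect a genuine obstacle here — the statement is essentially a bookkeeping lemma whose only content is that the threshold $\sqrt2-1$ is precisely the place where $\theta^2/(1-2\theta)$ crosses $1$. The one point requiring a little care is the case analysis on the absolute values: one must confirm that the choices $k = 1-u$ and $u<1$ are self-consistent (they are, since then $k>0$ automatically) and that no other branch of $|1-u-k|$ or $|1-u|$ is secretly needed. A remark worth adding for the reader is that this $u$-range is exactly the constraint $\tfrac{\theta^2}{1-2\theta} < u < 1$ that reappears in the simplified hyperparameter discussion following Theorem \ref{thm: time discrete PDHG converge with Lip }, so the lemma is really just certifying that that interval is nonempty under the stated bound on $\theta$.
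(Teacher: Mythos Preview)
Your argument is correct and in fact cleaner than the paper's. The paper does not make the substitution $k = 1-u$; instead it first bounds $\Omega(u,k,\theta)^2 \leq 2(1+\theta)^2((1-u)^2 + k^2)$ via triangle and Cauchy--Schwarz, rewrites the resulting lower bound as $ku(1+\theta)^2\bigl(\Psi(\theta)/(1+\theta)^2 - ((1-u)^2+k^2)/(2ku)\bigr)$, optimizes the subtracted fraction over $u$ (minimum $(\sqrt{k^2+1}-1)/k$ at $u=\sqrt{1+k^2}$), and then observes that $(\sqrt{k^2+1}-1)/k$ ranges over $(0,1)$ so can be made smaller than $c = \Psi(\theta)/(1+\theta)^2 \in (0,1]$ for some $k>0$. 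Your route is more direct: by zeroing the $|1-u-k|$ term you reduce immediately to the single constraint $u > \theta^2/(1-2\theta)$, and the bound $\theta < \sqrt{2}-1$ drops out as exactly the condition making this threshold less than $1$. A bonus of your approach is that it recovers precisely the interval $u \in (\theta^2/(1-2\theta),1)$ and the choice $\varrho = 1-u$ that the paper uses later in the simplification leading to Corollary~\ref{coro: simplify PDHG alg converge}, so the connection you flag at the end is genuine and worth keeping.
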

\begin{proof}[Proof of Lemma \ref{lemm: solution set for ineq is nonempty }]
We note that $\Omega(u, k, \theta)^2\leq ((1+\theta)(|1-u|+k))^2\leq 2(1+\theta)^2((1-u)^2 + k^2 )$. Then for any $u, k >0$, we have
\begin{align*}
  ku\Psi(\theta) - \frac14 \Omega(u, k, \theta)^2 & \geq ku\Psi(\theta) - \frac12(1+\theta)^2((1-u)^2 + k^2)) \\
  & = ku(1+\theta)^2\left(\frac{\Psi(\theta)}{(1+\theta)^2} - \frac{((1-u)^2 + k^2)}{2ku}\right) \\
  & \geq ku(1+\theta)^2\left(\frac{\Psi(\theta)}{(1+\theta)^2} - \frac{\sqrt{k^2+1} - 1}{k}\right).
\end{align*}
Denote $c=\frac{\Psi(\theta)}{(1+\theta)^2}$. For any $\theta\in[0, \sqrt{2}-1)$, $c\in(0, 1]$. As shown in Figure \ref{fig: function}, it is not hard to verify that $\frac{\sqrt{k^2+1}-1}{k}$ increases monotonically from $0$ to $1$ on $\mathbb{R}_+$. Thus, $\frac{\Psi(\theta)}{(1+\theta)^2} - \frac{\sqrt{k^2+1} - 1}{k}>0$ is guaranteed to have a positive solution $k>0$. This proves the lemma.
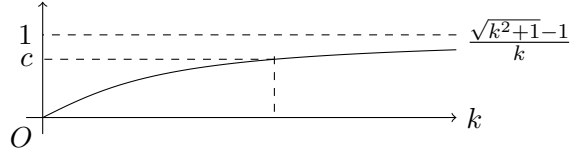
\begin{figure}[htb!]
\centering
\begin{tikzpicture}[scale=1.1]
\centering
  \draw (0, 0) node[below left] {$O$};
  \draw[->] (-0.2, 0) -- (5.0, 0) node[right] {$k$};
  \draw[->] (0, -0.2) -- (0, 1.4) ;
  \draw (5.0, 0.95) node[right] {$\frac{\sqrt{k^2+1}-1}{k}$};
  \draw[scale=1, domain=0.001:4.999, smooth, variable=\x, black] plot ({\x}, {(sqrt(\x*\x+1) - 1)/\x});
  \draw [dashed] (2.8, 0.7047) -- (0, 0.7047) node [left] {$c$};
  \draw [dashed] (2.8, 0.7407) -- (2.8, 0) ;
  \draw [dashed] (5, 1.0) -- (0.0, 1.0)  node [left] {$ 1 $}; 
\end{tikzpicture}
\caption{ Graph of $\frac{\sqrt{k^2+1}-1}{k}$.}\label{fig: function}
\end{figure}
\end{proof}

\begin{lemma}\label{vector function mean value thm }
  Suppose $F:\mathbb{R}^d\rightarrow \mathbb{R}^d$ is differentiable on $\mathbb{R}^d$. Let $\textbf{v} \in \mathbb{R}^d$. Then, for any $x,y\in\mathbb{R}^d$, there exists $t_{\textbf{v}}\in (0, 1)$ such that 
  \begin{equation*} 
    \textbf{v}^\top (F(y) - F(x)) = \textbf{v}^\top DF(x+t_{\textbf{v} } (y-x) ) (y - x).
  \end{equation*}
\end{lemma}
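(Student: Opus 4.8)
The plan is to reduce the vector-valued identity to a scalar mean value theorem applied to a cleverly chosen auxiliary function. First I would fix $x, y \in \mathbb{R}^d$ and $\mathbf{v} \in \mathbb{R}^d$, and define the scalar function $g : [0,1] \to \mathbb{R}$ by
\[
g(t) = \mathbf{v}^\top F\big(x + t(y-x)\big).
\]
Since $F$ is differentiable on $\mathbb{R}^d$ and $t \mapsto x + t(y-x)$ is an affine (hence smooth) map from $[0,1]$ into $\mathbb{R}^d$, the composition $g$ is differentiable on $(0,1)$ and continuous on $[0,1]$. By the chain rule, $g'(t) = \mathbf{v}^\top DF\big(x + t(y-x)\big)(y-x)$.

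Next I would apply the classical one-dimensional mean value theorem to $g$ on $[0,1]$: there exists $t_{\mathbf{v}} \in (0,1)$ with $g(1) - g(0) = g'(t_{\mathbf{v}})$. Unwinding the definitions, $g(1) = \mathbf{v}^\top F(y)$ and $g(0) = \mathbf{v}^\top F(x)$, so this reads
\[
\mathbf{v}^\top\big(F(y) - F(x)\big) = \mathbf{v}^\top DF\big(x + t_{\mathbf{v}}(y-x)\big)(y-x),
\]
which is exactly the claimed identity. The subscript notation $t_{\mathbf{v}}$ simply records that the point depends on the choice of $\mathbf{v}$ (and of $x, y$), which is expected since one cannot in general choose a single intermediate point that works simultaneously for all coordinates of $F$ — that is precisely why the statement is phrased with a scalar test vector $\mathbf{v}$ rather than as $F(y) - F(x) = DF(\xi)(y-x)$.

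There is essentially no obstacle here: the only subtlety worth stating carefully is the differentiability of the composition $g$ and the chain-rule computation of $g'$, which rely on $F$ being differentiable on all of $\mathbb{R}^d$ (so that the segment $\{x + t(y-x) : t \in [0,1]\}$ lies in the domain of differentiability). Once $g$ is set up, the result is an immediate application of the scalar mean value theorem, so the proof is short.
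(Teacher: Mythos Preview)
Your proof is correct and follows essentially the same approach as the paper: define a scalar auxiliary function along the segment (the paper uses $h(t)=\mathbf{v}^\top(F(x+t(y-x))-F(x))$, which differs from your $g$ only by a constant) and apply the one-dimensional mean value theorem.
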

\begin{proof}[Proof of Lemma \ref{vector function mean value thm }]
 Define $h(t) = \boldsymbol{v}^\top (F(x+t(y-x)) - F(x))$. Since $h(\cdot)$ is differentiable on $(0, 1)$, by mean value theorem, there exists $t_{\boldsymbol{v}}\in(0, 1)$ such that $h(1) - h(0) = h'(t_{\boldsymbol{v}})$.
\end{proof}

\begin{lemma}\label{Lemm: exponential decay of sequence with 2nd order recurrence ineq }
  Suppose a positive sequence $\{a_k\}_{k\geq 0}$ satisfies the following recurrence inequality
  \begin{equation}
     a_{k+2} - a_k \leq - \Phi \; a_{k+1}, \quad k \geq 0  \label{recurrence ineq }
  \end{equation}
  with $\Phi > 0$. Then 
  \begin{equation*}
    a_k \leq \left(\frac{2}{\Phi + \sqrt{\Phi^2 + 4}}\right)^{k+1} \left(a_1 + \frac{\Phi + \sqrt{\Phi^2 + 4}}{2}a_0\right) \quad\textrm{for } k\geq 1.
  \end{equation*}
\end{lemma}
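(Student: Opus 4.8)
The plan is to treat the recurrence inequality \eqref{recurrence ineq } as a perturbation of the linear recurrence $b_{k+2} + \Phi\, b_{k+1} - b_k = 0$, whose characteristic polynomial is $x^2 + \Phi x - 1 = 0$ with roots $x_{\pm} = \frac{-\Phi \pm \sqrt{\Phi^2+4}}{2}$. The positive root is $r := x_+ = \frac{-\Phi + \sqrt{\Phi^2+4}}{2} = \frac{2}{\Phi + \sqrt{\Phi^2+4}} \in (0,1)$, and I note the useful identity $r^2 = 1 - \Phi r$, equivalently $\frac{1}{r} = \Phi + r$. So the target bound is exactly $a_k \le r^{k+1}\bigl(a_1 + \frac{1}{r} a_0\bigr)$, since $\frac{\Phi + \sqrt{\Phi^2+4}}{2} = \frac{1}{r}$.

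First I would set up an induction on $k$ with the claim $a_k \le C r^{k+1}$ where $C = a_1 + \frac{1}{r}a_0$. The base cases $k=0$ and $k=1$ hold because $a_0 \le C r \cdot \frac{1}{r^0}$... more carefully: for $k=0$ we need $a_0 \le C r = (a_1 + \frac1r a_0) r = r a_1 + a_0$, which holds since $r a_1 \ge 0$; for $k=1$ we need $a_1 \le C r^2 = (r a_1 + a_0) r = r^2 a_1 + r a_0$. Hmm, this last inequality is not obviously true, so I would instead do strong induction more cleverly: actually the cleanest route is to define $d_k := r^{-(k+1)} a_k$ and show $d_k$ is bounded by $\max\{d_0, d_1\}$ or rather by $C$ directly via the two-step recurrence. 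Let me restructure: the right approach is to prove by strong induction that $a_k \le C r^{k+1}$ for all $k \ge 0$, checking $k=0,1$ directly (for $k=1$ one simply uses $a_1 \le C r^2$ iff $a_1 \le (a_1 + \frac1r a_0)r^2 = a_1 r^2 + a_0 r$ iff $a_1(1-r^2) \le a_0 r$ iff $a_1 \Phi r \le a_0 r$ iff $a_1 \Phi \le a_0$ — which need NOT hold).

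So the honest plan is: the bound as stated cannot come from term-by-term induction on $a_k \le Cr^{k+1}$; instead one should bound the combination. The key step I would carry out is to show that $s_k := a_{k+1} + \frac{1}{r} a_k$ satisfies $s_{k+1} \le r\, s_k$. Indeed, using $\frac1r = \Phi + r$: $s_{k+1} = a_{k+2} + \frac1r a_{k+1} \le (a_k - \Phi a_{k+1}) + (\Phi + r) a_{k+1} = a_k + r a_{k+1} = r(a_{k+1} + \frac1r a_k) = r s_k$. This is the heart of the argument and the only place the recurrence inequality and the positivity of $a_k$ are used (positivity guarantees $s_k > 0$ so the one-step contraction iterates cleanly). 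Then $s_k \le r^k s_0 = r^k(a_1 + \frac1r a_0)$, and since $a_k \le a_k + r a_{k+1} = r s_k \le r^{k+1}(a_1 + \frac1r a_0)$ — wait, $a_k \le r s_k$ requires $a_{k+1}\ge 0$, which holds — we get $a_k \le r^{k+1}(a_1 + \frac1r a_0) = \bigl(\frac{2}{\Phi+\sqrt{\Phi^2+4}}\bigr)^{k+1}\bigl(a_1 + \frac{\Phi+\sqrt{\Phi^2+4}}{2} a_0\bigr)$ for $k \ge 1$, which is the assertion. I expect no serious obstacle here; the only subtlety is spotting the right linear combination $a_{k+1} + \frac1r a_k$ (equivalently, collapsing the two-step recurrence onto the stable eigendirection), after which everything is a two-line computation using $r^2 + \Phi r - 1 = 0$ and nonnegativity of the sequence.
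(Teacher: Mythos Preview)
Your proposal is correct and takes essentially the same approach as the paper: the paper also factors the recurrence using the positive root $r_+$ of $x^2+\Phi x-1=0$, shows that $a_{k+1}+\tfrac{1}{r_+}a_k$ contracts by a factor of $r_+$ at each step, and then drops the nonnegative term $a_{k+1}$ to extract $a_k\le r_+^{k+1}\bigl(a_1+\tfrac{1}{r_+}a_0\bigr)$. Your exploratory false start with termwise induction is exactly the dead end one should abandon; the combination $s_k=a_{k+1}+\tfrac{1}{r}a_k$ is the right object, and your one-line verification $s_{k+1}\le r s_k$ matches the paper's key inequality.
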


\begin{proof}[Proof of Lemma \ref{Lemm: exponential decay of sequence with 2nd order recurrence ineq }]
We consider the characteristic polynomial $r^2 + \Phi r - 1 = 0$. It has two  roots $r_+ = \frac{-\Phi + \sqrt{\Phi^2 + 4}}{2}> 0 $ and $r_- = \frac{-\Phi - \sqrt{\Phi^2 + 4}}{2}<0$. Then $\Phi = \frac{1-r_+^2}{r_+} = \frac{1}{r_+}-r_+$. Plugging this back to \eqref{recurrence ineq } yields
\begin{equation*}
  a_{k+2} + \left(\frac{1}{r_+} - r_+ \right) a_{k+1} - a_{k} \leq 0,   \quad k \geq 0,
\end{equation*}
which further leads to
\begin{equation*}
  a_{k+2} + \frac{1}{r_+} a_{k+1} \leq  r_+\left( a_{k+1} + \frac{1}{r_+} a_{k}\right) \quad k\geq 0.
\end{equation*}
Thus, we obtain
\begin{equation}
  a_{k+1} + \frac{  1  }{r_+} a_k \leq r_+^k \left( a_1 + \frac{1}{r_+} a_0 \right), \quad \textrm{for any } k \geq 0.  \label{estimation on Jk+1 + r Jk }
\end{equation}
Taking the index in \eqref{estimation on Jk+1 + r Jk } as $k-1$ and $k$, one obtains
\begin{align}
   & r_+^{k-1} \left( a_1 + \frac{1}{r_+} a_0 \right)  \geq a_k + \frac{1}{r_+} a_{k-1} > a_k; \nonumber\\
   & r_+^k \left( a_1 + \frac{1}{r_+} a_0 \right)  \geq a_{k+1} + \frac{1}{r_+}a_k > \frac{1}{r_+} a_k. \nonumber
\end{align}
This yields
\begin{equation}
  a_k \leq r_+^{k-1}\left( a_1 + \frac{1}{r_+} a_0  \right),  \quad \textrm{and} \quad a_k \leq r_+^{k+1} \left(a_1 + \frac{1}{r_+} a_0 \right), \quad k\geq 1. \nonumber 
\end{equation}
Since $r_+ < 1$, 
we finally obtain
\begin{equation}
  a_k \leq \left(\frac{2}{\Phi + \sqrt{\Phi^2 + 4}}\right)^{k+1} \left(a_1 + \frac{\Phi + \sqrt{\Phi^2 + 4}}{2} a_0\right), \quad k \geq 1 . \nonumber
\end{equation}
\end{proof}

We now prove Theorem \ref{thm: time discrete PDHG converge with Lip }.
\begin{proof}[Proof of Theorem \ref{thm: time discrete PDHG converge with Lip }]
According to Lemma \ref{lemm: sufficient cond unique existence of root }, under conditions \eqref{condition: a b >= 0 }, \eqref{condition: f Lip }, \eqref{condition: G,L>=0, symm, commut }, \eqref{condition Jf=cI c>=0 }, and 
\[bT\zeta_{a,b,c}(h_t)\mathrm{Lip}(R)<\sqrt{2}-1 < 1,\]
it is straightforward to check the unique existence of the root-finding problem $\widehat{F}(U)=0$. 

Now we suppose $\{U_k, Q_k\}$ solves \eqref{Preconded PDHG}. We write $\mathcal J_k = \mathcal J(U_k, Q_k)$ for convenience. Then we want to bound $\mathcal J_{k+1} - \mathcal J_k$ from above. We calculate
\begin{align*}
  \mathcal J_{k+1} - \mathcal J_k = & (U_{k+1} - U_k)\cdot\left(\frac{1}{2}(U_{k+1}+U_k) - U_*\right) + (Q_{k+1}-Q_k)\cdot\left(\frac{Q_{k+1}+Q_k}{2}\right) \\
  \leq & (U_{k+1} - U_k)\cdot\left(\frac{1}{2}(U_{k+1}+U_k) - U_*\right) + (Q_{k+1}-Q_k)\cdot Q_{k+1}\\
  = & (U_{k+1}-U_k)\cdot (U_k - U_*) + \frac{1}{2}\|U_{k+1} - U_k\|^2 + (Q_{k+1}-Q_k)\cdot Q_{k+1}.
\end{align*}
The inequality is due to the convexity of the quadratic function $\|Q\|^2$. From \eqref{Preconded PDHG}, we know
\begin{align*}
  U_{k+1} - U_k & = -\tau_U  D\widehat F(U_k)^\top (Q_{k+1} + \omega\tau_P(\widehat F(U_k) - \epsilon Q_{k+1} ));\\
  & = -\tau_U  D\widehat F(U_k)^\top ((1-\widetilde{\gamma}\epsilon)Q_{k+1} + \widetilde{\gamma}\widehat F(U_k)); \\
  Q_{k+1} - Q_k & = \tau_P(\widehat F(U_k) - \epsilon Q_{k+1}).
\end{align*}
Let us define $\widetilde \gamma = \omega\tau_P$ and $\varrho = \frac{\tau_P}{\tau_U}$. Using $F(U_*) = 0$, we obtain
\begin{align*}
  & ~~~~~ \mathcal J_{k+1} - \mathcal J_k \\
  & = -\tau_U (U_k-U_*)^\top D\widehat F(U_k)^\top ((1-\widetilde{\gamma}\epsilon)Q_{k+1} + \widetilde \gamma \widehat F(U_k)) + \tau_P  Q_{k+1}^\top (\widehat F(U_k) - \epsilon Q_{k+1}) + \frac12 \|U_{k+1} - U_k\|^2   \\
  & = -\tau_U \Big( \widetilde{\gamma} (U_k-U_*)^\top D\widehat{F}(U_k)^\top \widehat F(U_k) + (1-\widetilde{\gamma}\epsilon)(U_k-U_*)^\top D\widehat{F}(U_k)^\top Q_{k+1} \\
  &  \quad  \quad  \quad  \quad  - \varrho \widehat{F}(U_k)^\top Q_{k+1} + \varrho \epsilon \|Q_{k+1}\|^2 \Big) + \frac{\tau^2_U}{2} \|D\widehat F(U_k)^\top ((1-\widetilde{\gamma}\epsilon)Q_{k+1} + \widetilde{\gamma}\widehat F(U_k))\|^2\\
  & = -\tau_U \Big( \underbrace{\widetilde{\gamma} (U_k-U_*)^\top D\widehat{F}(U_k)^\top(\widehat F(U_k) - \widehat F(U_*))}_{(A)} + \underbrace{(1-\widetilde{\gamma}\epsilon)(U_k-U_*)^\top D\widehat{F}(U_k)^\top Q_{k+1}}_{(B)}\\
  & \quad \underbrace{-\varrho(\widehat{F}(U_k) - \widehat F(U_*))^\top Q_{k+1}}_{(C)} + \underbrace{\varrho\epsilon \|Q_{k+1}\|^2}_{(D)}  \Big)  + \frac{\tau^2_U}{2} \underbrace{\|D\widehat F(U_k)^\top ((1-\widetilde{\gamma}\epsilon)Q_{k+1} + \widetilde{\gamma}\widehat F(U_k))\|^2}_{(E) }.
\end{align*}
By Lemma \ref{vector function mean value thm }, term (A) and term (C) are given by 
\begin{align*}
    (A) &= \widetilde{\gamma} (U_k-U_*)^\top D\widehat{F}(U_k)^\top D\widehat F(U_{k, \nu_1})(U_k - U_* )\,,\\
    (C) &=  -(U_k - U_*)^\top D\widehat{F}(U_{k, \nu_2})^\top Q_{k+1}\,,
\end{align*}
where $U_{k,\nu_j} = U_* + \nu_j(U_k - U_*)$ with $\nu_1, \nu_2\in(0, 1)$, $j=1,2$. 

Recall that $D\widehat F(U) = I + D\eta(U)$. To simplify the notation, we write
\[ \overline{\sigma}_\eta = \sup_{U\in\mathbb{R}^n} \left\{\|D\eta(U)\|\right\}. \]
By Lemma \ref{lemm: more sophisticate est on singular values of DF }, we have $\overline{\sigma}_\eta \leq bT\zeta_{a,b,c}(h_t)\mathrm{Lip}(R).$ We now estimate term (A) as
\begin{align*}
  (A) = & \widetilde{\gamma} (U_k - U_*)D\widehat F(U_k)^\top D\widehat F(U_{k, \nu_1}) (U_k - U_*) \\
  = & (U_k - U_*)^\top (I + D\eta(U_k)^\top)(I + D\eta(U_{k, \nu_1}))(U_k - U_*)\\
  = & \|U_k - U_*\|^2 + (U_k - U_*)^\top D\eta(U_k)^\top (U_k - U_*) + (U_k - U_*)^\top D\eta(U_{k, \nu_1} )(U_k - U_*)\\
  & + (U_k - U_*)^\top D\eta(U_k)^\top D\eta(U_{k, \nu_1})(U_k - U_*) \\
  \geq & (1-2\overline{\sigma}_\eta-\overline{\sigma}_\eta^2)\|U_k - U_*\|^2.
\end{align*}
We can further estimate the terms (B), (C), and (E) as
\begin{align*}
  (B) & =  (1-\widetilde \gamma \epsilon ) (U_k - U_*)^\top (I + D\eta(U_k)) Q_{k+1} ;\\
  (C) &  =  - \varrho (U_k - U_*)^\top D\widehat F(U_{k, \nu_2})^\top  Q_{k+1}= - \varrho (U_k - U_*)^\top (I + D\eta(U_{k, \nu_2}))^\top  Q_{k+1}. 
\end{align*}
Thus
\begin{align*}
  (B)+(C) = & (1-\widetilde \gamma \epsilon ) (U_k - U_*)^\top (I + D\eta(U_k)) Q_{k+1} -  \varrho (U_k - U_*)^\top (I + D\eta(U_{k, \nu_2}))^\top  Q_{k+1} \\
   = & (1-\widetilde \gamma \epsilon-\varrho)(U_k - U_*)^\top Q_{k+1} + (U_k - U_*)^\top ((1-\widetilde\gamma\epsilon)D\eta(U_k) - \varrho D\eta(U_{k,\nu_2}) )^\top Q_{k+1} \\
   \geq & - |1-\widetilde\gamma\epsilon-\varrho|\|U_k - U_*\| \|Q_{k+1}\| - (|1-\widetilde\gamma \epsilon| + \varrho)\bar{\sigma}_\eta \|U_k - U_*\|\|Q_{k+1}\|\\
   = & - (|1-\widetilde\gamma\epsilon-\varrho| + (|1-\widetilde\gamma \epsilon| + \varrho)\bar{\sigma}_\eta) \|U_k - U_*\| \|Q_{k+1}\|\,. 
\end{align*}
And 
\begin{align*}
  (E) & \leq \overline{\sigma}^2(|1-\widetilde{\gamma}\epsilon| \cdot \|Q_{k+1}\| + \widetilde{\gamma} \| \widehat F(U_k) \| )^2 \\
  & \leq \overline{\sigma}^2(|1-\widetilde{\gamma}\epsilon| \cdot \|Q_{k+1}\| + \widetilde{\gamma}\overline{\sigma}\|U_k - U_*\| )^2 \\ 
  & \leq 2\overline{\sigma}^2 ((1-\widetilde{\gamma}\epsilon)^2\|Q_{k+1}\|^2 + \widetilde{\gamma}^2\overline{\sigma}^2\|U_k - U_*\|^2). 
\end{align*}
The second inequality on (E) is due to
\begin{align*}
  \|\widehat F(U_k)\|  =  &  \|\widehat F(U_k) - \widehat F(U_*)\| = \Big\|\int_0^1 \left( \frac{d}{ds} \widehat{F}(U_*+s(U_k - U_*)) \right) ~ ds \Big\| \\
   = & \Big\| \int_0^1 D \widehat F(U_*+s(U_k-U_*))(U_k - U_*)~ds \Big\| \\
   \leq & \int_0^1 \overline\sigma \|U_k - U_*\|~ds = \overline{\sigma } \|U_k - U_*\|.
\end{align*}
Combining the estimations on term (A)-(E), we obtain
\begin{align}
  &  ~~ ~  \mathcal J_{k+1} - \mathcal J_k  \nonumber  \\
  & = -\tau_U \Big(    \widetilde{\gamma} (1-2\overline{\sigma}_\eta-\overline{\sigma}_\eta^2)\|U_k - U_*\|^2 - (|1-\widetilde\gamma\epsilon-\varrho| + (|1-\widetilde\gamma \epsilon| + \varrho)\bar{\sigma}_\eta)\|U_k - U_* \| \|Q_{k+1}\|  \nonumber \\
     & \quad \quad  \quad   \quad  + \varrho\epsilon \|Q_{k+1}\|^2 - \tau_U( \overline{\sigma}^2 (1-\widetilde{\gamma}\epsilon)^2\|Q_{k+1}\|^2  + \widetilde{\gamma}^2\overline{\sigma}^2\|U_k - U_*\|^2 )  \Big)  \nonumber\\
   & = 
   -\tau_U [\|U_k - U_*\|, \|Q_{k+1}\|] \left({\Gamma} - \tau_U \Theta \right) \left[\begin{array}{c}
        \|U_k - U_*\| \\
        \|Q_{k+1}\|
   \end{array}\right] .\label{Jk+1-Jk}    
\end{align}
Here
\begin{gather}
  \Gamma = \left[\begin{array}{cc}
     \widetilde \gamma (1-2\overline{\sigma}_\eta -\overline{\sigma}_{\eta}^2)  &  -\frac12(|1-\widetilde\gamma\epsilon-\varrho| + (|1-\widetilde\gamma \epsilon| + \varrho)\bar{\sigma}_\eta)  \nonumber  \\
       -\frac12(|1-\widetilde\gamma\epsilon-\varrho| + (|1-\widetilde\gamma \epsilon| + \varrho)\bar{\sigma}_\eta) & \varrho \epsilon 
   \end{array}\right],  \\
   \Theta = \left[\begin{array}{cc}
          \widetilde \gamma^2 \overline \sigma^4  &  \\
        &   \overline{ \sigma}^2 (1-\widetilde \gamma\epsilon)^2
   \end{array}\right].  \nonumber 
\end{gather}
Recall that we assume $bT\zeta_{a,b,c}(h_t)\mathrm{Lip}(R) \leq \theta$, this leads to $\overline{\sigma}_\eta \leq \theta$. By Lemma \ref{lemm: more sophisticate est on singular values of DF }, we also have $\overline{\sigma} \leq 1+\theta$. Thus, $\widetilde \gamma(1-2\overline{\sigma}_\eta-\overline{\sigma}_\eta^2) > \widetilde\gamma(1-2\theta-\theta^2)>0$ as $\theta \in [0,  \sqrt{2} - 1) $. Hence, 
\begin{align} 
       \mathrm{det}(\Gamma) = & \varrho \widetilde \gamma \epsilon  (1-2\overline{\sigma}_\eta - \overline{\sigma}_\eta^2) - \frac{1}{4}(|1-\widetilde\gamma\epsilon-\varrho| + (|1-\widetilde\gamma \epsilon| + \varrho)\bar{\sigma}_\eta)^2  \nonumber   \\
       \geq & \varrho \widetilde \gamma \epsilon (1-2\theta-\theta^2) - \frac14 (|1-\widetilde\gamma\epsilon-\varrho| + (|1-\widetilde\gamma \epsilon| + \varrho)\theta)^2. \nonumber
\end{align}

We denote $\Psi(\theta) = 1 - 2\theta - \theta^2 $ and $\Omega(u, \varrho, \theta) = |1-u-\varrho|+(|1-u|+\varrho)\theta$. Lemma \ref{lemm: solution set for ineq is nonempty } guarantees that there exist $\widetilde\gamma, \omega, \epsilon$, such that \eqref{condition on gamma, ktau, epsilon } holds. The condition \eqref{condition on gamma, ktau, epsilon } leads to $\textrm{det}(\Gamma)>0$, which guarantees the positive definiteness of $\Gamma$.

Furthermore, we have $\Gamma \succeq \lambda_2(\Gamma) I$, where $\lambda_2(\Gamma)$ represents the smallest eigenvalue of $\Gamma$ and $I$ is an identity matrix. One can bound $\lambda_2(\Gamma)$ from below as
\begin{align*}
  \lambda_2(\Gamma) & = \frac{\widetilde\gamma(1-2\overline{\sigma}_\eta - \overline{\sigma}_\eta^2) + \varrho\epsilon - \sqrt{ ( \widetilde\gamma(1-2\overline{\sigma}_\eta - \overline{\sigma}_\eta^2) + \varrho\epsilon )^2 - 4 \mathrm{det}(\Gamma) }}{2} \\ 
  & \geq \frac{4 (\varrho \widetilde \gamma \epsilon (1-2\theta-\theta^2) - \frac14 (|1-\widetilde\gamma\epsilon-\varrho| + (|1-\widetilde\gamma \epsilon| + \varrho)\theta)^2) }{2(\widetilde\gamma(1-2\overline{\sigma}_\eta - \overline{\sigma}_\eta^2) + \varrho\epsilon + \sqrt{  ( \widetilde\gamma(1-2\overline{\sigma}_\eta - \overline{\sigma}_\eta^2) + \varrho\epsilon )^2 - 4 \mathrm{det}(\Gamma) }) } \\
  &  \geq  \frac{ \varrho \widetilde \gamma \epsilon (1-2\theta-\theta^2) - \frac14 (|1-\widetilde\gamma\epsilon-\varrho| + (|1-\widetilde\gamma \epsilon| + \varrho)\theta)^2 }{\widetilde{\gamma}(1-2\overline{\sigma}_\eta-\overline{\sigma}_\eta^2)  +  \varrho  \epsilon  }\\
  & \geq \frac{\varrho\widetilde\gamma\epsilon \Psi(\theta)  - \frac14 \Omega(\widetilde\gamma\epsilon, \varrho, \theta)^2  }{\widetilde\gamma + \varrho\epsilon}.
 \end{align*}

On the other hand, we have 
\begin{equation*}
  \Theta \prec \overline{\sigma}^2\max\{\widetilde\gamma^2\overline{\sigma}^2 , |1-\widetilde\gamma\epsilon|^2\} I \prec    (1+\theta)^2\max\{\widetilde{\gamma}^2(1+\theta)^2, |1-\widetilde{\gamma}\epsilon|^2 \} I .
\end{equation*}

Thus we have 
\begin{equation*}
  \Gamma - \tau \Theta \succ  \underbrace{\left( \frac{ \varrho\widetilde\gamma\epsilon \Psi(\theta)  - \frac14 \Omega(\widetilde\gamma\epsilon, \varrho, \theta)^2   }{\widetilde{\gamma} + \varrho \epsilon  }  - \tau (1+\theta)^2\max\{ \widetilde{\gamma}^2(1+\theta)^2, |1-\widetilde{\gamma}\epsilon|^2 \} \right)}_{\textrm{denote as } C(\theta, \widetilde{\gamma}, \epsilon, \varrho, \tau)} I . 
\end{equation*}
Plug this estimation to \eqref{Jk+1-Jk}, we obtain
\begin{equation*}
  \mathcal J_{k+1} - \mathcal J_k \leq - \tau C(\theta, \widetilde{\gamma}, \epsilon, \varrho, \tau) (\|U_{k}-U_*\|^2 + \|Q_{k+1}\|^2).
\end{equation*}
Since we set the PDHG step size as
\begin{equation*}
  0 < \tau < \bar{\tau}(\theta, \widetilde{\gamma}, \epsilon, \varrho, \tau) \triangleq \frac{\varrho\widetilde\gamma\epsilon \Psi(\theta)  - \frac14 \Omega(\widetilde\gamma\epsilon, \varrho, \theta)^2}{2(\widetilde\gamma + \varrho\epsilon)(1+\theta)^2\max\{ \widetilde\gamma^2 (1+\theta)^2 ,  (1 -\widetilde\gamma\epsilon)^2 \}},
\end{equation*}
this guarantees  $C(\theta, \widetilde{\gamma}, \epsilon, \varrho, \tau) > 0$.

Furthermore, as a function of $\tau$, $\tau C(\theta, \widetilde{\gamma}, \epsilon, \varrho, \tau)$ reaches its maximum value at $\tau = \frac12 \bar{\tau}(\theta, \widetilde{\gamma}, \epsilon, \varrho, \tau )$. We then set (here $\bar{\tau}$ denotes $\bar\tau(\theta, \widetilde\gamma, \epsilon, \varrho)$)
\[ \Phi = \frac{1}{2} \bar{\tau} C(\theta, \widetilde{\gamma}, \epsilon, \varrho, \frac{1}{2}\bar{\tau} ) = \frac{(\varrho\widetilde\gamma\epsilon \Psi(\theta)  - \frac14 \Omega(\widetilde\gamma\epsilon, \varrho, \theta)^2 )^2}{2(1+\theta)^2\max\{\widetilde\gamma^2(1+\theta)^2, (1-\widetilde\gamma \epsilon)^2\}(\widetilde{\gamma} + \varrho \epsilon)^2 }. \]
Thus we have
\begin{align}
  \mathcal J_{k+1} - \mathcal J_k \leq -\Phi \cdot\frac12 (\|U_k - U_*\|^2 + \|Q_{k+1}\|^2).  \nonumber    
\end{align}

Now we prove the exponential decay of $\mathcal{J}_k$. To do so, we sum up the above inequality at time index $k$ and $k+1$ to obtain,
\begin{equation}
  \mathcal{J}_{k+2} - \mathcal{J}_{k} \leq -\Phi \cdot \frac{1}{2}(\|U_{k+1} - U_*\|^2 + \|Q_{k+2}\|^2 + \|U_{k} - U_*\|^2 + \|Q_{k+1}\|^2), \quad k\geq 0. \nonumber
\end{equation}
It is not hard to see that the right-hand side of the above inequality is no larger than $-\Phi \mathcal J_{k+1}$. Hence,
\begin{equation}
  \mathcal J_{k+2} - \mathcal J_{k} \leq - \Phi  \mathcal J_{k+1}.  \label{time disct second order ineq }
\end{equation}

Now, by Lemma \ref{Lemm: exponential decay of sequence with 2nd order recurrence ineq }, we obtain 
\begin{equation}
  \mathcal J_k \leq \left(\frac{2}{\Phi + \sqrt{\Phi^2 + 4}}\right)^{k+1} \left(\mathcal J_1 + \frac{\Phi + \sqrt{\Phi^2 + 4}}{2}\mathcal J_0\right), \quad \textrm{for } k\geq 1. \nonumber 
\end{equation}
This concludes our proof.
\end{proof}

\end{document}